\documentclass[12pt]{amsart}

\usepackage{graphicx, overpic}
\usepackage[below]{placeins}
\usepackage[colorlinks=true, linkcolor=blue, citecolor=blue]{hyperref}
\usepackage[]{algorithm2e}
\usepackage{comment}

\usepackage[T1]{fontenc}
\usepackage{amsmath,amsthm,amscd,amssymb,eucal}

\usepackage{enumerate, amsfonts, latexsym, color, url}
\usepackage{epstopdf}
\usepackage{pinlabel}

\makeatletter

\newcommand{\dashover}[2][\mathop]{#1{\mathpalette\df@over{{\dashfill}{#2}}}}
\newcommand{\fillover}[2][\mathop]{#1{\mathpalette\df@over{{\solidfill}{#2}}}}
\newcommand{\df@over}[2]{\df@@over#1#2}
\newcommand\df@@over[3]{%
  \vbox{
    \offinterlineskip
    \ialign{##\cr
      #2{#1}\cr
      \noalign{\kern1pt}
      $\m@th#1#3$\cr
    }
  }%
}
\newcommand{\dashfill}[1]{%
  \kern-.5pt
  \xleaders\hbox{\kern.5pt\vrule height.8pt width \dash@width{#1}\kern.5pt}\hfill
  \kern-.5pt
}
\newcommand{\dash@width}[1]{%
  \ifx#1\displaystyle
    2pt
  \else
    \ifx#1\textstyle
      1.5pt
    \else
      \ifx#1\scriptstyle
        1.25pt
      \else
        \ifx#1\scriptscriptstyle
          1pt
        \fi
      \fi
    \fi
  \fi
}
\newcommand{\solidfill}[1]{\leaders\hrule\hfill}

\newsavebox{\@brx}
\newcommand{\llangle}[1][]{\savebox{\@brx}{\(\m@th{#1\langle}\)}%
  \mathopen{\copy\@brx\kern-0.5\wd\@brx\usebox{\@brx}}}
\newcommand{\rrangle}[1][]{\savebox{\@brx}{\(\m@th{#1\rangle}\)}%
  \mathclose{\copy\@brx\kern-0.5\wd\@brx\usebox{\@brx}}}
\makeatother

\DeclareMathOperator\Homeo{Homeo}
\DeclareMathOperator\Map{Map}
\DeclareMathOperator\SO{SO}

\begin{document}

\newtheorem{theorem}{Theorem}[section]
\newtheorem{lemma}[theorem]{Lemma}
\newtheorem{proposition}[theorem]{Proposition}
\newtheorem{corollary}[theorem]{Corollary}
\newtheorem{conjecture}[theorem]{Conjecture}
\newtheorem{question}[theorem]{Question}
\newtheorem{problem}[theorem]{Problem}
\newtheorem*{claim}{Claim}
\newtheorem*{criterion}{Criterion}
\newtheorem*{G_equivalence_theorem}{$G$-Equivalence Theorem~\ref{theorem:G_equivalence}}
\newtheorem*{equivalence_theorem}{Equivalence Theorem~\ref{theorem:equivalence}}

\theoremstyle{definition}
\newtheorem{definition}[theorem]{Definition}
\newtheorem{provisional_definition}[theorem]{Provisional Definition}
\newtheorem{construction}[theorem]{Construction}
\newtheorem{notation}[theorem]{Notation}
\newtheorem{object}[theorem]{Object}
\newtheorem{operation}[theorem]{Operation}

\theoremstyle{remark}
\newtheorem{remark}[theorem]{Remark}
\newtheorem{example}[theorem]{Example}

\numberwithin{equation}{subsection}

\newcommand\id{\textnormal{id}}

\newcommand\CW{CaTherine wheel}
\newcommand\HH{\mathbb H}
\newcommand\N{\mathbb N}
\newcommand\Z{\mathbb Z}
\newcommand\Q{\mathbb Q}
\newcommand\R{\mathbb R}
\newcommand\C{\mathbb C}
\newcommand\DD{\mathbb D}
\newcommand\CP{\mathbb{CP}}
\newcommand\RP{\mathbb{RP}}
\newcommand\CC{\mathcal C}
\newcommand\F{\mathcal F}
\newcommand\I{\mathcal I}
\newcommand\D{\mathcal D}
\newcommand\M{\mathcal M}
\newcommand\EE{\mathcal E}
\newcommand\EEE{\mathfrak E}
\newcommand\PEEE{\mathfrak P}
\newcommand\dashEEE{\dashover{\mathfrak E}}
\newcommand\LL{\mathcal L}
\newcommand\KK{\mathcal K}
\newcommand\QQ{\mathcal Q}
\newcommand\SLE{\textnormal{SLE}}
\newcommand\LQG{\textnormal{LQG}}
\newcommand\rel{\textnormal{rel}}
\newcommand\Rel{\textnormal{Rel}}
\newcommand\Lam{\textnormal{Lam}}
\newcommand\area{\textnormal{area}}
\newcommand\length{\textnormal{length}}
\newcommand\vol{\textnormal{vol}}
\newcommand\inte{\textnormal{int}}
\newcommand\un{\textnormal{univ}}
\newcommand\fun{f}
\newcommand\fix{\textnormal{fix}}

\title{CaTherine Wheels}
\author{Danny Calegari}
\address{University of Chicago \\ Chicago, Ill 60637 USA}
\email{dannyc@uchicago.edu}
\author{Ino Loukidou}
\address{University of Chicago \\ Chicago, Ill 60637 USA}
\email{thelouk@uchicago.edu}
\date{\today}
\dedicatory{for Curt McMullen, our teacher and friend}

\begin{abstract}
A {\em CaTherine wheel} is a surjective continuous map $f:S^1 \to S^2$ such that for
every closed interval $I\subset S^1$ the image $f(I)$ is homeomorphic to a disk, and
$f(\partial I)$ is contained in the boundary of this disk.
CaTherine wheels arise in many areas of low-dimensional geometry and topology,
including conformal dynamics (expanding Thurston maps, expanding origamis), probability
theory (whole plane $\SLE_\kappa$ for $\kappa \ge 8$, LQG metric trees) and
elsewhere. We develop their theory in generality, and explain how CaTherine wheels
and their associated structures can serve as a dictionary between these
various fields.

Our most substantial applications are to the theory of hyperbolic 3-manifolds.
If $M$ is a closed hyperbolic 3-manifold and $G=\pi_1(M)$, we show that there is a
canonical bijection between four kinds of structures associated to $M$:
\begin{enumerate}
\item{orbit-equivalence classes of pseudo-Anosov flows on $M$ without perfect fits;}
\item{$G$-equivariant CaTherine wheels up to conjugacy;}
\item{minimal $G$-zippers; and}
\item{connected components of the space of uniform quasimorphisms on $G$.}
\end{enumerate}
This generalizes and amplifies the theory of fiberings of hyperbolic 3-manifolds
over the circle and the Thurston norm.
\end{abstract}

\maketitle
\setcounter{tocdepth}{1}
\tableofcontents

\section*{Introduction}

A {\em CaTherine wheel} is a surjective continuous map $f:S^1 \to S^2$ satisfying two
axioms: for every closed interval $I \subset S^1$ the image $f(I)$ is homeomorphic to
the closed unit disk; and the image $f(\partial I)$ is contained in the boundary
circle $\partial f(I)$. 

A picture of (the image of) a CaTherine wheel would not be very interesting, but every
CaTherine wheel may be approximated by an embedding $S^1 \to S^2$, and these can be
visually fascinating; see Figure~\ref{CaTherine_wheel}.

\begin{figure}[htpb]
\centering
\includegraphics[scale=0.2]{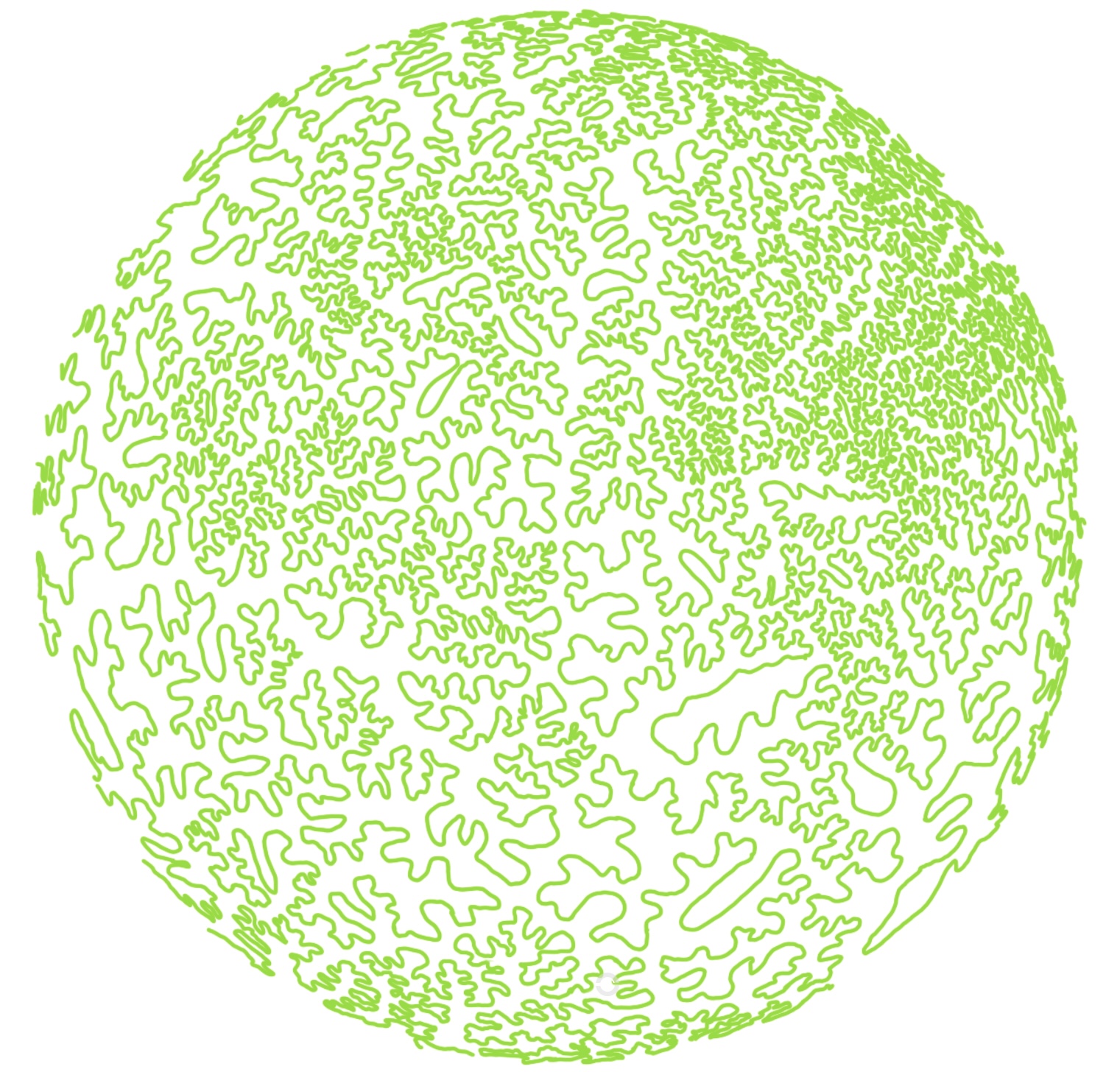}
\caption{A CaTherine wheel; not to be confused with Figure~\ref{Saint_Catherine}.}
\label{CaTherine_wheel}
\end{figure}

\subsection*{Endomorphisms}

If $G$ is some abstract semigroup acting by endomorphisms on $S^2$ then a
$G$-CaTherine wheel is a CaTherine wheel $f:S^1 \to S^2$ together with an action of $G$
on $S^1$ so that $f$ intertwines the given $G$ actions. Some familiar and well-studied
examples (but by no means the only ones; see e.g.\/ \S~\ref{subsection:expanding_origamis}) are 
\begin{enumerate}
\item{Cannon--Thurston \cite{Cannon_Thurston}: when $M$ is a closed hyperbolic 3-manifold 
that fibers over the circle, and $G=\pi_1(M)$ acting on $\CP^1$ there is a 
so-called Cannon--Thurston map $f:S^1_\infty \to \CP^1$
where $S^1_\infty$ is the ideal boundary of the universal cover of a fiber; and}
\item{Tan Lei, Meyer \cite{Tan_Lei_mate, Meyer_unmate}: when $G$ is the semigroup generated by a
degree 2 rational map $H:\CP^1\to \CP^1$ obtained by mating two quadratic polynomials
associated to suitable pairs of Misiurewicz points in the Mandelbrot set (those that do not
lie on conjugate limbs, and for which the associated {\em ray-equivalence classes} --- 
see \S~\ref{subsubsection:mating} --- have diameter zero)
there is a surjective map $f:S^1 \to \CP^1$ intertwining the dynamics of 
$h:z \to z^2$ on $S^1$ with $H$.}
\end{enumerate}
The name `CaTherine wheel' (and its nonstandard capitalization) is derived from the
names of the authors in the first example, and their wider contribution to related topics 
throughout low-dimensional geometry and topology.

In fact it is possible to give a complete characterization of $G$-CaTherine wheels when
$G = \pi_1(M)$ for $M$ a closed hyperbolic 3-manifold. First let us give some informal
definitions (precise definitions are given in the main body of the paper).

We assume the reader is familiar with the notion of a {\em pseudo-Anosov flow} $X$ on $M$;
see e.g.\/ \cite{Thurston_dynamics} or \cite{Barthelme_Frankel_Mann}.
A pseudo-Anosov flow has {\em no perfect fits} if no pair of stable and unstable leaves
are asymptotic at infinity in the orbit space of $\tilde{X}$, the lifted flow on the
universal cover of $M$. 

A (minimal) {\em $G$-zipper} is a pair $Z^\pm$ of disjoint, nonempty, path-connected,
$G$-invariant subsets of $\CP^1$ which is minimal with respect to these properties; 
see \cite{Calegari_Loukidou_Zippers}. 

We briefly recall the notion of a {\em quasimorphism} $\phi:G \to \R$
on a group. This is a function for which there is some least non-negative real
number $D(\phi)$ called the defect so that for all $g,h\in G$ there in an inequality
$$|\phi(gh)-\phi(g)-\phi(h)| \le D(\phi)$$
see e.g.\/ \cite{Calegari_scl}. 

Any (absolutely) bounded function on a group is a quasimorphism; the set of quasimorphisms
modulo bounded functions on $G$ is a vector space $Q(G)$, and every quasimorphism has
a canonical representative in $Q(G)$, a {\em homogeneous} quasimorphism, i.e.\/ one
for which $\phi(g^n)=n\phi(g)$ for all $g\in G$ and $n\in \Z$. When $G$ is
finitely generated, $H^1(G)$ is a finite dimensional subspace of $Q(G)$, and the
defect $D(\cdot)$ descends to a norm on $Q(G)/H^1(G)$ making it into a (typically
non-separable) Banach space. Thus $Q(G)$ has a well-defined topology.

An unbounded quasimorphism on a finitely generated group is Lipschitz
in any word metric on the group, and it is said to be {\em uniform} if the coarse
level sets are coarsely connected in the sense of Gromov \cite{Gromov_asymptotic}; see
also \cite{Calegari_Loukidou_Zippers}.

Combining the main results of \S~\ref{subsection:pseudo_Anosov} and 
\S~\ref{subsection:G_zippers} (Theorem~\ref{theorem:wheels_and_flows},
Theorem~\ref{theorem:G_zipper_to_G_wheel} and Theorem~\ref{theorem:G_zippers_are_hairy}),
we obtain the following:

\begin{G_equivalence_theorem}
For $M$ a fixed closed hyperbolic 3-manifold with fundamental group $G$ acting on $\CP^1$
there are canonical bijections between the following structures:
\begin{enumerate}
\item{pseudo-Anosov flows without perfect fits on $M$ up to orbit equivalence;}
\item{$G$-CaTherine wheels up to conjugacy;}
\item{(minimal) $G$-zippers; and }
\item{connected components in $Q(G)$ of the space of uniform quasimorphisms $\phi:G\to \R$.}
\end{enumerate}
\end{G_equivalence_theorem}

The proof of this theorem uses results of KyeongRo Kim \cite{Kim} and
Calegari--Zung \cite{Calegari_Zung} as well as the content of 
\cite{Calegari_Loukidou_Zippers}; furthermore, it gives a logically new proof of
a well-known theorem of Fenley \cite{Fenley_no_perfect_fits} (in our language:
that every pseudo-Anosov flow without perfect fits  
gives rise to a $G$-CaTherine wheel), albeit under 
the stronger hypothesis that $M$ is already known to be hyperbolic.
Fenley's proof of this theorem is very long and 
technical, so we think having a really conceptually different 
proof of his result is a good thing.

It is a consequence of this theorem that for $G=\pi_1(M)$ as above,
each connected component of the space of
uniform quasimorphisms is an open convex cone in the space $Q(G)$ of all
quasimorphisms (modulo bounded functions) on $G$ (this depends on \cite{Calegari_Zung},
Theorem~2.10). Let us elaborate.
If $X$ is a pseudo-Anosov flow and $\tilde{X}$ is its lift to the universal
cover, let $Q_X(G)\subset Q(G)$ denote the space of
quasimorphisms whose restriction to each (coarse) flowline of
$\tilde{X}$ is a coarse quasi-isometry to $\R$; we say that the $\phi \in Q_X(G)$ are 
{\em adapted} to $X$. It turns out that every quasimorphism adapted to some $X$
is uniform, and conversely. Furthermore, each $Q_X(G)$ is an open convex cone in $Q(G)$, and every
connected component of the space of uniform quasimorphisms on $G$ is 
$Q_X(G)$ for some $X$, unique up to orbit equivalence.

Here is how to think about what it means for a quasimorphism to be uniform.
If $G$ is a finitely generated group, a surjective 
homomorphism $\phi:G \to \Z$
is uniform (as a quasimorphism) if and only if the kernel is finitely generated, and
more generally, a non-zero homomorphism $\phi:G \to \R$ is
uniform if and only if its projective class is contained in the BNS-invariant of $G$
\cite{BNS}. For $M$ a closed 3-manifold, Stallings showed that a surjective homomorphism
$\phi:\pi_1(M) \to \Z$ has finitely generated kernel if and only if it arises from
a fibration of $M$ over $S^1$. Such homomorphisms are precisely the primitive elements of $H^1(M)$
(or, by Poincar\'e duality, $H_2(M)$) that projectively intersect the interiors of the 
fibered faces of the unit ball of the Thurston norm.
Thus our theorem may be seen as generalizing the classical correspondence, 
between orbit equivalence classes of 
{\em suspension} pseudo-Anosov flows on $M$, and cones on (open) fibered faces of 
the Thurston norm ball.

One important remark to make is that the existence of a uniform quasimorphism on a group
is a structure that exists purely at the level of coarse geometry in the sense of Gromov.
In particular, we are able to give a characterization of a 3-manifold
with a pseudo-Anosov flow without perfect fits purely in terms of group theory. The main
advantage of our perspective is that we work directly with objects that exist on
the Gromov boundary of $G$; as an application, we are able to show 
(Theorem~\ref{theorem:wheel_Cannon_conjecture}) that if $G$ is a
hyperbolic group with $\partial_\infty G$ homeomorphic to $S^2$, and if there is a
$G$-CaTherine wheel (for the natural $G$ action on $\partial_\infty G$) then $G$ is
virtually a 3-manifold group; i.e.\/ Cannon's Conjecture is true for $G$. It is not
clear to what extent this represents a viable approach to the full conjecture.

\subsection*{Point set topology}

One of the most attractive and useful aspects of the theory of CaTherine wheels is that
so much of their structure may be derived directly from the axioms without any
assumption of symmetry or analytic regularity whatsoever. 

A surjective map $f:S^1 \to S^2$ induces an equivalence relation on $S^1$, whose
equivalence classes are the point preimages. If $f$ is a CaTherine wheel, it turns out 
that the set of nontrivial equivalence classes $\LL$ admits a {\em canonical}
partition into two subsets $\LL = \LL^+ \sqcup \LL^-$ 
in such a way that each of $\LL^\pm$ is {\em laminar}:
i.e.\/ the equivalence relation on $S^1$ induced by each $\LL^\pm$ separately is closed,
and distinct equivalence classes in $\LL^+$ are unlinked in $S^1$ (and similarly for
$\LL^-$).

The laminar relations $\LL^\pm$ let us build a canonical 2-sphere $S^2_f$ with $S^1$ as the
equator, and a surjective map $F:S^2_f \to S^2$ whose restriction to $S^1$ agrees with $f$.
The images $Z^\pm$ of the upper and lower (open) hemispheres of $S^2_f$ under $F$ are 
a zipper: they are disjoint, dense, path-connected subsets of $S^2$, each of which
is a countable increasing union of finite trees, and in which every point is a cut point.

Part~\ref{part:point_set_topology} describes the relationship between $f:S^1 \to S^2$
and $\LL^\pm$ and $Z^\pm$, and gives necessary and sufficient conditions for 
a pair of laminar relations or a zipper to arise from a CaTherine wheel. 

A pair of laminar relations $\LL^\pm$ is said to have {\em no perfect fits} if the
equivalence classes in $\LL^+$ and in $\LL^-$ are disjoint. A laminar relation $\LL$ 
gives rise to an associated {\em boundary lamination} $\Lambda:=\Lam(\LL)$ (see 
Definition~\ref{definition:boundary_lamination}). The laminar relation is said
to have {\em no isolated sides} if each leaf $\lambda$ of $\Lambda$ is either a
nontrivial limit of leaves of $\Lambda$ on either side, or it is a nontrivial limit
of leaves on one side, and bounds a nontrivial polyhedron 
associated to an equivalence class $\mu$ of $\LL$ with $|\mu|\ge 3$ on the other side.

A zipper $Z^\pm\subset S^2$ has the {\em strong landing property} if, roughly speaking,
every proper ray in $Z^\pm$ extends continuously to a unique endpoint in $S^2$, and
every point in $S^2$ arises as an endpoint of some ray in $Z^+$ and in $Z^-$; 
see Definition~\ref{definition:strong_landing_property} for details.
A zipper is {\em hairy} if every embedded arc in $Z^+$ has arcs branching off it on either side
and similarly for $Z^-$ (see Definition~\ref{definition:hairy}).

The following amalgamates Theorem~\ref{theorem:laminar_decomposition}, 
Theorem~\ref{theorem:zippers_from_wheels}, Theorem~\ref{theorem:wheels_from_laminations}, 
and Theorem~\ref{theorem:wheels_from_zippers}:
\begin{equivalence_theorem}
There is a canonical bijection between isomorphism classes of the following objects:
\begin{enumerate}
\item{CaTherine wheels $f:S^1 \to S^2$;}
\item{pairs of laminar relations $\LL^\pm$ on $S^1$ with no perfect fits and no isolated sides; and}
\item{hairy zippers $Z^\pm \subset S^2$ with the strong landing property.}
\end{enumerate}
\end{equivalence_theorem}

Thus each of the three $f,\LL^\pm,Z^\pm$ are interchangable and faithful avatars
of the same abstract mathematical object. The laminar relations $\LL^\pm$ live on $S^1$,
the zipper $Z^\pm$ lives on $S^2$, and $f$ relates the two.

Here is an application. Let $\EEE$ denote the space of embeddings $S^1\to S^2$, thought
of as a subspace of the space of all maps from $S^1$ to $S^2$ in the compact-open topology.
Let $f:S^1 \to S^2$ be a nowhere locally
constant map which is not an embedding, but admits a {\em pseudo-isotopy}; i.e.\/
there is a 1-parameter family of maps $f_t:S^1 \to S^2$ for $t\in [0,1]$ which are
embeddings for $t<1$ and for which $f=f_1$. Let $\PEEE(f)$ denote the space of
pseudo-isotopies for $f$. Theorem~\ref{theorem:CaTherine_unique_pseudoisotopy} says
that for $f$ a CaTherine wheel, $\PEEE(f)$ is path-connected. The
alternative to this is a {\em self-bumping point} in the frontier of
$\EEE$: an $f:S^1 \to S^2$ that admits two pseudo-isotopies that are not homotopic 
through a family of pseudo-isotopies. Such self-bumping points occur at a dense
subset of points in the frontier of $\EEE$; their analysis
and classification is almost entirely unknown.

\subsection*{Sullivan's dictionary}

One advantage of developing the theory of CaTherine wheels in generality is that once
we identify the intrinsic `meaning' of a CaTherine wheel in a specialized
context, we may transport this meaning to different contexts where it will 
motivate new definitions and constructions, and give rise to interesting questions. 
Let us give three examples.

\medskip

{\bf 1:} $G$-zippers were defined and investigated in \cite{Calegari_Loukidou_Zippers} for $G$
the fundamental group of a closed hyperbolic 3-manifold. Yan He and Daniel Meyer
realized that such objects and suitable generalizations should exist for $G$ the
semigroup generated by an {\em expanding Thurston map} $H:S^2 \to S^2$. In the case
that there is a $G$-CaTherine wheel $f:S^1 \to S^2$, the zippers $Z^\pm$ are the
full inverse images under iterates of $H^{-1}$ of the 
{\em trimmed Hubbard trees} $T^\pm_0$ --- ordinary Hubbard trees with their 
1-valent vertices cut off; see Proposition~\ref{proposition:zipper_preimage}. 
For general expanding Thurston maps the definition and construction of $Z^\pm$ 
is more subtle; see \cite{AHLZ}.

{\bf 2:} For $G$ the fundamental group of a closed hyperbolic 3-manifold $M$, a
$G$-CaTherine wheel $f:S^1 \to \CP^1$ determines a canonical properly embedded
3-manifold $\tilde{N} \subset UT\HH^3$ which is homeomorphic to $\R^3$, which is
foliated by flowlines of the geodesic flow, and which admits a free properly
discontinuous action of $G$. Such a manifold $\tilde{N}$ can be constructed for
{\em any} CaTherine wheel $f:S^1 \to \CP^1$. CaTherine wheels $f$ for which the 
target 2-sphere admits a natural conformal structure include $f$ coming from 
(holomorphic) expanding Thurston maps, and whole plane 
Schramm--Loewner evolution (i.e.\/ $\SLE$; see \S~\ref{section:SLE}) 
from $\infty$ to $\infty$ with driving parameter $\kappa$ for $\kappa\ge 8$.
Can one define the dynamics of $H$ on $\tilde{N}$ for $H$ an expanding Thurston map?
What does the geometry of a random $\tilde{N}$ look like? What can one say about the
(countably many) singular orbits --- how close do they come to each other? 

{\bf 3:} For $G$ the fundamental group of a closed hyperbolic 3-manifold $M$,
there is an equivalence between $G$-CaTherine wheels and pseudo-Anosov flows without
perfect fits on $M$. What objects correspond to general pseudo-Anosov flows?
Many (most?) pseudo-Anosov flows are quasigeodesic, and conversely
Frankel--Landry \cite{Frankel_Landry_flows} showed that every quasigeodesic flow
on a closed hyperbolic 3 manifold is homotopic to a (quasigeodesic) pseudo-Anosov flow, 
unique up to orbit equivalence.
For such quasigeodesic flows, the associated laminar relations $\LL^\pm$ are an
example of what Frankel--Landry call an {\em especial pair} (see \cite{Frankel_Landry_flows}
Definition~3.1); but not every 
especial pair gives rise to a flow, or even to a surjective map $f:S^1 \to S^2$. 
Suppose we try to define a P-CaTherine wheel
to be a surjective map $f:S^1 \to S^2$ satisfying certain desirable properties,
so that in the $G$-invariant case such objects up to conjugacy correspond to
orbit-equivalence classes of (general) pseudo-Anosov flows on $M$.
How can one characterize such P-CaTherine wheels intrinsically? What are the corresponding
P-zippers? Can such $f$ be characterized in terms of topological properties of
a pair of laminar relations $\LL^\pm$? A tentative investigation of such matters 
is begun in \S~\ref{section:P_wheels}.

\medskip

We should emphasize that in this paper, we barely discuss CaTherine wheels as they
pertain to expanding Thurston maps, or to $\SLE_\kappa$ except as examples. 
This is partly from our ignorance, but mostly because these theories are 
already highly developed; for the first, see e.g.\/ Bonk--Meyer \cite{Bonk_Meyer} 
and for the second, see e.g.\/ Gwynne--Holden--Sun \cite{Gwynne_survey} or
Lawler \cite{Lawler_book}. 

\vfill
\pagebreak

\part{Point set topology}\label{part:point_set_topology}

\section{CaTherine wheels}\label{section:CaTherine_wheels}

\subsection{Definition}

In what follows let $S^1$ and $S^2$ denote the standard circle and the standard 2-sphere
respectively. These are closed orientable manifolds, and we fix for all time a choice 
of orientation on each of them. 

A {\em closed interval} $I\subset S^1$ is a closed subset 
homeomorphic to $[0,1]$. We can and do think of such an $I$ as a (codimension 0) submanifold 
of $S^1$; the boundary $\partial I$ is both the boundary as a submanifold, and the frontier
as a closed subset of a topological space. Such an interval inherits an orientation
from $S^1$. If we give $[0,1]$ the standard orientation, then we may choose an 
orientation-preserving homeomorphism $[0,1] \to I$, and denote the images of $0$ and $1$
under such a homeomorphism by $I^-$ and $I^+$ respectively. Thus $\partial I$ is the
disjoint union of $I^-$ and $I^+$.

\begin{definition}[CaTherine wheel]\label{definition:CaTherine_wheel}
A {\em CaTherine wheel} is a continuous map $f:S^1 \to S^2$ so that
\begin{enumerate}
\item{$f$ is surjective; and}
\item{for any closed interval $I \subset S^1$
\begin{enumerate}
\item{the image $f(I)$ is homeomorphic to the closed unit disk $D^2$; and}
\item{the image $f(\partial I)$ is contained in the boundary circle $\partial f(I)$.}
\end{enumerate}
}
\end{enumerate}
\end{definition}

\begin{figure}[htpb]
\centering
\includegraphics[scale=0.5]{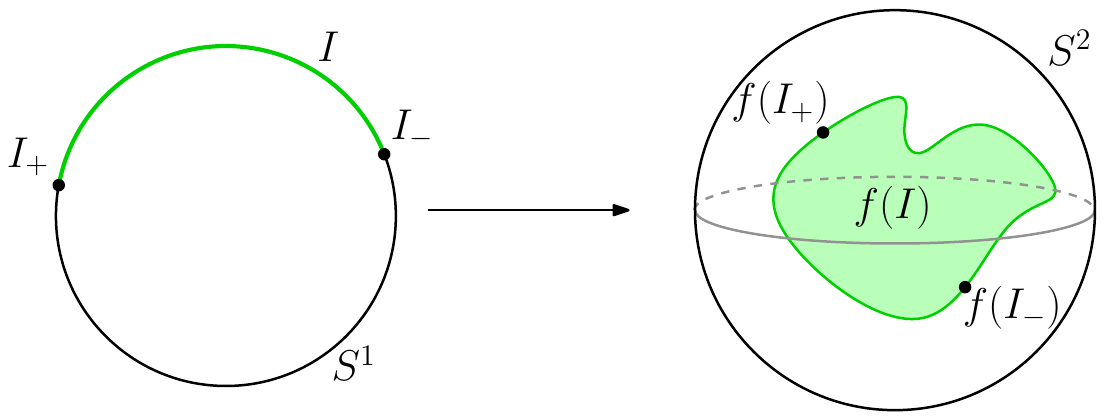}
\caption{The defining property of a CaTherine wheel}
\label{wheel_definition}
\end{figure}

\begin{remark}
By the Jordan Curve Theorem every topological embedding of $D^2$ in $S^2$ is standard, 
i.e.\/ there is a self-homeomorphism of $S^2$ taking the boundary to a great circle and
the interior to an (open) half-space in the usual round metric.
\end{remark}

Here is a suggestive image to keep in mind. If we fix $f$ and `grow' 
the interval $I$ inside $S^1$ then the image $f(I)$ also grows, like an accumulating blob
of foam extruded from a nozzle with its tip always at $f(\partial I)$, which
lies in the boundary of what has been extruded so far. 

\begin{lemma}[Basic properties]\label{lemma:basic_properties}
Let $f:S^1 \to S^2$ be a CaTherine wheel. Then:
\begin{enumerate}
\item{for any nonempty open subset $U \subset S^2$ there is a closed interval $I\subset S^1$
with $f(I) \subset U$;}
\item{$f$ is nowhere locally constant; and}
\item{if $I$ and $J$ are closed intervals in $S^1$ with disjoint interiors then 
$f(I)$ and $f(J)$ have disjoint interiors.}
\end{enumerate}
\end{lemma}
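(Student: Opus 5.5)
We prove the three items in the order (2), (3), (1).

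For (2): suppose $f$ is constant on some closed interval $I$. Then $f(I)$ is a point, which is not homeomorphic to $D^2$. This contradicts axiom (2a) of Definition~\ref{definition:CaTherine_wheel}, so $f$ is nowhere locally constant.

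For (3): first reduce to the case where $I$ and $J$ are adjacent, sharing an endpoint $I^+=J^-$. If their interiors are disjoint but the intervals are not adjacent, enlarge $J$ within the complementary gap until it abuts $I$. Interiors of images only grow under this enlargement, so it suffices to treat adjacent intervals. (If $I\cup J=S^1$, shrink one of them slightly and argue by the same containment.) Now $K=I\cup J$ is a closed interval, and $f(K)=f(I)\cup f(J)$ is a disk, as are $f(I)$ and $f(J)$.

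Suppose $\inte f(I)\cap\inte f(J)$ contains a point. Choose a small open disk $U$ in this intersection. Subdividing $I$ and $J$ finitely, each image is a finite union of images of small subintervals. I would try to derive a contradiction with axiom (2b) applied to a suitable interval: the point $f(I^+)=f(J^-)$ must lie on $\partial f(I)$, on $\partial f(J)$, and on $\partial f(K)$. The cleaner route uses the following idea.

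Take a subinterval $J'\subset J$ starting at $J^-$, chosen maximal such that $f(J')\cap U\subset \partial f(I)$, or more concretely such that $f(J')$ does not yet meet $\inte f(I)\cap U$. Consider the interval $I\cup J'$. Its image is a disk $D$ containing $f(I)$, with $f(\partial(I\cup J'))$ in $\partial D$. As $J'$ grows past the first time $f$ enters $\inte f(I)$, continuity forces the endpoint $f((J')^+)$ to lie in the interior of $f(I)\subset D$. This contradicts (2b). So $f(J)\cap\inte f(I)=\emptyset$ outright, which is stronger than needed and immediately gives disjoint interiors.

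The subtle point is whether the endpoint could sit in $\inte f(I)$ yet lie on $\partial D$. This cannot happen, since $\inte f(I)\subset\inte D$ by invariance of domain: $f(I)\subset D$, and an open subset of $S^2$ contained in $D$ lies in $\inte D$. Thus for every $t\in J$, $f(t)\notin\inte f(I)$, applying (2b) to the interval from $I^-$ to $t$.

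For (1): let $U$ be nonempty and open. Since $f$ is surjective, pick $t\in S^1$ with $f(t)\in U$. By continuity there is a closed interval $I\ni t$ with $f(I)\subset U$.

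The main obstacle is the invariance-of-domain step in (3) and the bookkeeping of reducing to adjacent intervals. Once $f(t)\notin\inte f(I)$ is established for all $t\notin I$, we get $f(J)\cap\inte f(I)=\emptyset$. Hence $\inte f(J)\subset f(J)$ is an open set missing $\inte f(I)$, which gives the disjointness of interiors.
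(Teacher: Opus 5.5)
Your proof is correct and is essentially the paper's. Items (1) and (2) are handled identically. For (3), the paper likewise takes $p\in J$ (with $f(p)\in\inte f(J)$), notes $p\notin I$, and applies axiom (2b) to the interval $[p,I^+]\supset I$ to conclude $f(p)\in\partial f([p,I^+])$ and hence $f(p)\notin\inte f(I)$; this is the mirror image of your interval from $I^-$ to $t$. That pointwise argument works for every $t\in J$ whenever $I,J$ have disjoint interiors (using (2b) on $I$ itself if $t\in\partial I$), so your reduction to adjacent intervals and the `maximal $J'$' detour are unnecessary.
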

\begin{proof}
Property (1) is true for any surjective continuous map, whereas property (2) follows
immediately from the fact that for $f$ a CaTherine wheel,
the image of any nontrivial interval in $S^1$ has nonempty interior. 

To prove (3), choose an arbitrary point $p\in J$ for which $f(p)$ is in the interior
of $f(J)$. By the definition of a CaTherine wheel, $p$ is in the interior of $J$ and is
therefore in the complement of $I$. Let $K$ denote the closed interval with $K^- = p$ and 
$K^+ =  I^+$; thus in particular $I \subset K$ and therefore $f(I) \subset f(K)$.
But $p\in \partial K$ so $f(p) \in \partial f(K)$ and is therefore not in the interior of $f(I)$.
Interchanging the roles of $I$ and $J$ finishes the proof.
\end{proof}

Two closed intervals $I,J \subset S^1$ are {\em adjacent} if they have disjoint interiors and
exactly one (end)point in common. They are {\em complementary} if they have disjoint interiors
and both endpoints in common. The unique complementary interval to $I$ is denoted $I^c$; note
that $I^c$ is not the set-theoretic complement of $I$, rather it is the set-theoretic complement
of the interior of $I$, or equivalently it is the closure of the set-theoretic complement of $I$.
Evidently $I^{cc}=I$.

\begin{lemma}[Adjacent intervals]\label{lemma:adjacent_intervals}
Let $f:S^1 \to S^2$ be a CaTherine wheel, and let $I,J\subset S^1$ be two closed intervals
with disjoint interiors.
\begin{enumerate}
\item{if $I$ and $J$ are adjacent, then
$f(I) \cap f(J)$ is a closed interval which is contained in $\partial f(I) \cap \partial f(J)$; and}
\item{if $I$ and $J$ are complementary, then $f(I) \cap f(J) = \partial f(I) = \partial f(J)$.}
\end{enumerate}
\end{lemma}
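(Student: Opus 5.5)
The plan is to work entirely with the two disks $A:=f(I)$ and $B:=f(J)$ and reduce both parts to plane topology. Lemma~\ref{lemma:basic_properties}(3) gives that $A$ and $B$ have disjoint interiors. Since a closed disk is the closure of its interior, $B=\overline{\inte B}\subset \overline{S^2-\inte A}=S^2-\inte A$, and symmetrically for $A$. Hence $A\cap B\subset \partial A\cap\partial B$ in either case, and this already gives the containment asserted in (1). The information that differs between the two cases is what we know about $A\cup B=f(I\cup J)$.

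For (2), $I\cup J=S^1$, so surjectivity of $f$ gives $A\cup B=S^2$. Then $S^2-A\subset B$, and taking closures, $S^2-\inte A=\overline{S^2-A}\subset B$. Combined with $B\subset S^2-\inte A$ from above, this yields $B=S^2-\inte A$. Therefore $A\cap B=A-\inte A=\partial A$, and by symmetry $A\cap B=\partial B$.

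For (1), $K:=I\cup J$ is a closed interval, so $D:=f(K)=A\cup B$ is a closed disk. Moreover $A\cap B\ni f(I^+)$ (say $I^+=J^-$), so the intersection is nonempty. I will show in turn that $C:=A\cap B$ is a connected, proper, non-degenerate closed subset of the circle $\partial A$; such a subset is an arc.

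\emph{Connected.} Use the Mayer--Vietoris sequence in \v{C}ech cohomology for the compact pair $A,B$:
\[
H^0(A)\oplus H^0(B)\to H^0(C)\to H^1(A\cup B).
\]
Here $H^1(D)=0$ and $A,B$ are connected, so $\tilde H^0(C)=0$, i.e.\ $C$ is connected. (This is Janiszewski's theorem.)

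\emph{Proper.} Suppose $C=\partial A$. Then $\partial A\subset B\subset E:=S^2-\inte A$, which is a closed disk with $\partial E=\partial A$. If some point $q\in E$ lay outside $B$, then, since $S^2-B$ is connected, $q$ could be joined within $S^2-B$ to a point of $\inte A$ (which is nonempty and disjoint from $B$). Such a path must cross $\partial A\subset B$, a contradiction. So $B=E$, whence $A\cup B=S^2$, contradicting that $D$ is a disk.

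\emph{Non-degenerate.} If $C$ were a single point $x$, then $x$ would be a cut point of $D=A\cup B$, because $(A-x)$ and $(B-x)$ are disjoint, nonempty, and relatively clopen in $D-x$. But a closed disk has no cut points.

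The main obstacle is the connectedness step. I will invoke Mayer--Vietoris/Janiszewski as a standard fact rather than reprove it. The other steps are elementary consequences of the Jordan--Schoenflies theorem, using the remark that every embedded disk in $S^2$ is standard.
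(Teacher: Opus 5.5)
Your proof is correct. For (1) it has the same outline as the paper's proof: disjoint interiors place $f(I)\cap f(J)$ in $\partial f(I)\cap\partial f(J)$, Mayer--Vietoris gives connectedness, and the cut-point argument rules out a single point. The difference is in how the last case is handled. The paper simply asserts that the two remaining possibilities (an arc or the whole circle) ``correspond'' to the adjacent and complementary cases; you actually prove this.

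\begin{itemize}
\item In (1), your path-crossing argument, which uses that $S^2-B$ is connected, excludes $C=\partial A$ when $A\cup B$ is a disk.
\item In (2), you do not need Mayer--Vietoris at all: $A\cup B=S^2$ together with $B\subset S^2-\inte A$ forces $B=\overline{S^2-A}=S^2-\inte A$.
\end{itemize}

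The paper's implicit justification is to push its Mayer--Vietoris sequence one degree further, giving $\check H^1(C)\cong\check H^2(A\cup B)$. This is $0$ for a disk and $\Z$ for $S^2$, and it silently decides between arc and circle. That route is uniform and short but rests entirely on \v{C}ech cohomology. Yours is more elementary: cohomology (or Janiszewski) is used only for connectedness in (1), and everything else uses Schoenflies-level facts. It also makes the case distinction explicit instead of leaving it to the reader.
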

\begin{proof}
We have already seen by Lemma~\ref{lemma:basic_properties} that $f(I)$ and $f(J)$ have disjoint
interiors so $f(I) \cap f(J) = \partial f(I) \cap \partial f(J)$. 
On the other hand, $f(I) \cup f(J) = f(I\cup J)$ is a closed disk if $I$ and $J$ are adjacent,
or all of $S^2$ if $J=I^c$. Since $f(I) \cup f(J)$ is connected and simply-connected, 
the intersection $f(I) \cap f(J)$ is (by e.g.\/ Mayer-Vietoris) nonempty, closed, and connected
and therefore consists of a point, a closed interval, or a circle. The first case cannot occur
or $f(I\cup J)$ would have a cut point. The other two cases correspond to the cases that
$I,J$ are adjacent and complementary respectively.
\begin{figure}[htpb]
\centering
\includegraphics[scale=0.5]{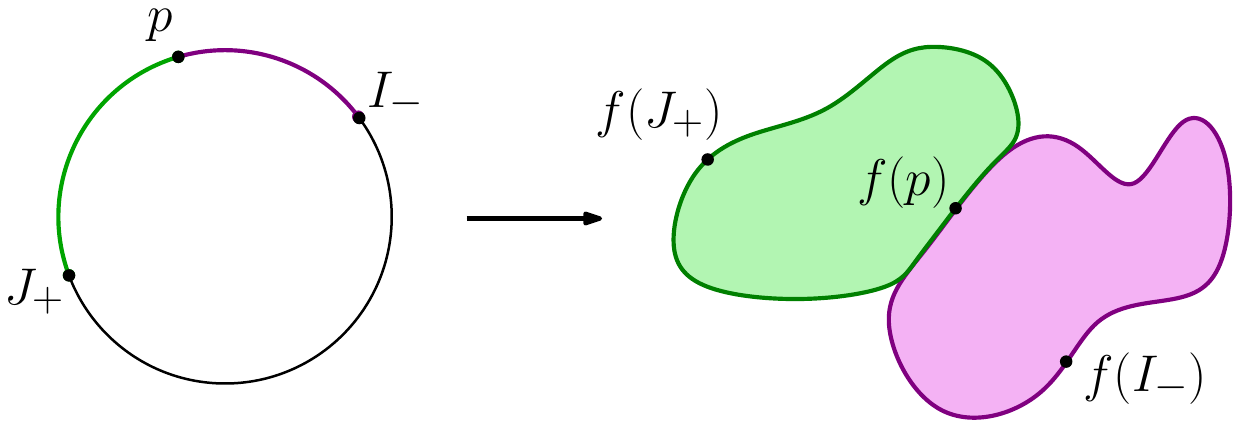}
\caption{If $I$ and $J$ are adjacent, $f(I)$ and $f(J)$ share a closed interval in their
respective boundaries.}
\label{adjacent_intervals}
\end{figure}
\end{proof}

Let $\tilde{\M}$ denote the set of closed intervals in $S^1$. If $\Delta \subset S^1\times S^1$
denotes the diagonal, we may identify $\tilde{\M}$ with the
space $S^1 \times S^1 - \Delta$ of distinct ordered pairs of points in $S^1$ as follows: given
an ordered pair of points $p,q \in S^1$ with $p \ne q$ there is a unique (oriented) 
closed interval $I \subset S^1$ with $I^- = p$ and $I^+ = q$ (sometimes in the sequel we will
denote this oriented interval $I$ by $[p,q]$). Thus $\tilde{\M}$ may be naturally topologized
as an open annulus. The involution that exchanges the factors of $S^1$ acts freely on
$\tilde{\M}$, and the quotient $\M$ is homeomorphic to an open M\"obius band. Observe
that $\M$ may be identified with the space of distinct {\em unordered} pairs of points in $S^1$. 
One may also think of this involution as the operation that takes a closed interval $I$ to $I^c$.

\begin{definition}[Degenerate intervals]\label{definition:degenerate_intervals}
Let $f:S^1 \to S^2$ be a CaTherine wheel. A closed interval $I\subset S^1$ is {\em degenerate}
for $f$ if $f(\partial I)$ consists of a single point, and is {\em nondegenerate} for $f$
otherwise (we also just use the terms {\em degenerate} and {\em nondegenerate} if $f$ is understood). 

Denote the space of intervals degenerate for $f$ by $\tilde{\LL}(f) \subset \tilde{\M}$ or just 
$\tilde{\LL}$ if $f$ is understood, and let $\LL(f)$ denote the image of $\tilde{\LL}(f)$
in $\M$.
\end{definition}

An interval $I$ is degenerate for $f$ if any only if $I^c$ is; thus $\tilde{\LL}$ is the
preimage of $\LL \subset \M$ under the double covering $\tilde{\M} \to \M$. 
Note that $\LL$ is a closed subset of $\M$ (and likewise
$\tilde{\LL}$ is closed in $\tilde{\M}$). We shall see in the sequel that $\LL$ is 
totally disconnected and perfect.

\begin{lemma}[Boundary continuous]\label{lemma:boundary_continuous}
The map that sends $I$ to $\partial f(I)$ is a continuous map from $\tilde{\M}$ to the
space of closed subsets of $S^2$ with the Hausdorff topology.
\end{lemma}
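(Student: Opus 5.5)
We will prove continuity of $I \mapsto \partial f(I)$ by first establishing continuity of $I\mapsto f(I)$ in the Hausdorff topology, and then transferring this to boundaries via the complementary interval. By Lemma~\ref{lemma:adjacent_intervals}(2), $\partial f(I) = f(I)\cap f(I^c)$, and the map $I \mapsto I^c$ is a homeomorphism of $\tilde{\M}$. So the whole statement reduces to understanding $f(I)$ and $f(I^c)$ as $I$ varies.

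\textbf{Step 1: continuity of $I\mapsto f(I)$.} Since $S^1$ is compact, $f$ is uniformly continuous. If $I_n \to I$ in $\tilde{\M}$, then the endpoints converge, so $I_n \to I$ in the Hausdorff metric on closed subsets of $S^1$. Uniform continuity then gives $f(I_n)\to f(I)$ in the Hausdorff metric on $S^2$; the same holds for $f(I_n^c)\to f(I^c)$.

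\textbf{Step 2: the upper bound.} Any limit point of points of $f(I_n)\cap f(I_n^c)$ lies in $f(I)\cap f(I^c)=\partial f(I)$. Passing to subsequences, this shows that every Hausdorff subsequential limit of $\partial f(I_n)$ is contained in $\partial f(I)$.

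\textbf{Step 3: the lower bound, the main obstacle.} Intersection is not Hausdorff-continuous in general, so we must show that every point $x\in\partial f(I)$ is a limit of points $x_n\in\partial f(I_n)$. Suppose not. Then there are $\epsilon>0$ and a subsequence with $\partial f(I_n)\cap B_\epsilon(x)=\emptyset$.

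\begin{itemize}
\item The ball $B_\epsilon(x)$ is connected and disjoint from the Jordan curve $\partial f(I_n)$. It therefore lies entirely in the interior of $f(I_n)$ or entirely in the interior of $f(I_n^c)$.
\item Passing to a further subsequence, we may assume it always lies in $f(I_n)$, say. Then by Step 1, $B_\epsilon(x)\subset f(I)$.
\item This makes $x$ an interior point of $f(I)$, contradicting $x\in\partial f(I)$.
\end{itemize}

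The only nontrivial input is the Jordan curve dichotomy, namely that $S^2-\partial f(I_n)$ is the disjoint union of $\inte f(I_n)$ and $\inte f(I_n^c)$. This follows from Lemma~\ref{lemma:adjacent_intervals}(2) together with Lemma~\ref{lemma:basic_properties}(3).

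\textbf{Conclusion.} Steps 2 and 3 together say that the upper and lower Kuratowski limits of $\partial f(I_n)$ both equal $\partial f(I)$. Since $S^2$ is compact, Kuratowski convergence coincides with Hausdorff convergence, which gives the claimed continuity.
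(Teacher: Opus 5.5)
Your proof is correct, but it obtains the lower bound by a different mechanism from the paper's.

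\textbf{The paper's route.} The paper's argument is, in effect, a localization. When $J$ is close to $I$, the sets $f(J\setminus I)$ and $f(I\setminus J)$ lie in a small neighbourhood $U$ of $f(\partial I)$. So $f(J)$ and $f(I)$ coincide outside $U$, and their frontiers coincide outside $\overline{U}$. Hence a compact $K\subset\partial f(I)-f(\partial I)$ that is $\delta$-dense in $\partial f(I)$ lies literally inside $\partial f(J)$, while $\partial f(J)\subset\partial f(I)\cup U$. Both Hausdorff estimates follow at once.

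\textbf{Your route.} You first prove Hausdorff continuity of $I\mapsto f(I)$ and $I\mapsto f(I^c)$. The upper bound then comes from closedness of $f(I)\cap f(I^c)$. The lower bound comes from a trapping argument: a ball missing infinitely many $\partial f(I_n)$ lies in $\inte f(I_n)$ or $\inte f(I_n^c)$, hence in the limit disk, which would make $x$ an interior point.

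\textbf{What each buys.} Your argument is softer and purely point-set. It never singles out $f(\partial I)$ or tracks preimages, and it uses only that $f(I)$ and $f(I^c)$ are closed disks covering $S^2$ with common boundary, varying Hausdorff-continuously. The paper's argument proves a stronger statement: $\partial f(J)$ agrees exactly with $\partial f(I)$ away from a small neighbourhood of $f(\partial I)$. That exact containment is what is reused in the proof of Lemma~\ref{lemma:orientation_continuous}, where a compact $K\subset Z^+(I)$ must remain in $\partial f(J)-f(\partial J)$ for all nearby $J$. Hausdorff convergence alone would not supply it.
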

\begin{proof}
Fix an interval $I$ and a compact subset $K \subset \partial f(I) - f(\partial I)$, and let
the Hausdorff distance from $K$ to $\partial f(I)$ be $\delta$.
Choose $\delta > \epsilon > 0$ so that that the $\epsilon$-neighborhood $U$ of $f(\partial I)$ is
disjoint from $K$. By continuity of $f$ there is an open neighborhood $V$ of $I \in \tilde{\M}$
so that for any $J \in V$ we have $f(\partial J) \in U$ and therefore also 
$K \subset \partial f(J)$ (because $K$ is in the frontier of $f(J)$, and the frontier equals
the boundary). Observe that $\partial f(J) \subset \partial f(I) \cup U$ and contains $K$;
thus the Hausdorff distance from $\partial f(I)$ to $\partial f(J)$ is at most $\delta$,
and the lemma is proved.
\end{proof}

\subsection{Monotonicity}

For any closed interval $I\subset S^1$ the subset $\partial f(I) - f(\partial I)$ consists
of either one or two open intervals, depending on whether $I$ is degenerate for $f$ or not.
In either case, let $Y$ be a component of $\partial f(I) - f(\partial I)$ and let
$C: = f^{-1}(Y) \subset I$. Since $f$ is a CaTherine wheel and the 
image of $C$ has no interior, $C$ is totally
disconnected. The orientation on $I$ determines a total order on $C$; likewise, any choice
of orientation on the interval $Y$ determines a total order on $Y$ (as a set). The next
proposition relates these orders:

\begin{proposition}[Monotonicity]\label{proposition:monotonicity}
For any closed interval $I\subset S^1$ and any component $Y$ of $\partial f(I) - f(\partial I)$ 
with preimage $C:=f^{-1}(Y) \subset I$ there is a canonical orientation on $Y$ for
which the map $f:C \to Y$ is monotone increasing with respect to the associated orders.

If $I$ is nondegenerate for $f$, the canonical orientation on $Y$ is the one for which the
associated orientation on the closure $\overline{Y}$ runs from $f(I^-)$ to $f(I^+)$.
\end{proposition}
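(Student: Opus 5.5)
Fix $I$ and a component $Y$ of $\partial f(I) - f(\partial I)$, with preimage $C = f^{-1}(Y)\subset I$. The plan is to compare $f(I)$ with the images of the subintervals of $I$ cut off at points of $C$, and to use Lemma~\ref{lemma:adjacent_intervals} to locate the images of those points along $Y$.

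\textbf{Step 1: splitting at a point of $C$.} Take $p\in C$, which lies in the interior of $I$. Split $I$ into the adjacent intervals $I_1=[I^-,p]$ and $I_2=[p,I^+]$. By Lemma~\ref{lemma:adjacent_intervals}(1), $f(I_1)\cap f(I_2)=A_p$ is a closed arc contained in $\partial f(I_1)\cap\partial f(I_2)$. Since $f(I_1)\cup f(I_2)=f(I)$ is a disk and $f(I_1),f(I_2)$ are disks with disjoint interiors, $A_p$ is a crosscut of $f(I)$ with endpoints on $\partial f(I)$. Moreover $p\in\partial I_1\cap\partial I_2$, so $f(p)\in A_p$. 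Because $f(p)\in Y\subset\partial f(I)$ and an arc glued along a crosscut meets $\partial f(I)$ only at its endpoints, $f(p)$ is an endpoint of $A_p$. The point $f(p)$ therefore divides $\partial f(I)$ into two arcs. One lies in $\partial f(I_1)$ and contains $f(I^-)$; the other lies in $\partial f(I_2)$ and contains $f(I^+)$. On $Y$ this gives a splitting $Y=Y_1(p)\sqcup\{f(p)\}\sqcup Y_2(p)$, where $Y_i(p)\subset f(I_i)$. The side of $Y$ that $f(I_1)$ occupies is the side toward $\partial A_p$'s other endpoint, going around through $f(I^-)$.

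\textbf{Step 2: order preservation.} Now take $p<q$ in $C$. Then $q\in\operatorname{int} I_2(p)$. By Lemma~\ref{lemma:basic_properties}(3) applied to $[I^-,p]$ and $[p,q]$, we get $f(q)\notin\operatorname{int}f(I_1(p))$; also $f(q)\in Y$, so $f(q)\in Y_2(p)$ or $f(q)=f(p)$. Hence in the nondegenerate case every point of $f(C\cap(p,I^+])$ lies on the $f(I^+)$-side of $f(p)$ along $\overline{Y}$, which gives monotonicity with $\overline{Y}$ oriented from $f(I^-)$ to $f(I^+)$.

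The delicate point, and the main obstacle, is showing that $Y_1(p)$ lies on the $f(I^-)$-side of $f(p)$. We need to know which arc of $\partial f(I)\setminus\{f(p)\}$ belongs to $f(I_1)$. The arc $\partial f(I)\cap\partial f(I_1)$ is connected: it equals $\partial f(I)$ minus the open arc of $\partial f(I_2)\setminus A_p$. It contains both $f(I^-)$ and $f(p)$, and it is a single arc between the endpoints of $A_p$. When $f(I^-)\ne f(I^+)$, both endpoints lie in $\overline{Y}$ or on the boundary, and I would argue that $Y\cap f(I_1)$ is exactly the sub-arc of $Y$ from the $f(I^-)$ end up to $f(p)$. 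The aim is to get this from connectivity alone, with no further use of the axioms.

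\textbf{Step 3: the degenerate case.} Here $\partial f(I)\setminus f(\partial I)$ is a single open arc $Y$ with both ends at the same point $f(I^-)=f(I^+)$, so the canonical orientation must be defined intrinsically. I would let $f(I_1(p))$ define it: orient $Y$ so that $Y\cap f(I_1(p))$ is an initial segment. Step~2's argument shows that this initial segment grows with $p$. That implies the orientation does not depend on the choice of $p$, and that $f|_C$ is monotone for it.
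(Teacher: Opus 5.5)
\textbf{The gap is in Step~2, at the point where monotonicity has its content.} Every point of $Y$ lies on $\partial f(I)$, and $\operatorname{int} f(I_1(p))\subseteq \operatorname{int} f(I)$. So the statement $f(q)\notin\operatorname{int} f(I_1(p))$ holds automatically for every point of $Y$ and carries no information; Lemma~\ref{lemma:basic_properties}(3) is doing nothing here. In particular it does not exclude $f(q)\in Y_1(p)$, because by your own Step~1 the set $Y_1(p)$ lies in $\partial f(I_1(p))$, not in its interior. Ruling out $f(q)\in Y_1(p)$ for $q>p$ is the entire nondegenerate case, so that case is not proved as written.

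Lemma~\ref{lemma:basic_properties}(3) only bites at interior points, and that is how the paper uses it. Assuming $x<y$ with $f(y)$ before $f(x)$ on $\overline{Y}$, the paper joins $f(I^-)$ to $f(x)$ by an arc through $\operatorname{int} f([I^-,x])$, and $f(y)$ to $f(I^+)$ by an arc through $\operatorname{int} f([y,I^+])$. The endpoints are linked on $\partial f(I)$, so the arcs meet at a point interior to both images, which is a contradiction. You also leave the ``delicate point'' of which side $Y_1(p)$ lies on as an intention rather than an argument.

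\textbf{Both holes can be filled from your own Step~1.} Let $e_p$ be the other endpoint of $A_p$, and set $B_i:=\partial f(I_i)\setminus \operatorname{relint} A_p$.
\begin{itemize}
\item Since $\operatorname{relint} A_p\subset\operatorname{int} f(I)$, and a point of $f(I_1)\setminus A_p$ has a neighborhood missing the closed set $f(I_2)$, you get $f(I_i)\cap\partial f(I)=B_i$. Each $B_i$ is an arc from $f(p)$ to $e_p$, with $B_1\cap B_2=\{f(p),e_p\}$, $f(I^-)\in B_1$ and $f(I^+)\in B_2$.
\item In the degenerate case, $f(I^-)=f(I^+)$ lies in both $B_i$, which forces $e_p=f(I^\pm)$.
\item In the nondegenerate case, $e_p\in Y$ is impossible: one of $B_1,B_2$ would then be the subarc of $Y$ between $f(p)$ and $e_p$, containing neither $f(I^-)$ nor $f(I^+)$. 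Also, $B_1$ cannot leave $f(p)$ in the direction of $f(I^+)$, since $f(I^+)\in B_1$ only if $f(I^+)=e_p$.
\end{itemize}
Hence $Y\cap f(I_1(p))$ is a half-open subarc of $Y$ running from an end of $Y$ up to and including $f(p)$. In the nondegenerate case that end is the one at $f(I^-)$.

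Now run your Step~3 argument in both cases. For $p<q$ in $C$ one has $f(I_1(p))\subseteq f(I_1(q))$, so the initial segment ending at $f(p)$ is contained in the one ending at $f(q)$, i.e.\ $f(p)\le f(q)$. Equivalently, $f(q)\in f(I_2(p))$ and $f(I_1(p))\cap f(I_2(p))\cap Y=A_p\cap Y=\{f(p)\}$, so $f(q)\notin Y_1(p)$.

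With this repair your route is correct: it uses Lemma~\ref{lemma:adjacent_intervals} and nesting in place of the paper's crossing argument. It also handles the degenerate case more transparently than the paper's ``the other case is similar''.
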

\begin{proof}
We first prove this proposition for nondegenerate intervals. 
Thus, let $I$ be nondegenerate for $f$ so that $f(I^-)$ and $f(I^+)$ are
distinct points in $\partial f(I)$. Let $Y$ be one of the open complementary intervals 
of $\partial f(I) - f(\partial I)$, and orient the closed interval
$\overline Y$ so that the orientation runs from $f(I^-)$ to $f(I^+)$.

Suppose there are points $x$ and $y$ in $C$ so that $I^- < x < y < I^+$ in
the orientation on $I$, but $f(I^-) < f(y) < f(x) < f(I^+)$ in the orientation on $\overline{Y}$.
The image of the interval $[I^-,x]$ is a closed disk contained entirely in the disk $f(I)$, and
therefore the interior of $f([I^-,x])$ contains an arc $\alpha$ properly embedded in the 
interior of $f(I)$ and running from $f(I^-)$ to $f(x)$. Likewise, the interior of $f([y,I^+])$ 
contains an arc $\beta$ properly embedded in the interior of $f(I)$ and running from
$f(y)$ to $f(I^+)$. The pairs $\lbrace f(I^-),f(x)\rbrace$ and $\lbrace f(y),f(I^+)\rbrace$
are linked in $\partial f(I)$ by hypothesis, so $\alpha$ and $\beta$ must intersect. But
this means that the disjoint intervals $[I^-,x]$ and $[y,I^+]$ in $S^1$, have images
whose interiors intersect, contrary to Lemma~\ref{lemma:basic_properties}, bullet (3). This
contradiction proves the proposition for nondegenerate intervals.

\begin{figure}[htpb]
\centering
\includegraphics[scale=0.5]{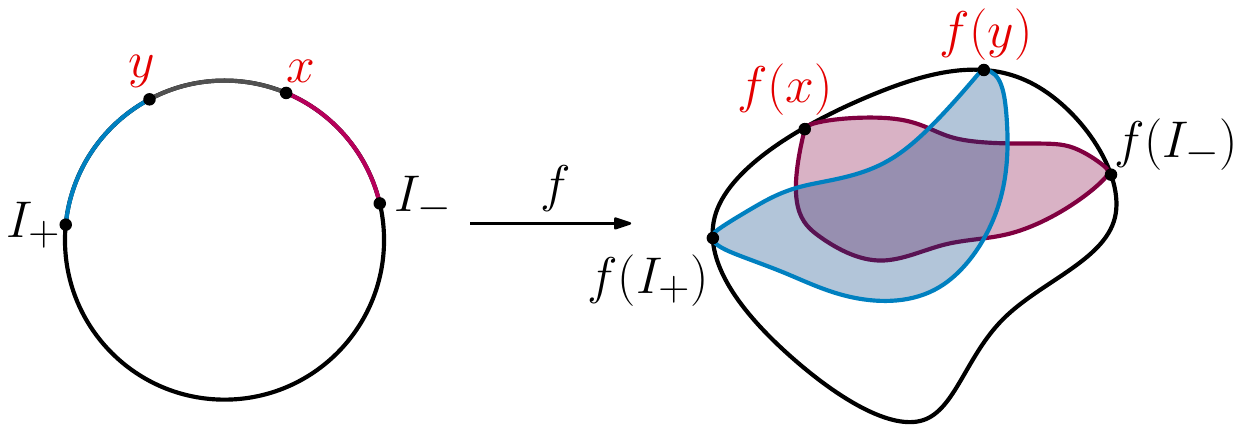}
\caption{A failure of monotonicity forces the interiors of $f([I^-,x])$ and
$f([y,I^+])$ to intersect.}
\label{monotonicity}
\end{figure}

Now let's suppose $I$ is degenerate for $f$. If we choose two points $x,z$ in $C$ with
disjoint image, then we may choose an orientation on $Y$ for which $f(x) < f(z)$ in $Y$.
Now suppose there is $y \in C$ with $x < y < z$ but $f(x) < f(z) < f(y)$ in $Y$ (the case
$f(y) < f(x) < f(z)$ is similar). Then we have $x < y < z < I^+$ in $I$ but 
$\lbrace f(x), f(y)\rbrace$ links $\lbrace f(z), f(I^+)\rbrace$ in $\partial f(I)$.
Repeating the argument of the previous paragraph with $x,y,z,I^+$ in place of $I^-,x,y,I^+$
gives a contradiction. This proves the general case and concludes the argument.
\end{proof}

Proposition~\ref{proposition:monotonicity} determines an orientation
on each component $Y$ of each $\partial f(I) - f(\partial I)$ that we call 
the {\em canonical orientation} associated to $I$. Define $Z^+(I)$ to be the
unique component of $\partial f(I) - f(\partial I)$ (if one exists) for which the
canonical orientation agrees with the orientation it inherits from $\partial f(I)$
as the boundary of a submanifold of the oriented manifold $S^2$, and to be
empty otherwise, and define $Z^-(I)$ to be the component for which the canonical
orientation {\em disagrees} with the orientation on $\partial f(I)$.
We say that a degenerate interval $I$ is {\em positively degenerate} for $f$ if $Z^+(I)$ is 
{\em empty} (note the choice of convention!) and {\em negatively degenerate} otherwise.

\begin{figure}[htpb]
\centering
\includegraphics[scale=0.5]{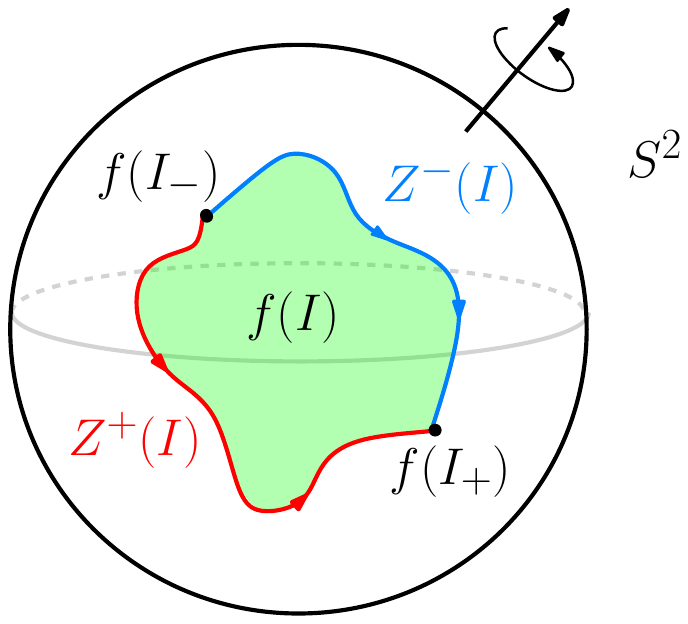}
\caption{The oriented intervals $Z^\pm(I)$ in $\partial f(I) - f(\partial I)$.}
\label{definition_of_Z}
\end{figure}

\begin{remark}
If we want to emphasize the dependence on $f$ we denote $Z^\pm(I)$ by $Z^\pm_f(I)$.
\end{remark}

\begin{lemma}[Canonical orientation continuous]\label{lemma:orientation_continuous}
The family of closed sets $Z^+(I) \cup f(\partial I)$ varies continuously as a 
function of $I \in \tilde{\M}$, and similarly for $Z^-(I) \cup f(\partial I)$.
\end{lemma}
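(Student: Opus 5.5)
The plan is to reduce to Lemma~\ref{lemma:boundary_continuous} and then use the orientation data to track which arc of $\partial f(I) - f(\partial I)$ is which. Fix $I\in\tilde{\M}$. The closed set $Z^+(I)\cup f(\partial I)$ is either $f(\partial I)$ alone, or the closure of one arc $\overline{Z^+(I)}$ of the circle $\partial f(I)$. We must show two things for $J$ near $I$. First, $Z^+(J)\cup f(\partial J)$ lies in a small neighborhood of $Z^+(I)\cup f(\partial I)$. Second, every point of $Z^+(I)\cup f(\partial I)$ is close to some point of $Z^+(J)\cup f(\partial J)$.

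We argue as in the proof of Lemma~\ref{lemma:boundary_continuous}. Take a compact set $K\subset Z^+(I)$ and an $\epsilon$-neighborhood $U$ of $f(\partial I)$ disjoint from $K$. For $J$ close to $I$ we have $f(\partial J)\subset U$. We also have $K\subset\partial f(J)$. Indeed, $K$ lies in the frontier of $f(I)$, the boundary moves little, and $K$ avoids the image of the region where $I$ and $J$ differ, since that image lies near $f(\partial I)$. Moreover $\partial f(J)\subset \partial f(I)\cup U$. Hence $K$, which is connected if chosen as a compact subarc, lies in a single component of $\partial f(J)-f(\partial J)$. Call this component $Y_J$. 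Likewise a compact subarc $K'$ of the other component of $\partial f(I)-f(\partial I)$, when that component exists, lies in a different component $Y'_J$. These two components are different because they are separated in $\partial f(J)$ by points of $f(\partial J)$ near the two distinct points of $f(\partial I)$. Since $\partial f(J)-U$ is then covered by $Y_J\cup Y'_J$ up to small error, Hausdorff convergence follows once we know that $Y_J=Z^+(J)$ and $Y'_J=Z^-(J)$.

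The main step is to show that the canonical orientation on $Y_J$, restricted to $K$, agrees with the canonical orientation on $Z^+(I)$ restricted to $K$. The boundary orientation on $K$ is the same whether $K$ is viewed in $\partial f(I)$ or in $\partial f(J)$. This holds because $f(I)$ and $f(J)$ lie on the same side of $K$ locally: both contain $f(I\cap J)$, which has interior abutting $K$. So the comparison of canonical orientation with boundary orientation is unchanged.

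To compare canonical orientations, pick two points $x<y$ in $f^{-1}(K)\cap I\cap J$ with $f(x)\neq f(y)$. By Proposition~\ref{proposition:monotonicity}, the canonical orientation on $Y$ is characterised as the one making $f$ increasing on $f^{-1}(Y)$. Since the order of $x$ and $y$ in $I$ and in $J$ agrees for intervals with nearby endpoints, both canonical orientations put $f(x)<f(y)$, so they agree.

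Two gaps remain. First, we must show that $f^{-1}(K)\subset I\cap J$. A point of $J$ outside $I$ lies near $\partial I$, so its image lies in $U$, which is disjoint from $K$. Second, $K$ contains at least two distinct image points, because $K$ is a nondegenerate arc and $f$ maps $f^{-1}(K)$ onto it.

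It remains to treat the case where $I$ is degenerate. If $Z^+(I)$ is empty, every nondegenerate $K$ lies in $Z^-(I)$ and the argument above puts it in $Z^-(J)$. Then $Z^+(J)$ must lie within $U$, which gives convergence to $f(\partial I)$. I expect this degenerate case, where nearby $J$ may be nondegenerate with a tiny $Z^+(J)$, to be the delicate point. The argument handles it because any arc of $\partial f(J)$ outside $U$ is forced into $Z^-(J)$. The statement for $Z^-$ follows by symmetry.
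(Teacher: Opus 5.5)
Your proposal is correct and takes essentially the paper's route: a compact $K\subset Z^+(I)$ stays inside $\partial f(J)-f(\partial J)$ for $J$ near $I$, and its preimages in $I\cap J$ induce the same monotone parametrization, hence the same canonical orientation; you also spell out the boundary-orientation comparison, the two-sided Hausdorff estimate and the degenerate case, all of which the paper leaves implicit. The only slip is notational: $f^{-1}(K)$ is never contained in $I\cap J$, because points of $K\subset\partial f(I)=\partial f(I^c)$ also have preimages in $I^c$; what your argument actually proves, and all you need, is $f^{-1}(K)\cap J\subset I$ (or symmetrically $f^{-1}(K)\cap I\subset J$).
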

\begin{proof}
By Lemma~\ref{lemma:boundary_continuous} the sets $\partial f(I)$ and $f(\partial I)$ vary 
continuously with $I$, so all that needs to be shown is that the canonical orientations 
vary continuously. 
If $K$ is a compact subset of $Z^+(I)$ then $K$ is bounded away from $f(\partial I)$ and
therefore there is an open neighborhood of $I$ in $\tilde{\M}$ so that for any $J\in \I$ we have
$K \subset \partial f(J) - f(\partial J)$. But then the preimage of $K$ in $I\cap J$ monotonely
parameterizes $K$ in the same way both in $I$ and in $J$.
\end{proof}

\subsection{Image Relations}

A CaTherine wheel $f$ induces an equivalence relation on $S^1$ where $p \sim q$ if and only if 
$f(p) = f(q)$; we refer to this as the {\em image relation}. Any symmetric reflexive
relation on a set is a subset of the set of unordered pairs on that set containing the
diagonal, and is therefore determined by its restriction to the set of unordered distinct pairs.
We may therefore encode the image relation by a (closed) subset of the space $\M$ of 
unordered pairs of distinct points in $S^1$; evidently
this subset is precisely the set $\LL(f)$ introduced in \S~\ref{definition:degenerate_intervals}.

A subset $K$ of $\M$ corresponds in this way to an equivalence relation on $S^1$ if and only
if it is transitive --- i.e.\/ if $\lbrace p,q\rbrace$ and $\lbrace q,r\rbrace$ are in $K$
where $p,q,r$ are distinct, then $\lbrace p,r\rbrace$ is in $K$.

We shall see that $\LL(f)$ has a canonical decomposition into a
disjoint union $\LL(f) = \LL^+(f) \sqcup \LL^-(f)$ 
where each of $\LL^+(f)$ and $\LL^-(f)$ separately is closed, and comes from an
equivalence relation on $S^1$ in the sense above. 
It will turn out that an unordered pair 
$\lbrace p,q\rbrace$ is in $\LL^+(f)$ resp. $\LL^-(f)$ if and only if $[p,q]$ is 
positively resp. negatively degenerate for $f$. Note that because $\LL(f)$ is a set of 
{\em unordered} pairs it will follow that $[p,q]$ is positively resp. negatively
degenerate for $f$ if and only if $[q,p]=[p,q]^c$ is positively resp. negatively
degenerate for $f$; this is a special case of 
Lemma~\ref{lemma:complementary_orientations} to be proved in the sequel.

The crux of the matter is the fact that positively (resp. negatively) degenerate pairs mapping 
to distinct points in $S^2$ are unlinked in $S^1$:

\begin{proposition}[Classes unlinked]\label{proposition:classes_unlinked}
Suppose $[p,q]$ and $[r,s]$ are both positively degenerate, or both
negatively degenerate for $f$, and $f(p) = f(q) \ne f(r) = f(s)$. Then
the pairs $\lbrace p,q\rbrace$ and $\lbrace r,s\rbrace$ are unlinked in $S^1$.
\end{proposition}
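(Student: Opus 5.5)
The plan is to argue by contradiction. Suppose $\lbrace p,q\rbrace$ and $\lbrace r,s\rbrace$ are linked, so that after relabelling the points occur in the cyclic order $p<r<q<s$ on $S^1$. Set $I=[p,q]$, $J=[r,s]$, $x=f(p)=f(q)$ and $y=f(r)=f(s)$, with $x\ne y$. The circle is cut into four consecutive closed intervals $A=[p,r]$, $B=[r,q]$, $C=[q,s]$, $D=[s,p]$, with $I=A\cup B$ and $J=B\cup C$. I will show that $I$ and $J$ have \emph{opposite} degeneracy signs, contradicting the hypothesis.

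\textbf{Step 1: where $x$ and $y$ lie.} Since $r\in I$ and $s\in I^c$, the point $y$ lies in $f(I)\cap f(I^c)=\partial f(I)$ by Lemma~\ref{lemma:adjacent_intervals}(2). Because $y\ne x$, it follows that $y$ lies in the unique component $Y_I=\partial f(I)-\lbrace x\rbrace$. Symmetrically, $x$ lies in $Y_J=\partial f(J)-\lbrace y\rbrace$. By Proposition~\ref{proposition:monotonicity} applied to $I$, the points of $f^{-1}(Y_I)\cap B$ map monotonically onto the sub-arc $\beta_I\subset\partial f(I)$ running from $y$ to $x$ in the canonical orientation of $Y_I$. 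Similarly, the points of $f^{-1}(Y_J)\cap B$ map onto the sub-arc $\beta_J\subset\partial f(J)$ running from $y$ to $x$ in the canonical orientation of $Y_J$. The interval $B$ itself is nondegenerate, since $f(r)=y\ne x=f(q)$. Its image $f(B)$ is therefore a disk whose boundary consists of two arcs from $y$ to $x$, each canonically oriented from $y$ to $x$; these arcs are exactly $Z^+(B)$ and $Z^-(B)$ (closed up).

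\textbf{Step 2: the two arcs are the two different sides of $f(B)$.} I claim $\beta_I$ and $\beta_J$ are contained in $\partial f(B)$ and are the two distinct boundary arcs of $f(B)$.
\begin{itemize}
\item Since $f(B)\subset f(I)$, the arc $\beta_I$ lies in $\partial f(B)$.
\item The arc $\beta_I$ lies in $\partial f(I)$, so it avoids the interior of $f(A)\cup f(B)$. It must therefore be disjoint from the interior of the arc $f(A)\cap f(B)$ of Lemma~\ref{lemma:adjacent_intervals}(1), which lies in the interior of $f(I)$ apart from its endpoints.
\item By the same reasoning, $\beta_J$ avoids the interior of the arc $f(B)\cap f(C)$.
\item The arcs $f(A)\cap f(B)$ and $f(C)\cap f(B)$ lie on the two opposite boundary arcs of $f(B)$. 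This is because $f(A)$ and $f(C)$ have disjoint interiors (Lemma~\ref{lemma:basic_properties}(3)), and their union with $f(B)$ and $f(D)$ is all of $S^2$.
\end{itemize}
Hence $\beta_I$ is the boundary arc of $f(B)$ facing $f(C)$, and $\beta_J$ is the one facing $f(A)$.

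\textbf{Step 3: conclude with orientations.} The canonical orientations of $\beta_I$ and $\beta_J$ both run from $y$ to $x$. The boundary orientation of $\partial f(I)$ agrees with the boundary orientation of $\partial f(B)$ along $\beta_I$, because $f(B)\subset f(I)$ locally near $\beta_I$ from the same side. Likewise, the boundary orientation of $\partial f(J)$ agrees with that of $\partial f(B)$ along $\beta_J$. The two boundary arcs of a disk, both oriented from $y$ to $x$, have opposite agreement with the boundary orientation. So exactly one of $\beta_I,\beta_J$ has its canonical orientation agreeing with the boundary orientation. That is, exactly one of $Z^+(I)$, $Z^+(J)$ is nonempty, and $I$, $J$ have opposite signs, which is a contradiction.

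The main obstacle is Step 2: making rigorous that $\beta_I$ and $\beta_J$ lie on genuinely opposite sides of $f(B)$, and that the local orientation comparison is valid. I would first try a Jordan-curve argument in $S^2=f(A)\cup f(B)\cup f(C)\cup f(D)$, using Lemma~\ref{lemma:adjacent_intervals} repeatedly to identify the shared boundary arcs. A secondary point is that $\beta_I$ might a priori be degenerate or touch $x$ in complicated ways. This should be handled by working with a nonempty compact sub-arc and invoking Lemma~\ref{lemma:orientation_continuous}.
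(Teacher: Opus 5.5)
Your proof is correct, but it is organized differently from the paper's.

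\textbf{The paper's route.} The paper passes to the union $[p,s]=I\cup J$, which is nondegenerate, and assumes both intervals are positive. It cuts $f([p,s])$ along $\alpha=f([p,q])\cap f([q,s])$; this arc is in fact your $\bar\beta_I=f(B)\cap f(C)$. Emptiness of $Z^+([p,q])$ forces $Z^+([p,s])$ onto the outer boundary of $f([q,s])$, hence into $\partial f([r,s])-\lbrace y\rbrace=Z^-([r,s])$. The contradiction is that the canonical orientations from $[p,s]$ and from $[r,s]$ disagree on this arc. This tacitly uses that the arc's preimages in the two intervals are the same subset of $(q,s)$.

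\textbf{Your route.} You pass instead to the overlap $B=I\cap J$ and read both signs off the single nondegenerate disk $f(B)$. The final contradiction is then the purely topological fact that the two boundary arcs of $f(B)$, both oriented $y\to x$, induce opposite boundary orientations. What this buys:
\begin{itemize}
\item it is symmetric in the two leaves;
\item it needs no reduction to the positive case;
\item it never compares canonical orientations of two different intervals;
\item it proves directly the sharper fact that linked degenerate leaves with distinct images have opposite signs.
\end{itemize}
The cost is your Step 1, where you split $Y_I$ and $Y_J$ by monotonicity at the interior points $r$ and $q$.

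\textbf{Closing Step 2.} Step 2 closes more easily than you fear.
\begin{itemize}
\item The arc $f(A)\cap f(B)$ contains $f(p)=f(q)=x$ and $f(r)=y$. Its interior lies in $\inte f(I)$, while $x,y\in\partial f(I)$. So it is a full arc of $\partial f(B)$ with endpoints $x$ and $y$.
\item Likewise $f(B)\cap f(C)$ contains $f(q)=x$ and $f(s)=y$, with interior in $\inte f(J)$, so it is also a full such arc.
\item If the two arcs coincided, then near an interior point both $f(A)$ and $f(C)$ would fill the side opposite $f(B)$. This contradicts Lemma~\ref{lemma:basic_properties}(3). Your remark about $f(D)$ is not needed.
\end{itemize}
Hence $\bar\beta_I=f(B)\cap f(C)$ and $\bar\beta_J=f(A)\cap f(B)$, and your ``secondary point'' never arises.

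\textbf{Relabelling.} When you relabel to get $p<r<q<s$, interchange only the two pairs (this is needed when $s\in(p,q)$). Never swap $p$ with $q$: sign-invariance under $I\mapsto I^c$ is Lemma~\ref{lemma:complementary_orientations}, which is proved using this proposition.
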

\begin{proof}
It suffices to prove the proposition under the assumption that  $[p,q]$ and $[r,s]$ are 
both positively degenerate for $f$.

Suppose not, so that without loss of generality $[p,s]$ is a closed oriented interval
in which $p < r < q < s$. By hypothesis $[p,s]$ is nondegenerate for $f$. The
interval $[p,s]$ is the union of two adjacent intervals $[p,q]$ and $[q,s]$, the
first positively degenerate for $f$ and the second nondegenerate. Let $x = f(p)=f(q)$
and $y = f(r)=f(s)$; these are both in $\partial f([p,s])$ and the complement
$\partial f([p,s]) - (x\cup y) = Z^+([p,s]) \sqcup Z^-([p,s])$, each of which is nonempty
and is canonically oriented to run from $x$ to $y$. By Lemma~\ref{lemma:adjacent_intervals}
the intersection $\alpha:=f([p,q])\cap f([q,s])$ is a proper interval in the interior of
$f([p,q])$ which contains $x$ and $y$ but can't contain them in the interior, and therefore
$\partial \alpha = x\cup y$. 

\begin{figure}[htpb]
\centering
\includegraphics[scale=0.5]{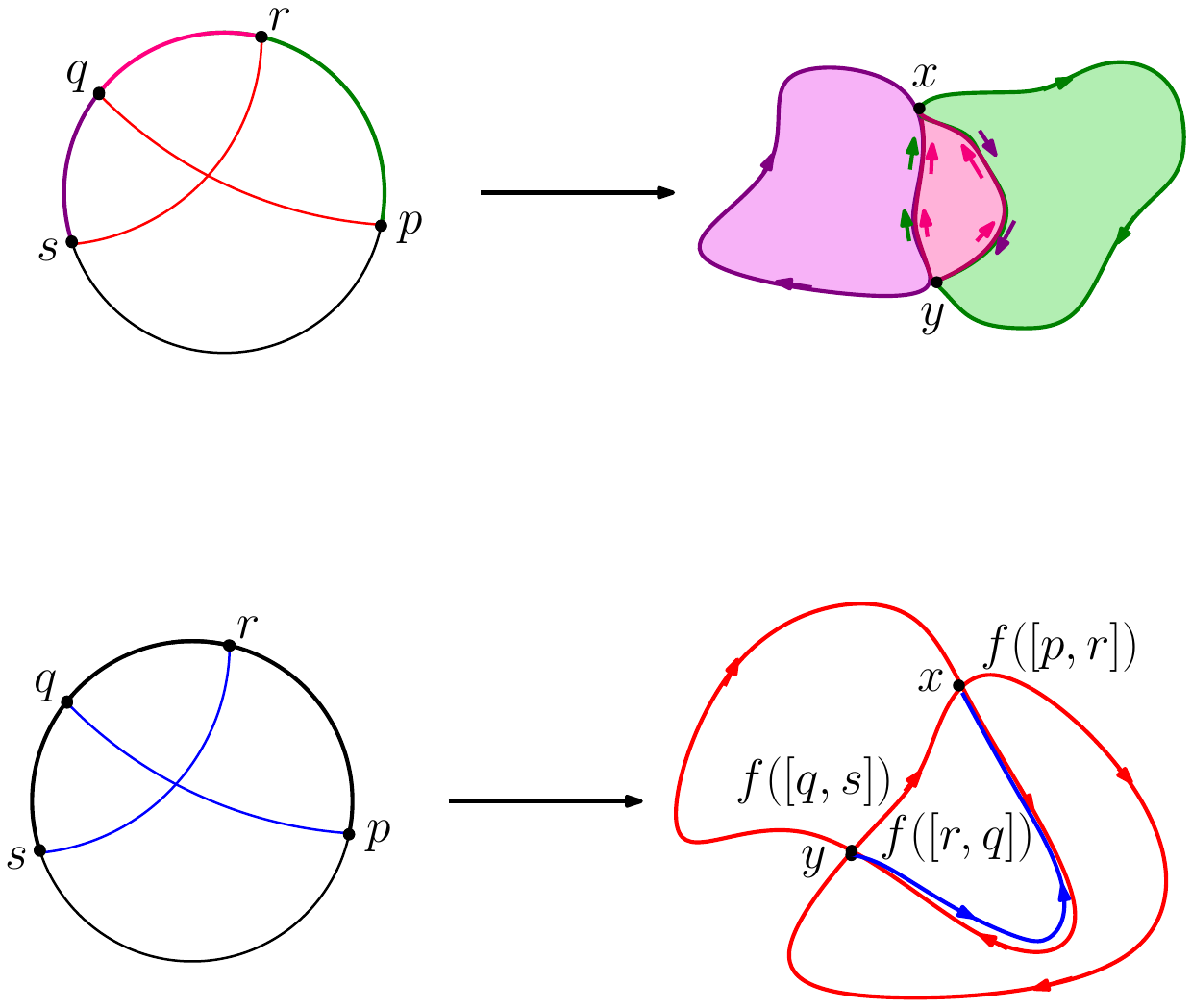}
\caption{The arc $\alpha$ is the common boundary of $f([p,q])\cap f([q,s])$ and
is proper in $f([p,s])$.}
\label{non_crossing}
\end{figure}

By construction each of the (nonempty!) intervals $Z^+([p,s])$ and $Z^-([p,s])$ is entirely contained
in $\partial f([p,q])$ or $\partial f([q,s])$, and since the parameterization of
$\partial f([p,q])$ by its preimage in $[p,q]$ is monotone non-increasing, it follows that
$Z^-([p,s]) \subset \partial f([p,q])$ and $Z^+([p,s]) \subset \partial f([q,s])$.

But $f([r,q])$ is entirely contained in $f([p,q])$ and therefore $Z^+([p,s])$ is contained
in $\partial f([r,s])$. Since $[r,s]$ by hypothesis is positively degenerate, $Z^+([r,s])$
is empty; it follows that the parameterization of $Z^+([p,s]) \subset Z^-([r,s])$ by
its preimage in $[p,s]$ is monotone positive, but by its preimage in $[r,s]$ is monotone negative
which is a contradiction. This proves the lemma.
\end{proof}

\begin{lemma}[Complementary orientations]\label{lemma:complementary_orientations}
Let $I \subset S^1$ be a closed interval. Then $Z^+(I) = Z^+(I^c)$ and $Z^-(I) = Z^-(I^c)$;
equivalently these intervals are equal as subsets, and their canonical orientations 
{\em disagree}.
\end{lemma}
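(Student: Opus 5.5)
The approach is to compare $I$ and $I^c$ using Lemma~\ref{lemma:adjacent_intervals}(2). It gives $\partial f(I) = \partial f(I^c) = f(I)\cap f(I^c)$. Moreover $\partial I = \partial I^c$, so $f(\partial I) = f(\partial I^c)$. Hence the components of $\partial f(I) - f(\partial I)$ and of $\partial f(I^c) - f(\partial I^c)$ are literally the same open arcs. What remains is to track two orientations on each such arc $Y$.

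\emph{Boundary orientations.} By Lemma~\ref{lemma:basic_properties}(3) the disks $f(I)$ and $f(I^c)$ have disjoint interiors. Their union is all of $S^2$, so they are the two complementary disks of the Jordan curve $\partial f(I)$. Their boundary orientations, induced from the fixed orientation of $S^2$, are therefore opposite.

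\emph{Canonical orientations.} I claim that $Y$ receives the \emph{same} canonical orientation from $I$ and from $I^c$.

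In the nondegenerate case this is immediate from Proposition~\ref{proposition:monotonicity}. Write $I=[p,q]$, so $I^c=[q,p]$. The canonical orientation from $I$ runs from $f(p)$ to $f(q)$. The one from $I^c$ runs from $f(I^{c-}) = f(q)$ to $f(p)$. These are opposite. This is not a contradiction with the claim as intended: the statement's parenthetical says the orientations \emph{disagree}. Comparing the boundary orientation with the canonical orientation, both flip when we pass from $I$ to $I^c$. So whether $Y$ is $Z^+$ or $Z^-$ is unchanged, and the $Z^\pm$ labels match. In the nondegenerate case the lemma is therefore just the juxtaposition of these two facts.

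\emph{Degenerate case.} This is the main obstacle, since there is no endpoint recipe for the canonical orientation. Here $f(\partial I)=\{x\}$ is a single point and $Y = \partial f(I) - x$. Note that $f(I)$ and $f(I^c)$ overlap exactly on $\partial f(I)$.

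I need to show that monotonicity of $f^{-1}(Y)\cap I \to Y$ and of $f^{-1}(Y)\cap I^c \to Y$ induce opposite orientations on $Y$. My approach is to perturb to nondegenerate intervals and use Lemma~\ref{lemma:orientation_continuous}. Choose $I_n \to I$ nondegenerate; then $I_n^c \to I^c$ are also nondegenerate. For a compact $K\subset Y$, for large $n$ the set $K$ lies in a single component of $\partial f(I_n) - f(\partial I_n)$.

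By Lemma~\ref{lemma:orientation_continuous}, the canonical orientation of $K$ from $I_n$ agrees with that from $I$, and the one from $I_n^c$ agrees with that from $I^c$. The nondegenerate case shows that these two orientations are opposite on $K$. Exhausting $Y$ by compacta $K$ gives opposite canonical orientations on $Y$.

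The boundary orientations are also opposite, so the $\pm$ label is preserved: $Z^+(I)=Z^+(I^c)$, and likewise for $Z^-$.

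To be safe, I would also check directly that such approximations $I_n$ exist. Nondegenerate intervals are dense, because $\LL$ has empty interior: otherwise $f$ would be constant on an interval, contradicting Lemma~\ref{lemma:basic_properties}(2). I would also check that a component containing $K$ is stable under this approximation, which is exactly what Lemma~\ref{lemma:orientation_continuous}'s proof provides.
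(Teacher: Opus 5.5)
Your argument is correct, but it reaches the key point by a different route from the paper. That key point is that $I$ and $I^c$ induce \emph{opposite} canonical orientations on each component $Y$. One correction first: your sentence ``I claim that $Y$ receives the \emph{same} canonical orientation'' should say ``opposite.'' Everything you then prove and use is the opposite-orientation statement, so this is an expository slip, not a gap.

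The paper treats all $I$ uniformly. For $x\in Y$ it picks preimages $p(x)\in I$ and $p'(x)\in I^c$, and notes that each $[p(x),p'(x)]$ is degenerate. A pigeonhole argument gives two points $x\neq y$ whose pairs have the same sign. Proposition~\ref{proposition:classes_unlinked} then says $\{p(x),p'(x)\}$ and $\{p(y),p'(y)\}$ are unlinked, which forces the orders induced on $Y$ by $I$ and by $I^c$ to be reversed.

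You instead read off the answer in the nondegenerate case from the endpoint description in Proposition~\ref{proposition:monotonicity}. You then transfer it to degenerate $I$ by approximating with nondegenerate $I_n$. The transfer uses the argument in the proof of Lemma~\ref{lemma:orientation_continuous}: the preimage in $I$ of a compact sub-arc $K\subset Y$ is a compact subset of $\inte(I)$, hence lies in $I_n$ for large $n$, and it parameterizes $K$ monotonely for both intervals.

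Your route is more elementary in that it avoids Proposition~\ref{proposition:classes_unlinked} entirely. The cost is a limiting argument, the density of nondegenerate intervals, and reliance on the proof rather than the statement of Lemma~\ref{lemma:orientation_continuous}. The paper's route needs no approximation, and it shows the lemma is really a consequence of the unlinking of same-sign degenerate pairs. Both finish the same way: the boundary orientations of the complementary disks $f(I)$ and $f(I^c)$ are opposite, so the $\pm$ labels are preserved.
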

\begin{proof}
We have already seen by Lemma~\ref{lemma:adjacent_intervals} that 
$\partial f(I^c) = \partial f(I)$ and it is also true that $f(\partial I^c)=f(\partial I)$.
Let $Y$ be a component of $\partial f(I) - f(\partial I)$.
It suffices to show that the canonical orientations on $Y$ (coming from $I$ and from $I^c$)
disagree.

The interval $Y$ is monotonely parameterized by subsets $C$ and $C'$ of $I$ and $I^c$ respectively.
For each point $x$ in $Y$ there are $p(x) \in C$ and $p'(x) \in C'$ in the preimage, and therefore
each interval $[p(x), p'(x)]$ is either positively or negatively degenerate. In particular, we
may find at least two distinct points $x,y \in Y$ for which $[p(x),p'(x)]$ and $[p(y),p'(y)]$ are
both positively degenerate or both negatively degenerate, and therefore by 
Proposition~\ref{proposition:classes_unlinked} the pairs $\lbrace p(x),p'(x)\rbrace$ and
$\lbrace p(y),p'(y)\rbrace$ are unlinked in $S^1$. But this implies that the canonical orientations
on $Y$ disagree and the lemma is proved.
\end{proof}

As a special case, if $[p,q]$ is positively degenerate for $f$, then $[q,p] = [p,q]^c$ is
also positively degenerate for $f$. We may therefore unambiguously make the following
definition:

\begin{definition}[Positive and negative relations]\label{definition:signed_relations}
An unordered pair $\lbrace p,q\rbrace$ is in $\LL^+(f)$ resp. $\LL^-(f)$ if and only if 
$[p,q]$ is positively resp. negatively degenerate for $f$.
\end{definition}
Observe that $\LL(f) = \LL^+(f) \sqcup \LL^-(f)$.

\begin{lemma}[Closed subsets]\label{lemma:L_closed_subsets}
Both $\LL^+(f)$ and $\LL^-(f)$ are closed as subsets of $\M$.
\end{lemma}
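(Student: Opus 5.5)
Since $\LL(f) = \LL^+(f)\sqcup\LL^-(f)$ is already known to be closed in $\M$, it suffices to show that $\LL^+(f)$ is closed; the argument for $\LL^-(f)$ is symmetric. Equivalently, I will show $\LL^-(f)$ is relatively open in $\LL(f)$, and likewise $\LL^+(f)$. It is convenient to work upstairs in $\tilde{\M}$. There $\tilde{\LL}^\pm$ are the preimages of $\LL^\pm$ under the double cover, using Lemma~\ref{lemma:complementary_orientations}. So the claim reduces to the following: if $I_n \to I$ in $\tilde{\M}$ with each $I_n$ degenerate, then $I$ is degenerate, and if the $I_n$ are all positively degenerate then so is $I$.

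Let $I_n\to I$ with each $I_n$ positively degenerate, so $Z^+(I_n)=\emptyset$. By closedness of $\tilde{\LL}$, the interval $I$ is degenerate, and $f(\partial I)$ is a single point $x$. Suppose for contradiction that $I$ is negatively degenerate. Then $Z^+(I)$ is nonempty; in fact $Z^+(I)=\partial f(I)-\{x\}$, and $Z^-(I)$ is empty. Choose a compact arc $K\subset Z^+(I)$ with nonempty interior. As in the proof of Lemma~\ref{lemma:orientation_continuous}, $K$ is bounded away from $x$. Hence for $n$ large, $f(\partial I_n)$ lies in a small neighbourhood $U$ of $x$ disjoint from $K$, and $K\subset \partial f(I_n)-f(\partial I_n)$. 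Since $I_n$ is degenerate, $\partial f(I_n)-f(\partial I_n)$ has a single component, namely $Z^-(I_n)$, so $K\subset Z^-(I_n)$.

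The key step is to compare the canonical orientations on $K$ coming from $I$ and from $I_n$. I would use the argument of Lemma~\ref{lemma:orientation_continuous}. Points of $K$ have preimages in $I$, and since $K$ lies in $\partial f(I_n)$ these preimages lie in $I\cap I_n$ near the interior. Pick two points $a<b$ of $K$ with preimages $p,q$ in the interior of $I\cap I_n$. Both $I$ and $I_n$ carry the orientation of $S^1$, and for $n$ large they share a long common subinterval containing $p$ and $q$. Monotonicity (Proposition~\ref{proposition:monotonicity}), applied to $K$ inside $\partial f(I)$ and inside $\partial f(I_n)$, then orders $f(p), f(q)$ the same way in both. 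So the canonical orientation on $K$ from $I_n$ agrees with the one from $I$.

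Both $\partial f(I)$ and $\partial f(I_n)$ also induce boundary orientations on $K$. I must check that these agree. I expect this to be the main obstacle, since one needs $f(I)$ and $f(I_n)$ to lie on the same side of $K$. Near an interior point of $K$ away from $U$, the disks $f(I)$ and $f(I_n)$ both contain the image of a small subinterval of $I\cap I_n$ adjacent to the preimage of that point. So both disks lie locally on the same side of $K$, and the boundary orientations agree. It then follows that $K$ is positively oriented with respect to $I_n$, contradicting $K\subset Z^-(I_n)$ with $Z^+(I_n)=\emptyset$. Hence $I$ is positively degenerate, and $\LL^+(f)$ is closed.
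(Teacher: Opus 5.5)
Your proposal is correct and follows the paper's route. The paper deduces the lemma in one line from Lemma~\ref{lemma:orientation_continuous}: the sets $Z^\pm(I)\cup f(\partial I)$ vary continuously, so the singleton condition persists in the limit. Your argument unpacks that lemma's proof (compact arc $K$, monotone parameterization on $I\cap I_n$) in the degenerate case, and your explicit check that the boundary orientations from $\partial f(I)$ and $\partial f(I_n)$ agree on $K$ fills in a step the paper leaves implicit.
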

\begin{proof}
This follows immediately from Lemma~\ref{lemma:orientation_continuous}.
\end{proof}

By abuse of notation, and in anticipation of things to come, we refer to an unordered pair
$\lbrace p,q\rbrace \in \LL(f)$ as a {\em leaf}.

\begin{proposition}[Relations transitive]\label{proposition:relations_transitive}
With notation as above,
\begin{enumerate}
\item{each of the subsets $\LL^+(f)$ and $\LL^-(f)$ are transitive;}
\item{no leaf of $\LL^+(f)$ can have a point in common with a leaf of $\LL^-(f)$;}
\item{two adjacent degenerate intervals in $S^1$ are either both positively
degenerate or both negatively degenerate.}
\end{enumerate}
In particular, each of the relations $\LL^+(f)$ and $\LL^-(f)$ is separately an 
equivalence relation, and the equivalence classes are closed as subsets of $S^1$.
\end{proposition}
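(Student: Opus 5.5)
The plan is to prove (3) first, since it is the geometric heart of the statement, and then deduce (1) and (2) from it together with Lemma~\ref{lemma:complementary_orientations}.

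\textbf{Step 1: proof of (3).} Let $I=[p,q]$ and $J=[q,r]$ be adjacent degenerate intervals, so $f(p)=f(q)=f(r)=:x$ and $I\cup J=[p,r]$. By Lemma~\ref{lemma:adjacent_intervals}, $\alpha:=f(I)\cap f(J)$ is a closed arc contained in $\partial f(I)\cap\partial f(J)$, and it contains $x=f(q)$.

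First I will show that $x$ is an endpoint of $\alpha$. If $x$ were interior to $\alpha$, then the two disks $f(I)$ and $f(J)$, glued along $\alpha$, would contain a neighbourhood of $x$. But $x=f(p)$ with $p\in\partial(I\cup J)$, so $x$ must lie in $\partial f(I\cup J)$, which is a contradiction. Hence $\alpha=[x,y]$ with $y\neq x$, and $\alpha-\{x\}$ lies in both $Y_I:=\partial f(I)-x$ and $Y_J:=\partial f(J)-x$.

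Since $f(I)$ and $f(J)$ lie on opposite sides of $\alpha$, the boundary orientations that $\partial f(I)$ and $\partial f(J)$ induce on $\alpha$ are opposite. So it suffices to show that the canonical orientations on $\alpha$ coming from $I$ and from $J$ are also opposite; then the sign comparison defining $Z^\pm$ comes out the same for $I$ and $J$. Concretely, I will show:
\begin{enumerate}
\item in the canonical order on $Y_I$, the arc $\alpha$ sits at the terminal end, i.e.\/ it is parametrized by points of $I$ near $q$ and the parametrization runs toward $x$;
\item in $Y_J$, the arc $\alpha$ sits at the initial end, starting from $x$.
\end{enumerate}

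This is the step I expect to be the main obstacle. The plan is to reuse the crossing-arc argument from Proposition~\ref{proposition:monotonicity}. Suppose the terminal end of $Y_I$ were not $\alpha$, so that some point $b\in Y_I-\alpha$ is parametrized later in $I$ than some point $a\in\alpha$. Using the disk $f([a_I,q])$ (where $a_I\in I$ is a preimage of $a$) together with a disk $f([q,t])$ for $t\in J$ just past $q$, I will produce properly embedded arcs with linked endpoints on $\partial f(I\cup J)$ or $\partial f(I)$. These arcs then come from intervals with disjoint interiors whose images have intersecting interiors, contradicting Lemma~\ref{lemma:basic_properties}(3). The same argument with $J$ in place of $I$ gives (2) above.

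\textbf{Step 2: proof of (1).} With $I,J$ as in Step 1, $\partial f(I\cup J)$ is the closure of $(\partial f(I)-\alpha)\cup(\partial f(J)-\alpha)$. The open arc $\partial f(I\cup J)-x$ is therefore the concatenation of the remainder of $Y_I$ followed by the remainder of $Y_J$. By Step 1, the canonical orientation of $I\cup J$ restricts to the canonical orientations of $I$ and $J$ on these pieces, and the boundary orientations also agree on them. Consequently, if $Z^+(I)=Z^+(J)=\emptyset$ then $Z^+(I\cup J)=\emptyset$, and the same holds with $Z^-$ in place of $Z^+$. So $[p,r]$ has the common sign of $[p,q]$ and $[q,r]$.

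To get transitivity for arbitrary distinct $p,q,r$ with $\{p,q\},\{q,r\}\in\LL^+(f)$, relabel using the cyclic order so that the relevant intervals are adjacent. Lemma~\ref{lemma:complementary_orientations} says that $[u,v]$ and $[v,u]$ have the same sign, so the relabelling does not affect the conclusion, and we get $\{p,r\}\in\LL^+(f)$. The argument for $\LL^-(f)$ is identical.

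\textbf{Step 3: proof of (2) and the final assertion.} Suppose $\{p,q\}\in\LL^+(f)$ and $\{q,r\}\in\LL^-(f)$ with $r\neq p$. Then $f(p)=f(q)=f(r)$. Replacing intervals by their complements where necessary (which does not change signs, by Lemma~\ref{lemma:complementary_orientations}), we obtain adjacent degenerate intervals of opposite signs, contradicting (3).

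Finally, (1) gives transitivity, and symmetry and reflexivity hold by definition, so each of $\LL^+(f)$ and $\LL^-(f)$ is an equivalence relation. The class of $p$ is $\{p\}\cup\{q:\{p,q\}\in\LL^+(f)\}$. A limit of such points $q$ is either $p$ itself, or gives a limit pair $\{p,q\}$ in $\M$, which lies in $\LL^+(f)$ because $\LL^+(f)$ is closed by Lemma~\ref{lemma:L_closed_subsets}. Hence each class is closed in $S^1$, and likewise for $\LL^-(f)$.
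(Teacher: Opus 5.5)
Your argument is logically sound, but it is built around a detour the paper does not need, and that detour is the only place where something is left unexecuted.

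\textbf{Step 1 is unnecessary.} The observation you make in Step 2 does not depend on Step 1. Let $\beta_I$ denote the arc of $\partial f(I)$ complementary to $\alpha$.
\begin{itemize}
\item Interior points of $\beta_I$ are not in $f(J)$, because $f(I)\cap f(J)=\alpha$. So their preimages in $[p,r]$ lie in $I$, whose order is the restriction of the order on $[p,r]$.
\item Near such points, $f([p,r])$ coincides locally with $f(I)$.
\end{itemize}
Hence on $\beta_I$ the canonical and boundary orientations coming from $[p,r]$ agree with those coming from $I$, and likewise on $\beta_J$ for $J$. Since $[p,r]$ is degenerate, $\partial f([p,r])-x$ is a single arc, lying entirely in $Z^+([p,r])$ or entirely in $Z^-([p,r])$. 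So $[p,q]$ and $[q,r]$ both inherit the sign of $[p,r]$.

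This is exactly the paper's proof of (3). It makes your comparison of orientations on $\alpha$ superfluous, and the phrase ``By Step 1'' in Step 2 should be replaced by the direct justification above.

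\textbf{If you keep Step 1.} The crossing-arc argument that $\alpha$ is terminal in $Y_I$ and initial in $Y_J$ is only announced, not carried out. A cleaner justification is Proposition~\ref{proposition:monotonicity} applied to the degenerate interval $[p,r]$:
\begin{itemize}
\item interior points of $\beta_I$ are parametrized only from $I$, and those of $\beta_J$ only from $J$;
\item so $\beta_I$ precedes $\beta_J$ in the canonical order of $[p,r]$;
\item this orients $\beta_I$ from $x$ to $y$, which puts $\alpha$ at the terminal end of $Y_I$, and symmetrically at the initial end of $Y_J$.
\end{itemize}

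\textbf{Parts (1), (2) and closedness.} Here the two routes differ only cosmetically.
\begin{itemize}
\item The paper first notes that (1), (2), (3) are equivalent. Since $\LL(f)=\LL^+(f)\sqcup\LL^-(f)$ is the transitive image relation, $\LL^\pm(f)$ can fail to be transitive only if a leaf of $\LL^+(f)$ and a leaf of $\LL^-(f)$ share a point. Also (2)$\Leftrightarrow$(3) by passing to complements via Lemma~\ref{lemma:complementary_orientations}. So the paper proves only (3).
\item You instead prove (1) directly, by propagating the sign upward from $[p,q],[q,r]$ to $[p,r]$, and deduce (2) from (3). This is equally valid.
\item For closedness of the classes, the paper observes that they are classes of $\LL(f)$, i.e.\ point preimages of $f$. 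You use closedness of $\LL^+(f)$ in $\M$ (Lemma~\ref{lemma:L_closed_subsets}). Both work.
\end{itemize}
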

\begin{proof}
First observe that all three conditions are equivalent: since $\LL(f)$ is transitive,
and $\LL(f)$ is the disjoint union of $\LL^+(f)$ and $\LL^-(f)$ it follows that each
of these subsets is individually transitive unless they have a pair of leaves with
a point in common; thus (1) and (2) are equivalent. To see that (2) is equivalent to (3),
replace an interval by its complement if necessary and apply
Lemma~\ref{lemma:complementary_orientations}. 

Thus it suffices to show that if adjacent intervals $[p,q]$ and $[q,r]$ are both
degenerate for $f$ then they are either both positively degenerate or both negatively
degenerate. If $[p,q]$ and $[q,r]$ are degenerate then $[p,r]$ is also degenerate for $f$; 
let's suppose for simplicity that it is positively degenerate, so that $Z^+([p,r])$ is
empty. Then $Z^-([p,r])$ contains open subintervals both of $\partial f([p,q]) - f(\partial [p,q])$
and of $\partial f([q,r]) - f(\partial [q,r])$ and therefore $Z^-([p,q])$ and
$Z^-([q,r])$ are nonempty. But these intervals are both degenerate, so they are both positively
degenerate. The case that $[p,r]$ is negatively degenerate is similar.

Since equivalence classes of $\LL^+(f)$ or $\LL^-(f)$ are
equivalence classes of the equivalence relation $\LL(f)$, 
they are point preimages under $f:S^1 \to S^2$ and are therefore closed as subsets of $S^1$.
\end{proof}

Complementary to Proposition~\ref{proposition:classes_unlinked} we have the following:
\begin{lemma}[Linking endpoints]\label{lemma:linking_endpoints}
Let $\lbrace p,q\rbrace$ be an arbitrary unordered pair of distinct points in $S^1$. 
Then either $\lbrace p,q\rbrace$ is a leaf of $\LL^+$,
or there is a disjoint leaf $\lbrace r,s\rbrace$ of $\LL^+$ so that 
$\lbrace p,q\rbrace$ and $\lbrace r,s\rbrace$ link in $S^1$,
and similarly with $\LL^+$ replaced by $\LL^-$.
In particular, a pair $\lbrace p,q\rbrace$ that does not belong to $\LL$ 
is linked by leaves of both $\LL^+$ and $\LL^-$.
\end{lemma}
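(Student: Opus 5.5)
The plan is to produce the linking leaf from a point of $Z^+$. Suppose $\lbrace p,q\rbrace$ is not a leaf of $\LL^+$ and put $I=[p,q]$. Then $Z^+(I)$ is nonempty. If $I$ is nondegenerate, $\partial f(I)-f(\partial I)$ has two components with opposite canonical orientations, so one of them is $Z^+(I)$. If $I$ is degenerate, it must be negatively degenerate, so $Z^+(I)\neq\emptyset$ by definition. By Lemma~\ref{lemma:complementary_orientations}, $Z^+(I)=Z^+(I^c)$ as subsets of $\partial f(I)=\partial f(I^c)$. Hence every $y\in Z^+(I)$ has a preimage $a$ in the interior of $I$ and a preimage $b$ in the interior of $I^c$; both are interior because $y\notin f(\partial I)$. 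The pair $\lbrace a,b\rbrace$ is a leaf of $\LL$, it is disjoint from $\lbrace p,q\rbrace$, and it links $\lbrace p,q\rbrace$, since $a$ and $b$ lie in the interiors of complementary intervals. The same argument with $Z^-$ handles $\LL^-$, and the final sentence of the lemma follows by applying both cases.

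The only remaining task is to show that $\lbrace a,b\rbrace$ can be chosen in $\LL^+$ rather than $\LL^-$. I would do this by a local orientation computation at $y$. Orient $Z^+(I)$ by its $I$-canonical orientation, which by definition agrees with the boundary orientation of $\partial f(I)$. Proposition~\ref{proposition:monotonicity} then locates the preimages of the two sub-arcs of $Z^+(I)$ on either side of $y$.
\begin{enumerate}
\item{The sub-arc of $Z^+(I)$ running forward from $y$ toward $f(q)$ has its $I$-preimages in $[a,q]$.}
\item{Since the $I^c$-canonical orientation is reversed, that same sub-arc has its $I^c$-preimages in $[q,b]$.}
\item{Symmetrically, the backward sub-arc toward $f(p)$ has preimages in $[p,a]\cup[b,p]$.}
\end{enumerate}
It follows that $f([a,b])\supseteq f([a,q])\cup f([q,b])$ contains the forward sub-arc. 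The boundary $\partial f([a,b])$ near the point $y=f(a)=f(b)$ then consists of one arc leaving $y$ into the interior of $f(I)$ and one into the interior of $f(I^c)$. These arcs come from $\partial f([a,q])$ and $\partial f([q,b])$ respectively.

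Next I would determine which of $Z^\pm([a,b])$ these arcs belong to. The canonical orientation on $\partial f([a,b])$ near $y$ is the monotone one from Proposition~\ref{proposition:monotonicity}, which runs away from $y$ on the $[a,q]$ side. I would compare this with the boundary orientation of $f([a,b])$, using that $f(I)$ sits on a fixed side of $Z^+(I)$ with matching orientation. The aim is to show that the arc on the $f(I)$ side lies in $Z^-([a,b])$ and the arc on the $f(I^c)$ side lies in $Z^-([a,b])$ as well. Together with degeneracy of $[a,b]$, this forces $Z^+([a,b])=\emptyset$, so $[a,b]$ is positively degenerate.

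This sign bookkeeping is the main obstacle, because it is easy to lose a sign. If the direct computation proves fragile, my fallback is a contradiction argument in the style of Proposition~\ref{proposition:classes_unlinked} and Proposition~\ref{proposition:relations_transitive}. Suppose every pair $\lbrace a,b\rbrace$ arising from $y\in Z^+(I)$ were in $\LL^-$. Pick $y<y'$ in $Z^+(I)$ with preimage pairs $\lbrace a,b\rbrace$ and $\lbrace a',b'\rbrace$, which are nested. The adjacent intervals $[a,a']$ and $[b',b]$ then produce an arc of $Z^+(I)$ lying in $\partial f([a,b])$. On that arc the monotone parametrization, read via $[a,b]$, is orientation-preserving for $\partial f([a,b])$, which would place it in $Z^+([a,b])$. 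This contradicts negative degeneracy of $[a,b]$.
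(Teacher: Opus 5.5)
\paragraph{The gap: the sign of the leaf is never established.}
Your reduction is fine and matches the paper. If $\lbrace p,q\rbrace\notin\LL^+$ then $Z^+(I)=Z^+(I^c)\neq\emptyset$, and any $y\in Z^+(I)$ has preimages $a\in\inte(I)$, $b\in\inte(I^c)$, giving a leaf $\lbrace a,b\rbrace$ of $\LL$ that links $\lbrace p,q\rbrace$. But the real content of the lemma is that $[a,b]$ is \emph{positively} degenerate, and you have not proved this.

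Your primary route names the correct target, $\partial f([a,b])-\lbrace y\rbrace\subset Z^-([a,b])$, but stops at ``the aim is to show''. The orientation comparison you defer rests on an unproved description of how the Jordan curve $\partial f([a,b])$ leaves $y$ (one germ into $\inte f(I)$, one into $\inte f(I^c)$). That description is exactly where the difficulty lies.

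Your fallback fails outright, for two independent reasons.
\begin{enumerate}
\item The arc of $Z^+(I)$ between $y$ and $y'$ is not in $\partial f([a,b])$. By monotonicity its $I$-preimages lie in $(a,a')$ and its $I^c$-preimages lie in $(b',b)$. So near each of its points, $f(I)$ agrees with $f([a,a'])$ and $f(I^c)$ agrees with $f([b',b])$. Hence the arc lies in the \emph{interior} of $f([a,a'])\cup f([b',b])\subset f([a,b])$.
\item Even an arc in $Z^+([a,b])$ would not contradict negative degeneracy. By the paper's convention, $[a,b]$ is negatively degenerate exactly when $Z^+([a,b])\neq\emptyset$. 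A contradiction requires exhibiting a nonempty $Z^-([a,b])$.
\end{enumerate}

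\paragraph{The missing idea.}
Get the sign from a \emph{nondegenerate} subinterval, whose canonical orientation is fixed by its endpoints, rather than from bookkeeping at $y$. This is what the paper does.

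Let $J=[p,a]$, so $f(\partial J)=\lbrace f(p),y\rbrace$ with $f(p)\neq y$. Let $\alpha$ be the sub-arc of $Z^+(I)$ running from $f(p)$ to $y$. Its $I$-preimages lie in $J$, so $\alpha\subset\partial f(J)$, and $\alpha$ is a full component of $\partial f(J)-f(\partial J)$. On $\alpha$, the $J$-canonical orientation (from $f(p)$ to $y$) agrees with the $I$-canonical one. That in turn agrees with the boundary orientation of $f(I)$, which on $\alpha$ equals that of $f(J)\subset f(I)$. Hence $\alpha=Z^+(J)$.

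By Lemma~\ref{lemma:adjacent_intervals} applied to $J$ and $[a,q]$, $\partial f(J)$ contains an open arc $\beta\subset\inte f(I)$. Since $\beta$ misses $\alpha\subset\partial f(I)$, it lies in $Z^-(J)$.

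Because $\beta$ avoids $f(I^c)\supseteq f([b,p])$, it is also an arc of $\partial f([b,a])=\partial\bigl(f([b,p])\cup f(J)\bigr)$. On $\beta$ it has the same preimages, the same canonical orientation and the same boundary orientation as for $J$. So $Z^-([b,a])\neq\emptyset$. Hence $[b,a]$, and with it $[a,b]=[b,a]^c$, is positively degenerate, i.e.\ $\lbrace a,b\rbrace\in\LL^+$.
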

\begin{proof}
If $\lbrace p,q\rbrace$ is a leaf of $\LL^+$ there is nothing to prove, so suppose not.
Let $I = [p,q]$ and $I^c = [q,p]$. Then by hypothesis $Z^+(I) = Z^+(I^c)$ are nonempty subsets of
$\partial f(I) = \partial f(I^c)$. Choose $s \in (p,q)$ and $r \in (q,p)$ with the
same image $x \in Z^+(I)$. Then $[r,s]$ is degenerate, and we claim it is positively 
degenerate, equivalently that $Z^-([r,s])$ is nonempty. 

Let $J = [p,s]$ and $J' = [r,p]$ so that $[r,s] = J\cup J'$. Then $f(J)$ is a subset
of $f(I)$ with $x \in \partial f(J) \cap \partial f(I)$. Let $\alpha$ be the open subinterval
of $Z^+(I)$ running from $f(p)$ to $x$ and let $C \subset J$ be the preimage of 
$\alpha$. By monotonicity $C$ is contained in $J$, and therefore
$\alpha \subset \partial f(J)$ and in fact $\alpha = Z^+(J)$. On the other hand,
$\partial f(J)$ contains a proper open arc $\beta$ in the interior of $f(I)$ by 
Lemma~\ref{lemma:adjacent_intervals} and $\beta$ is therefore necessarily contained in
$Z^-(J)$. But $\beta$ is also part of $\partial f(J\cup J') = \partial f([r,s])$ and
therefore $Z^-([r,s])$ is nonempty. Since $[r,s]$ is degenerate this implies it is
positively degenerate.
\end{proof}

\subsection{Laminar decomposition}\label{subsection:laminar_decomposition}

Let us summarize where we are. We have demonstrated that the image relation
$\LL(f)$ has a canonical decomposition into disjoint subsets 
$$\LL(f) = \LL^+(f) \sqcup \LL^-(f)$$
which are themselves associated to closed (Lemma~\ref{lemma:L_closed_subsets}) 
unlinked (Lemma~\ref{proposition:classes_unlinked}) equivalence relations
on $S^1$ (Proposition~\ref{proposition:relations_transitive}). Furthermore, 
since each equivalence class in $\LL^\pm(f)$ is an equivalence class in $\LL(f)$,
it is closed as a subset of $S^1$.

Since the concept will arise again in the sequel we formalize these properties as
follows:
\begin{definition}[Laminar equivalence relation]\label{definition:laminar_relation}
An equivalence relation on $S^1$ is {\em laminar} if the equivalence classes are closed
as subsets of $S^1$, if the relation is closed in the space $\M$ of unordered distinct pairs,
and if distinct equivalence classes are unlinked.
\end{definition}
In particular, $\LL^+$ and $\LL^-$ are (associated to) laminar equivalence relations.
We refer to these equivalence relations as the {\em positive image relation} and the 
{\em negative image relation} respectively.

We now use these equivalence relations to build a decomposition of a 2-sphere.
Let's take two copies $P^\pm$ of the hyperbolic plane, each thought of as an open
unit disk in the Poincar\'e model, and identify the ideal boundary of each copy with $S^1$.
Under this identification the union $S^1 \cup P^\pm$ may be naturally topologized as a sphere
that we denote $S^2_\fun$ to distinguish it from `the' $S^2$.

For each equivalence class $\nu$ in the image relation we define a closed subset $C(\nu)$ 
of $S^2_\fun$ as follows:
\begin{enumerate}
\item{if $\nu$ consists of a single point $p\in S^1$ then $C(\nu)=p$;}
\item{if $\nu$ is a nontrivial element of the positive image relation,
then $C(\nu) = \nu \cup H^+(\nu)$ where $H^+(\nu)$ is the convex hull of $\nu$
in $P^+$; and}
\item{if $\nu$ is a nontrivial element of the negative image relation,
then $C(\nu) = \nu \cup H^-(\nu)$ where $H^-(\nu)$ is the convex hull of $\nu$
in $P^-$.}
\end{enumerate}

Note that each $C(\nu)$ is closed as a subset of $S^2_\fun$. Let $\CC$ denote the
collection of subsets $C(\nu)$ as $\nu$ ranges over the equivalence classes of the image
relation.

Each convex hull $H^+(\nu)$ is bounded by a countable family of bi-infinite geodesics in $P^+$
limiting to pairs of points of $\nu$ that bound a maximal open interval in $S^1$ in the 
complement of $\nu$. We refer to a leaf of $\LL^+$ as a {\em boundary leaf} if it corresponds 
to such a boundary component of $H^+(\nu)$ (and similarly for $\LL^-$). Note that we
include the case that $\nu$ consists of only two points, in which case $H^\pm(\nu)$ consists
of a single bi-infinite geodesic whose endpoints should be considered a boundary leaf.
It is immediate from Lemma~\ref{lemma:L_closed_subsets} that the set of boundary leaves is closed.

\begin{figure}[htpb]
\centering
\includegraphics[scale=0.5]{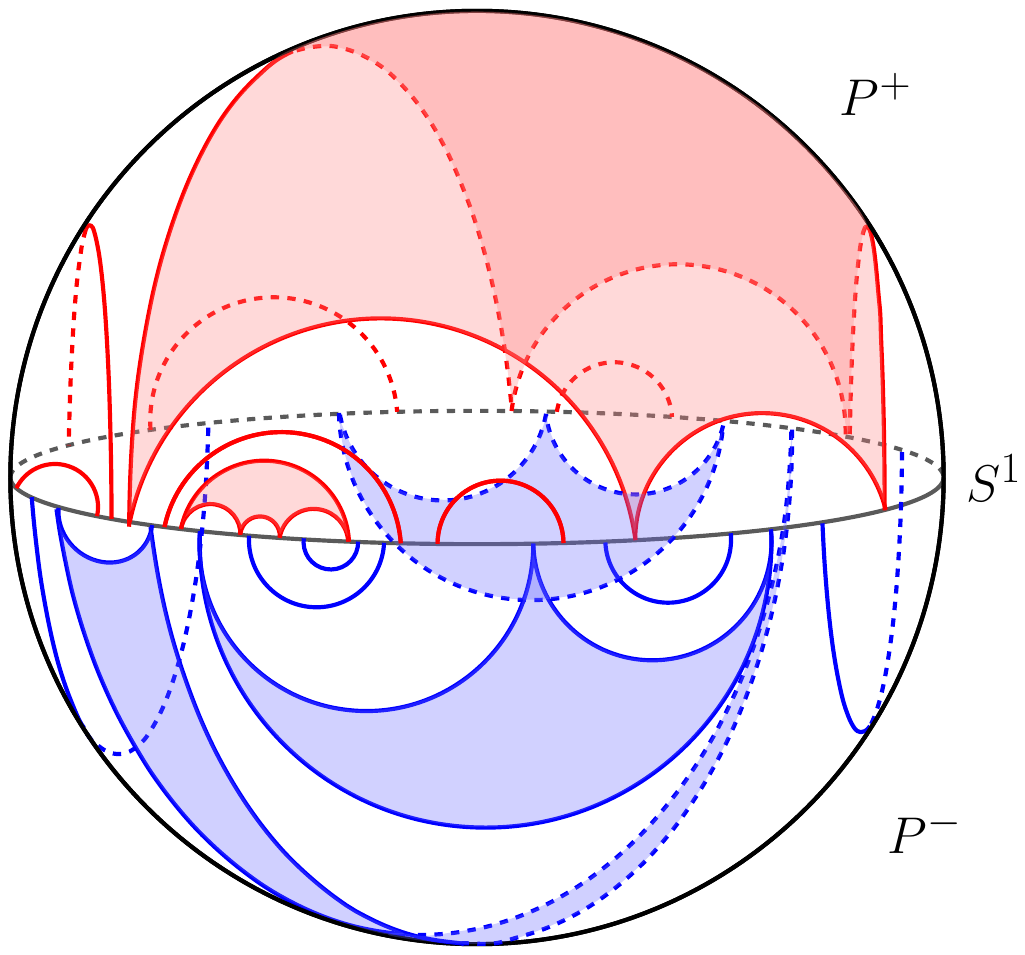}
\caption{Nontrivial subsets $C(\nu)$ in $S^2_f$.}
\label{up_down_decomposition}
\end{figure}

\begin{lemma}[Boundary leaves accumulate]\label{lemma:boundary_leaves_accumulate}
Let $\ell$ be a boundary leaf of $\LL^+$ corresponding to some boundary component of
$H^+(\nu)$. Then $\ell$ is a limit of a sequence of boundary leaves of $\LL^+$ 
on each side (in $P^+$) that does not intersect $H^+(\nu)$. 

Likewise, if $p \in S^1$ is an arbitrary point not in
a leaf of $\LL^+$ then $p$ is a limit of nested boundary leaves of $\LL^+$, i.e.\/ 
leaves whose endpoints approach $p$ from both sides in $S^1$.
\end{lemma}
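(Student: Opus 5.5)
Both parts will be derived from Lemma~\ref{lemma:linking_endpoints}, Proposition~\ref{proposition:classes_unlinked} and the closedness of $\LL^+$ (Lemma~\ref{lemma:L_closed_subsets}). Boundary leaves are the natural output of these results, because any leaf of $\LL^+$ linking a given pair can be replaced by a boundary leaf of its own class.

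\emph{First statement.} Let $\ell=\{a,b\}$ be a boundary leaf of $H^+(\nu)$, and let $J=[a,b]$ be the arc with $J\cap\nu=\{a,b\}$. Fix points $a_n\to a$ and $b_n\to b$ in the interior of $J$, chosen so that $a_n,b_n\notin\nu$ and $f(a_n)\neq f(b_n)$; this is possible because $f$ is nowhere locally constant (Lemma~\ref{lemma:basic_properties}). The pair $\{a_n,b_n\}$ is not a leaf of $\LL^+$. By Lemma~\ref{lemma:linking_endpoints} some leaf $\{r_n,s_n\}$ of $\LL^+$ links $\{a_n,b_n\}$.
\begin{enumerate}
\item This leaf lies in a class $\mu_n\neq\nu$, since $\nu$ has no points in $(a_n,b_n)$.
\item By Proposition~\ref{proposition:classes_unlinked}, $\mu_n$ does not link $\ell$. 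Because one endpoint of the leaf lies in $(a_n,b_n)\subset J$, all of $\mu_n$ lies in $J$.
\item So $\mu_n$ has points in $(a_n,b_n)$ and also in $J\setminus(a_n,b_n)$, that is, in $[a,a_n]\cup[b_n,b]$.
\item Replace $\{r_n,s_n\}$ by the boundary leaf $\ell_n$ of $H^+(\mu_n)$ that faces $\ell$. This is the leaf cutting off the complementary arc of $\mu_n$ that contains $S^1\setminus J$.
\item The endpoints of $\ell_n$ are the extreme points of $\mu_n$ in $J$. One lies in $[a,a_n]$, or the other in $[b_n,b]$; I expect both, but this is not yet checked.
\end{enumerate}

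The main obstacle is this last point. As set up, $\ell_n$ separates $\ell$ from part of $(a_n,b_n)$, but that only guarantees \emph{one} endpoint near $a$ or near $b$. To get both, I would instead apply Lemma~\ref{lemma:linking_endpoints} to a pair consisting of one point very near $a$ and one point near the middle of $J$. The resulting boundary leaves separate a fixed interior point from $\ell$. Among all classes in $J$ that separate a fixed point $m\in(a,b)$ from $\ell$, take the outermost ones. Their facing boundary leaves are nested, so they converge in $\M$; by closedness of $\LL^+$ the limit is a leaf, and closedness of the set of boundary leaves makes it a boundary leaf. The limit cannot be a leaf strictly inside $J$: the region between it and $\ell$ contains no class separating $m$, yet linking pairs taken there would produce such a class by the construction above. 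Hence the limit is $\ell$ itself. These leaves lie outside $H^+(\nu)$ because they are in $J$; the other side of $\ell$ is handled the same way using the other complementary arc.

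\emph{Second statement.} Let $p$ lie in no leaf of $\LL^+$. For $x_n\to p^-$ and $y_n\to p^+$, Lemma~\ref{lemma:linking_endpoints} gives leaves linking $\{x_n,p\}$ and leaves linking $\{p,y_n\}$, each with one endpoint arbitrarily close to $p$. To upgrade this to nested leaves whose endpoints approach $p$ from both sides, I would use the same nesting-and-limit argument. Consider the boundary leaves separating $p$ from a fixed point $q$, and pass to the innermost ones; their limit is a leaf or a point. A leaf limit would contain $p$ as an endpoint, contradicting the hypothesis, or would be separated from $p$ and contradict the linking lemma. Hence both endpoints of these leaves tend to $p$.
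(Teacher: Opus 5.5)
Your skeleton for the $J$ side is sound. You take the classes $\mu\subset J$ that have points in both $(a,m)$ and $(m,b)$, note that their facing boundary leaves are nested, pass to the supremum using closedness of $\LL^+$, and show that the supremum is $\ell$. However, the step that produces new straddling classes does not work as you state it, so the proof has a genuine gap at its key step.

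\textbf{Where it fails.} Apply Lemma~\ref{lemma:linking_endpoints} to a pair $\{x,m\}$ with $x$ merely \emph{near} $a$. You only get a leaf with one endpoint in $(x,m)$ and the other outside $[x,m]$. Nothing prevents that other endpoint from lying in $(a,x)$. In that case the class sits inside $(a,m)$ and does not separate $m$ from $\ell$, so your claim that ``the resulting boundary leaves separate a fixed interior point from $\ell$'' is unjustified.

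The contradiction step has the same problem. Suppose the supremum $\{u,u'\}$ is a facing leaf of some class $\mu^*$ with $a<u<u'<b$, and you link a pair $\{x,y\}$ with $x\in(a,u)$ and $y\in(u',b)$. The resulting leaf may have both endpoints in $(a,u)$, or both in $(u',b)$. Then no class strictly outside $\{u,u'\}$ straddling $m$ appears, and there is no contradiction. Your pairs $\{a_n,b_n\}$ have exactly this defect, which is why they only gave you one endpoint.

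\textbf{The missing idea.} Put a point of an already known class \emph{into} the pair. Apply Lemma~\ref{lemma:linking_endpoints} to $\{a,u\}$, which is not a leaf of $\LL^+$ since $a\in\nu$ and $u\notin\nu$.
\begin{enumerate}
\item The linking leaf has an endpoint $r\in(a,u)$, so its class is not $\nu$.
\item By Proposition~\ref{proposition:classes_unlinked} that class lies in $(a,b)$, and linking puts its other endpoint $s$ outside $[a,u]$. Hence $s\in(u,b)$.
\item $s\in(u,u')$ would link $\{u,u'\}$. $s=u'$ would put $r$ in $\mu^*$, which misses $(a,u)$. So $s\in(u',b)$.
\item This class therefore gives a strictly larger element of your chain, which is the contradiction you need.
\end{enumerate}
The pair $\{a,m\}$ shows in the same way that the chain is nonempty. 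The mixed case $u=a<u'<b$ is excluded by closedness: $\{a,u'\}$ would be a leaf, forcing $u'\in\nu\cap(a,b)=\emptyset$.

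In the second statement the same repair applies. Once the innermost leaf $\{u,u'\}$ with $u<p<u'$ is in hand, link $\{u,p\}$ itself, not $\{x_n,p\}$ with $x_n\in(u,p)$.

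\textbf{Comparison with the paper.} This is precisely the paper's mechanism: it links $\{p,r\}$ and then $\{p,s\}$, with $p$ an endpoint of $\ell$, so that unlinkedness pushes both endpoints outward at every step. Once repaired, your supremum argument is a tidier substitute for the paper's iteration plus transfinite induction, with the same ingredients. It is the maximal-element device the paper itself uses in the proof of Theorem~\ref{theorem:laminar_decomposition}.
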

\begin{proof}
Let $\ell = \lbrace p,q\rbrace$ in such a way that the oriented interval $[p,q] \subset S^1$
does not intersect $\nu$ in its interior. Then by Lemma~\ref{lemma:linking_endpoints},
for every $r \in (p,q)$ there is a positive
leaf $\lbrace s,s'\rbrace$ that links $\lbrace p,r\rbrace$. The endpoints $s,s'$ are themselves
contained in a positive equivalence class $\mu$ that is entirely contained in $(p,q)$
by Proposition~\ref{proposition:classes_unlinked} and therefore without loss of generality we may
assume $\mu$ is disjoint from the intervals $(p,s)$ and $(s',q)$ and therefore that
$\lbrace s,s'\rbrace$ is a boundary leaf of $H^+(\mu)$ separating $H^+(\nu)$ from
$H^+(\mu)$.

We may repeat the argument with $s$ in place of $r$ and obtain a new boundary leaf
$\lbrace t,t'\rbrace$ with $t \in (p,s)$ and $t' \in (s,q)$. Actually, by 
Proposition~\ref{proposition:classes_unlinked} we must have $t' \in (s',q)$. Thus we
obtain a sequence of boundary leaves whose endpoints move monotonely towards $p$ and $q$
respectively. A limit of such a monotone sequence is itself a positive leaf by
Lemma~\ref{lemma:L_closed_subsets} and is either a boundary leaf or is contained in 
an equivalence class with a boundary leaf with both endpoints still closer to $\ell$;
thus by transfinite induction we obtain a (countable) sequence of boundary leaves 
$\lbrace s_i,s_i'\rbrace$ with endpoints moving monotonely $s_i \to u$, $s_i' \to u'$
and such that either $u= p$ or $u'= q$ or both. In fact, by Lemma~\ref{lemma:L_closed_subsets} 
$\lbrace u,u'\rbrace$ is a positive leaf, and since it has at least one endpoint in
common with $\ell$ it is contained in the class of $\nu$. But $\nu$ does not intersect
$(p,q)$ and therefore $u=p$ and $u'=q$ as desired.

The second assertion of the lemma is proved almost exactly the same as the first
assertion, with $p$ in place of the pair $p,q$; i.e.\/ for any $r \ne p$ in $S^1$ 
we may find a positive leaf $\lbrace s,s'\rbrace$ that links $\lbrace p,r\rbrace$ and 
therefore a boundary leaf of some $H^+(\mu)$ separating $p$ from $H^+(\mu)$, and so on.
\end{proof}

Note that if an equivalence class in the positive image relation $\nu$ consists of 
only 2 points so that $H^+(\nu)$ consists of a single geodesic corresponding to a boundary
leaf $\ell$, then $\ell$ is a limit of (positive) boundary leaves on both sides.

Since the concept will be used repeatedly in the sequel, we introduce terminology for
a nested sequence as in the statement of Lemma~\ref{lemma:boundary_leaves_accumulate}:

\begin{definition}[Rainbow]\label{definition:rainbow}
A family of positive resp. negative nested boundary leaves of $\LL^+$ or $\LL^-$
limiting down to a single point of $S^1$ is called a positive resp. negative {\em rainbow}.
\end{definition}

\begin{figure}[htpb]
\centering
\includegraphics[scale=0.5]{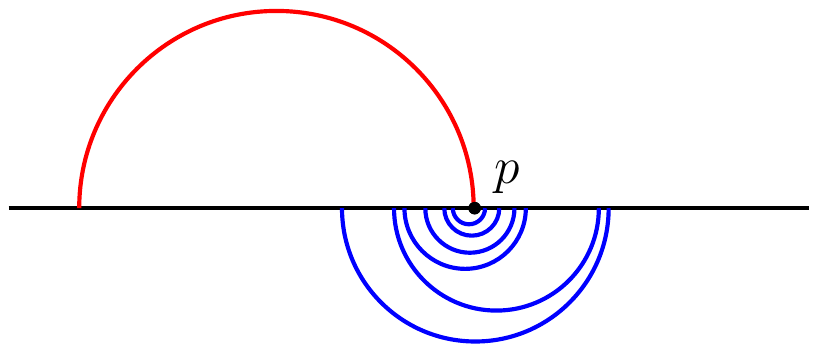}
\caption{A negative rainbow at $p$}
\label{rainbow}
\end{figure}

Thus part of Lemma~\ref{lemma:boundary_leaves_accumulate} may be restated as 
the assertion that an arbitrary $p \in S^1$ not in a leaf of $\LL^+$ has a positive rainbow.

In order to state the next theorem we must recall some elements from decomposition
theory. A good reference is Daverman \cite{Daverman} or the notes of 
Boldizs\'ar Kalm\'ar \cite{Kalmar}. 
We do not state definitions in the greatest possible generality, since our focus is
on decompositions of compact Hausdorff spaces.

\begin{definition}[Upper semi-continuous decomposition]\label{definition:usc_decomposition}
Let $X$ be a compact Hausdorff space. A collection of subsets $\CC$ of $X$ is an
{\em upper semi-continuous decomposition} of $X$ if it satisfies the following properties:
\begin{enumerate}
\item{elements of $\CC$ are closed subsets of $X$;}
\item{distinct elements of $\CC$ are disjoint as subsets of $X$;}
\item{the union of all elements of $\CC$ is $X$;}
\item{(upper semi-continuity): for every element $K$ of $\CC$ and every open neighborhood
$V$ of $K$ in $X$ there is an open neighborhood $U$ of $K$ with $K \subset U \subset V$
such that $U$ is a union of elements of $\CC$.}
\end{enumerate}
\end{definition}

If $\CC$ is an upper semi-continuous decomposition of $X$ we may form the quotient space
$X \to X_\CC$ in which the points of $X_\CC$ correspond to the elements of $\CC$, and
give $X_\CC$ the quotient topology.

In our context where $X$ is a compact Hausdorff space, Proposition~2.9 in \cite{Kalmar} says
that when $\CC$ is an upper semi-continuous decomposition of $X$, the quotient space
$X_\CC$ is also a compact Hausdorff space.

\begin{theorem}[Laminar decomposition]\label{theorem:laminar_decomposition}
Let $f:S^1 \to S^2$ be a CaTherine wheel. Let $\CC$ be the collection of subsets of
$S^2_\fun$ associated to the image relation of $f$ as above. Then $\CC$ 
forms an upper semi-continuous decomposition of $S^2_\fun$. 

Furthermore, the quotient space by this decomposition is homeomorphic to $S^2$, and
there is a unique choice of this homeomorphism such that the restriction of the quotient map 
$F:S^2_\fun \to S^2$ to $S^1$ is equal to $f$.
\end{theorem}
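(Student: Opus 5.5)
We first check the four axioms of Definition~\ref{definition:usc_decomposition} and then identify the quotient. Closedness of each $C(\nu)$ was noted above. Disjointness: distinct classes $\nu,\mu$ have disjoint traces on $S^1$. If both are positive, $H^+(\nu)$ and $H^+(\mu)$ are disjoint because the classes are unlinked (Proposition~\ref{proposition:classes_unlinked}) and closed (Proposition~\ref{proposition:relations_transitive}). A positive and a negative hull lie in different hemispheres $P^\pm$. Covering: every point of $S^1$ lies in some class. For a point $z\in P^+$ not in any $H^+(\nu)$, we make $z$ a singleton element of $\CC$. To keep the decomposition exactly as stated, we instead show such points do not occur. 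By Lemma~\ref{lemma:boundary_leaves_accumulate} and Lemma~\ref{lemma:linking_endpoints}, every geodesic in $P^+$ through $z$ is crossed by positive leaves. A gap of the lamination $\Lambda^+$ containing $z$ would be bounded by boundary leaves not all in one class, which contradicts the accumulation property in Lemma~\ref{lemma:boundary_leaves_accumulate}. If this argument fails, we simply add the points of $P^\pm$ outside the hulls as singletons; this does not affect the rest of the proof.

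For upper semicontinuity we use the standard criterion: for compact Hausdorff $X$ it suffices that the relation $\{(x,y): x,y \text{ in the same element}\}$ be closed in $X\times X$. Take sequences $x_i\to x$ and $y_i\to y$ with $x_i,y_i\in C(\nu_i)$. Pass to a subsequence in which all $\nu_i$ have the same sign, say positive. The hulls $C(\nu_i)$ then converge in the Hausdorff topology to a closed set $K$, and $K\cap S^1$ consists of pairs that are limits of leaves of $\LL^+$. Hence $K\cap S^1$ lies in one positive class $\nu$ by Lemma~\ref{lemma:L_closed_subsets}. If instead the diameters of the $C(\nu_i)$ tend to $0$, then $K$ is a single point. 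Convexity of the hulls then gives $K\subset C(\nu)$. This step uses only closedness of $\LL^\pm$ and unlinkedness.

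To identify the quotient we use Moore's theorem. Each element of $\CC$ is a closed convex hull in a hemisphere together with its ideal points, so it is connected and its complement in $S^2_\fun$ is connected. Moreover $\CC$ is not the whole sphere. By Moore's theorem, the quotient $S^2_\fun/\CC$ is therefore homeomorphic to $S^2$. Next, $\CC$ restricted to $S^1$ is exactly the image relation of $f$. So the map $F$ restricted to $S^1$ factors through a continuous bijection $S^1/\!\sim\;\to S^2$ induced by $f$; this map is surjective. The main obstacle is to extend this to the whole sphere, that is, to show that the natural map $S^2_\fun/\CC\to S^2$ is well defined. We define it on $P^+$ as follows. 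A point of $P^+$ lies in some hull $C(\nu)$, or is a limit of hulls, and we send it to $f(\nu)$. Continuity follows from the upper semicontinuity above.

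It remains to see that this map $\Phi:S^2_\fun/\CC\to S^2$ is a homeomorphism. The key observation is that the image of $S^1$ in the quotient is already everything. Every point of $P^+$ lies in the hull of a class, so it is identified with points of $S^1$; this is exactly where the absence of gaps is needed. So $\Phi$ is the continuous bijection induced by $f$ from $S^1/\!\sim=S^2_\fun/\CC$ to $S^2$. A continuous bijection from a compact space to a Hausdorff space is a homeomorphism. Uniqueness of this homeomorphism is immediate, since $S^1$ surjects onto the quotient and $F$ is required to agree with $f$ on $S^1$. The hardest step is the no-gaps claim. We would prove it via Lemma~\ref{lemma:linking_endpoints}: a gap of $\Lambda^+$ bounded by leaves from distinct classes would have a diagonal pair that is not a leaf. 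That pair must be linked by a positive leaf, and such a leaf would have to enter the gap, which is a contradiction.
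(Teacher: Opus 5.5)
Your proof is correct in substance, and two of its three main steps track the paper. The covering (no-gaps) step is the paper's argument in different clothing: the maximal separating boundary leaf in the paper's partial order is just the frontier leaf of your gap, and Lemma~\ref{lemma:boundary_leaves_accumulate} rules it out. The final identification is also the paper's: every element of $\CC$ meets $S^1$ in exactly one class of the image relation, so once $S^2_\fun/\CC$ is Hausdorff the inclusion induces a homeomorphism $S^1/{\sim}\to S^2_\fun/\CC$, and $f$ induces $S^1/{\sim}\cong S^2$.

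Where you genuinely differ is upper semicontinuity. The paper builds explicit saturated neighbourhoods from a positive rainbow together with negative rainbows at the nontrivially related points. You instead use the closed-graph criterion for compact Hausdorff spaces and check that the relation is closed by taking Hausdorff limits of the sets $C(\nu_i)$. Your route is shorter. It shows that, once the elements cover $S^2_\fun$, upper semicontinuity needs only closedness and transitivity of $\LL^\pm$; rainbows enter only through covering. So it would also streamline the corresponding step of Theorem~\ref{theorem:wheels_from_laminations}. The paper's route buys concrete saturated neighbourhoods. To make your step airtight, say explicitly two things:
\begin{itemize}
\item $C(\nu_i)$ is the closed hyperbolic convex hull of $\nu_i$ in $P^+\cup S^1$;
\item closed convex hulls vary continuously with the closed subset of $S^1$ in the Hausdorff topology (e.g.\ via Euclidean hulls in the Klein model).
\end{itemize}
Then $K$ is the closed hull of $\lim\nu_i$. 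This limit is either a point or lies in one positive class, by Lemma~\ref{lemma:L_closed_subsets} and transitivity, so $K\subset C(\nu)$.

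Three corrections.

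First, delete the fallback of adding uncovered points of $P^\pm$ as singletons. It does affect the rest of the proof: your last step uses that every element of $\CC$ meets $S^1$, as you yourself note at the end.

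Second, your linking version of no-gaps only treats gaps bounded by leaves of two distinct classes. Phrase it so that it covers every case. Let $U$ be a complementary region of $\Lam(\LL^+)$ lying in no hull, and pick two ideal points $p,q$ of $U$ not in a common positive class. This is possible, since otherwise $U$ lies in a single hull. This choice also covers gaps with a single frontier leaf, gaps whose ideal boundary contains arcs of $S^1$, and the case $\Lam(\LL^+)=\emptyset$. By Lemma~\ref{lemma:linking_endpoints}, a positive leaf links $\{p,q\}$, so it crosses the geodesic $pq$ inside $U$, and its hull $H^+(\mu)$ meets $U$. Now $U$ is connected and misses the frontier of $H^+(\mu)$, which is a union of boundary leaves. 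This forces $U\subset H^+(\mu)$, a contradiction.

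Third, the appeal to Moore's theorem is redundant, as the paper remarks after the theorem. The continuous-bijection argument already gives the homeomorphism, and it is the argument that yields $F|_{S^1}=f$.
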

\begin{proof}
The fact that elements of $\CC$ are closed follows from 
Proposition~\ref{proposition:relations_transitive} and the fact that they are 
disjoint follows from Lemma~\ref{proposition:classes_unlinked}.

Let's denote by $|\CC| \subset S^2_\fun$ the union of the elements of $\CC$. We will
show that $|\CC| = S^2_\fun$. Suppose not, so that there is some $x$ in the complement.
Since all of $S^1$ is in $|\CC|$ we may assume without loss of generality that $x \in P^+$.
For every nontrivial positive image relation class $\nu$ the convex hull $H^+(\nu)$ has a unique
boundary leaf in $P^+$ that separates $x$ from the rest of $H^+(\nu)$ (if any). 
We may define a partial order on these boundary leaves where $\ell > \ell'$ if $\ell$ 
separates $\ell'$ from $x$. 

Maximal elements for this partial order exist, since if
$\ell_i \subset H^+(\nu_i)$ is an ascending sequence, some subsequence has a limiting leaf
$\ell_\infty$ in $P^+$ which is necessarily contained in some $H^+(\nu)$ because
$\LL^+$ is closed.  But then $H^+(\nu)$ has some boundary leaf $\ell'$ with
$\ell_i < \ell'$ for all $i$; this proves the claim.

On the other hand, every boundary leaf of every $H^+(\nu)$ is a limit of boundary leaves
from the outside by Lemma~\ref{lemma:boundary_leaves_accumulate}, and therefore maximal
elements can't exist after all. This contradiction proves that $|\CC| = S^2_\fun$.

\begin{figure}[htpb]
\centering
\includegraphics[scale=0.5]{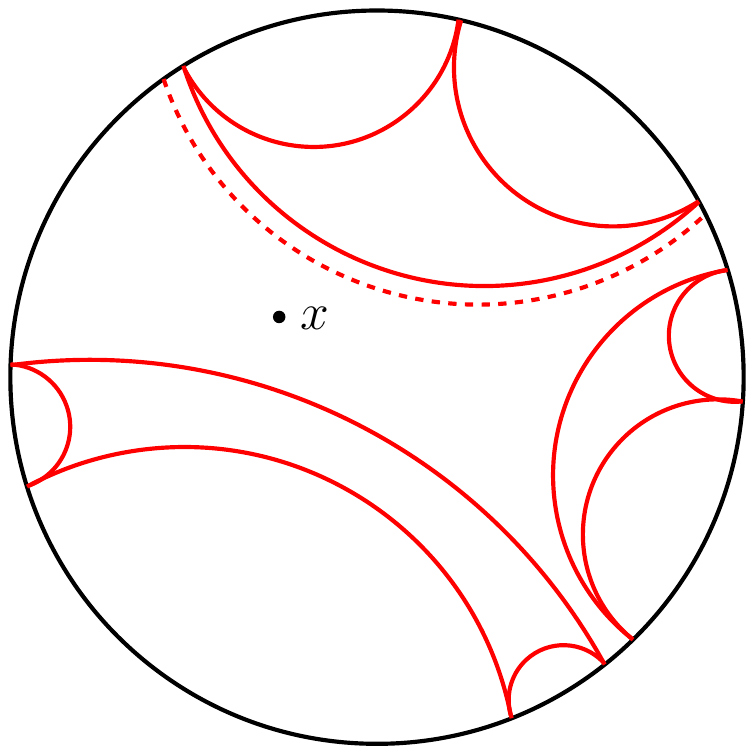}
\caption{The union $|\CC|$ is all of $S^2_\fun$.}
\label{decomposition_fills}
\end{figure}

Finally we must prove upper semi-continuity. Let's fix an element $C(\nu)$. First let's
consider the case that $C(\nu)$ consists of a single point $p\in S^1$. This point is a limit
of rainbows in both $\LL^+$ and $\LL^-$. Choose a boundary leaf of $\LL^+$ that cuts off
an open half-space $A$ in $P^+$. The closure of $A$ meets $S^1$ in a closed interval
$I$ with $p$ in its interior. For every point $q$ in the interior of $I$ that is
contained in a nontrivial leaf of $\LL^+$ there is a rainbow in $\LL^-$ that limits
to $q$, and we choose a boundary leaf of $\LL^-$ in this rainbow that cuts off an 
open half-space $B_q$ in $P^-$ and whose closure meets $S^1$ in an interval $J_q$ in
the interior of $I$. Likewise, we choose a boundary leaf of $\LL^-$ in a rainbow for
$p$ that cuts off $B_p$ in $P^-$ whose closure meets $S^1$ in an interval $J_p$ in the interior
of $I$. Then we may form a set 
$$U: = A \cup B_p \cup \inte(J_p) \bigcup_q B_q \bigcup_q \inte(J_q)$$
where $\inte(\cdot)$ of an interval denotes interior in $S^1$.
One may check that this is open in $S^2_\fun$ and may be chosen to lie inside any open
neighborhood $V$ of $p$. Furthermore by construction any element of $\CC$ that meets
$U$ must be entirely contained in $U$, and therefore $U$ is a union of elements
of $\CC$.

\begin{figure}[htpb]
\centering
\includegraphics[scale=0.5]{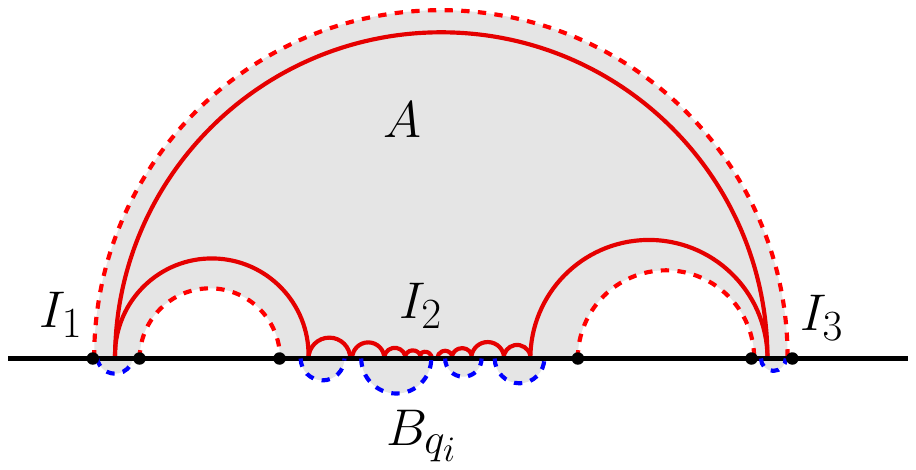}
\caption{The construction of an open neighborhood $U$ which is a union of elements of $\CC$.}
\label{upper_semi_leaf}
\end{figure}

If $\nu$ is a nontrivial class (without loss of generality positive) the boundary
leaves of $C(\nu)$ in $P^+$ are limits of positive boundary leaves, and therefore we
may cut $P^+$ along a finite family of such leaves, one for each boundary leaf of $C(\nu)$
with diameter $>\epsilon$ in a round metric on $S^2_\fun$, to produce an open subspace $A$
containing $H^+(\nu)$, and such that the closure of $A$ meets $S^1$ in finitely many
intervals $I_1,\cdots,I_n$ whose interiors contain $\nu$. As in the previous case,
for every point $q$ in the interior of any $I_i$ that is contained in a nontrivial leaf
of $\LL^+$ there is a rainbow in $\LL^-$ that limits to $q$, and we may choose a
boundary leaf of $\LL^-$ in this rainbow that cuts off an open half-space $B_q$ in
$P^-$ and whose closure meets $S^1$ in an interval $J_q$ in the interior of $I_i$.
Then we may form as above the set
$$U: = A \bigcup_q B_q \bigcup_q \inte(J_q)$$
and check that this is open in $S^2_\fun$ and consists of a union of elements of
$\CC$; furthermore, by choosing $\epsilon$ sufficiently small we may ensure that
$U$ is contained inside any open neighborhood $V$ of $C(\nu)$. This completes the
proof of upper semi-continuity. 

Since each decomposition element of $\CC$ meets $S^1$ in an equivalence class of
the image relation, and since all equivalence classes arise this way, the map
$f:S^1 \to S^2$ factors through the inclusion $i:S^1\to S^2_\fun$, and the
induced map $F:S^2_\fun \to S^2$ for which $Fi=f$ is exactly the map from $S^2_\fun$
to its quotient space under the decomposition $\CC$.
\end{proof}

\begin{remark}
In fact, a theorem of R. L. Moore \cite{Moore} says that if $\CC$ is any upper semi-continuous
decomposition of $S^2$ whose elements are non-separating, the quotient space is
homeomorphic to $S^2$. This consequence may be seen directly in our context without
invoking Moore's theorem; however, we shall use Moore's theorem in the sequel in order
to prove a converse to Theorem~\ref{theorem:laminar_decomposition}.
\end{remark}

\subsection{Zippers}

We may now reinterpret the subsets $Z^\pm(I)$ as the images of bi-infinite geodesics
in the hyperbolic disks $P^\pm$.

For a closed interval $I$ in $S^1$ let's introduce the notation $Z_e^+(I)$ (the
subscript $e$ stands for `extended') for the union of 
$Z^+(I)$ together with certain points in $f(\partial I)$ as follows:
\begin{enumerate}
\item{if $I$ is positive degenerate, then $Z_e^+(I) = f(\partial I)$; and}
\item{if $I$ is not positive degenerate, but $x\in \partial I$ is contained
in some equivalence class $\nu$ of $\LL^+$ that nontrivially intersects the
interiors of both $I$ and $I^c$, then $f(x) \in Z_e^+(I)$.}
\end{enumerate}

Notice that any point of $Z_e^+(I) - Z^+(I)$ is contained in the image of a nontrivial
class of $\LL^+$, and therefore $Z_e^+(I)$ and $Z_e^-(I)$ are still disjoint, 
and furthermore $Z_e^\pm(I) = Z_e^\pm(I^c)$ for any $I$.
For example, if $I$ is positive degenerate, $Z_e^+(I)$ consists of the single point
$f(\partial I)$. Notice that $Z_e^+(I)$ might be open, closed, or neither.

\begin{lemma}[$Z_e$ is image]\label{lemma:Z_is_image}
For any closed interval $I \subset S^1$ let $\gamma^+$ be the oriented bi-infinite geodesic
in $P^+$ resp. $P^-$ joining $I^-$ to $I^+$. Then
$F(\gamma^+) = Z_e^+(I)$ resp. $Z_e^-(I)$ where $F:S^2_\fun \to S^2$ is the quotient map defined in 
Theorem~\ref{theorem:laminar_decomposition},
and furthermore the map $F:\gamma \to Z_e^+(I)$ resp $Z_e^-(I)$ is monotone increasing in the
canonical orientation. 
\end{lemma}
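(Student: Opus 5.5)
We work with $\gamma^+\subset P^+$ (the $P^-$ case is symmetric) and describe $F(\gamma^+)$ by sorting the decomposition elements of $\CC$ that $\gamma^+$ meets. An element $C(\nu)$ meets the open geodesic $\gamma^+$ only if $\nu$ is a nontrivial positive class whose hull $H^+(\nu)$ crosses or contains $\gamma^+$. That happens exactly when $\nu$ meets both $\inte(I)$ and $\inte(I^c)$, or when $\nu\supset\partial I$. So the plan is to identify the images of such classes with points of $Z_e^+(I)$, and conversely.

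We split into cases. If $I$ is positively degenerate, then $\partial I\subset\nu$ for a positive class $\nu$, so $\gamma^+\subset H^+(\nu)$ and $F(\gamma^+)=f(\partial I)=Z_e^+(I)$. This case is trivial (monotonicity is vacuous).

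Otherwise, no positive class contains both endpoints of $I$, so by unlinkedness (Proposition~\ref{proposition:classes_unlinked}) the positive classes crossing $\gamma^+$ are linearly ordered along $\gamma^+$, and each meets $\gamma^+$ in a point or a closed segment. Each such class $\nu$ is a fiber of $f$ with points $r\in\inte(I^c)$ and $s\in\inte(I)$, so $f(\nu)=f(s)=f(r)\in f(I)\cap f(I^c)=\partial f(I)$ by Lemma~\ref{lemma:adjacent_intervals}. If $f(\nu)\notin f(\partial I)$, then $[r,s]$ is positively degenerate, and the argument of Lemma~\ref{lemma:linking_endpoints} (run in reverse) shows the image lies in $Z^+(I)$ rather than $Z^-(I)$. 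Concretely, suppose $x\in Z^-(I)$ had preimages $s\in I$ and $r\in I^c$. The arc of $Z^-(I)$ from $f(I^-)$ to $x$ is then $Z^-$ of a subinterval, and one checks that $[r,s]$ would have empty $Z^-$, so it would be negatively degenerate. If instead $f(\nu)\in f(\partial I)$, the definition of $Z_e^+$ puts it in. This gives $F(\gamma^+)\subset Z_e^+(I)$.

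For the reverse inclusion, take $x\in Z^+(I)$. The construction in Lemma~\ref{lemma:linking_endpoints} produces $s\in(I^-,I^+)$ and $r\in\inte(I^c)$ with $f(r)=f(s)=x$ and $\{r,s\}$ a positive leaf, so the class of $x$ crosses $\gamma^+$. Points of $Z_e^+(I)\cap f(\partial I)$ are handled directly by the definition: the positive class through the endpoint crosses $\gamma^+$, or contains an endpoint of $\gamma^+$ in its closure.

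Monotonicity then follows from the linear order. Parametrize $\gamma^+$ from $I^-$ to $I^+$. For two crossing classes with points $s_1<s_2$ in $I$, the crossing points on $\gamma^+$ are ordered the same way, by unlinkedness. By Proposition~\ref{proposition:monotonicity}, $f(s_1)\le f(s_2)$ in the canonical orientation. The map is constant on segments where $\gamma^+$ lies in a single hull, so it is monotone non-decreasing.

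The main obstacle is the sign bookkeeping in the second paragraph: showing that an image point in $Z^-(I)$ cannot be realized by a positive leaf crossing $[I^-,I^+]$. For that step I would repeat the argument of Lemma~\ref{lemma:linking_endpoints} with the roles of $\pm$ swapped, using Lemma~\ref{lemma:complementary_orientations} to compare the orientations coming from $I$ and $I^c$.
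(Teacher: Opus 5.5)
Your argument is correct. Its skeleton matches the paper's: the positively degenerate case is trivial, every hull crossing $\gamma^+$ comes from a fiber lying over $\partial f(I)$, and monotonicity follows from Proposition~\ref{proposition:monotonicity} together with the linear order of the crossing hulls. Where you differ is in the sign step and in surjectivity.

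The paper decides the sign locally. For $p\in\mu\cap\inte(I)$ it uses a negative rainbow at $p$ to find a negatively degenerate $J\subset\inte(I)$ around $p$, and notes $f(p)\in Z^+(J)$ since $Z^-(J)=\emptyset$. It then pushes membership in $Z^+$ outward through the gluings $f(J\cup K)$ and $f(I)$ along shared boundary arcs. Surjectivity comes from connectedness of $F(\gamma^+)$ and the fact that its closure contains $f(\partial I)$.

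You instead extract from the proof of Lemma~\ref{lemma:linking_endpoints} an intermediate claim: for $x\in Z^+(I)$, \emph{every} pair $s\in\inte(I)$, $r\in\inte(I^c)$ over $x$ gives a positively degenerate $[r,s]$. You use it twice. First, in its $\pm$-mirror, contrapositively, to rule out $Z^-(I)$; this is legitimate because $Z^-(I)$ is also canonically oriented from $f(I^-)$ to $f(I^+)$ and the argument is symmetric. Second, directly, to show each point of $Z^+(I)$ is hit.

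Your route is shorter, reuses an existing argument, and gives surjectivity pointwise. The paper's connectedness step does not suffice on its own when $I$ is negatively degenerate: a connected subset of the open arc $Z^+(I)$ whose closure contains the single point $f(\partial I)$ could be a half-arc. There the paper implicitly relies on the monotonicity it establishes afterwards. The paper's route, for its part, never needs $I^c$ or the mirror symmetry in the sign step.

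Two cosmetic points:
\begin{itemize}
\item Lemma~\ref{lemma:complementary_orientations} is not needed, since only the nested intervals $[I^-,s]\subset I$ and $[I^-,s]\subset[r,s]$ are compared.
\item A hull containing an endpoint of $I$ meets $\gamma^+$ in a ray, not a point or segment. Your clause about a class ``containing an endpoint of $\gamma^+$ in its closure'' is superfluous, because the definition of $Z_e^+(I)$ already requires the class to meet both $\inte(I)$ and $\inte(I^c)$.
\end{itemize}
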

\begin{proof}
Each point $x \in \gamma^+$ lies in exactly one equivalence class $C(\mu)$. Since
$x \in P^+$ this means $x$ is contained in the convex hull $H^+(\mu)$. If $I$ is positive
degenerate, $\gamma^+$ lies entirely in $H^+(\mu)$ and $F(\gamma^+) = f(\mu) = Z_e^+(I)$
by definition. Otherwise $\mu$ contains points on either
side of $\gamma$. In other words, $F(x) = f(\mu) \in f(I) \cap f(I^c) = \partial f(I)$.
If $\mu$ contains a point of $\partial I$ then $F(x)\in Z_e^+(I)$ by definition.
Otherwise $F(x)$ is in one of $Z^+(I)$ or $Z^-(I)$. We shall show it lies in $Z^+(I)$.

Pick a point $p \in \mu \cap \inte(I)$. Then $p$ has a negative rainbow, so there
is a closed negatively degenerate interval $J$ with $p \in \inte(J)$ and $J \subset \inte(I)$.
In particular, $F(x)=f(\mu)=f(p) \in Z^+(J)$ because $\mu$ is a nontrivial positive
equivalence class, while $J^\pm$ are contained in a nontrivial negative equivalence
class. Let $K$ be the interval adjacent to $J$
defined by $K=[I^-,J^-]$. The disk $f(J\cup K)$ is the union of the two disks $f(J)$ and
$f(K)$ glued together along a mutual interval in their common boundaries. 
It is also true that $f(p)\in \partial f(J\cup K)$ because $f(p)\in \partial f(I)$ and 
$J\cup K\subset I$, and we have already seen that $f(p)$ is not equal to $f(I^-)$
or to $f(J^-)$ and therefore $f(p)$ is contained in a 
component $Y$ of $\partial f(J \cup K) - f(\partial (J\cup K))$ and this component 
necessarily contains an interval in common
with $Z^+(J)$. Since the preimage of this interval in $J$ maps monotonely under $f$
in both $J$ and in $J\cup K$, it follows that $f(p) \in Z^+(J\cup K)$.

But now we can repeat the same argument. Let $L$ be the interval adjacent to
$J\cup K$ defined by $L = [J^+,I^+]$. Then $f(I)$ is the union of the disks $f(L)$ and
$f(J\cup K)$ glued together along a mutual interval in their common boundaries, 
and $f(p) \in \partial f(I)$ in the complement of $f(\partial I)$. Thus 
$f(p)$ is contained in a component $Y'$ of $\partial f(I) - f(\partial I)$, and this
component contains an interval in common with $Z^+(J\cup K)$. Since the preimage
of this interval in $J$ maps monotonely under $f$ in both $I$ and in $J\cup K$
it follows that $f(p) \in Z^+(J\cup K)$.

It follows that the image $F(\gamma)$ is contained in $Z_e^+(I)$. By continuity of $F$
its image is path connected, and its closure contains $f(\partial I)$. Thus
$F(\gamma) = Z_e^+(I)$.

Finally, if $x_i \in \gamma$ is a monotone increasing sequence then each $x_i$
is in some $H^+(\mu_i)$ where $\mu_i$ contains a point $p_i \in \inte(I)$ in such a way that
$p_i$ is monotone increasing in $I$. Since the $p_i$ are all in the preimage of
$Z_e^+(I)$ their image in $Z_e^+(I)$ are also monotone increasing for the canonical orientation,
and we are done.
\end{proof}

\begin{theorem}[Zippers]\label{theorem:zippers_from_wheels}
Let $f:S^1 \to S^2$ be a CaTherine wheel. Define $Z^+:=\cup_I Z^+(I)$ and 
$Z^-:=\cup_I Z^-(I)$ as subsets of $S^2$. Then $Z^\pm = F(P^\pm)$ and furthermore,
\begin{enumerate}
\item{$Z^+$ and $Z^-$ are disjoint, dense, consist entirely of cut points,
and each of them is a countable increasing union of finite trees 
(in particular, they are path connected, and in the path topology 
they are each homeomorphic to a $\sigma$-compact topological $\R$-tree); }
\item{each of $Z^+$ and $Z^-$ satisfy the {\em strong landing property} for proper rays;
in other words: every embedded half-open interval $r:[0,1) \to Z^+$ extends
to an embedded closed interval $\bar{r}:[0,1] \to S^2$ in one of three ways:
\begin{enumerate}
\item{(type 0): $\bar{r}(1) \in Z^+$;}
\item{(type 1): $\bar{r}(1) \in S^2 - (Z^+\cup Z^-)$ and there is another embedded
half-open interval $s:[0,1) \to Z^-$ extending to $\bar{s}:[0,1]\to S^2$ with $r(1)=s(1)$; or}
\item{(type 2): $\bar{r}(1) \in Z^-$;}
\end{enumerate}
and conversely every point in $S^2$ is obtained as $\bar{r}(1)$ for some ray
$r$; and similarly for $Z^-$; and}
\item{each of $Z^\pm$ is {\em hairy}: every embedded interval in $Z^+$ has arcs 
branching off it on either side, and similarly for $Z^-$.}
\end{enumerate}
In fact, every $r:[0,1) \to Z^+$ actually has image contained in $Z_e^+(I)$ for some 
closed interval $I\subset S^1$ and similarly for $Z^-$.
\end{theorem}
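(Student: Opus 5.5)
The plan is to work throughout in the model $S^2_\fun$ and to push everything forward through the quotient map $F$ of Theorem~\ref{theorem:laminar_decomposition}. The first step is the identity $Z^\pm=F(P^\pm)$. Every point of $P^+$ lies on some bi-infinite geodesic $\gamma$ joining the endpoints of some interval $I$. By Lemma~\ref{lemma:Z_is_image} we have $F(\gamma)=Z_e^+(I)$, so $F(P^+)=\bigcup_I Z_e^+(I)$. Conversely $Z^+(I)\subset Z_e^+(I)$. The extra points of $Z_e^+(I)$ are images of nontrivial positive classes, and each such point lies in $Z^+(J)$ for a slightly perturbed $J$: take $J$ with endpoints near, but not in, that class, chosen so that the class links $\partial J$. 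Hence $\bigcup_I Z^+(I)=F(P^+)$, and the same argument applies to $Z^-$.

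Next I prove the properties in (1), all of which are inherited from $P^\pm$. For disjointness, the decomposition elements meeting $P^+$ are positive hulls and those meeting $P^-$ are negative hulls. These are disjoint by Proposition~\ref{proposition:relations_transitive}(2), so $F(P^+)\cap F(P^-)=\emptyset$. Density holds because $F(P^+)\supset Z^+(I)\subset\partial f(I)$, and by Lemma~\ref{lemma:basic_properties}(1) small disks $f(I)$ lie in any open set. For the tree structure, exhaust $P^+$ by an increasing sequence of finite geodesic trees $T_n$ spanned by finitely many points of a countable dense subset of $S^1$. Then $F(T_n)$ is a finite union of the sets $Z_e^+(I)$, each of which is an arc by the monotonicity in Lemma~\ref{lemma:Z_is_image}. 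The image $F(P^+)/\!\sim$ is the $\R$-tree obtained by collapsing each convex hull $H^+(\nu)$ to a point, which is a dual tree to the lamination. Thus $F(T_n)$ is a finite tree and $Z^+$ is their increasing union; the path topology is the $\R$-tree topology. For cut points, each $F(x)$ with $x\in P^+$ separates the tree, because removing $H^+(\nu)$ disconnects $P^+$.

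The third step is the strong landing property (2). Given an embedded ray $r:[0,1)\to Z^+$, lift it through the tree structure to a proper path in $P^+/\{H^+(\nu)\}$. This path either terminates at a hull, or it crosses a nested sequence of boundary leaves whose endpoints converge, by Lemma~\ref{lemma:L_closed_subsets}, either to a single point $p\in S^1$ or to a leaf pair. This gives the statement that $r$ lies in some $Z_e^+(I)$: one takes $I$ with endpoints at the limit. In the type 0 case the limiting set is a positive leaf. Otherwise the endpoint is $f(p)$, and $p$ is either in a nontrivial negative class (type 2) or in no nontrivial class. In the latter case Lemma~\ref{lemma:boundary_leaves_accumulate} provides a negative rainbow at $p$ that yields the ray $s$ in $Z^-$ (type 1). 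Continuity at the endpoint follows from upper semicontinuity of $\CC$ together with the shrinking of the nested half-spaces. For the converse, let $y\in S^2$ and pick $p\in f^{-1}(y)$. The geodesic ray in $P^+$ from a basepoint to $p$, or to a point of the class, has image landing at $y$.

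The last step is hairiness (3). An embedded arc in $Z^+$ is $F$ of a geodesic segment $\sigma$ in $P^+$ modulo hulls. Lemma~\ref{lemma:boundary_leaves_accumulate}, and more precisely Lemma~\ref{lemma:linking_endpoints}, produce positive leaves crossing $\sigma$ whose endpoints lie on both sides, so that branches leave $\sigma$ on either side of it in $S^2$. Orientation on the two sides is controlled by Proposition~\ref{proposition:monotonicity}. I expect the main obstacle to be the landing step: showing that rays which are embedded in the path topology actually converge in $S^2$, and that the trichotomy of types is exhaustive. This requires careful use of upper semicontinuity and of the fact that nested boundary leaves shrink to a point or to a leaf.
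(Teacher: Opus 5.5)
\textbf{The gap is in your proof of hairiness (3).} A positive leaf that crosses $\sigma$, with one endpoint on each side of the geodesic $\gamma\supset\sigma$, does not produce a branch. If its class has only those two points, its hull meets $\sigma$ in a single point, and the arc $F(\sigma)$ simply passes through its image. A hair on the right of $F(\sigma)$ needs something more: a positive class $\mu$ whose hull meets the interior of $\sigma$ and has at least two ideal points to the right of $\gamma$, so that some complementary region of $H^+(\mu)$ lies on the right and avoids $\sigma$.

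Nothing you cite forces such a class to exist. Consider a ``product region'': an interval's worth of two-point positive classes all crossing $\sigma$, each with exactly one endpoint in an interval $J$ to the right of $\gamma$. This configuration is compatible with everything you invoke:
\begin{enumerate}
\item $\LL^+$ is closed and laminar;
\item Lemma~\ref{lemma:linking_endpoints} holds, since any pair in $J$ is linked by a crossing leaf;
\item both parts of Lemma~\ref{lemma:boundary_leaves_accumulate} hold for $\LL^+$, since every leaf is a limit of leaves on both sides, and the rainbow statement is vacuous because every point of $J$ lies in a positive leaf.
\end{enumerate}
Yet $F(\sigma)$ has no hair on the right. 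Proposition~\ref{proposition:monotonicity} does not help either.

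\textbf{The missing idea is that this configuration can only be excluded through $\LL^-$ and the absence of perfect fits.} Suppose $F(\sigma)$ had no right hairs. Then every point of some nontrivial interval $J\subset S^1$ would lie in a nontrivial positive class. By Proposition~\ref{proposition:relations_transitive}(2), no point of $J$ then lies in a negative leaf. So each point of $J$ has a negative rainbow (the $\LL^-$ version of Lemma~\ref{lemma:boundary_leaves_accumulate}), and the leaves of that rainbow have endpoints in $J$. This produces a point lying in both a positive and a negative leaf, a contradiction. This is exactly the paper's argument. Equivalently, it is what supplies a point on the right side lying in no positive leaf, whose positive rainbow then gives the hair.

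\textbf{Secondary issue: the tree structure.} You identify $F(P^+)$ with the dual tree of the hull decomposition. But $F|_{P^+}$ is only a continuous bijection from that quotient onto $Z^+$ with the subspace topology, so you still owe an argument that $Z^+$ contains no embedded circle. The paper's argument is that such a circle would be a Jordan curve missing the connected dense set $Z^-$, which is impossible. Your landing step also relies on this unique-path property when you ``lift'' an arbitrary embedded ray.

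In addition, trees spanned by ideal points neither exhaust $P^+$ nor have compact images. Use compact segments or polygons, as the paper does, and check that every hull meets one of them.
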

\begin{proof}
We have already seen by Lemma~\ref{lemma:Z_is_image} that $Z_e^+(I) = F(\gamma^+)$ 
where $\gamma^+ \subset P^+$ is the positively oriented geodesic running from $I^-$ to
$I^+$. Thus $F(P^+) = \cup_I Z_e^+(I)$. To prove the stronger statement that
$F(P^+) = \cup_I Z^+(I)$ it suffices to find, for each $x\in H^+(\mu) \subset P^+$, 
a geodesic $\gamma^+$ containing $x$ whose endpoints are not in $\mu$, so that
$F(x) \in Z_e^+(I) - f(\partial I) = Z^+(I)$.
Since $\mu \subset S^1$ is closed and totally disconnected, this is easy to arrange. This proves
$Z^+ = F(P^+)$ and similarly for $Z^-$.

Since $F$ is the map to the quotient space defined by $\CC$, and since the elements
of $\CC$ that intersect $P^+$ are disjoint from the elements that intersect $P^-$, it
follows that $Z^+$ and $Z^-$ are disjoint. Since the closure of each of $P^\pm$ in
$S^2_\fun$ contains $S^1$, they are dense. Since every nontrivial positive image
equivalence class $\nu \subset S^1$ has at least two points but is nowhere dense, the
complement of $H^+(\nu)$ in $P^+$ has at least two components, and thus every point of
$Z^+$ is a cut point (and similarly for $Z^-$).

Since $P^\pm$ are path-connected, so are $Z^\pm$. It follows, because
the $Z^\pm$ are disjoint, dense and path-connected, that every two
distinct points in $Z^+$ are joined by a unique embedded path in $Z^+$ (and likewise
for $Z^-$) or else an innermost pair of distinct embedded paths between two points of
$Z^+$ would form a Jordan curve which would necessarily intersect $Z^-$.

Here is an explicit way to construct these unique paths. If $p,q \in Z^+$ are any two distinct points, 
they are in the image of equivalence classes $C(\mu)$, $C(\nu)$ that intersect $P^+$
in nonempty convex hulls $H^+(\mu)$, $H^+(\nu)$. If we choose any two points 
$x\in H^+(\mu)$ and $y\in H^+(\nu)$ and let $\delta$ be the (oriented) geodesic segment
joining them, then we claim $F(\delta)$ is an embedded path in $Z^+$ running from $p$
to $q$. To see this, extend $\delta$ to a bi-infinite oriented geodesic $\gamma^+$
running between the endpoints $I^-,I^+$ of a closed interval $I\subset S^1$.
By Lemma~\ref{lemma:Z_is_image} the map $F:\gamma^+ \to Z_e^+(I)$ is a monotone surjection,
and therefore its restriction to $\delta$ is an embedded subpath of $Z_e^+(I)$ running
from $p$ to $q$. Notice by the way that this path is also a subpath of $Z_e^+(J)$ 
for every interval $J$ that intersects both $\mu$ and $\nu$ in $S^1$.

To see that each of $Z^\pm$ is a countable increasing union of finite trees, take a countable
increasing union of finite sided geodesic polygons $P_i \subset P_{i+1} \subset$ whose
union is $P^+$. Any equivalence class $C(\mu)$ that intersects $P_i$ must intersect its
boundary, and therefore $F(P_i)=F(\partial P_i)$ and $\cup F(P_i) = F(P^+) = Z^+$. 
But $F(\partial P_i)$ is a finite union of embedded closed paths, each contained in 
some path $Z^+(I)$. Since there is a unique path in $Z^+$ joining any two points, 
any finite union of embedded closed paths is a finite tree. 

Now let's prove the strong landing property. Let $r:[0,1) \to Z^+$ be an
embedding. For each positive integer $n$ we may choose a point $x_n \in P^+$ with $F(x_n)=r(1-1/n)$
and let $\gamma_n$ be the oriented geodesic in $P^+$ from $x_1$ to $x_n$. Then 
$F(\gamma_n) = r([0,1-1/n])$. There is some subsequence of $x_n$ that converges to some
$x_\infty \in P^+\cup S^1$, and we let $\gamma^+$ be the bi-infinite geodesic in $P^+$
whose closure contains $x_1$ and $x_\infty$. Let $I \subset S^1$ be the closed interval 
whose endpoints are the endpoints of the closure of $\gamma^+$. Then by 
Lemma~\ref{lemma:Z_is_image} it follows that the image of $r$ lies in $Z_e^+(I)$.
From this the strong landing property follows, since we may 
evidently find $\bar{r}$ and $s$ if necessary with image in $\partial f(I)$.

Finally we show that $Z^\pm$ are hairy. Suppose not, so that without loss of generality
there is some oriented segment
$\delta$ in $Z^+$ (say) which has no branching on the right hand side. Let $\gamma$ be
an oriented geodesic in $P^+$ containing a segment $\gamma'\subset \gamma$ mapping to $\delta$.
Consider the intersection of $\gamma'$ with decomposition elements $H^+(\mu)$, and 
for each of these consider the intersection of $H^+(\mu)$ with the interval of $S^1$ 
on the right of $\gamma$. The hypothesis of no branching means that each such $H^+(\mu)$ 
intersects this interval in $S^1$ in exactly one point, and therefore there is a nontrivial
interval $J$ in $S^1$ consisting entirely of endpoints of distinct nontrivial decomposition
elements of $\LL^+$. But then any element of $\LL^-$ with an endpoint in $J$ would create
a perfect fit. This contradiction completes the proof.
\end{proof}

The objects $Z^\pm \subset S^2$ are an example of a {\em zipper}, whose precise
definition we shall give in \S~\ref{subsection:zipper_definitions}.

\section{Laminar Relations}

The main goal of this section is to prove Theorem~\ref{theorem:wheels_from_laminations} 
which is a converse to 
Theorem~\ref{theorem:laminar_decomposition}. Namely, we shall characterize exactly
which pairs of laminar relations $\LL^+$, $\LL^-$ can arise from CaTherine wheels.
This will be useful in the sequel for constructing examples with desirable properties.

\subsection{Definitions}

Although it is not strictly necessary, it is convenient to express things in the 
language of laminations, since this makes it easier to connect the contents of this paper
to the literature. See for example Baik--Kim \cite{Baik_Kim} for an introduction to the theory
and significance of laminations in low-dimensional topology, or 
Thurston \cite{Thurston_degree_d}.

\begin{definition}[Lamination]\label{definition:lamination}
A {\em lamination} is a closed subset of the space $\M$ of unordered pairs of points
in $S^1$ such that no two elements of this subset are linked in $S^1$. The elements
of a lamination are called {\em leaves}. 
\end{definition}

It is convenient to alternate between thinking of a leaf of a lamination as an unordered
pair of points in $S^1$, or as a geodesic in $\HH^2$; likewise, we will think of a nontrivial
equivalence class $\nu$ of a laminar relation either as a closed subset of $S^1$, or
as its convex hull $H(\nu)$ in $\HH^2$.

Thinking of leaves as geodesics in $\HH^2$, it makes sense to think of each leaf as having
two `sides'. If $\Lambda$ is a lamination, a leaf $\lambda$ of $\Lambda$ may be 
a limit of leaves of $\Lambda$ on both sides, on one side, or on neither side. 
A leaf which is a limit of leaves of $\Lambda$ on neither side is an {\em isolated leaf}.

There is a close relationship between laminations and laminar relations, although this
relationship has some subtlety. This is explained carefully in \cite{Thurston_degree_d}.
We do not pursue this matter in the greatest possible generality, restricting ourselves
to the laminations and laminar relations that are important for CaTherine wheels.

First of all, a laminar relation determines a lamination as follows:

\begin{definition}[Boundary lamination]\label{definition:boundary_lamination}
A laminar relation $\LL$ determines a lamination $\Lam(\LL)$ whose elements
are the boundary leaves of $\LL$, i.e.\/ the boundaries of the convex hulls of the
equivalence classes of $\LL$. 

To be precise, if $\nu$ is a nontrivial equivalence class of $\LL$, we may take the
convex hull $H(\nu)$. If $|\nu|=2$ then $H(\nu)$ consists of a single geodesic,
and this geodesic is a leaf of $\Lam(\LL)$. If $|\nu|>2$ then $H(\nu)$ is a convex
polygon, and the sides of $H(\nu)$ are leaves of $\Lam(\LL)$.

A lamination that arises this way is called 
a {\em boundary lamination}.
\end{definition}

A lamination $\Lambda$ generates a laminar relation $\LL$ in several different ways,
which we now distinguish:
\begin{definition}[Two relations]\label{definition:two_relations}
Let $\Lambda$ be a lamination. The {\em little relation}, denoted $\rel(\Lambda)$, 
is the smallest closed equivalence relation on unordered pairs that contains $\Lambda$.

The {\em big relation}, denoted $\Rel(\Lambda)$ is the smallest closed equivalence
relation on unordered pairs that contains $\Lambda$, and contains 
every unordered pair that crosses only countably many leaves of $\Lambda$.
\end{definition}
It is straightforward to check that both $\rel(\Lambda)$ and $\Rel(\Lambda)$ are
laminar relations. Notice that every two distinct elements of $\Rel(\Lambda)$
are separated from each other in the disk 
by uncountably many distinct elements of $\Rel(\Lambda)$; this is an essential 
property if we want the quotient of the disk by this relation to be Hausdorff
and path-connected. 

\begin{remark}
Thurston defines only the little relation in \cite{Thurston_degree_d}, and denotes
it $\Rel$, which is an unfortunate clash of notation. For the relations that arise
from CaTherine wheels it is the big relation that is important, as we shall see, 
and it seems to make sense for us to denote it by $\Rel$.
\end{remark}

In general, neither $\rel$ nor $\Rel$ are inverse to $\Lam$. However for the laminar
relations that arise from CaTherine wheels the situation is cleaner.

\begin{proposition}[Back and forth]\label{proposition:back_and_forth}
Suppose $\Lambda$ is a boundary lamination. Then no point $p\in S^1$ is contained in
more than two leaves of $\Lambda$.

Conversely, suppose $\Lambda$ is a lamination such that
\begin{enumerate}
\item{no point $p \in S^1$ is contained in more than two leaves of $\Lambda$; and}
\item{$\Lambda$ has no isolated leaves.}
\end{enumerate}
Let $\LL:=\Rel(\Lambda)$. Then $\Lam(\LL) = \Lambda$; in particular, 
$\Lambda$ is a boundary lamination.
\end{proposition}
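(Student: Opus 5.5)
The first assertion is immediate from the structure of convex hulls, so I would dispose of it quickly. Suppose $\Lambda=\Lam(\LL)$ for a laminar relation $\LL$, and let $p\in S^1$ be an endpoint of some leaf. Then $p$ lies in a unique nontrivial class $\nu$, since classes are disjoint. Every leaf of $\Lambda$ through $p$ is a boundary side of $H(\nu)$: a leaf with endpoint $p$ comes from some class $\mu$ containing $p$, and $\mu=\nu$.

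The boundary sides of the convex hull of a closed set $\nu\subset S^1$ correspond to the complementary open intervals of $S^1-\nu$. A point $p\in\nu$ is an endpoint of at most two such intervals, one on each side of $p$. Hence at most two leaves of $\Lambda$ meet $p$; if $|\nu|=2$ there is exactly one.

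For the converse, let $\LL=\Rel(\Lambda)$. I would prove the two inclusions $\Lambda\subset\Lam(\LL)$ and $\Lam(\LL)\subset\Lambda$. To make this tractable I would first describe $\Rel(\Lambda)$ concretely.

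\textbf{Step 1: describe the classes.} Declare $p\approx q$ if the chord $\{p,q\}$ crosses only countably many leaves of $\Lambda$. I would argue that this relation, together with $\Lambda$ itself (leaves cross no leaves), is already closed, transitive and unlinked, so that it equals $\Rel(\Lambda)$.

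Transitivity holds because a leaf crossing $\{p,r\}$ must cross $\{p,q\}$ or $\{q,r\}$. Closedness holds because a limit chord crossing uncountably many leaves crosses uncountably many leaves transversally, namely leaves whose endpoints lie strictly inside the complementary arcs. All but countably many of these are stably crossed by nearby chords, since a lamination has only countably many leaves sharing an endpoint with the limit chord (by hypothesis (1), at most two at each endpoint). Unlinkedness of distinct classes I would check via the same crossing count.

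\textbf{Step 2: show $\Lambda\subset\Lam(\LL)$.} Let $\lambda=\{a,b\}\in\Lambda$ with class $\nu\ni a,b$. I must show $\lambda$ is a side of $H(\nu)$, that is, that one of the open arcs cut off by $\lambda$ contains no point of $\nu$. Suppose instead that $c,d\in\nu$ lie on opposite sides of $\lambda$. Since $\lambda$ is not isolated, it is a limit of leaves on at least one side, say the side containing $c$.

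The naive claim is that uncountably many of those leaves separate $\lambda$ from $c$, and hence cross $\{a,c\}$, contradicting $a\approx c$. But a limit of leaves need not by itself give uncountably many separating leaves. What I would use instead is that the set of leaves accumulating on $\lambda$ from that side is a closed set with no isolated points, since no leaf is isolated. A nonempty perfect set of nested leaves between $\lambda$ and a fixed leaf separating $\lambda$ from $c$ is uncountable, which gives the contradiction.

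The delicate point is the case where the accumulating leaves share the endpoint $a$, since such leaves fail to cross $\{a,c\}$. Hypothesis (1) bounds such coincidences to finitely many, and this is exactly where it enters. Running the same argument on the other side shows that at most one side of $\lambda$ contains points of $\nu$, so $\lambda$ is a side of $H(\nu)$.

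\textbf{Step 3: show $\Lam(\LL)\subset\Lambda$.} Let $\{p,q\}$ be a side of some $H(\nu)$ and suppose it is not in $\Lambda$. Consider the leaves of $\Lambda$ crossing $\{p,q\}$. There are only countably many of them, because $p\approx q$. Each such leaf has an endpoint in the open arc $(p,q)$ and another in $(q,p)$, and is therefore unlinked with, and eventually separated from, the rest of $\nu$; together these force uncountably many crossings with chords of $\nu$, which is a contradiction.

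So no leaf crosses $\{p,q\}$ at all. The leaves of $\Lambda$ on the arc side $(p,q)$ near $\{p,q\}$ then accumulate, since none is isolated, to a limit leaf $\lambda'$ with endpoints in $[p,q]$. The chords joining $p$ and $q$ to $\lambda'$ cross only countably many leaves, so the endpoints of $\lambda'$ belong to $\nu$. Since the side $\{p,q\}$ bounds an arc of $S^1-\nu$, this forces $\lambda'=\{p,q\}$, which lies in $\Lambda$ because $\Lambda$ is closed. If instead no leaves at all approach from that side, then the whole arc between $p$ and $q$ crosses nothing, and it would belong to $\nu$, contradicting the assumption that $\{p,q\}$ is a side.

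\textbf{Main obstacle.} I expect the hardest part to be the countability bookkeeping, especially the claim in Step 1 that the relation is closed, and the handling in Steps 2 and 3 of leaves sharing endpoints with the chord under consideration.
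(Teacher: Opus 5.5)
Your proof of the first assertion is the paper's argument. For the converse you take a genuinely different route. The paper uses (1) to see that leaves accumulating on a leaf eventually share no endpoint with it. It then uses Cantor--Bendixson (a nonempty countable closed set of leaves has an isolated leaf) to identify the nontrivial classes of $\Rel(\Lambda)$ directly: pairs of endpoints of leaves that are limits on both sides, and ideal boundaries of complementary regions of $\HH^2-\Lambda$. From that description it reads off $\Lam(\LL)=\Lambda$.

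You instead show that the countable-crossing relation $\approx$ is already closed, transitive and unlinked, hence equals $\Rel(\Lambda)$, and then prove the two inclusions by crossing counts. The Cantor--Bendixson input reappears in your Step~2, playing the same role as in the paper. Your version is more explicit and shows exactly where each hypothesis enters:
\begin{itemize}
\item (1) is genuinely needed for transitivity, since the only leaves crossing $\{p,r\}$ but neither $\{p,q\}$ nor $\{q,r\}$ are the leaves through $q$;
\item non-isolation is used only in Step~2.
\end{itemize}
The paper's version buys an explicit description of the classes, from which, e.g., Lemma~\ref{lemma:no_isolated_side} is immediate.

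A few steps should be tightened.
\begin{itemize}
\item \textbf{Step 3, no leaf crosses $\{p,q\}$.} This follows at once from the unlinkedness in Step~1. Such a leaf lies in a class linked with $\nu$, hence in $\nu$, yet it has an endpoint in $(p,q)\subset S^1-\nu$. Your stated reason does not work, because that leaf is linked with the chord $\{p,q\}$ of $\nu$ itself.
\item \textbf{Step 3, choice of $\lambda'$.} Take $\lambda'$ to be an outermost leaf with both endpoints in $[p,q]$: the limit of a maximal nested chain, which is a leaf because $\Lambda$ is closed. It should not be produced from non-isolation. Maximality is exactly what guarantees that no leaf crosses $\{p,s\}$ or $\{t,q\}$ when $\lambda'=\{s,t\}$; for an arbitrary limit leaf your countability claim is unjustified.
\item \textbf{Step 1, closedness.} Suppose $\{p,q\}$ crosses uncountably many leaves. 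Then for some $\epsilon>0$ uncountably many of them have endpoints $\epsilon$-far from $p$ and $q$. Every chord $\epsilon$-close to $\{p,q\}$ crosses all of these, which gives closedness. Hypothesis (1) plays no role here.
\item \textbf{Step 2, shared endpoints.} You can drop the endpoint-sharing worry altogether. Every leaf other than $\lambda$ that separates $\lambda$ from $c$ crosses $\{a,c\}$ or $\{b,c\}$, and both of these chords lie in $\nu$.
\end{itemize}
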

\begin{proof}
Suppose $\LL$ is a laminar relation, and $\Lambda = \Lam(\LL)$. Let $p \in S^1$ 
and let $\nu \subset S^1$ be the equivalence class of $\LL$ containing $p$. 
Then $\lbrace p,q\rbrace$ is a leaf of $\Lambda$ if and only if $(p,q) \subset S^1$ 
or $(q,p) \subset S^1$ is a maximal open interval in the complement of $\nu$; 
in particular, at most two leaves of $\Lambda$ may contain $p$.

Conversely, suppose $\Lambda$ is a lamination satisfying the two properties above.
Let's realize $\Lambda$ as a geodesic lamination in $\HH^2$. Since $\Lambda$ has no
isolated leaves, each leaf $\lambda$ is a limit of leaves $\lambda_i$ from at least
one side; and furthermore, the approximating leaves $\lambda_i$ can have no endpoints
in common with $\lambda$ or we would violate bullet (1). A countable closed set
of leaves must have some isolated leaves, by considering Cantor--Bendixson rank;
thus the nontrivial equivalence classes of $\Rel(\Lambda)$ are of exactly two kinds:
the endpoints of leaves $\lambda$ of $\Lambda$ that are not isolated on either side,
and the closures in $S^1$ of components $U$ of $\HH^2 -\Lambda$. From this it
follows that $\Lam(\LL) = \Lambda$ as claimed.
\end{proof}

\begin{example}[Two relations]\label{example:two_relations}
One must be careful interpreting Proposition~\ref{proposition:back_and_forth}. 
Suppose $\Lambda$ is a boundary lamination with no isolated leaves, 
and let $\LL:=\Rel(\Lambda)$ so that
$\Lam(\LL)=\Lambda$. Suppose that $\LL$ contains a nontrivial equivalence class
$\nu \subset S^1$ which is a Cantor set. Let $\LL'$ be the laminar relation whose
equivalence classes are of two sorts: the classes of $\LL$ other than $\nu$, and
the pairs of points $p,q \in \nu$ for which the interval $[p,q]$ intersects $\nu$
only at its endpoints. Then $\Lam(\LL')=\Lambda$.
\end{example}

In order to distinguish between the laminar relations $\LL$ and $\LL'$ from
Example~\ref{example:two_relations} we make the following definition:

\begin{definition}[No isolated sides]\label{definition:no_isolated_side}
A laminar relation $\LL$ has {\em no isolated sides} if, for $\Lambda:=\Lam(\LL)$,
each leaf $\lambda$ of $\Lambda$ is either a nontrivial limit of leaves on both sides,
or it is a nontrivial limit of leaves on one side, and is a boundary leaf of $H(\mu)$
for some equivalence class $\mu$ of $\LL$ with $|\mu|\ge 3$ on the other side.
\end{definition}

\begin{lemma}[No isolated side]\label{lemma:no_isolated_side}
Suppose $\Lambda$ is a boundary lamination with no isolated leaves. Then 
$\Rel(\Lambda)$ has no isolated sides.
\end{lemma}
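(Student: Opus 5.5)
By Proposition~\ref{proposition:back_and_forth}, $\LL:=\Rel(\Lambda)$ satisfies $\Lam(\LL)=\Lambda$. So the leaves we must examine are exactly the leaves of $\Lambda$, and we may use the description of the nontrivial classes of $\Rel(\Lambda)$ obtained in that proof. There the classes come in exactly two kinds:
\begin{enumerate}
\item the endpoint pairs of leaves of $\Lambda$ that are limits of leaves on both sides;
\item the closures in $S^1$ of complementary regions $U$ of $\HH^2-\Lambda$ that are not simply the strip between two leaves.
\end{enumerate}
Classes of the second kind have at least three points, since their convex hulls are polygons with nonempty interior $U$. The plan is to take an arbitrary leaf $\lambda$ of $\Lambda$ and check the dichotomy in Definition~\ref{definition:no_isolated_side} directly.

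First, since $\Lambda$ has no isolated leaves, $\lambda$ is a nontrivial limit of leaves on at least one side. If it is a limit on both sides, the first alternative of the definition holds and we are done.

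Otherwise $\lambda$ is a limit of leaves on exactly one side. On the other side there is a complementary component $U$ of $\HH^2-\Lambda$ whose closure has $\lambda$ as a boundary geodesic. Let $\mu$ be the closure in $S^1$ of the ideal boundary of $U$, so that $\mu$ is an $\LL$-class and $\lambda$ is a boundary leaf of $H(\mu)$.

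It remains to show $|\mu|\ge 3$. Suppose instead $|\mu|=2$. Then $\overline U$ meets $S^1$ only in the two endpoints of $\lambda$, so $U$ is a region bounded by $\lambda$ together with further leaves sharing those endpoints. By condition (1) of Proposition~\ref{proposition:back_and_forth}, applied to a boundary lamination, there is at most one other leaf at each endpoint, so $U$ would be a bigon between $\lambda$ and a second leaf $\lambda'$ with the same endpoints. This is impossible, since distinct leaves are distinct unordered pairs. Hence $\mu$ contains a point outside $\lambda$, giving $|\mu|\ge3$.

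I expect the main obstacle to be justifying carefully that the side of $\lambda$ which is not a limit of leaves really borders an open complementary region whose ideal boundary is a genuine class of $\Rel(\Lambda)$. Concretely, one must rule out that the big relation merges $U$'s boundary points with more points, or fails to include them. I would handle this by appealing to the Cantor--Bendixson argument in the proof of Proposition~\ref{proposition:back_and_forth}: pairs crossing only countably many leaves are exactly those inside a single such $U$, or equal to a two-sided limit leaf. This pins down $\mu$ as exactly the boundary of $U$, and it has at least three points because $U$ is nonempty and open.
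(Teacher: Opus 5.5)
Your argument is correct, and it is essentially the unpacking of what the paper dismisses as ``immediate'': by Proposition~\ref{proposition:back_and_forth} one has $\Lam(\Rel(\Lambda))=\Lambda$, and a leaf $\lambda$ that is a limit on only one side borders a complementary region $U$ on the other side, whose ideal points give a class with at least three points having $\lambda$ as a boundary leaf. The obstacle you flag goes away without any Cantor--Bendixson argument: the ideal points of $U$ are pairwise joined by geodesics in $\overline{U}$ that cross no leaves, so they lie in a single class of $\Rel(\Lambda)$ directly by definition, and since $\lambda\in\Lam(\Rel(\Lambda))$ is a boundary leaf of that class, its hull automatically lies on the side of $U$, so you never need to identify the class exactly (likewise, your $|\mu|=2$ step does not need condition (1): every frontier leaf of $U$ would then equal $\lambda$, making $U$ a half-plane with an arc of ideal points).
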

\begin{proof}
The proof is immediate.
\end{proof}

Before we state the main theorem of this section, 
we must state some elements of decomposition theory,
and a theorem of R. L. Moore \cite{Moore} about 
upper semi-continuous decompositions of 2-manifolds. 
We state the theorem only in the case that the 2-manifold
in question is either a sphere or a disk, since these are the only cases we need to use,
but the reader should know that Moore's theorem holds more generally. An elegant modern
proof is given by Timorin \cite{Timorin}.

First a couple of definitions.
\begin{definition}[Cellular]\label{definition:cellular}
A closed subset $K$ of a manifold is {\em cellular} if it is the intersection of
a nested sequence of closed balls.
\end{definition}

\begin{definition}[Shrinkable decomposition]\label{definition:shrinkable}
An upper semi-continuous decomposition $\CC$ of $X$ is {\em shrinkable} if for every
open cover $\mathcal{V}$ and every $\CC$-saturated open cover $\mathcal{U}$ (i.e.\/ one
for which each $U\in \mathcal{U}$ is a union of elements of $\CC$) there is a
homeomorphism $h:X \to X$ such that for every $K\in \CC$ the set $h(K)$ is contained
in $V$ for some $V\in \mathcal{V}$, and for every $x\in X$ there is $U\in \mathcal{U}$ 
so that $x,h(x) \in U$.
\end{definition}

The following theorem is essentially due to Bing \cite{Bing}; for a proof see e.g.\/
Theorem~4.4 from \cite{Kalmar}:
\begin{theorem}[Shrinkable is approximable]\label{theorem:shrinkable}
If $X$ is a complete metric space and $\CC$ is a shrinkable upper semi-continuous
decomposition, then the map $f:X \to X_\CC$ from $X$ to its quotient space is a
near-homeomorphism; i.e.\/ for every open covering $\mathcal{U}$ of $X_\CC$ there is a homeomorphism
$h: X \to X_\CC$ such that for every $x\in X$ the points $f(x)$ and $h(x)$ are in some
$U \in \mathcal{U}$.
\end{theorem}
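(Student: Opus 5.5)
The plan is to follow the standard Bing shrinking argument. We prove that the quotient map $f:X\to X_\CC$ can be approximated by homeomorphisms $h_n:X\to X$ with $f\circ h_n^{-1}$ converging, and take the limit. Fix a complete metric $d$ on $X$ and a metric on $X_\CC$; the quotient is metrizable in the setting of interest, namely compact Hausdorff with upper semicontinuity. We may then assume the given open cover $\mathcal U$ of $X_\CC$ is replaced by a single $\epsilon>0$, via a Lebesgue number in the compact case. In general one uses a refining sequence of covers instead, but the structure is the same.

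The main construction is an inductive sequence of homeomorphisms $h_n:X\to X$ with $h_0=\id$. At stage $n$ we apply shrinkability. For $\mathcal V_n$ we take an open cover by sets of $d$-diameter $<\epsilon_n$, where $\epsilon_n\to0$ is chosen small. For the saturated cover $\mathcal U_n$ we take the preimages under $f$ of small open sets of $X_\CC$, pulled back through the current $h_{n-1}$ so that they remain saturated. This yields a homeomorphism $g_n$. We set $h_n=g_n\circ h_{n-1}$, or precompose, depending on convention. The two properties of $g_n$ give the following. First, every decomposition element $K$ has $h_n(K)$ of diameter $<\epsilon_n$. Second, $h_n$ moves each point only within a saturated set of small $f$-image, so $f\circ h_n^{-1}$ stays uniformly close to $f\circ h_{n-1}^{-1}$ in $X_\CC$. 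Summing a geometric series, the total displacement is at most $\epsilon$.

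Next we show that the maps $\phi_n:=f\circ h_n^{-1}:X\to X_\CC$ form a uniformly Cauchy sequence. This gives a continuous limit $\phi$ with $\phi$ uniformly $\epsilon$-close to $f$. Since each $h_n(K)$ has diameter $\to0$ and $\phi_n$ is constant on $h_n(K)$, we then argue that $\phi$ is injective. The idea is that distinct points $x\neq y$ eventually lie in images of distinct elements that are separated, because $\epsilon_n$ is chosen smaller than a quantity controlled by the earlier stages. Surjectivity follows because each $\phi_n$ is onto. Continuity of the inverse follows from compactness, or in the complete metric case from an argument that $h_n^{-1}$ also converges. The homeomorphism sought is $h:=\phi^{-1}$ composed appropriately, or $\phi$ itself read as a map $X\to X_\CC$.

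The delicate step is the choice of $\epsilon_n$ and $\mathcal U_n$ so that the limit is injective, and not merely a uniform limit of surjections. Here I would follow Bing: at stage $n$, choose $\epsilon_{n+1}$ so small that points at distance $\ge 1/n$ under $h_n$ stay separated by more than $\sum_{k>n}\epsilon_k$ under all later moves. Compactness or completeness makes this choice possible, and the argument is exactly Theorem~4.4 of \cite{Kalmar}, which I would cite for the routine estimates.
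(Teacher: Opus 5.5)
Your outline is the standard Bing shrinking argument. That is all the paper relies on here: it gives no proof of its own and just cites Bing and Kalm\'ar's Theorem~4.4, so the route is the same. For compact $X$ it goes through; this is the only case the paper uses, namely $S^2$ and $D^2$. However, the injectivity step as you wrote it mixes up two parameters.

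\begin{itemize}
\item[(a)] The $\epsilon_k$ bound the diameters in $X$ of the sets $h_k(K)$. They do not control how far $\phi_k=f\circ h_k^{-1}$ is from $\phi_{k-1}$. That distance is governed by the mesh $\eta_k$, measured in $X_\CC$, of the saturated cover you feed into shrinkability.
\item[(b)] The separation that matters is that of $\phi_n(x),\phi_n(y)$ in $X_\CC$, not of points ``under $h_n$''.
\end{itemize}

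The correct bookkeeping is as follows. The fibres of $\phi_n$ are exactly the sets $h_n(K)$, $K\in\CC$, all of diameter $<\epsilon_n$. So by compactness there is $\delta_n>0$ with $d(\phi_n(x),\phi_n(y))<\delta_n\Rightarrow d(x,y)<\epsilon_n$. Since $\delta_n$ is known before $\eta_{n+1}$ is fixed, you can choose the $\eta_k$ so that $\sum_{k>n}\eta_k<\delta_n/2$ for every $n$. Then $\phi(x)=\phi(y)$ forces $d(\phi_n(x),\phi_n(y))<\delta_n$, hence $d(x,y)<\epsilon_n$ for all $n$, so $x=y$. Surjectivity and continuity of $\phi^{-1}$ then follow from compactness, as you say.

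A smaller point: with $h_n=g_n\circ h_{n-1}$, the map $g_n$ must shrink the decomposition $h_{n-1}(\CC)$, not $\CC$. That decomposition is shrinkable because $\CC$ is, by conjugating with $h_{n-1}$; you should say so.

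Outside the compact case your sketch does not prove the statement as posed for complete metric spaces.
\begin{itemize}
\item[(a)] A uniform $\delta_n$ need not exist.
\item[(b)] Surjectivity of a uniform limit of surjections is no longer automatic.
\item[(c)] Your suggestion that ``$h_n^{-1}$ also converges'' is wrong. Suppose $K\in\CC$ contains $a\neq b$, $h_n^{-1}\to G$ uniformly with $G$ continuous, and $h_n(a)\to p$. Then also $h_n(b)\to p$. Since $G(h_n(a))$ is uniformly close to $h_n^{-1}(h_n(a))=a$, we get $G(p)=a$, and likewise $G(p)=b$, a contradiction.
\end{itemize}
What converges, given control on the moves in $X$, is $h_n$ itself, to $\phi^{-1}\circ f$. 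Continuity of $\phi^{-1}$ then follows because $f$ is a quotient map.

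To make the argument work for noncompact $X$ you must replace the constants $\epsilon_n,\eta_n,\delta_n$ throughout by open covers (or positive continuous functions). That is the real content of the general case, and you, like the paper, leave it to the cited reference. For the paper's purposes only the compact case is needed.
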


Notice in particular that if $\CC$ is shrinkable, $X_\CC$ is homeomorphic to $X$.
Finally we state Moore's theorem:

\begin{theorem}[Moore; decompositions of 2-manifolds]\label{theorem:Moore}
Let $X$ be one of $S^2$ or $D^2$ and let $\CC$ be an upper semi-continuous decomposition 
of $X$. Suppose every element of $\CC$ is cellular, and if $X$ is $D^2$ suppose further
that every element is non-separating. Then $\CC$ is shrinkable.
\end{theorem}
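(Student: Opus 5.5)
The plan is to reduce shrinkability to the statement that the quotient map is a near-homeomorphism. That statement is the converse direction of Theorem~\ref{theorem:shrinkable}, and for compact metric spaces the two notions are equivalent: this is Bing's shrinking criterion. So it suffices to show that the quotient map $\pi:X\to X_\CC$ can be approximated, within any given $\CC$-saturated open cover, by homeomorphisms. The argument is in two stages.

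\textbf{Stage 1: the quotient is a sphere.} Treat $X=S^2$ first. By Proposition~2.9 of \cite{Kalmar}, $X_\CC$ is compact Hausdorff. It is metrizable since $\CC$ is upper semi-continuous on a compact metric space. It is a Peano continuum, being the monotone image of $S^2$; monotonicity holds because cellular sets are connected. I then plan to verify the hypotheses of the Kline/Zippin characterization of $S^2$:
\begin{enumerate}
\item $X_\CC$ has no cut points;
\item no pair of points separates $X_\CC$;
\item every simple closed curve in $X_\CC$ separates it.
\end{enumerate}
Each of these is checked by lifting to $S^2$. A cellular set $K$ has a basis of closed-disk neighborhoods. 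By upper semi-continuity these neighborhoods can be thickened to saturated sets whose complements remain connected. This lets us transport the separation properties of $S^2$ (the Jordan curve theorem and unicoherence) through $\pi$. Consequently $X_\CC\cong S^2$.

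\textbf{Stage 2: approximating $\pi$ by homeomorphisms.} The map $\pi$ is now a cell-like map $S^2\to S^2$, and I will prove directly that it is a near-homeomorphism.
\begin{enumerate}
\item Given a saturated cover $\mathcal U$, choose a triangulation $T$ of the target $S^2$ fine enough that each closed star lies in some $\pi(U)$, $U\in\mathcal U$.
\item Perturb the $1$-skeleton $T^{(1)}$ slightly so that no vertex lands on a nondegenerate element in an awkward position.
\item Lift $T^{(1)}$ to an embedded graph $\Gamma\subset X$ with $\pi(\Gamma)$ close to $T^{(1)}$. Each edge is lifted by choosing arcs through the preimages. When an edge passes through a nondegenerate element $K$, we route around $K$ inside a small disk neighborhood of $K$ supplied by cellularity, joining entry and exit points by an arc in that disk.
\item Match the complementary disks of $\Gamma$ (Schoenflies) with the faces of $T$.
\item Extend the edgewise homeomorphism $\Gamma\to T^{(1)}$ cell by cell, using the Alexander trick, to a homeomorphism $h$.
\end{enumerate}
The bounds from step~1 guarantee that $h(x)$ and $\pi(x)$ lie in a common element of $\mathcal U$. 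Composing with $\pi^{-1}$ on saturated sets then yields the homeomorphism required in Definition~\ref{definition:shrinkable}.

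\textbf{The disk case.} For $X=D^2$, double $D^2$ along its boundary to get $S^2$. Elements of $\CC$ disjoint from $\partial D^2$ are doubled to two copies; an element $K$ meeting $\partial D^2$ is replaced by $K\cup\bar K$. Because $K$ is non-separating in $D^2$ and cellular, $K\cup\bar K$ is a non-separating continuum in $S^2$, hence cellular. Upper semi-continuity is preserved under doubling. The sphere case now produces approximating homeomorphisms. To finish, I will arrange the construction of Stage~2 to be equivariant under the reflection, by choosing $T$ symmetric and lifting the edges in $\partial D^2$ first. Restricting to one half then gives the disk statement.

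\textbf{Main obstacle.} The hardest step is Stage~2 step~3: lifting the $1$-skeleton so that distinct edges remain disjoint and stay close to their images. The difficulty is that there may be infinitely many large decomposition elements, so no finiteness can be assumed. I would first try to get around this by perturbing vertices so that each vertex lies on a degenerate element or at a generic point. Then I would route each edge through nested disk neighborhoods of the elements it meets, using only finitely many of them at each scale: compactness of the edge and upper semi-continuity give a finite subcover by saturated disk-like neighborhoods. Finally, I would resolve intersections between the lifts of different edges by a local innermost-disk argument.
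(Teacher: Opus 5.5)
The paper gives no proof of this statement. It quotes it from Moore \cite{Moore}, points to Timorin \cite{Timorin} for a modern proof, and uses it as a black box together with Theorem~\ref{theorem:shrinkable}. You instead reconstruct the classical route, and it is sound in outline:
\begin{enumerate}
\item The converse half of Bing's criterion reduces shrinkability to approximating $\pi$ by homeomorphisms (via $h_1^{-1}h_2$ for two successive approximations).
\item Bing's Kline sphere characterization identifies $X_\CC$ with $S^2$. The separation checks are exactly Janiszewski's theorems: the preimages of the two arcs of a simple closed curve are continua with disconnected intersection, so their union separates $S^2$; the preimage of two points is a disjoint union of two non-separating continua, so it does not.
\item Lifting a fine triangulation proves the stronger fact that every cell-like map $S^2\to S^2$ is a near-homeomorphism.
\end{enumerate}
This buys self-containment, at the price of leaning on Bing's proof of the Kline characterization.

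Several steps should be done differently from your sketch.
\begin{enumerate}
\item The lift must be close as a parametrized map, not just in image. You do not need vertices on degenerate elements, which you could not easily justify anyway. Since $\pi$ is monotone and closed, $\pi^{-1}(V)$ is connected for every connected open $V$. So join preimages of consecutive subdivision points of an edge by arcs in $\pi^{-1}$ of small neighborhoods of the subedges, then loop-erase. Erased loops are short in time because $\pi$ of the path is close to an injective parametrization.
\item Lifts of distinct edges meet only in $\pi^{-1}$ of a small ball around a common vertex $v$, and this is where cellularity enters, not an innermost-disk argument. Choose a disk $D_v\supset\pi^{-1}(v)$ with $\pi(D_v)$ small. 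Use upper semi-continuity to push all such intersections into $\mathrm{int}\,D_v$. Then replace the initial segments by disjoint arcs in $D_v$ from a point of $\pi^{-1}(v)$ to the last exit points.
\item Step~1 does not by itself control $h$ on faces. Let $D_\sigma$ be the disk bounded by a lifted triangle, on the side containing no other part of $\Gamma$; you must show $\pi(D_\sigma)$ lies within $\epsilon$ of $\sigma$. This holds because a fiber over a point far from $\sigma$ is joined to a lifted vertex outside $D_\sigma$ by the connected preimage of a path avoiding that neighborhood.
\item In the disk case the statement as written fails for the one-element decomposition $\{D^2\}$: it is cellular and non-separating, but the quotient is a point. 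This is exactly where your claim that $K\cup\bar K$ is cellular breaks, since $S^2$ is not cellular. For $K\neq D^2$, cellularity forbids $K\supset\partial D^2$, and non-separation forces $K\cap\partial D^2$ to be connected; that is what makes $K\cup\bar K$ a proper non-separating continuum.
\item Equivariance is unnecessary. The doubled quotient already shows $\pi(D^2)$ is the disk bounded by the Jordan curve $\pi(\partial D^2)$. You can then run Stage~2 directly on $\pi|_{D^2}$, after first approximating the monotone circle map $\pi|_{\partial D^2}$ by a homeomorphism.
\end{enumerate}
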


Together with Theorem~\ref{theorem:shrinkable} this implies that $X_\CC$ is
homeomorphic to $X$, and the map $X \to X_\CC$ is approximable by homeomorphisms.

We are now ready to state and prove the main theorem of this section:

\begin{theorem}[CaTherine wheels from laminations]\label{theorem:wheels_from_laminations}
Let $\LL^\pm$ be a pair of laminar relations. Then $\LL^\pm$ arise from a CaTherine wheel
$f:S^1 \to S^2$ if and only if 
\begin{enumerate}
\item{(no isolated sides): the laminar relations have no isolated sides; and}
\item{(no perfect fits): no pair of leaves of the laminar relations $\lambda^\pm \in \LL^\pm$ 
have an endpoint in common.}
\end{enumerate}
\end{theorem}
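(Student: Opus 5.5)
The plan has two parts: necessity of conditions (1) and (2), and then sufficiency, which is the substantial direction.

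\emph{Necessity.} Condition (2) is Proposition~\ref{proposition:relations_transitive}(2): no leaf of $\LL^+(f)$ shares an endpoint with a leaf of $\LL^-(f)$. For condition (1), take a boundary leaf $\ell$ of $\Lam(\LL^+)$, bounding $H^+(\nu)$. Lemma~\ref{lemma:boundary_leaves_accumulate} shows that $\ell$ is a limit of boundary leaves on the side away from $H^+(\nu)$. On the other side:
\begin{itemize}
\item if $|\nu|\ge 3$, $\ell$ bounds a polygon with $|\nu|\ge 3$, as required;
\item if $|\nu|=2$, the note after that lemma gives accumulation on both sides.
\end{itemize}
So $\LL^\pm(f)$ have no isolated sides.

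\emph{Sufficiency: setup.} Given $\LL^\pm$ satisfying (1) and (2), form $S^2_\fun = P^+\cup S^1\cup P^-$ and the collection $\CC$ of sets $C(\nu)$ exactly as before Theorem~\ref{theorem:laminar_decomposition}. By (2), a point of $S^1$ lies in a nontrivial class of at most one of $\LL^\pm$, so the $C(\nu)$ are pairwise disjoint, closed, and cover $S^1$. Each $C(\nu)$ is a point or the closure of a convex hull in a disk, so it is cellular. The key substitute for Lemma~\ref{lemma:linking_endpoints} is the following claim.

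\emph{Key claim.} Every point $p\in S^1$ not in a nontrivial $\LL^+$ class has a positive rainbow, and every boundary leaf is a limit of boundary leaves from outside its hull.

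The second half is exactly (1). For the first half, I would argue as follows.
\begin{itemize}
\item If $p$ is not approached by leaves, then some complementary region of $\Lam(\LL^+)$ in $P^+$ has an interval of $S^1$ in its closure containing $p$.
\item By (1), every leaf bounding that region is accumulated from outside.
\item Tracking boundary leaves whose endpoints converge monotonically toward $p$, as in the transfinite argument of Lemma~\ref{lemma:boundary_leaves_accumulate}, the limit leaf must either degenerate to $p$ or be a leaf ending at $p$. The latter is excluded unless $p$ lies in a nontrivial class.
\end{itemize}
The delicate case is a gap region: an open interval of $S^1$ meeting no leaf. I expect this must be ruled out using (2) together with laminarity, but I am not sure it is. If (1) and (2) do not formally exclude it, the natural fix is to read "laminar relation arising from a wheel" as requiring each $\LL^\pm$ to have uncountably many classes. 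The obstacle is precisely the interplay showing that $\LL^+$ leaves are dense in the sense needed. With rainbows in hand, I would also check that $\CC$ fills $S^2_\fun$ by the maximal-element argument of Theorem~\ref{theorem:laminar_decomposition}, and that $\CC$ is upper semi-continuous by building the same neighborhoods $U$.

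\emph{Sufficiency: conclusion.} Apply Moore's Theorem~\ref{theorem:Moore} with Theorem~\ref{theorem:shrinkable}: the quotient $S^2_\CC$ is a sphere. Let $f$ be the restriction to $S^1$ of the quotient map $F$; it is continuous and surjective because every class meets $S^1$. Then verify the wheel axiom for a closed interval $I$:
\begin{itemize}
\item $f(I)=F(D_I)$, where $D_I$ is the closed disk in $S^2_\fun$ bounded by $I$ together with the geodesics $\gamma^\pm$ in $P^\pm$ joining $I^-$ to $I^+$.
\item The decomposition elements meeting $\gamma^\pm$ are cut by it, so $F(D_I)$ equals the quotient of $D_I$ by the saturated relation restricted to $D_I$.
\item Moore's theorem for $D^2$ applies, since elements restricted to $D_I$ are non-separating by convexity. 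Hence $f(I)$ is a disk with $f(\partial I)\subset F(\gamma^\pm)\subset \partial f(I)$.
\end{itemize}
There is one technicality here: elements crossing $\gamma^\pm$ are not contained in $D_I$. I would handle it by observing that $F(D_I)$ is the closure of an open set whose frontier is $F(\gamma^+\cup\gamma^-)$, a Jordan curve by the monotonicity argument of Lemma~\ref{lemma:Z_is_image}. Finally, the signs $\LL^\pm(f)=\LL^\pm$ follow by comparing orientations of $F(\gamma^\pm)$, which completes the proof.
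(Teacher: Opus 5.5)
Your overall architecture is the paper's, and the necessity half is fine. The sufficiency plan also matches: $\CC$, rainbows, the covering and upper semicontinuity arguments, Moore--Bing, and restriction to $D(I)$. The gap is in the one genuinely new step of sufficiency, the existence of positive rainbows (the paper's Lemma~\ref{lemma:rainbow}). Your sketch leaves the ``gap region'' case open and proposes changing the hypotheses instead of proving the claim.

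The missing argument is a maximal-chain one. Order the leaves of $\Lambda^+=\Lam(\LL^+)$ by $\mu>\lambda$ if $\mu$ separates $p$ from $\lambda$, and take a maximal chain. This chain has no top element $\mu$, by no isolated sides:
\begin{itemize}
\item either $\mu$ is nontrivially accumulated on the side containing $p$, and leaves close to $\mu$ on that side separate $p$ from $\mu$;
\item or $\mu$ is a side of some $H(\nu)$ with $|\nu|\ge 3$ lying on the side of $p$, and then the side of $H(\nu)$ spanning the complementary interval of $\nu$ containing $p$ separates $p$ from $\mu$.
\end{itemize}
A countable cofinal subsequence has endpoints converging monotonically. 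A nondegenerate limit avoiding $p$ is a leaf (boundary leaves form a closed set) lying above the whole chain, contradicting maximality or the absence of a top. A limit with an endpoint at $p$ would put $p$ in a leaf of $\LL^+$. Hence the leaves shrink to $p$.

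Condition (2) plays no role here. It is used for disjointness of the $C(\nu)$, and for negative rainbows at points of nontrivial $\LL^+$-classes in the upper semicontinuity step.

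Your gap-region worry is therefore settled by (1), not by (2) or laminarity. Suppose a complementary region of $\Lambda^+$ has an arc of $S^1$ around $p$ in its closure. Then it has a frontier leaf that is isolated on that side and does not bound a polygon there; the region is not the interior of any $H(\mu)$, since $p$ lies in no nontrivial class. This contradicts (1), unless $\Lambda^+$ has no leaves at all.

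That degenerate case is real. The identity relations satisfy (1) and (2) vacuously, but they arise from no CaTherine wheel, since Lemma~\ref{lemma:linking_endpoints} forces $\LL^\pm(f)\neq\emptyset$. So the statement needs the tacit hypothesis $\Lam(\LL^\pm)\neq\emptyset$, which the paper's chain argument also uses silently. ``Uncountably many classes'' is not the right fix, since the trivial classes are always uncountable.

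Finally, your appeal to Lemma~\ref{lemma:Z_is_image} in the disk step is circular, because that lemma presupposes that $f$ is a CaTherine wheel. It is also unnecessary. Every element of $\CC$ meeting $D_I$ meets $I$ and intersects $D_I$ in a connected non-separating set. So the Moore quotient of $D_I$ by the intersected decomposition maps bijectively, hence homeomorphically, onto $F(D_I)=f(I)$, with $\partial I\subset\partial D_I$ going to the boundary.
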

\begin{proof}
First we show that the conditions are necessary. The fact that $\LL^\pm$ have no
isolated sides follows from Lemma~\ref{lemma:boundary_leaves_accumulate}, and
the fact that no pair of leaves of $\LL^\pm$ have an endpoint in common follows from
Proposition~\ref{proposition:relations_transitive}.

Now we show that the conditions are sufficient. 
We repeat the constructions of an
upper semi-continuous decomposition $\CC$ of a 2-sphere as in 
\S~\ref{subsection:laminar_decomposition} as follows. First, build a 2-sphere 
$S^2_\Lambda$ from the union of $S^1$ with two hyperbolic planes $P^\pm$. 
Associated to $\LL^+$ we form a decomposition $\CC$ of $S^2_\Lambda$ whose elements
are single points $p\in S^1$ not contained in either of $\LL^\pm$, and
convex hulls $C(\nu):=\nu\cup H^+(\nu)$ for $\nu$ a nontrivial equivalence class
associated to $\LL^+$, and likewise for $\LL^-$.

We claim that $\CC$ forms an upper semi-continuous decomposition of $S^2_\Lambda$. The
proof very closely mirrors the proofs of Lemma~\ref{lemma:boundary_leaves_accumulate} and
Theorem~\ref{theorem:laminar_decomposition} and we are consequently brief.
First we show that an arbitrary $p\in S^1$ not in a leaf of $\LL^+$ resp. $\LL^-$ has a 
positive resp. negative rainbow.

\begin{lemma}[Rainbow]\label{lemma:rainbow}
Suppose $\LL^\pm$ has no isolated sides, and no perfect fits. Then
every $p \in S^1$ not contained in a leaf of $\LL^+$ resp. $\LL^-$ has a positive
resp. negative rainbow.
\end{lemma}
\begin{proof}
Define $\Lambda^\pm:=\Lam(\LL^\pm)$. We define a partial order on 
$\Lambda^+$ by $\mu > \lambda$ if the leaf $\mu$ separates $p$ from $\lambda$. Choose some maximal 
ascending chain. Any such chain must have a 
cofinal countable increasing subchain; call it $\lbrace s_i,s_i'\rbrace$ where
$s_i$ and $s_i'$ approach $p$ monotonely from either side. None of $s_i$ or $s_i'$ can
be equal to $p$ since by hypothesis $p$ is not contained in a leaf of $\LL^+$. Furthermore
we may assume that no two succesive $s_i$ or $s_i'$ are equal, since otherwise 
$\lbrace s_i,s_i'\rbrace$ and $\lbrace s_{i+1},s_{i+1}'\rbrace$ are both boundary leaves
of the same element of $\LL^+$ and we may choose the greater one and discard the other.
We claim this sequence forms a rainbow. For if not, we may extract a limit
$\lbrace s_\infty,s_\infty'\rbrace$ which is a leaf of $\Lambda^+$ because $\Lambda^+$
is closed. But $\Lambda^+$ is a boundary lamination, and therefore either
$\lbrace s_\infty,s_\infty'\rbrace$ is a limit of an infinite {\em decreasing} 
sequence of leaves from above, or it is the boundary of $H(\mu)$ for some nontrivial equivalence
class $\mu$ of $\LL^+$, and some other leaf of $\partial H(\mu)$ separates $p$ from
$\lbrace s_\infty,s_\infty'\rbrace$; in particular, the original sequence was not cofinal after
all, contrary to hypothesis. Thus this sequence forms a rainbow as claimed.
\end{proof}

We are now in a position to prove that $\CC$ is an upper semi-continuous decomposition.
The classes of $\CC$ are closed and disjoint by construction. The argument in
Theorem~\ref{theorem:laminar_decomposition} that the union of $\CC$ is all of $S^2_f$
only used the fact that $\LL^\pm$ are closed, and that they have no isolated sides.
Likewise, the proof that the decomposition is upper semi-continuous only used the
fact that the $\LL^\pm$ have no isolated side, and the existence of rainbows as 
proved in Lemma~\ref{lemma:boundary_leaves_accumulate}. Thus we may repeat the proof,
using Lemma~\ref{lemma:rainbow} and the hypothesis that $\LL^\pm$ have no isolated sides.
In conclusion, $\CC$ is an upper semi-continuous decomposition of $S^2_\Lambda$.

At this point we appeal to Moore's Theorem~\ref{theorem:Moore} and Bing's
Theorem~\ref{theorem:shrinkable}. Let $F$ be the map from $S^2_\Lambda$ to its quotient
space, and let $f$ be the restriction to $S^1$. The decomposition $\CC$ is evidently
cellular, so the quotient space is homeomorphic to a sphere, and we may choose an
identification of it with $S^2$ so that $F:S^2_\Lambda \to S^2$ and $f:S^1 \to S^2$.
To see that $f$ is a CaTherine wheel, choose a closed interval $I$. This bounds closed
half spaces $H^\pm(I)$ in $P^\pm$, and the union $D(I):=H^\pm(I) \cup I$ is a disk. The
restriction of the elements of $\CC$ to this disk is an upper semi-continuous 
decomposition, and the elements are still cellular and non-separating, since each
decomposition element is either entirely contained in $H^+(I) \cup I$ or $H^-(I)\cup I$.
Thus $F(D(I))$ is a disk. Since each decomposition element of $\CC|D(I)$ meets $I$,
we also have that $f(I)$ is a disk. Since $\partial I \in \partial D(I)$ it follows
that $f(\partial I) \subset \partial f(I)$ and the theorem is proved.

\begin{figure}[htpb]
\centering
\includegraphics[scale=0.5]{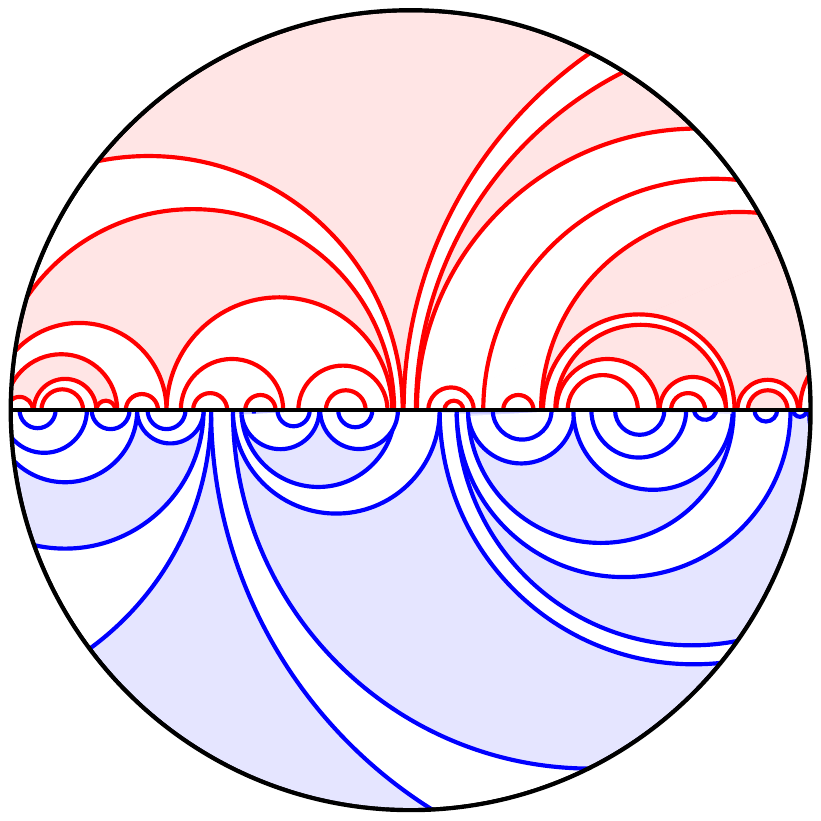}
\caption{The restriction of the decomposition of $S^2_f$ to the disk $D(I)$.}
\label{laminations_interval_restrict}
\end{figure}
\end{proof}

\begin{remark}
If each equivalence class of $\LL^\pm$ is finite, then $\LL^\pm$ have no perfect fits
if and only if $\Lambda^\pm$ have no perfect fits.
\end{remark}

\begin{remark}
A pair of laminar relations $\LL^\pm$ with no perfect fits and no isolated sides are
a special case of what Frankel--Landry call {\em especial pairs} in
\cite{Frankel_Landry_flows}, the only difference being that each pair of equivalence classes
$\nu^\pm$ in the equivalence relations associated to $\LL^\pm$ is allowed to have at most
one point in $S^1$ in common. Especial pairs give rise to a pair of upper
semi-continuous decompositions of
the disk, but the existence of perfect fits causes complications when one tries to 
extend these to upper semi-continuous decompositions of $S^2$, and it is by no
means clear that the decompositions one ultimately obtains satisfy the hypotheses of
Moore's theorem. In particular, some especial pairs give rise to surjective maps 
$f:S^1 \to S^2$ and some don't, and it seems like a challenging question to determine
those that do. One approach to this question is developed in \S~\ref{section:P_wheels}.
\end{remark}

By the way, Moore's Theorem~\ref{theorem:Moore} and Bing's Theorem~\ref{theorem:shrinkable}
together prove that every CaTherine wheel can be approximated by embeddings. In fact
we shall prove a sharper theorem in \S~\ref{section:embeddings}, 
building on some results of Kerbs \cite{Kerbs}.

We end this subsection by stating a simple criterion for a pair of laminations to
satisfy the hypothesis of Theorem~\ref{theorem:wheels_from_laminations}.

\begin{proposition}
Suppose $\LL^\pm$ are a pair of laminar relations without perfect fits, and
let $\Lambda^\pm = \Lam(\LL^\pm)$ be the associated boundary laminations. 
Suppose further that $\LL^\pm$ 
satisfy the conclusion of Lemma~\ref{lemma:linking_endpoints}, i.e.\/ 
that every distinct pair of points in $S^1$ is either in $\LL^+$ or links an
element of $\LL^+$ (and similarly for $\LL^-$). Then $\Lambda^\pm$
has no isolated leaves, and $\LL^\pm$ have no isolated sides.
\end{proposition}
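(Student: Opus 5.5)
The plan is to rerun the argument of Lemma~\ref{lemma:boundary_leaves_accumulate}. That proof used only three inputs: the conclusion of Lemma~\ref{lemma:linking_endpoints}, the fact that distinct classes of $\LL^+$ are unlinked, and the closedness of $\LL^+$ in $\M$. All three are available here, the first by hypothesis and the other two because $\LL^\pm$ are laminar relations. I treat $\LL^+$; the case of $\LL^-$ is symmetric. I do not expect to need the no perfect fits hypothesis, beyond its role in making $\LL^\pm$ a legitimate pair.

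\textbf{Step 1: outside leaves near a boundary leaf.} Fix a leaf $\ell=\lbrace p,q\rbrace$ of $\Lambda^+$, a boundary leaf of $H^+(\nu)$, oriented so that $(p,q)\cap\nu=\emptyset$. Take $r\in(p,q)$.
\begin{enumerate}
\item Since $r\notin\nu$, the pair $\lbrace p,r\rbrace$ is not in $\LL^+$. By hypothesis it is therefore linked by some leaf $\lbrace s,s'\rbrace$ of $\LL^+$, lying in a class $\mu$.
\item The class $\mu$ is unlinked from $\nu$ and contains a point of $(p,r)$. Hence $\mu\subset(p,q)$.
\item Replacing $\lbrace s,s'\rbrace$ by the boundary leaf of $H^+(\mu)$ facing $\ell$, we get a leaf of $\Lambda^+$ with both endpoints in the open interval $(p,q)$. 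It separates $\ell$ from $r$.
\end{enumerate}

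\textbf{Step 2: these leaves accumulate on $\ell$.} Let $\mathcal S$ be the set of leaves of $\Lambda^+$ with both endpoints in $(p,q)$. These are pairwise unlinked and all lie on the far side of $\ell$ from $H^+(\nu)$. Let $u$ be the infimum, over $\mathcal S$, of the left endpoints (measured in $(p,q)$), and $u'$ the supremum of the right endpoints. By nesting, a sequence of leaves in $\mathcal S$ realizes both extrema at once. Its limit $\lbrace u,u'\rbrace$ lies in $\LL^+$ by closedness, and is not degenerate since $r$ lies between.

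Suppose $u\neq p$. Apply Step 1 with $u$ in place of $r$: this gives a leaf of $\mathcal S$ separating $\ell$ from $u$, contradicting minimality of $u$. Hence $u=p$, and symmetrically $u'=q$. So $\ell$ is a limit of leaves of $\Lambda^+$, each with both endpoints distinct from $p,q$, approaching from the side of $\ell$ away from $H^+(\nu)$.

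This accumulation step is the main obstacle: one must check that a single nested sequence approaches both endpoints simultaneously. This is where unlinkedness and the monotone, transfinite-style argument of Lemma~\ref{lemma:boundary_leaves_accumulate} are essential. I expect it to go through verbatim.

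\textbf{Step 3: conclusion.} Every leaf of $\Lambda^+$ is a boundary leaf of some $H^+(\nu)$, so Step 2 shows that every leaf is a nontrivial limit on its outer side. Hence $\Lambda^+$ has no isolated leaves.

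For no isolated sides, split on the size of $\nu$.
\begin{enumerate}
\item If $|\nu|\ge3$, the other side of $\ell$ is the polygon $H^+(\nu)$, which is exactly the allowed alternative in Definition~\ref{definition:no_isolated_side}.
\item If $|\nu|=2$, then $\ell=H^+(\nu)$, and both $(p,q)$ and $(q,p)$ miss $\nu$. Applying Steps 1--2 to each interval shows that $\ell$ is a limit of leaves on both sides.
\end{enumerate}
Hence $\LL^+$ has no isolated sides, and the same argument applies to $\LL^-$.
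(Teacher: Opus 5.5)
Your strategy is the paper's. Its proof is the one sentence ``rerun the first part of Lemma~\ref{lemma:boundary_leaves_accumulate} using the linking hypothesis, closedness and unlinkedness.'' Your Steps 1 and 3 carry this out correctly, including the split between $|\nu|=2$ and $|\nu|\ge 3$ and the observation that no perfect fits is not needed.

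The problem is in Step 2, exactly where you flagged the obstacle. You define $\mathcal S$ as \emph{all} leaves of $\Lambda^+$ with both endpoints in $(p,q)$. These are unlinked but not nested: one leaf can sit near $p$ and another near $q$, side by side. So ``by nesting, a sequence of leaves in $\mathcal S$ realizes both extrema at once'' is false. Worse, for this $\mathcal S$ you get $u=p$ and $u'=q$ for free by applying Step 1 with $r$ near $p$ or near $q$. The ``suppose $u\neq p$'' argument is then vacuous, and the simultaneous approach of one sequence to both $p$ and $q$ (the whole content of the lemma) is asserted rather than proved. Your phrase ``not degenerate since $r$ lies between'' shows you actually had a different family in mind.

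The repair is as follows.
\begin{enumerate}
\item Let $\mathcal S_r$ be the leaves of $\Lambda^+$ with one endpoint in $(p,r)$ and the other in $(r,q)$. Two unlinked leaves that both straddle $r$ are weakly nested, so $\mathcal S_r$ is a chain. Hence a single sequence realizes $u=\inf$ of the left endpoints and $u'=\sup$ of the right endpoints, and its limit $\lbrace u,u'\rbrace$ lies in $\LL^+$ with $u<r<u'$.
\item Suppose $u\ne p$. Then the class $\mu_\infty$ of $\lbrace u,u'\rbrace$ meets $(p,q)$, so $\mu_\infty\neq\nu$ and hence $\mu_\infty\subset(p,q)$.
\item When you apply Step 1 at $u$, you must still check that the new leaf straddles $r$. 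Step 1 gives a class $\mu'\subset(p,q)$ with a point in $(p,u)$ and a point in $(u,q)$.
\item If $\mu'=\mu_\infty$, the boundary leaf of $\mu_\infty$ facing $\ell$ lies in $\mathcal S_r$ with left endpoint $<u$.
\item If $\mu'\neq\mu_\infty$, then $\mu'$ is unlinked from $\lbrace u,u'\rbrace$. This forces its point in $(u,q)$ to lie in $(u',q)$, so again its facing boundary leaf is in $\mathcal S_r$ with left endpoint $<u$.
\item Either way the minimality of $u$ fails, so $u=p$.
\item Then $\lbrace p,u'\rbrace\in\LL^+$ forces $u'\in\nu\cap(r,q]$, so $u'=q$. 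This closedness shortcut is how Lemma~\ref{lemma:boundary_leaves_accumulate} finishes, and it replaces your separate ``symmetrically'' argument.
\end{enumerate}
With this correction the proof is complete.
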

\begin{proof}
We may simply repeat the proof of the first part of 
Lemma~\ref{lemma:boundary_leaves_accumulate}, using the conclusion of 
Lemma~\ref{lemma:linking_endpoints} and the fact that $\LL^\pm$ are closed and
equivalence classes are unlinked (by definition).
\end{proof}

\section{Zippers}\label{section:zippers}

The main goal of this section is to prove Theorem~\ref{theorem:wheels_from_zippers}
which is a converse to Theorem~\ref{theorem:zippers_from_wheels}. Namely, we shall 
characterize exactly which zippers $Z^\pm$ can arise from CaTherine wheels. 

\subsection{Definitions}\label{subsection:zipper_definitions}

In this subsection we give the definition of a zipper, and recall some basic properties
and constructions from \cite{Calegari_Loukidou_Zippers}. We are concerned with
the path connectivity of certain subsets of $S^2$; thus throughout this section `component'
means path component, and `cut point' means path cut point (i.e.\/ a point in a
path connected set whose removal creates at least two distinct path components).

\begin{definition}[Zipper]\label{definition:zipper}
A pair of nonempty subsets $Z^\pm$ of $S^2$ are a {\em zipper} if they satisfy the following
conditions:
\begin{enumerate}
\item{$Z^+$ and $Z^-$ are disjoint;}
\item{each of $Z^\pm$ is dense in $S^2$;}
\item{every point of $Z^\pm$ is a cut point; and}
\item{each of $Z^\pm$ is a countable increasing union of finite trees.}
\end{enumerate}
\end{definition}

\begin{remark}\label{remark:two_zippers}
In fact this definition diverges very slightly from the definition of zippers
in \cite{Calegari_Loukidou_Zippers}. The context of that paper is 
that one has a cocompact Kleinian group $G$ acting on $S^2_\infty$,
and in that context, what is called a zipper is a 
pair of nonempty $G$-invariant 
subsets $Z^\pm \subset S^2_\infty$ that are disjoint and path-connected.
Since $G$ acts minimally on $S^2_\infty$, both of $Z^\pm$ are necessarily dense. 
Furthermore, it is shown (Proposition~2.10 in that paper) that
any zipper in this sense contains a minimal subzipper, for which every point of 
$Z^\pm$ is a cut point, and each of $Z^\pm$ is 
necessarily a countable increasing union of finite trees. In particular, a minimal zipper 
in the sense of \cite{Calegari_Loukidou_Zippers} is a zipper in the sense of
Definition~\ref{definition:zipper}, and conversely a zipper in the sense of 
Definition~\ref{definition:zipper} invariant under a cocompact Kleinian group $G$ 
contains a minimal subzipper in the sense of \cite{Calegari_Loukidou_Zippers}. 
In this paper by abuse of notation we will reserve the word zipper for a zipper
in the sense of Definition~\ref{definition:zipper}, and will refer to a minimal
zipper in the sense of \cite{Calegari_Loukidou_Zippers} as a $G$-zipper.

In the sequel we will appeal to certain constructions and facts proved about
$G$-zippers in \cite{Calegari_Loukidou_Zippers} that do not use $G$-invariance, and
therefore hold also for zippers.
\end{remark}

\begin{lemma}[Unique path]\label{lemma:unique_path}
Let $Z^\pm$ be a zipper. Then any two distinct points in $Z^+$ may be joined by a
unique embedded path in $Z^+$, and similarly for $Z^-$.
\end{lemma}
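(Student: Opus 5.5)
Existence comes first. By Definition~\ref{definition:zipper}(4), write $Z^+=\bigcup_i T_i$ with $T_1\subset T_2\subset\cdots$ finite trees. Given distinct $p,q\in Z^+$, some $T_i$ contains both, and the tree $T_i$ contains an embedded arc from $p$ to $q$. So the whole content of the lemma is uniqueness.

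For uniqueness, suppose $\alpha\ne\beta$ are two embedded paths in $Z^+$ from $p$ to $q$. I will first reduce to the case of a Jordan curve. Since $\alpha\ne\beta$ as sets, there is a point $x\in\alpha\setminus\beta$. Let $J\subset\alpha$ be the component of $\alpha\setminus\beta$ containing $x$; it is an open subarc of $\alpha$. Its endpoints $a\ne b$ lie in $\beta$. They are distinct because $\alpha$ is embedded and $J$ is a nondegenerate open arc with closure in $\alpha$.

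Let $\beta'\subset\beta$ be the subarc from $a$ to $b$. Then $\overline J\cup\beta'$ is a Jordan curve $\Gamma$, since $\overline J$ meets $\beta$ only at $a,b$. Moreover $\Gamma\subset Z^+$, because both arcs lie in $Z^+$.

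Next I use the Jordan Curve Theorem. $S^2\setminus\Gamma$ has two components, $U$ and $V$, each a nonempty open set. By density (Definition~\ref{definition:zipper}(2)), $Z^-$ meets both $U$ and $V$. By Definition~\ref{definition:zipper}(4), $Z^-$ is also a countable increasing union of finite trees, hence path-connected. So there is a path in $Z^-$ joining a point of $U$ to a point of $V$, and this path must cross $\Gamma\subset Z^+$. That contradicts disjointness (Definition~\ref{definition:zipper}(1)). Hence $\alpha=\beta$ as sets. Since the paths are embedded arcs with the same endpoints, they agree up to reparametrization. The argument for $Z^-$ is symmetric.

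The only mildly delicate step is extracting the Jordan curve from two distinct arcs. That is the component-of-$\alpha\setminus\beta$ argument above, and it needs $\beta$ closed so that $\alpha\setminus\beta$ is open in $\alpha$. The rest is immediate; this is the same argument already sketched in the proof of Theorem~\ref{theorem:zippers_from_wheels}.
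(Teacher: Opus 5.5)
Your proof is correct and is essentially the paper's argument. Existence comes from bullet (4) of the definition, and uniqueness comes from passing from two distinct arcs to a Jordan curve in $Z^+$, which the connected, dense set $Z^-$ would have to cross, contradicting disjointness. Your component-of-$\alpha\setminus\beta$ construction just spells out the paper's ``restricting to smaller subpaths if necessary'' step.
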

\begin{proof}
Bullet (4) from the definition implies that each of $Z^\pm$ is path-connected. If
$p,q\in Z^+$ are joined by distinct embedded paths $\alpha,\beta$ then by restricting
to smaller subpaths if necessary, we may assume $\alpha \cup \beta$ is a Jordan curve
in $Z^+$, so that by disjointness, $Z^-$ is forced to lie in one of the complementary 
components. But this violates density.
\end{proof}

The set of {\em ends} of a zipper $Z^\pm$, denoted $\EE^\pm$, is the set of topological ends 
in the sense of Freudenthal (although we will give them a different topology); 
by bullet (4) of Definition~\ref{definition:zipper}
we can take $\EE^+ = \varprojlim \pi_0(Z^+ - T)$ where the
limit is taken over all (compact) finite subtrees $T \subset Z^+$, and similarly for
$Z^-$. Thus one may easily see that the ends of $Z^\pm$
are in bijection with equivalence classes of (unparameterized) proper rays, 
where $r \sim r'$ if they are equal away from some pair of compact initial segments. 

It is convenient to define the convex hull of a subset $S \subset Z^+\sqcup \EE^+$.
This is exactly what the name suggests: it is the union of all embedded paths (finite
or infinite) `between' pairs of distinct elements of $S$ (with some exceptional 
cases when $|S|<2$) but we spell it out:

\begin{definition}[Convex hull]\label{definition:convex_hull}
Let $S$ be a subset of $Z^+ \sqcup \EE^+$. If $S$ is empty or consists of a single point of
$\EE^+$, the convex hull of $S$ is empty. Otherwise the convex hull of $S$ is the smallest 
path-connected subset of $Z^+$ containing $S \cap Z^+$ and containing rays representing
every element of $S \cap \EE^+$.
\end{definition}

The key property of zippers that we shall use is that the sets $\EE^+$ have a natural
{\em circular order}, which can be completed by adding certain objects called {\em ideal
ends}. 

\begin{lemma}[Circular order]\label{lemma:circular_order}
The end spaces $\EE^\pm$ each admit a canonical circular order. 
\end{lemma}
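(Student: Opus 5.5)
We define the circular order on $\EE^+$ from the planar embedding of $Z^+$ in the oriented sphere $S^2$ (the case of $\EE^-$ is symmetric). Given three distinct ends $e_1,e_2,e_3 \in \EE^+$, we first take their convex hull $T$ in the sense of Definition~\ref{definition:convex_hull}. By Lemma~\ref{lemma:unique_path}, $T$ is a tripod: a unique center point $c\in Z^+$ together with three proper rays $r_1,r_2,r_3$ from $c$ representing the $e_i$, pairwise meeting only at $c$. We then declare $(e_1,e_2,e_3)$ positively ordered if the germs of $r_1,r_2,r_3$ at $c$ appear in counterclockwise order with respect to the fixed orientation of $S^2$. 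Since $c$ has a neighborhood in which the three arcs are tame enough to see a cyclic order, we can make this precise by choosing a small Jordan curve around $c$ meeting each arc once.

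\textbf{Independence of choices.} The tripod is canonical, since it is the unique path-connected hull. The only choice is which representative rays are used, and any two representatives of an end agree outside a compact initial segment. The order depends only on the germs at $c$, and $c$ itself is determined by the ends because it is the unique point where the three hull paths branch. So the order is well defined.

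\textbf{Circular order axioms.} Antisymmetry under transpositions and cyclic invariance are immediate from the planar definition. The real content is the cocycle (transitivity) condition for four ends $e_1,\dots,e_4$. Their convex hull is a finite tree with four proper rays, either with one 4-valent vertex or with two trivalent vertices joined by an arc. In each case we need the triple orders induced on the subtripods to be restrictions of a single cyclic order on the four ends. To see this, we embed the hull tree in the plane: a properly embedded tree whose ends accumulate only at infinity inherits from the orientation a cyclic order on its ends, namely the boundary order obtained by walking around a regular neighborhood, and every subtripod order is the restriction of it.

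\textbf{Main obstacle.} The rays are not proper in $S^2$; they are dense-ish and may accumulate everywhere, so "walking around a neighborhood" is not literally available. We would handle this with the planar ribbon structure of finite subtrees, which really are compact embedded trees in $S^2$ and so inherit a cyclic order on edges at each vertex. The order of ends is then read off from a large finite subtree $T'$ containing the branch points, using the branches of $T'$ at those vertices: an end determines a component of $Z^+ - T'$, which attaches to $T'$ at a definite position. The orders of these components stabilize as $T'$ grows, because of bullet (4) and uniqueness of paths. Hence everything reduces to the combinatorics of cyclically ordered finite planar trees, where the axioms are standard.
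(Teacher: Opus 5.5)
Your proposal is correct and is essentially the paper's construction. The paper reads off the order from the first points at which disjoint representative rays $r_1,r_2,r_3$, starting on a compact subtree $K$, exit a closed disk $D\supset K$, with $\partial D$ oriented as a boundary of $D$ in the oriented $S^2$. Your small Jordan curve around the branch point $c$ is just the special case where $D$ is a small disk about $c$.

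The one thing the paper's more flexible formulation buys is the cocycle condition. If you take a single disk around a compact core of the hull of four ends, all four triple orders are restrictions of the cyclic order of four points on $\partial D$, so the condition holds automatically once independence of choices is known. Your reduction to the combinatorics of planar ribbon trees is then unnecessary.
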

This is proved in \cite{Calegari_Loukidou_Zippers}, Lemma~2.13. Since we need to 
understand the relationship of this order to the topology of $Z^\pm$ and the way they
are embedded in $S^2$ we describe the circular order explicitly. To give a circular
order on a set is to give the assignment clockwise or anticlockwise to each distinct
ordered triple of elements in a set, in a way which is invariant under cyclic permutation,
and is compatible on distinct 4-tuples of elements.
If $e_1,e_2,e_3\in \EE^+$ are three distinct ends, we may find disjoint representative
rays $r_1,r_2,r_3$ that begin on some compact subtree $K$. We may then find a closed
disk $D$ in $S^2$ that contains $K$ in the interior but does not contain all of any
of the $r_i$. Let $p_i$ be the first point of intersection of $r_i$ with $\partial D$.
The Jordan curve $\partial D$ inherits an orientation from $D$ which in turn inherits it from
$S^2$, and $p_1,p_2,p_3$ thereby inherit a circular order from $\partial D$. This is
easily seen to be independent of choices, and defines the circular order on $\EE^+$
and likewise on $\EE^-$.

Each of the subsets $\EE^\pm$ gets a natural {\em order topology} from its circular
order (this is quite different from the usual Freudenthal topology on the end space).
Any circularly ordered set is Hausdorff in its order topology. The spaces $\EE^\pm$ furthermore are
separable, by bullet (4) of Definition~\ref{definition:zipper}. 

A set with a separable (circular) order topology has a natural separable order completion. We
denote these completed spaces by $\bar{\EE}^\pm$. The points of $\bar{\EE}^\pm - \EE^\pm$
are represented by geometric objects called {\em ideal gaps} that we now define,
following \cite{Calegari_Loukidou_Zippers}, \S~2.5.

If $e^L,e^R \in \EE^+$, we define $[e^L,e^R] \subset \EE^+$ to be the set
$$[e^L,e^R]:= \lbrace e^L,e^R\rbrace \cup \lbrace e \in \EE^+ \text{ such that } 
e^L,e,e^R \text{ is anticlockwise}\rbrace$$

\begin{lemma}[Nonempty intersection]\label{lemma:nonempty_intersection}
Suppose there is an infinite nested sequence
$$ \cdots [e_{i+1}^L,e_{i+1}^R] \subset [e_i^L,e_i^R] \subset \cdots$$
whose intersection is empty. For each $i$ let $\gamma_i\subset Z^+$ denote the convex hull of
$e_i^L$ and $e_i^R$ (this is a properly embedded line), oriented to run from $e_i^L$ to $e_i^R$. 
Then there is some $i$ so that the intersection $X: = \cap_{j\ge i} \gamma_j$ is either
a point, or a compact, nonempty interval.
\end{lemma}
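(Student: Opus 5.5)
The strategy is to exploit the tree structure of $Z^+$ (Lemma~\ref{lemma:unique_path}) together with the way nesting of the intervals $[e_i^L,e_i^R]$ forces the lines $\gamma_i$ to interact. The goal is to show two things. First, the $\gamma_j$ eventually share a common nonempty compact piece. Second, this piece cannot be unbounded, since an unbounded common piece would produce an end lying in every interval.

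\textbf{Step 1: consecutive lines intersect.} The first claim is that for each $i$, $\gamma_i\cap\gamma_{i+1}$ is nonempty, and more generally $\gamma_j\cap\gamma_i\neq\emptyset$ for $j\ge i$. The convex hull of two ends is the unique properly embedded line joining them; since $Z^+$ is a tree, $\gamma_i\cap\gamma_j$ is either empty or a connected subset of each line, i.e.\ a point, a compact arc, a ray, or a whole line. Suppose instead that $\gamma_j$ and $\gamma_i$ are disjoint. Then there is a unique shortest arc $\sigma$ in $Z^+$ from $\gamma_i$ to $\gamma_j$, meeting $\gamma_i$ at a single point $x$. Using the explicit description of the circular order (choose a disk $D$ around a compact subtree containing $\sigma$ and read off where the rays exit $\partial D$), the ends $e_j^L,e_j^R$ both lie in the same complementary side of $\gamma_i$ at $x$.

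The side of $\gamma_i$ that lies "inside" $[e_i^L,e_i^R]$ is determined by the anticlockwise convention. I expect that nestedness forces $e_j^L,e_j^R$ onto that side. Moreover, all ends on that side branching off at points of $\gamma_i$ other than the branch through $\sigma$ should belong to $[e_i^L,e_i^R]$ but not to $[e_j^L,e_j^R]$. This is consistent with nesting, so disjointness is not immediately contradictory. The real content is therefore Step 2.

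\textbf{Step 2: using empty intersection.} Let $x_i\in\gamma_i$ be the point where the path from $\gamma_i$ toward $\gamma_{i+1}$ leaves $\gamma_i$, or the relevant endpoint of the overlap arc if the two lines meet. Consider the sequence of points obtained this way. If, for infinitely many $i$, $\gamma_{i+1}$ branches off $\gamma_i$ strictly to one side at a point of $\gamma_i$, then the branch points march off along some path. I would argue that this path must be a proper ray. It cannot stay in a compact subtree, because each subtree $T_n$ (from bullet (4) of Definition~\ref{definition:zipper}) is finite and the branchings are distinct. A proper ray defines an end $e\in\EE^+$, and that end separates consistently into every $[e_i^L,e_i^R]$. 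This contradicts the assumption that the intersection is empty.

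Ruling out this scenario means that eventually every $\gamma_j$ with $j\ge i$ passes through a common point. The set $X=\cap_{j\ge i}\gamma_j$ is then an intersection of nested-in-the-tree connected subsets of $\gamma_i$, hence a nonempty connected subset of $\gamma_i$: a point, a compact interval, a ray, or a line. If $X$ contained a ray of $\gamma_i$ converging to $e_i^L$ (say), then $e_i^L=e_j^L$ for all large $j$, and $e_i^L$ would lie in the intersection, again a contradiction. So $X$ is a point or a compact nonempty interval.

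\textbf{Main obstacle.} The main difficulty is the bookkeeping in Step 2: translating "the intervals shrink to nothing in the circular order" into "the lines cannot escape to infinity along a ray". This requires carefully relating the anticlockwise side of $\gamma_i$ (via the disk $D$ construction) to membership in $[e_i^L,e_i^R]$. The escaping-ray argument additionally needs the finite-tree exhaustion to guarantee that an escaping sequence of branch points actually determines a proper ray rather than accumulating inside a compact subtree.
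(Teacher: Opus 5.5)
Your Step~2 has a genuine gap, at the claim that the marching branch points ``cannot stay in a compact subtree, because each subtree $T_n$ is finite and the branchings are distinct.''

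A finite tree $T_n$ has finitely many vertices but uncountably many points. The points where $\gamma_{i+1}$ splits off from $\gamma_i$ are branch points of $Z^+$, not vertices of $T_n$, and nothing prevents infinitely many of them from lying on a single arc of $T_n$. They can therefore accumulate on a point $q\in Z^+$, and the path they trace is then a non-proper arc ending at $q$, which defines no end. This is not a degenerate case. Suppose all $\gamma_j$ pass through a $2$-valent point $q$ while the points where they split apart converge to $q$ from both sides. Then the intervals are nested with empty intersection and $X=\{q\}$; this is exactly the situation ideal gaps are built from. So ``infinitely many branchings'' cannot lead to a contradiction. Conversely, excluding your scenario does not give a common point. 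Consecutive lines can overlap (each $\gamma_{i+1}$ a ``U'' resting on a compact arc of $\gamma_i$) while the overlaps drift, so that $\gamma_{i+2}\cap\gamma_i=\emptyset$.

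The missing ingredient is the axiom that every point of $Z^+$ is a cut point. Your argument never uses it, and without it the lemma fails. Take a planar tree made of a segment $[0,1]$ with a leaf at $q=1$, and at each $p_n=1-1/n$ attach two proper rays on opposite sides. The lines $\gamma_n$ formed by these pairs of rays are pairwise disjoint, yet the intervals of ends on the $q$-side of the $\gamma_n$ are nested with empty intersection.

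The route the paper points to is Lemma~2.15 of \cite{Calegari_Loukidou_Zippers}, whose opening is replayed in the proof of Proposition~\ref{proposition:canonical_identification}. It fixes $q_0\in\gamma_0$ and takes the shortest arcs $J_i$ from $q_0$ to $\gamma_i$; nestedness gives $J_0\subset J_1\subset\cdots$. From there one argues as follows.
\begin{itemize}
\item If $J=\bigcup_i J_i$ is proper, its end lies in every interval, a contradiction.
\item Otherwise $J$ limits to some $q\in Z^+$. If $q\in\gamma_j$ for all large $j$, then $X\neq\emptyset$ and your last paragraph finishes the proof (note $X$ is closed, since the $\gamma_j$ are).
\item If not, then for infinitely many $j$, every $\gamma_k$ with $k$ large lies in the component of $Z^+-\gamma_j$ containing $q$, which must be on the inner side. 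Because $q$ is a cut point, some component of $Z^+-q$ misses $J$. That component contains a proper ray, since no point of $Z^+$ is a tree endpoint and $Z^+$ is exhausted by finite trees. The end of this ray lies in $(e_j^L,e_j^R)$ for infinitely many $j$, hence in every interval, a contradiction.
\end{itemize}
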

This is parallel to \cite{Calegari_Loukidou_Zippers}, Lemma~2.15 and the proof is identical.
Note that we do not assume the $e_i^L$ or the $e_i^R$ are all distinct, though of course
the hypothesis of the lemma implies that at least one of the sequences may be chosen to
be so.

We are now in a position to define ideal gaps.

\begin{definition}[End nested; ideal gap]\label{definition:ideal_gap}
A sequence of proper lines $\gamma_i$ in $Z^\pm$ which are the convex hulls of a
sequence of nested intervals $[e_i^L,e_i^R]$ in $\EE^\pm$ is said to be {\em end nested}.

A point $p \in Z^\pm$ together with an end nested sequence of proper lines
$\gamma_i$ associated to a nested sequence of closed intervals in $\EE^\pm$ with empty
intersection for which $p \in \cap_i \gamma_i$ (as in the statement of 
Lemma~\ref{lemma:nonempty_intersection}) is called an {\em ideal gap}. Two ideal
gaps are equivalent if they contain cofinal end nested subsequences of some end 
nested sequence.
\end{definition}

\begin{figure}[htpb]
\centering
\includegraphics[scale=0.5]{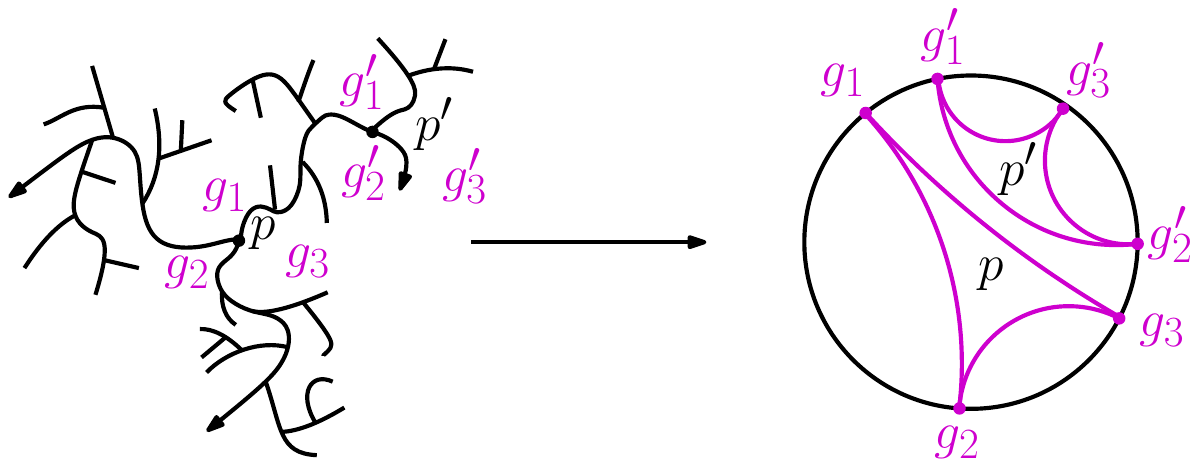}
\caption{Branch points in $Z^+$ are in the support of $>2$ ideal gaps.}
\label{ideal_gap}
\end{figure}

Informally, an ideal gap is an equivalence class of pairs consisting of a
point $p \in Z^+$ together with
a `direction of approaching' $p$ in the complement $S^2 - Z^+$ (and similarly for $Z^-$).
A point $p$ as above is said to be contained in the {\em support} of the ideal gap (and by abuse of notation,
the support of an equivalence class). From the proof of Lemma~\ref{lemma:nonempty_intersection} 
(see \cite{Calegari_Loukidou_Zippers}, Lemma~2.15) one may see that each equivalence 
class of ideal gap has support of the form $\cup_i \cap_{j\ge i} \gamma_j$ for some
end-nested sequence, and is therefore either a single point or a closed subset of $Z^\pm$ homeomorphic 
to an interval. If an interval, this interval is necessarily closed, since any
ray it contains would determine an end that lies in the intersection of all the $[e_i^L,e_i^R]$
contrary to hypothesis. For a zipper that is {\em hairy} (see Definition~\ref{definition:hairy})
the support of an equivalence class of ideal gap necessarily consists of a single point
(see Lemma~\ref{lemma:hairy_ideal_gap_point}).

\begin{lemma}[Order completion]\label{lemma:order_completion}
The union of $\EE^+$ together with the equivalence classes of ideal gaps of $Z^+$ 
admit a natural circular ordering which is equal to the order completion $\bar{\EE}^+$,
and similarly for $Z^-$.
\end{lemma}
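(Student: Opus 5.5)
The plan is to identify the order completion of $\EE^+$ concretely with equivalence classes of ideal gaps, and then transport the circular order. Recall that for a separable circularly ordered set, the completion $\bar{\EE}^+$ adds one point for each \emph{cut} (Dedekind gap) that is not realized by an element. Such a cut can be encoded by a nested sequence of closed intervals $[e_i^L,e_i^R]$ with empty intersection in $\EE^+$. Separability lets us take a countable cofinal sequence. Two such sequences define the same cut exactly when they are mutually cofinal, meaning each interval of one eventually contains the intervals of the other.

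\textbf{From cuts to ideal gaps.} Given a cut, choose a nested sequence $[e_i^L,e_i^R]$ as above. Let $\gamma_i$ be the convex hulls of $\{e_i^L,e_i^R\}$, which are proper lines by Lemma~\ref{lemma:unique_path}. By Lemma~\ref{lemma:nonempty_intersection}, for some $i$ the set $\cap_{j\ge i}\gamma_j$ is a nonempty point or compact interval. Any point $p$ in it, together with $\{\gamma_i\}$, is an ideal gap. Mutually cofinal sequences give equivalent ideal gaps, directly from Definition~\ref{definition:ideal_gap}. Hence we obtain a well-defined map from cuts to equivalence classes of ideal gaps.

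\textbf{From ideal gaps to cuts.} Conversely, an ideal gap carries a nested sequence of intervals with empty intersection, and this sequence determines a cut. Equivalent gaps share cofinal subsequences, so they give the same cut. The two constructions are evidently inverse to each other. Elements of $\EE^+$ are not cuts with empty intersection, so no gap coincides with an end.

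\textbf{Circular order and density.} We define the order on the union $\EE^+\sqcup\{\text{gaps}\}$ by declaring a triple to be anticlockwise if it is anticlockwise for sufficiently small representative intervals. Concretely, we replace each gap by an end $e$ in $[e_i^L,e_i^R]$ for $i$ large. Since the intervals for distinct gaps, or for a gap and an end, are eventually disjoint, this assignment is eventually constant and compatible on 4-tuples. It then remains to check two further facts. First, $\EE^+$ is dense in the union: any interval between two elements contains some $[e_i^L,e_i^R]$. Second, the union is complete: every cut of the union is realized, which one shows with a diagonal argument over countable nested sequences.

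\textbf{Main obstacle.} The delicate point is showing that the order is eventually constant. For a gap $g$ and an end $e$, we need $e\notin[e_i^L,e_i^R]$ for large $i$; this holds because the intersection is empty. For two inequivalent gaps, we need their intervals to be eventually disjoint. I would argue this by contradiction: if the intervals overlap for all $i$, then by nestedness one sequence is cofinal in the other, so the gaps are equivalent. The geometric content, namely that the circular order on $\EE^+$ is the one read off from $\partial D$, does not enter except through Lemma~\ref{lemma:nonempty_intersection}. The same argument applies to $Z^-$.
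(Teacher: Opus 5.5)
Your proposal is correct and is essentially the paper's argument: the paper notes that the lemma is immediate from the definition of ideal gaps (nested interval sequences with empty intersection, taken up to cofinality) together with Lemma~\ref{lemma:nonempty_intersection}, which supplies a support point, and your write-up fills in the routine order-theoretic details. One small fix: the ideal gap should use the truncated sequence $\{\gamma_j\}_{j\ge i}$, since the point $p$ is only guaranteed to lie in $\cap_{j\ge i}\gamma_j$, not in every $\gamma_j$.
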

This is immediate from the definition of ideal gaps, and from 
Lemma~\ref{lemma:nonempty_intersection}.

\subsection{Proof of the main theorem}

The zippers that arise from CaTherine wheels are not arbitrary, but satisfy two
important properties. First, they satisfy the strong landing property:
\begin{definition}[Strong landing property]\label{definition:strong_landing_property}
Let $Z^\pm$ be a zipper. Then $Z^\pm$ have the {\em strong landing property} if 
every ray in $Z^+$, i.e.\/ every embedded half-open interval $r:[0,1) \to Z^+$ extends
to an embedded closed interval $\bar{r}:[0,1] \to S^2$ in one of three ways:
\begin{enumerate}
\item{(type 0): $\bar{r}(1) \in Z^+$;}
\item{(type 1): $\bar{r}(1) \in S^2 - (Z^+\cup Z^-)$ and there is another embedded
half-open interval $s:[0,1) \to Z^-$ extending to $\bar{s}:[0,1]\to S^2$ with 
$\bar{r}(1)=\bar{s}(1)$; or}
\item{(type 2): $\bar{r}(1) \in Z^-$;}
\end{enumerate}
and conversely every point in $S^2$ is obtained as $\bar{r}(1)$ for some ray
$r$; and similarly for $Z^-$.
\end{definition}

Second, they are hairy:
\begin{definition}[Hairy]\label{definition:hairy}
A zipper $Z^\pm$ is {\em hairy} if for every closed oriented interval $I$ embedded in $Z^+$
there are nontrivial rays $r,l$ in $Z^+$ that begin on $I$ and intersect
$I$ only at these initial points, and which lie respectively to the right and to the left
of the oriented interval $I$ in $S^2$, and similarly for $Z^-$.
\end{definition}
We refer to $r$ and $l$ as above as `right and left hairs' on $I$.
Let us now state and prove some important consequences of hairiness.

\begin{lemma}[Ideal gaps of hairy zippers]\label{lemma:hairy_ideal_gap_point}
Suppose $Z^\pm$ is a hairy zipper. Then the support of each equivalence class of ideal gap 
is a single point.
\end{lemma}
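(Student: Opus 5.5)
We argue by contradiction. Suppose some equivalence class of ideal gap of $Z^+$ has support that is not a single point. By the discussion following Definition~\ref{definition:ideal_gap}, this support has the form $X=\cup_i\cap_{j\ge i}\gamma_j$ for an end nested sequence $\gamma_i$. So it is a closed subset of $Z^+$ homeomorphic to a compact interval, and we are assuming it is nondegenerate. By Lemma~\ref{lemma:nonempty_intersection} we may pass to a tail and assume $X\subset\gamma_j$ for all $j\ge i$. Orient $X$ compatibly with the $\gamma_j$, which run from $e_j^L$ to $e_j^R$.

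The first step is to locate the ends $\bigcap_j[e_j^L,e_j^R]$ on one definite side of $X$. Using the explicit description of the circular order (choose a disk $D$ containing a compact subtree and read off the order of exit points on $\partial D$), the interval $[e_j^L,e_j^R]$ consists of the ends of $Z^+$ represented by rays leaving $\gamma_j$ on a fixed side; with the anticlockwise convention, take this to be the left side. Since $X\subset\gamma_j$, any ray that starts at an interior point of $X$ and leaves $\gamma_j$ to its left determines an end lying in $[e_j^L,e_j^R]$.

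Next we apply hairiness (Definition~\ref{definition:hairy}) to a closed subinterval $I$ of the interior of $X$. This gives a left hair $l$ beginning at a point $x\in\inte(X)$ and meeting $I$ only at $x$. Let $e$ be the end of $Z^+$ that $l$ represents; if $l$ is compact, first extend it to a proper ray, which is possible because every point of $Z^+$ is a cut point and $Z^+$ is an increasing union of trees. Since $l$ leaves $\gamma_j$ at $x$ on the left for every $j\ge i$, the previous step gives $e\in[e_j^L,e_j^R]$ for all $j$. So the intersection is nonempty, contradicting the hypothesis in Definition~\ref{definition:ideal_gap}. Hence $X$ is a point.

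The main obstacle is making the side bookkeeping in the first step rigorous, and in particular checking that the hair does not rejoin $\gamma_j$ outside $I$. The hair cannot do so: by Lemma~\ref{lemma:unique_path} a ray leaving $\gamma_j$ at $x$ meets $\gamma_j$ only at $x$, so its end is genuinely separated onto one side. I would also need to confirm, by the Jordan curve argument used to define the circular order, that "left of $\gamma_j$" corresponds exactly to $[e_j^L,e_j^R]$ rather than its complement. If the convention turns out reversed, use the right hair $r$ instead, since hairiness supplies hairs on both sides.
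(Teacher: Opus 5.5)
Your proposal is correct and follows the paper's argument: a hair leaving an interior point of the nondegenerate support on the appropriate side represents an end lying in every $[e_j^L,e_j^R]$, which contradicts the empty intersection. The paper states this in one line and leaves implicit the side bookkeeping you spell out, including the point that nestedness of the intervals forces all the $\gamma_j$ to induce the same orientation on the support, so a single side works for every $j$.
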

\begin{proof}
The support of an equivalence class is the intersection $J:=\cup_i \cap_{j\ge i} \gamma_j$
of end nested proper lines associated to a nested sequence of closed
intervals $[e_i^L,e_i^R]$ in $\EE^\pm$ with empty intersection.
If $J$ contains more than one point, it cannot be hairy, since otherwise there would
be a ray $r$ starting at an interior point of $J$ on a side that forces the end $e$
associated to $r$ to lie in each of the intervals $[e_i^L,e_i^R]$, contrary to the
hypothesis that they have an empty intersection. Thus $J$ consists of a single point.
\end{proof}

\begin{lemma}[Order completions are circles]\label{lemma:order_circles}
Suppose $Z^\pm$ is a hairy zipper. Then the order completions $\bar{\EE}^\pm$ are 
homeomorphic to circles $S^1_\pm$.
\end{lemma}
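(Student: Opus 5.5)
The plan is to use the standard characterization of the circle among circularly ordered sets: a circularly ordered set is order-isomorphic (hence homeomorphic in the order topology) to $S^1$ exactly when it is separable, complete, dense (between any two distinct elements there is a third), and has more than one point. The paper already notes that $\bar{\EE}^\pm$ is separable, being the order completion of $\EE^\pm$, which is separable by bullet (4) of Definition~\ref{definition:zipper}. Completeness holds by construction (Lemma~\ref{lemma:order_completion}). So the work is to show that $\bar{\EE}^+$ is nonempty with at least three points and that its order is dense. The case of $Z^-$ is symmetric.

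For nontriviality, I will use that $Z^+$ is a countable increasing union of finite trees. It is dense in $S^2$, so it is not compact and contains a proper ray. Every point of $Z^+$ is a cut point, so removing a point separates $Z^+$, and each component, being noncompact, yields further ends. Together with hairiness (a hair on each side of a line), this gives at least three distinct ends.

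For density, I will separate the two kinds of completion points. Completion points that are not in $\EE^+$ are automatically limits of ends, since they are ideal gaps defined by nested intervals. So it suffices to show that two ends $e_1\ne e_2$ have no gap between them in $\EE^+$: that is, in the interval $[e_1,e_2]$ there is an end strictly between them, and likewise for $[e_2,e_1]$. Let $\gamma$ be the proper line in $Z^+$ that is the convex hull of $e_1,e_2$, oriented from $e_1$ to $e_2$, and take a compact subinterval $I\subset\gamma$. Definition~\ref{definition:hairy} gives a hair $r$ starting on $I$ on the left of $\gamma$ and a hair $l$ on the right. Each hair extends to a proper ray: the hair is a ray in $Z^+$ leaving $I$, and any ray in a tree-like space that leaves a compact set can be continued properly because every point is a cut point and components are noncompact. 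If necessary, I will replace the hair by a proper ray in the same complementary component of $Z^+-\gamma$. The end $e$ of such a ray is distinct from $e_1$ and $e_2$. Using the explicit description of the circular order via a disk $D$ containing $K\supset I$, the triple $(e_1,e,e_2)$ has the orientation determined by which side of $\gamma$ the ray leaves from. The left hair therefore lies in one of the intervals $[e_1,e_2]$ or $[e_2,e_1]$, and the right hair in the other, so both open intervals are nonempty.

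Once dense, complete, and separable without endpoints (as a cyclic order) are established, I will cut at a point to obtain a linear order. This linear order is order-isomorphic to an interval by Cantor's characterization of $\R$, and regluing gives $S^1$. Lemma~\ref{lemma:hairy_ideal_gap_point} is not strictly needed here. I expect the main obstacle to be the claim that the side of $\gamma$ on which the hair leaves determines the cyclic position of its end. That claim needs the explicit disk-based description of the circular order together with the fact that rays in $Z^+$ cannot cross $\gamma$ (Lemma~\ref{lemma:unique_path}). Ensuring that the hair can be prolonged to a proper ray whose end is distinct from $e_1$ and $e_2$ is a minor secondary point.
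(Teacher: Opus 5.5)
Your argument is correct and is essentially the paper's. Both reduce to three facts about $\bar{\EE}^\pm$: it is separable, it is complete, and it has no gaps. The no-gap property comes from left and right hairs on the convex hull of the two putative gap endpoints, which place points of $\bar{\EE}^\pm$ in both complementary intervals.

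The one refinement is that you first handle ideal-gap endpoints by observing that an ideal gap is a two-sided limit of ends, so it can never bound a gap. This lets you work with a genuine proper line between two ends. It also sidesteps the paper's use of the convex hull of supports, a step that needs separate care when both endpoints are ideal gaps with the same support.
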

\begin{proof}
We have already seen that $\EE^\pm$, and therefore also $\bar{\EE}^\pm$, are separable. 
Furthermore, we claim $\bar{\EE}^\pm$ have no {\em gaps}, i.e.\/ nontrivial 
closed intervals $[e^L,e^R]$ in $\bar{\EE}^\pm$ with no interior points. To see this,
suppose to the contrary that $[e^L,e^R]$ is an interval with no interior points. Then 
each of $e^L$ and $e^R$ is either an ordinary end or an ideal gap, and the convex hull
of these two elements or supports as appropriate is a nontrivial interval
$J \subset Z^\pm$ which is closed as a subspace of $Z^\pm$. But exactly the same
argument as the proof of Lemma~\ref{lemma:hairy_ideal_gap_point} shows that $J$
must fail to be hairy on at least one side, or we would be able to find a point
in the interior of $[e^L,e^R]$. This contradiction proves the claim.

A separable, complete circularly ordered set with no gaps is homeomorphic to a
circle; see e.g.\/ \cite{Frankel_Mobius}, Construction~5.7.
\end{proof}

\begin{lemma}[Laminar relations from hairy zippers]\label{lemma:zipper_relations}
Suppose $Z^\pm$ is a hairy zipper.
Define equivalence relations $\LL^\pm$ on $S^1_\pm$ whose trivial equivalence classes
are precisely the points of $\EE^\pm$ and whose nontrivial classes
are generated by pairs of ideal gaps of $Z^\pm$ with the same support. Then $\LL^\pm$ are
laminar relations with no isolated sides.
\end{lemma}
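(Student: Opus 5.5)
The plan is to verify the three laminar axioms of Definition~\ref{definition:laminar_relation} for the relation on $S^1_+=\bar{\EE}^+$ (the case of $Z^-$ is identical), and then check no isolated sides. By Lemma~\ref{lemma:hairy_ideal_gap_point}, every equivalence class of ideal gap has a single support point $p\in Z^+$. So the relation is exactly: two elements of $S^1_+$ are equivalent if and only if they are ideal gaps with the same support point. The nontrivial class attached to $p$ is then $\nu(p)$, the set of ideal gaps supported at $p$. Since $p$ is a cut point, $\nu(p)$ has at least two elements. Geometrically, the elements of $\nu(p)$ are the directions of approach to $p$ in the complementary regions of $Z^+$ near $p$; equivalently, they are the ``gaps between consecutive branches'' of $Z^+$ at $p$.

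\emph{Unlinking.} Take distinct support points $p\ne q$ and let $\delta$ be the unique embedded arc from $p$ to $q$ (Lemma~\ref{lemma:unique_path}). Removing the open arc $\delta$ splits the ends of $Z^+$ into two groups: those whose rays leave $\delta$ near the $p$ side and those that do not. I will show that every gap in $\nu(p)$ lies in the closure of a single complementary interval of $S^1_+$ cut out by $\nu(q)$, namely the one containing the ends seen ``through'' $\delta$, and symmetrically. The argument uses the explicit description of the circular order via small disks $D$ and the first intersection points $p_i$ with $\partial D$. A linking would force two end-nested sequences of lines converging to $p$ and to $q$ to cross, which is impossible in a tree.

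\emph{Closed classes and closedness of the relation in $\M$.} Closedness of $\nu(p)$ follows because a monotone limit of ideal gaps supported at $p$ is given by a limit of end-nested sequences. Lemma~\ref{lemma:nonempty_intersection} then shows this limit is again an ideal gap whose support still contains $p$, hence equals $p$. Closedness of the relation is the main obstacle. Suppose $\{a_i,b_i\}$ are pairs in $\nu(p_i)$ converging to $\{a,b\}$ with $a\ne b$. I first show the $p_i$ stay in a compact subtree. Pick ends $e,e'$ separating $a$ from $b$ in the circular order. Then every $p_i$ must lie on the line $\gamma$ from $e$ to $e'$, since the line must pass through the support of any pair of gaps it separates. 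Next I would use the strong landing property, or failing that just the monotone structure along $\gamma$, to extract $p_i\to p\in\gamma$ along the line without escaping to an end. If $p_i$ ran off to an end of $\gamma$, then $a$ and $b$ would be squeezed to be equal or to be that end, contradicting $a\ne b$. Finally, a diagonal end-nested sequence shows $a,b\in\nu(p)$.

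\emph{No isolated sides.} Let $\lambda=\{a,b\}$ be a boundary leaf of the class $\nu(p)$, with $(a,b)$ a complementary interval of $\nu(p)$. The ends in $(a,b)$ correspond to a single branch $B$ of $Z^+-p$ (consecutive gaps bound one branch). Hairiness applied to initial arcs of $B$ at $p$ produces branch points $q_n\in B$ converging to $p$, with hairs on both sides. For each $q_n$, the two gaps of $\nu(q_n)$ that face $p$ form a boundary leaf nested inside $(a,b)$. As $q_n\to p$ these leaves converge to $\lambda$, by the closedness argument above and Lemma~\ref{lemma:hairy_ideal_gap_point}. So $\lambda$ is a nontrivial limit of leaves from the $(a,b)$ side. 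On the other side, $\lambda$ either bounds $H(\nu(p))$ with $|\nu(p)|\ge3$, or, when $|\nu(p)|=2$, the same argument applied to the other branch gives accumulation there too. This is exactly Definition~\ref{definition:no_isolated_side}.
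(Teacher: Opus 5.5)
Your overall route is the paper's. Unlinking works because end-nested lines for a gap at $p$ eventually lie in the component of $Z^+-q$ containing $p$. No isolated sides follows by letting points $q_n$ of the branch $B$ converge to $p$ and taking the leaf of $\nu(q_n)$ that faces $p$; using branch points instead of the paper's $2$-valent points makes no difference. You also treat closedness of the relation in $\M$, which the paper's proof does not address explicitly. Your squeezing argument along $\gamma$ is the right idea there. Run it with the monotone structure only: the strong landing property is not a hypothesis of this lemma.

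\textbf{The gap is in your proof that each class $\nu(p)$ is closed.} You take $x_n\in\nu(p)$ converging monotonically to $x$ and invoke Lemma~\ref{lemma:nonempty_intersection} to conclude that $x$ is an ideal gap. But that lemma only applies to nested intervals of ends with \emph{empty} intersection, and intervals shrinking to $x$ have empty intersection exactly when $x\notin\EE^+$. So the case that actually has to be excluded, namely that the limit $x$ is an end, is assumed away. Nothing you cite rules it out: neither Lemma~\ref{lemma:hairy_ideal_gap_point} nor the fact that $\bar{\EE}^+$ is a circle does.

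This is precisely where the paper uses hairiness directly:
\begin{enumerate}
\item Represent $x$ by a ray $r$ and truncate it so that it lies in a component $K$ of $Z^+-p$.
\item Take hairs of $K$ on either side of $r$. Their ends $e_1,e_2$ bound an open interval of $S^1_+$ around $x$ all of whose ends lie in $K$.
\item Any line between two such ends stays in $K$ and so misses $p$. Hence no ideal gap supported at $p$ lies in that interval.
\end{enumerate}

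When $x$ is a gap, your argument also needs the approximating lines to pass through $p$. That uses the fact that no gap supported at $p$ lies strictly inside the hull in $S^1_+$ of the ends of a single component of $Z^+-p$. You rely on this repeatedly (also in unlinking and in "the line must pass through the support"), so you should establish it first.

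The same omission recurs in the ``diagonal'' step at the end of your closedness-in-$\M$ argument. To conclude $a,b\in\nu(p)$ you must know that limits of gaps at $p_i\to p$ are gaps rather than ends. Again, hairiness is what rules out the latter.
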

\begin{proof}
It is evident that these really are equivalence classes, since the set of ideal gaps
with support $p$ is disjoint from the set of ideal gaps with support $q$, by the
definition of support.

The first thing to see is that the equivalence classes of $\LL^\pm$ are unlinked.
This is because if $p,q\in Z^+$ are distinct points, and $\gamma_i$ is an end nested
sequence associated to $[e_i^L,e_i^R]$ with $p:=\cup_i \cap_{j\ge i} \gamma_j$ then
the $\gamma_j$ are eventually contained in the component of $Z^+-q$ containing $p$, and
conversely with the roles of $p$ and $q$ interchanged.
It follows that any pair of ideal gaps supported by $p$ or $q$ are approximated by
pairs of ideal points, so that the convex hull of the pair converging to $p$ is disjoint
from the convex hull of the pair converging to $q$. The claim follows.

The second thing to see is that the equivalence classes of $\LL^\pm$ are closed.
This is immediate for the trivial equivalence classes, so let $p \in Z^+$ be a point
which is the support of a set $\nu \in S^1_+$ of ideal gaps. We shall show, for every
$x\in S^1_+ - \nu$ that $x$ is contained in a nontrivial interval in $S^1_+$ disjoint
from $\nu$. If $x$ corresponds to a ray $r$ then we may truncate $r$ so that it lies
in a component of $Z^+-p$ and then by hairiness, find rays on either side of $r$ separating
it from every ideal gap of $p$. If $x$ corresponds to an ideal point with support
$q \ne p$ then an argument as in the previous paragraph, together with hairiness, gives
the desired separation. Thus $\LL^\pm$ are laminar relations.

What are the boundary leaves? They all arise in the following way: for each 
$p \in Z^+$ and each component $K$ of $Z^+-p$ the set of rays of $Z^+$ contained in $K$
correspond to ends in $S^1_+$ that are all contained in a minimal closed interval,
and the endpoints of that closed interval are (by construction) ideal points in the class
of $p$. Conversely, suppose $\lbrace x,y\rbrace \subset S^+_1$ is a boundary leaf of
the class with support $p$. Pick a point in $S^1_+$ which is represented by a ray $r$.
This ray can be truncated to a ray contained in some component of $Z^+-p$ and therefore
this boundary leaf is of the form above.

Notice by the way that if $p \in Z^+$ is 2-valent (i.e.\/ $Z^+-p$ has exactly 2 components)
then the equivalence class with support $p$ has exactly 2 points in $S^1_+$ (and likewise
for points of any finite valence $k$). Since $Z^+$ is a countable union of finite trees,
it has only countably many points of valence $>2$. Now, if $p$ is the support of an
ideal class $\nu$, and $\lbrace x,y\rbrace$ is a boundary leaf of $\nu$, it corresponds to
a component $K$ of $Z^+-p$. Take a sequence of 2-valent points $p_i$ in this component 
that converge to $p$ and let $\lbrace x_i,y_i\rbrace$ be the (boundary) leaves
associated to the $p_i$. Then $x_i \to x$ and $y_i \to y$, both nontrivially. This
proves that $\LL^\pm$ have no isolated sides, and we are done.

\begin{figure}[htpb]
\centering
\includegraphics[scale=0.5]{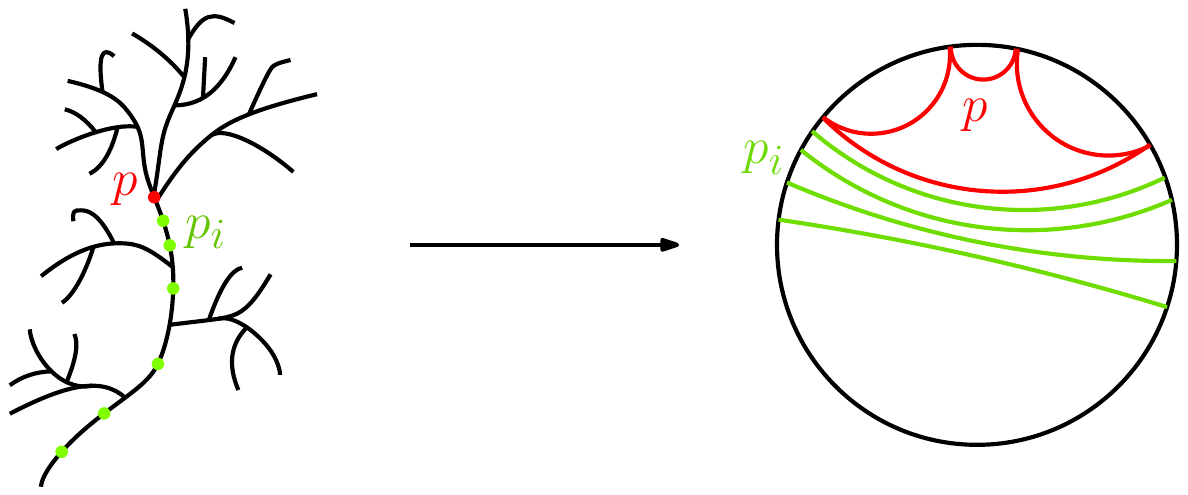}
\caption{A sequence of 2-valent points $p_i$ converging to $p$ gives a sequence of
leaves converging to a boundary leaf of $\nu$.}
\label{zippers_limit_leaves}
\end{figure}
\end{proof}

With these definitions, we are ready to state the main theorem:

\begin{theorem}[CaTherine wheels from zippers]\label{theorem:wheels_from_zippers}
Let $Z^\pm$ be a zipper. Then $Z^\pm$ arises from a CaTherine wheel $f:S^1 \to S^2$
if and only if $Z^\pm$ is hairy, and has the strong landing property.
\end{theorem}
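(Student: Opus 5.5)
Necessity is Theorem~\ref{theorem:zippers_from_wheels}: if $Z^\pm$ arises from a CaTherine wheel $f$, i.e.\/ $Z^\pm=F(P^\pm)$, then bullets (2) and (3) of that theorem give the strong landing property and hairiness. So the work is sufficiency. Given a hairy zipper with the strong landing property, I will manufacture a pair of laminar relations on a single circle, check the hypotheses of Theorem~\ref{theorem:wheels_from_laminations}, and then show that the resulting wheel reproduces $Z^\pm$.

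\textbf{Step 1: two circles and two relations.} By Lemma~\ref{lemma:order_circles} the order completions $\bar{\EE}^\pm$ are circles $S^1_\pm$. By Lemma~\ref{lemma:zipper_relations} they carry laminar relations $\LL^\pm$ with no isolated sides. Define $\phi_\pm:S^1_\pm\to S^2$ by landing. An end $e\in\EE^\pm$ is sent to the endpoint $\bar r(1)$ of any representing ray; this is well defined because all such rays agree near infinity. An ideal gap is sent to its support, which is a single point by Lemma~\ref{lemma:hairy_ideal_gap_point}.

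\textbf{Step 2: identify $S^1_+$ with $S^1_-$ (the main obstacle).} I would show that $\phi_+$ and $\phi_-$ are continuous surjections with the same fibres modulo the relations, so that they induce a canonical orientation-reversing homeomorphism $h:S^1_+\to S^1_-$.
\begin{itemize}
\item \emph{Surjectivity} is the converse clause of the strong landing property.
\item \emph{Continuity of $\phi_+$ in the order topology.} If ends $e_i\to e$ in the circular order, the convex hulls of nested intervals shrink. Their rays then land close to $\bar r(1)$, because a ray of $Z^-$ landing at $\bar r(1)$ (type 1 or 2) together with density fences them in.
\item \emph{Building $h$.} A type 1 landing point $x$ is hit by exactly one end of $Z^+$ and one end of $Z^-$. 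Define $h$ to match these two ends. A type 2 landing point lies in $Z^-$ and is hit by the ideal gaps of $Z^-$ supported at it; match an end $e^+$ landing there with the gap of $Z^-$ that approaches the point from the side containing the ray. Symmetrically for type 0, so ideal gaps of $Z^+$ are matched with ends of $Z^-$ landing in $Z^+$.
\item \emph{What remains.} I must check that $h$ is a bijection and order-reversing. Order reversal should follow from the explicit disk-boundary description of the circular order in Lemma~\ref{lemma:circular_order}: the rays of $Z^+$ and $Z^-$ interleave around $\partial D$.
\end{itemize}
This is where the strong landing property is used essentially. I expect it to be the delicate point, particularly uniqueness: two distinct ends of $Z^+$ must not land at the same type 1 point, or else the Jordan curve they form would cut off part of $Z^-$, contradicting density and path-connectedness.

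\textbf{Step 3: check the hypotheses of Theorem~\ref{theorem:wheels_from_laminations}.} Transport $\LL^-$ to $S^1:=S^1_+$ via $h$. No isolated sides is already known. For no perfect fits: a point in a nontrivial class of $\LL^+$ is an ideal gap of $Z^+$ at some $p\in Z^+$. Under $h$ it corresponds to an end of $Z^-$ landing at $p$, hence to a trivial class of $\LL^-$, so no point lies in leaves of both relations.

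\textbf{Step 4: recover the zipper.} Theorem~\ref{theorem:wheels_from_laminations} now gives a CaTherine wheel $f$ with $F:S^2_f\to S^2$. The map $\phi_+$ is constant on the classes of $\LL^+$ (and $\phi_-\circ h$ on those of $\LL^-$), and these are exactly the fibres of $F|_{S^1}$. Hence the decomposition quotient maps homeomorphically onto $S^2$, compatibly with $\phi_\pm$, and $f=\phi_+$ after this identification.

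Finally compare $F(P^+)$ with $Z^+$. Each geodesic in $P^+$ between two ends maps onto the convex hull of those ends, via Lemma~\ref{lemma:Z_is_image} and the corresponding embedded line in $Z^+$. Since $Z^+$ is the union of such lines and is a union of trees, $F(P^+)=Z^+$, and likewise $F(P^-)=Z^-$.
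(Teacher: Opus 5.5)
You have the same architecture as the paper: circles from Lemma~\ref{lemma:order_circles}, relations from Lemma~\ref{lemma:zipper_relations}, an orientation-reversing identification of $S^1_\pm$ via landing rays, then Theorem~\ref{theorem:wheels_from_laminations}. But there is a genuine gap. The step you defer as ``what remains'' is where the theorem's content lies, and you have misplaced the difficulty. Injectivity at type~1 points is the easy Jordan-curve argument.

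\emph{The missing step.} What is missing is surjectivity of your matching onto ideal gaps. Given $p\in Z^-$ and an ideal gap $x\in S^1_-$ supported at $p$, you must produce an end of $Z^+$ whose ray lands at $p$ and approaches it through the direction $x$; symmetrically for ideal gaps of $Z^+$. The converse clause of the strong landing property does not give this. It only says \emph{some} ray of $Z^+$ lands at $p$, not that each of the possibly infinitely many ideal gaps at $p$ is realized. Without this, $h$ is not defined on the ideal gaps of $Z^+$ and is not onto the ideal gaps of $Z^-$. Your Step~3 (``under $h$ it corresponds to an end of $Z^-$ landing at $p$'') assumes it outright. (Minor: type~0 rays are not proper and represent no end; you mean type~2 rays of $Z^-$ landing in $Z^+$.)

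\emph{Why continuity of $\phi_\pm$ does not rescue it.} Making ``fencing'' precise means the following. Take ends $e_i^L,e_i^R$ of $Z^-$ converging to $x$ from either side, with hull lines $\gamma_i\subset Z^-$. Join their landing data by arcs $\delta_i\subset Z^+$, obtaining Jordan curves that bound nested disks $D_i$. What you need is exactly $\bigcap_i D_i=\{p\}$. ``Convex hulls of nested intervals shrink'' is false here: the $\gamma_i$ all pass through $p$ and need not shrink at all.

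\emph{How the paper closes it.} The proof of Proposition~\ref{proposition:canonical_identification} builds a ray $J=\bigcup_i J_i$ in $Z^+$ from nearest-point segments to the $\delta_i$; it lands in $K=\bigcap_i D_i$. It then shows $K$ is a point. If not, the continuum $K$ contains many points, and by the converse clause each is a landing point of a ray of $Z^+$. The corresponding point of $S^1_-$ lies in every $[e_i^L,e_i^R]$, hence equals $x$, which contradicts injectivity of the type~2 correspondence.

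With that lemma supplied, the rest of your plan goes through. In particular, your Step~4 (same fibres give a continuous bijection $S^2\to S^2$) is a clean, explicit version of the paper's ``one can check this construction is inverse.''
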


\begin{proof}
A CaTherine wheel gives rise to a hairy zipper with the strong landing property, by
Theorem~\ref{theorem:zippers_from_wheels}. Thus we must prove the converse. 

Since $Z^\pm$ is hairy Lemma~\ref{lemma:order_circles} lets us obtain circles
$S^1_\pm$ homeomorphic to the order completions $\bar{\EE}^\pm$. 
Lemma~\ref{lemma:zipper_relations} says that $\LL^\pm$ have no isolated sides.
We must now show that there is a canonical identification of $S^1_+$ and
$S^1_-$ for which $\LL^\pm$ have no perfect fits. This is the next proposition:

\begin{proposition}[Canonical identification]\label{proposition:canonical_identification}
Suppose that $Z^\pm$ is a hairy zipper with the strong landing property. Then there is a
canonical (orientation reversing) identification of $S^1_+$ and $S^1_-$ for which
the associated laminar relations $\LL^+$ and $\LL^-$ have no perfect fits.
\end{proposition}
\begin{proof}
The correspondence between $S^1_+$ and $S^1_-$ is defined and constructed
in \cite{Calegari_Loukidou_Zippers}, \S~2.7, and in
fact only depends on the existence of a dense set of landing rays in either zipper. 
We summarize the construction. 

Let $r$ be a proper ray in $Z^+$. If it is type 1 it extends to a landing ray $\bar{r}$
that lands at $\bar{r}(1) \in S^2 - (Z^+ \cup Z^-)$, and there is another proper ray
$s$ in $Z^-$ that also lands at $\bar{r}(1)$. The equivalence classes of rays in $Z^+$
or $Z^-$ landing at any point $p \in (Z^+ \cup Z^-)$ are unique, or else we could find
a pair of proper rays $r,r'$ in $Z^+$ both starting at the same point, both landing at
$p$, and both disjoint except for their initial point. But then $p \cup r \cup r'$ would
be a Jordan curve disjoint from $Z^-$, contradicting the fact that $Z^-$ is connected
and dense in $S^2$. Thus, type 1 landing rays determine a correspondence between certain 
pairs of points in $\EE^\pm$. One may check that this correspondence is 
circular order-reversing (see e.g.\/ \cite{Calegari_Loukidou_Zippers}, Lemma~2.25) 
and evidently injective where defined.

If $r$ is type 2 then it lands at a point $p \in Z^-$. It evidently determines a cut
in the circular order of the components of $Z^- - p$ and therefore determines an
ideal gap supported at $p$. Again, one may check that this correspondence is circular
order-reversing and compatible with the type 1 correspondences. 
It is injective where defined, since if $r,r'$ are two type 2 rays that
land at $p\in Z^-$ then after modifying $r,r'$ in an initial segment so they start at
the same point and are disjoint except for their initial point, the Jordan curve
$p \cup r \cup r'$ intersects $Z^-$ only at $p$, and there is one side of it that is
disjoint from $Z^-$ or else there would be a component of $Z^- - p$ witnessing that
$r$ and $r'$ determine different cuts, and therefore different ideal points of $p$.

Conversely, if $p \in Z^-$ and $x \in S^1_-$ is an ideal gap with support equal to $p$,
we shall show there is a type 2 ray in $Z^+$ that lands at $p$ and represents $x$. 
This argument is rather involved, and uses the full force of the strong landing property.

Let $\gamma_i \subset Z^-$ 
be an end nested sequence with $\cup_i \cap_{j\ge i} \gamma_j$ associated to a nested
sequence of intervals $[e_i^L,e_i^R]$ converging to $x$. 
For each $i$ we may find a path $\delta_i$
in $Z^+$ whose endpoints are either type 1 landing points for rays 
representing $e_i^L,e_i^R$ or are ordinary points of $Z^+$ representing type 2
landing points for these rays. The unions $\beta_i$ obtained from
$\gamma_i \cup \delta_i$ by adding landing points (if any) form a sequence of Jordan
curves bounding nested closed disks $D_i \subset S^2$ whose intersection $K$ 
contains $p$ (because every $\gamma_i$ does).

We build a ray $r$ in $Z^+$ that lands at $p$
and determines the ideal gap $x$. The start of this argument parallels the proof of 
\cite{Calegari_Loukidou_Zippers}, Lemma~2.15 very closely.

Choose a point $q_0 \in \delta_0$ and for each $i$ let $J_i$ be the shortest embedded 
interval in $Z^+$ from $q_0$ to some unique nearest $q_i \in \delta_i$
(it is possible that $q_i = q_0$ and $J_i$ is degenerate). Then because the intervals
$[e_i^L,e_i^R]$ are nested, we have $J_0 \subset J_1 \subset \cdots$ and we may 
therefore form the union $J = \cup J_i$. If this is a proper ray then it lands
at some point $q$ in $K$. Otherwise it is not proper, and may be completed by adding
an endpoint $q$ in $Z^+ \cap K$. But $K$ is connected, and therefore either $K=p=q$ 
and we are done, or $K$ contains a sequence of points converging nontrivially to $p$.
By the strong landing property, these points are landing points of rays in $Z^+$,
and by construction the corresponding points in $S^1_-$ are all contained in every
interval $[e_i^L,e_i^R]$. But the only point contained in all of these intervals is
$x$, and we have already seen that different type 2 rays must land at different
ideal gaps. It follows that $K$ is a point after all, and $J$ is the desired proper
(type 2) ray, landing at $p$ and corresponding to $x$.

\begin{figure}[htpb]
\centering
\includegraphics[scale=0.5]{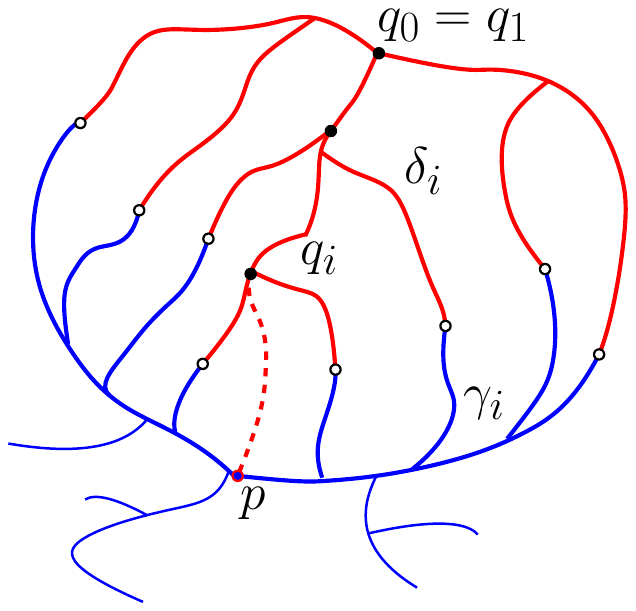}
\caption{Constructing a ray in $Z^+$ that lands at $p$.}
\label{ray_landing_at_point}
\end{figure}

Thus, by considering the correspondences of type 1 and 2 we obtain an order-reversing
bijection between $S^1_+$ and $S^1_-$. Since the topology agrees with the (circular) 
order topology, this extends uniquely to a homeomorphism.

It remains to show that in this correspondence the laminar relations $\LL^\pm$ have
no perfect fits. But this is obvious: we have just shown that the correspondence either
pairs trivial equivalence classes of one of $\LL^\pm$ with trivial equivalence classes 
of the other (type 1) or trivial equivalence classes with elements of non-trivial 
equivalence classes (type 2). Another way to see this is the observation 
that every ideal gap $x \in S^1_+$ corresponds to some landing ray $r \in Z^-$, and
by taking a sequence of 2-valent points on $r$ converging to the support of $x$ we
obtain a rainbow in $\LL^-$ for the point corresponding to $r$.
\end{proof}

But now the proof of Theorem~\ref{theorem:wheels_from_zippers} is immediate:
Proposition~\ref{proposition:canonical_identification} constructs a circle with
a pair of laminar relations with no perfect fits whose associated boundary laminations
have no isolated leaves, and then Theorem~\ref{theorem:wheels_from_laminations} 
constructs the desired CaTherine wheel. One can check that this construction is the 
inverse of the procedure in Theorem~\ref{theorem:zippers_from_wheels} that gives 
a zipper from a CaTherine wheel.
\end{proof}

\begin{remark}
For $G$-zippers there is more structure to work with, and in fact in the sequel we
will show (Theorem~\ref{theorem:G_zipper_to_G_wheel} and Theorem~\ref{theorem:G_zippers_are_hairy}), 
building on \cite{Calegari_Loukidou_Zippers} and a result of
KyeongRo Kim \cite{Kim}, that any (minimal) $G$-zipper $Z^\pm \subset S^2_\infty$ is hairy
and arises from a $G$-CaTherine wheel (for the given $G$ action on $S^2_\infty$) and
consequently a posteriori satisfies the strong landing property.
\end{remark}

Combining Theorem~\ref{theorem:laminar_decomposition}, 
Theorem~\ref{theorem:zippers_from_wheels}, Theorem~\ref{theorem:wheels_from_laminations}, 
and Theorem~\ref{theorem:wheels_from_zippers} gives the first main result of this paper:
 
\begin{theorem}[Equivalence Theorem]\label{theorem:equivalence} 
There is a canonical bijection between isomorphism classes of the following objects:
\begin{enumerate}
\item{CaTherine wheels $f:S^1 \to S^2$;}
\item{pairs of laminar relations $\LL^\pm$ on $S^1$ with no perfect fits and no isolated sides; and}
\item{hairy zippers $Z^\pm \subset S^2$ with the strong landing property.}
\end{enumerate}
\end{theorem}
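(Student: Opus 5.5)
The plan is to assemble the Equivalence Theorem from the four constructions already in hand. Then the only real content left is checking that they are mutually inverse on isomorphism classes. I would fix the notions of isomorphism first. Two CaTherine wheels $f,f'$ are isomorphic if there are orientation-preserving homeomorphisms $\alpha$ of $S^1$ and $\beta$ of $S^2$ with $f'\alpha=\beta f$. Pairs $\LL^\pm$ are isomorphic if an orientation-preserving homeomorphism of $S^1$ carries $\LL^+$ to $\LL^+$ and $\LL^-$ to $\LL^-$. Zippers are isomorphic if an orientation-preserving homeomorphism of $S^2$ carries $Z^\pm$ to $Z^\pm$. All constructions are natural under such homeomorphisms, so each map descends to isomorphism classes.

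\textbf{Wheels and laminar relations.} The map $f\mapsto \LL^\pm(f)$ lands in pairs with no perfect fits and no isolated sides by the necessity half of Theorem~\ref{theorem:wheels_from_laminations}. The map $\LL^\pm\mapsto f$ is the sufficiency half of that theorem. Starting from $f$, Theorem~\ref{theorem:laminar_decomposition} shows that $f$ equals $F|_{S^1}$, where $F$ is the quotient by the decomposition $\CC$ built from $\LL^\pm(f)$. Theorem~\ref{theorem:wheels_from_laminations} builds exactly this quotient from $\LL^\pm(f)$, so we recover $f$ up to a homeomorphism of $S^2$. Conversely, starting from $\LL^\pm$, the wheel $f=F|_{S^1}$ has image relation whose classes are precisely the elements of $\CC$ intersected with $S^1$. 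We must also check that the sign splitting $\LL^\pm(f)$ reproduces the given splitting. The intended argument is that a class coming from $\LL^+$ has hull in $P^+$, so by Lemma~\ref{lemma:Z_is_image} its image lies in $Z^+=F(P^+)$ and it is positively degenerate. The orientation conventions (choice of $P^+$ versus $P^-$) need to be matched once, consistently.

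\textbf{Wheels and zippers.} Theorem~\ref{theorem:zippers_from_wheels} gives $f\mapsto Z^\pm=F(P^\pm)$, and Theorem~\ref{theorem:wheels_from_zippers} gives the reverse map through the circles $S^1_\pm=\bar\EE^\pm$ and Proposition~\ref{proposition:canonical_identification}. To see that the round trip from zippers is the identity, note that the wheel built from $Z^\pm$ has $F(P^+)$ equal to the set of supports of ideal gaps, which is $Z^+$ by construction. To see that the round trip from wheels is the identity, one must identify $S^1$ with $\bar\EE^+(F(P^+))$ compatibly with $\LL^+$. Under this identification, points of $S^1$ outside nontrivial $\LL^+$-classes correspond to ends of $Z^+$, via geodesic rays in $P^+$ and the strong landing property. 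Points in a nontrivial class $\nu$ correspond to ideal gaps supported at $f(\nu)$, via the boundary leaves of $H^+(\nu)$.

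\textbf{Main obstacle.} I expect the hardest step to be showing that this identification $S^1\to \bar\EE^+$ is a well-defined order-preserving bijection. The same map for $Z^-$ then matches the canonical identification of Proposition~\ref{proposition:canonical_identification}, after which the laminar relations coincide. I would first define the map on the dense set of points with rainbows in $\LL^+$ (Lemma~\ref{lemma:boundary_leaves_accumulate}), using the monotonicity of Lemma~\ref{lemma:Z_is_image}, and then extend it by order completeness.
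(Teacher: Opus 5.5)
Your proposal follows the paper's route. The paper obtains the Equivalence Theorem simply by combining Theorems~\ref{theorem:laminar_decomposition}, \ref{theorem:zippers_from_wheels}, \ref{theorem:wheels_from_laminations} and \ref{theorem:wheels_from_zippers}, and says only that one can check the constructions are mutually inverse. Your round-trip checks spell out what the paper leaves to the reader: matching the sign convention for $P^\pm$, and identifying $S^1$ with $\bar{\EE}^\pm$ via geodesic rays and boundary leaves of hulls.

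One point in the zipper round trip should be made explicit. Theorem~\ref{theorem:wheels_from_laminations} lands in an abstract quotient sphere, so $F(P^\pm)=Z^\pm$ is not literally ``by construction''. To conclude it, check that the map $S^1\to S^2$ sending ends to their landing points and ideal gaps to their supports is continuous, using the nested-disk shrinking argument in the proof of Proposition~\ref{proposition:canonical_identification}. Check also that its fibers are exactly the classes of $\LL^+\cup\LL^-$. It then induces a homeomorphism from that quotient onto the given $S^2$ carrying $F(P^\pm)$ to $Z^\pm$.
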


\subsubsection{Half-zippers}

Suppose you have a zipper $Z^\pm$ and you lose half of it. When can you recover $Z^-$
from $Z^+$ alone?

The following definition is made in \cite{Calegari_Gwynne}:
\begin{definition}[Half-zipper]\label{definition:half_zipper}
A {\em half-zipper} is a subset $Z\subset S^2$ such that
\begin{enumerate}
\item{$Z$ is dense in $S^2$;}
\item{there is a unique embedded path in $Z$ connecting any two distinct points; and}
\item{every point of $Z$ is a (path) cut point.}
\end{enumerate}
A half-zipper is said to have {\em short hair} if, furthermore, for every
$\epsilon> 0$ there is a compact subset $T\subset Z$ homeomorphic to a finite
(simplicial) tree and so that every (path) component of $Z-T$ has diameter $<\epsilon$
in the spherical metric.
\end{definition}

\cite{Calegari_Gwynne} Theorem~1.3 says that every half-zipper with short hair is
actually $Z^+$ for some unique zipper $Z^\pm$ associated to a unique CaTherine wheel
$f:S^1 \to S^2$, and conversely \cite{Calegari_Gwynne} Proposition~1.4 says that if
$f:S^1 \to S^2$ is any CaTherine wheel with zipper $Z^\pm$, then each of
$Z^\pm$ has short hair. The proof builds on the material in this section. 

The short hair property is rather strong, and it is hard to come up with natural
examples of half-zippers (or even zippers) that have this property without already knowing
that they come from CaTherine wheels. However, some such examples do
occur in the theory of probability, as Liouville quantum gravity geodesic trees. See 
Example~\ref{example:LQG}.

\section{Embeddings}\label{section:embeddings}

We have already seen that a CaTherine wheel $f:S^1 \to S^2$ is approximable by 
embeddings. A stronger statement is that it admits a pseudo-isotopy, i.e.\/ that
there is a path of maps $f_t:S^1 \to S^2$ for $t \in [0,1]$ such that 
$f_t$ is an embedding for $t<1$, and $f=f_1$.
This is a standard result in decomposition theory, given 
Theorem~\ref{theorem:wheels_from_laminations} and a strengthening of Bing's
Theorem~\ref{theorem:shrinkable} due to Daverman \cite{Daverman} says 
that any shrinkable decomposition of a manifold can be realized by pseudo-isotopies.

Kerbs \cite{Kerbs} defines an invariant of a pseudo-isotopy of a
map $f:S^1 \to S^2$, and it turns out (Proposition~\ref{proposition:Kerbs_complete}) 
that it is a complete invariant of the homotopy
class of the pseudo-isotopy. 

Using Kerbs theorem, the main result of this section 
(Theorem~\ref{theorem:CaTherine_unique_pseudoisotopy}) is the statement 
that a CaTherine wheel admits a {\em unique} homotopy class of 
pseudo-isotopy.

\subsection{Definitions}

We give $\Map(S^1,S^2)$ the compact-open topology, and then inside this space we identify
$\EEE$, the space of embeddings. We may (and do) metrize $\Map(S^1,S^2)$ by fixing a
round metric on $S^2$, and defining the distance between $f$ and $g$ to be the supremum
of the distance from $f(p)$ to $g(p)$ over $p\in S^1$. The metric topology (often called
the uniform topology) and the compact open topology agree.

Denote by $\dashEEE$ the space obtained from $\EEE$ by
adding points in the closure corresponding to maps $S^1 \to S^2$ that are not injective
but are nowhere locally constant. The space $\EEE$ has
the homotopy type of $\SO(3)$; see 
\cite{Yagasaki}, Theorem~1.3. But the topology of $\dashEEE$ seems extremely complicated.
Nevertheless, both spaces admit an action of $\Homeo^+(S^2)$
by post composition. The action on $\EEE$ is transitive (with contractible point stabilizers),
from which the homotopy type of $\EEE$ can be deduced, but the action on $\dashEEE$ is very far
from transitive, and has uncountably many orbits:

\begin{example}[Uncountably many orbits]\label{example:uncountable}
Let $I \subset S^1$ be an embedded interval. Choose an identification of $I$ with $[0,1]$,
and for each $n\in \N$ let $p_n \in I \subset S^1$ be the point corresponding to $1-1/n\in [0,1]$.
Finally, let $P\subset S^1$ be the union of the $p_i$.

For each infinite sequence $S \in \lbrace 2,3\rbrace^\N$ we denote the coordinates of $S$
by $S_i$, for $i\in \N$. Let 
$$P_i:=\Bigl\lbrace p_j \in P \text{ such that } \sum_{k<i} S_k < j \le \sum_{k\le i} S_k\Bigr\rbrace$$
Observe that $P=\sqcup_i P_i$ and the cardinality of $P_i$ is $S_i$. 
Let $f_S:S^1 \to S^2$ be any continuous map which is injective away from $P$, and whose
nontrivial equivalence classes are exactly the finite subsets $P_i$. Then $f_S$ and
$f_{S'}$ are in different $\Homeo^+(S^2)$ orbits for different $S,S'\in \lbrace 2,3\rbrace^\N$.
\end{example}

\subsection{Some theorems of Kerbs}\label{subsection:Kerbs}

\begin{definition}[Pseudo-isotopy]\label{definition:pseudoisotopy}
A family of maps $F_t:X \to Y$ for $t\in [0,1]$ is a {\em pseudo-isotopy} if
$F_t$ is an embedding for $t<1$. 

A map $F:X \to Y$ admits a pseudo-isotopy if there is a pseudo-isotopy $F_t$ with
$F_1 = F$.
\end{definition}

The following may be found in Daverman \cite{Daverman}, Theorem~13.4.
\begin{theorem}[Shrinkable pseudo-isotopy \cite{Daverman}]\label{theorem:shrinkable_pseudoisotopy}
Let $M$ be a manifold, and let $F:M \to M$ be induced by a shrinkable decomposition. 
Then $F$ admits a pseudo-isotopy $F_t:M \to M$.
\end{theorem}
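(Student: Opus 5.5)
The plan is to realize $F$ as a uniform limit of homeomorphisms $\varphi_n:M\to M$, joined consecutively by isotopies whose tracks are small after projecting by $F$. Then reparametrize so that the $n$-th isotopy occupies the time interval $[1-1/n,\,1-1/(n+1)]$, and set $F_1=F$. Continuity at $t=1$ will follow from uniform convergence. Each $F_t$ with $t<1$ is a homeomorphism, hence an embedding, so this produces the required pseudo-isotopy.

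\textbf{Step 1: building the sequence $\varphi_n$.} Fix a metric on $M$ and a sequence $\epsilon_n\to 0$ with $\sum_n \epsilon_n<\infty$. Start with $\varphi_0=\id$. Given a homeomorphism $\varphi_n$ with $d(\varphi_n,F)<\epsilon_n$ in the sup metric, I apply shrinkability (Definition~\ref{definition:shrinkable}) with two covers:
\begin{enumerate}
\item an open cover $\mathcal V$ of mesh so small that $\varphi_n$ maps each of its elements to a set of diameter $<\epsilon_{n+1}$;
\item the $\CC$-saturated open cover $\mathcal U$ of $M$ by sets $F^{-1}(W)$, where the $W$ range over balls of radius $\epsilon_{n+1}$ in the target.
\end{enumerate}
This yields a homeomorphism $h$. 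I then modify $\varphi_n$ so that it approximates $F$ more closely. The candidate is $\varphi_{n+1}=\psi\circ h^{-1}$, where $\psi$ is a homeomorphism with $\psi\approx F\circ h^{-1}$. Such a $\psi$ exists because $F\circ h^{-1}$ is nearly constant on the small sets $h(K)$ and is therefore within about $\epsilon_{n+1}$ of a homeomorphism, as in Bing's Theorem~\ref{theorem:shrinkable}. Each consecutive pair $\varphi_n,\varphi_{n+1}$ then differs by a homeomorphism $g_n=\varphi_n^{-1}\varphi_{n+1}$ satisfying $d(F\circ g_n,F)<C\epsilon_n$. In other words, $g_n$ is \emph{$F$-controlled}: it moves each point only within a saturated set whose $F$-image is small.

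\textbf{Step 2: controlled isotopies.} For each $n$ I need an isotopy $g_{n,s}$, $s\in[0,1]$, from $\id$ to $g_n$ such that $d(F\circ g_{n,s},F)<C'\epsilon_n$ holds for all $s$. The concatenation $\varphi_n\circ g_{n,s}$ then stays within $O(\epsilon_n)$ of $F$ throughout the $n$-th time interval. Convergence $F_t\to F$ as $t\to 1$ follows because the $\epsilon_n$ are summable.

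\textbf{Main obstacle.} The hard step is Step 2. Local contractibility of $\Homeo(M)$, in the form of \v{C}ernavski\u{\i} and Edwards--Kirby, gives small isotopies to the identity for homeomorphisms that are close to the identity in the metric of $M$. Here, however, $g_n$ is close to the identity only after composing with $F$; in $M$ itself it may move points far along large decomposition elements. I would first try to avoid this issue by arranging the construction differently. Instead of composing with $h$ directly, I would obtain each $\varphi_{n+1}$ from $\varphi_n$ through the shrinking process itself, using isotopies supported in the saturated sets of $\mathcal U$. Moore/Bing shrinking can be done by a sequence of small ambient isotopies, and in dimension $2$ (our case of $S^2$ or $D^2$) by Alexander-trick-type isotopies in disks. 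Those saturated sets have small $F$-image, so the control passes to the isotopy automatically. If this fails in general, I would fall back on the controlled version of the Edwards--Kirby isotopy theorem relative to the map $F$, which is the route taken in Daverman's book.
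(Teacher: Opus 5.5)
The paper gives no proof of this statement; it cites Daverman, Theorem~13.4. Your outline is the standard one: approximate $F$ by homeomorphisms $\varphi_n$, join consecutive ones by small isotopies, and concatenate on the intervals $[1-1/n,1-1/(n+1)]$. However, the step you flag as the main obstacle is left open, and the obstacle comes from composing on the wrong side. You compare $\varphi_n$ and $\varphi_{n+1}$ through $g_n=\varphi_n^{-1}\varphi_{n+1}$, which is only $F$-controlled, so you seem to need a controlled isotopy theorem over $F$.

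Compare them on the target instead. The map $\eta_n:=\varphi_{n+1}\circ\varphi_n^{-1}$ satisfies, for $y=\varphi_n(x)$,
\[
d(\eta_n(y),y)=d(\varphi_{n+1}(x),\varphi_n(x))\le d(\varphi_{n+1}(x),F(x))+d(F(x),\varphi_n(x))<\epsilon_n+\epsilon_{n+1}.
\]
So $\eta_n$ is uniformly close to the identity in the honest metric of $M$.

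Ordinary local contractibility of $\Homeo(M)$ for compact $M$ (\v{C}ernavski\u{\i}, Edwards--Kirby; classical for $M=S^2$, the case the paper needs) now applies. It gives an isotopy $\theta_{n,s}$ from $\id$ to $\eta_n$ through homeomorphisms within $\delta_n$ of the identity. You get $\delta_n\to 0$ by fixing $\delta_n$ first and then choosing the $\epsilon_n$ small enough, which Bing's theorem lets you do freely.

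Put $F_t:=\theta_{n,s}\circ\varphi_n$ on the $n$-th interval. It runs from $\varphi_n$ to $\varphi_{n+1}$, and $d(F_t,F)<\delta_n+\epsilon_n\to 0$, so continuity at $t=1$ follows without any summability. In your notation, $g_{n,s}:=\varphi_n^{-1}\theta_{n,s}\varphi_n$ is exactly the $F$-controlled isotopy you wanted. It comes from conjugating a small isotopy of the target, not from a controlled isotopy theorem. The paper uses the same target-side trick in its proof of Proposition~\ref{proposition:Kerbs_complete}, where it isotopes $g_sf_s^{-1}$ to the identity.

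Your fallback ideas are therefore unnecessary, and the Alexander-trick one is not justified as stated. The homeomorphisms supplied by Definition~\ref{definition:shrinkable} come with no isotopies, and they move points within saturated sets that can be large in $M$.

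Smaller points:

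\begin{enumerate}
\item Step~1 is just Bing's Theorem~\ref{theorem:shrinkable} composed with a fixed identification of the quotient with $M$. That directly yields homeomorphisms $\varphi_n$ with $d(\varphi_n,F)<\epsilon_n$. Your recursion appeals to Bing's theorem anyway to produce $\psi$.
\item There is a slip in that recursion: with $\psi\approx F\circ h^{-1}$, you need $\varphi_{n+1}=\psi\circ h$, not $\psi\circ h^{-1}$.
\item Do not start from $\varphi_0=\id$. The approximating homeomorphisms need not be isotopic to the identity; for instance, they may reverse orientation, depending on the identification of the quotient with $M$. Begin the pseudo-isotopy at $\varphi_1$ instead, which Definition~\ref{definition:pseudoisotopy} permits.
\end{enumerate}
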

We would like to apply this theorem to $M=S^2$ and a map $F:S^2 \to S^2$ 
induced by a shrinkable decomposition. If we further include $S^1 \subset S^2$ as the equator, 
the map $f:=F|S^1$ admits a pseudo-isotopy $f_t:=F_t|S^1$.

Let us denote by $\PEEE(f)$ the space of pseudo-isotopies $f_t:S^1 \to S^2$ with $f_1=f$.
We topologize this as a subset of $\Map(I,\Map(S^1,S^2))$ or equivalently we may
define the distance between pseudo-isotopies $f_t$ and $g_t$ to be the maximum over $t_0\in [0,1]$
of the distance (as above) between the {\em maps} $f_{t_0}$ and $g_{t_0}$.

We do not know if every $f\in \partial \dashEEE$ admits a pseudo-isotopy, but for those
that do one may construct an invariant of a pseudo-isotopy following Kerbs \cite{Kerbs}.
Let us first describe the construction; for details, see Kerbs \cite{Kerbs}.
An informal version of this construction is described in \cite{Calegari_foliations},
\S~10.4.1.

\begin{construction}[Kerbs invariant]\label{construction:Kerbs_invariant}
We fix for all time a round metric on $S^2$. Next,
fix $f:S^1 \to S^2$ in the frontier of $\dashEEE$ and a pseudo-isotopy $f_t$ in $\PEEE(f)$. 

For each $t\in [0,1)$ Kerbs builds a pair of bounded metrics $d^\pm_t$ on $S^1$ where 
$d^+_t(p,q)$ resp. $d^-_t(p,q)$ is the minimal diameter of a closed connected set 
$K_t(p,q)$ in $S^2$, intersecting $f_t(S^1)$ only at $f_t(p)$ and $f_t(q)$, and
lying on the positive resp. negative side of $f_t(S^1)$.

Kerbs shows that these metrics have well-defined limits as 
$t \to 1$ and one obtains laminar relations
$\LL^\pm$ on $S^1$ by declaring $\lbrace p,q\rbrace \in \LL^\pm$ if $d^\pm_t(p,q) \to 0$.
We refer to these relations as the {\em Kerbs invariant} of the pseudo-isotopy $f_t$,
and denote them $\LL^\pm(f_t)$.
\end{construction}

\begin{theorem}[Kerbs invariant \cite{Kerbs}]\label{theorem:Kerbs_invariant}
Suppose $f:S^1 \to S^2$ is surjective and nowhere locally constant, and
admits a pseudo-isotopy $f_t\in \PEEE(f)$. Then the Kerbs invariants $\LL^\pm(f_t)$ are
an especial pair, and if $f_t,f_t'\in \PEEE(f)$ satisfy $f_{t_i} = f'_{t_i'}$ for
infinite sequences $t_i \to 1$ and $t_i' \to 1$, then $\LL^\pm(f_t) = \LL^\pm(f'_t)$.
\end{theorem}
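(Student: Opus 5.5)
Since $f_t$ is an embedding for $t<1$, the image $f_t(S^1)$ is a Jordan curve. By the Schoenflies theorem it bounds two closed disks $D^\pm_t$, labelled as the positive and negative sides using the fixed orientations. For $p\ne q$, the quantity $d^\pm_t(p,q)$ is a minimal diameter of a continuum in $D^\pm_t$ meeting $f_t(S^1)$ only at $f_t(p),f_t(q)$. Such continua are essentially crosscuts of $D^\pm_t$ (up to closure issues I will handle by taking crosscuts, which realize the infimum up to $\epsilon$).

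I will prove the two assertions of Theorem~\ref{theorem:Kerbs_invariant} in order.

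\textbf{Existence of limits.} For each $t$, $d^\pm_t$ is a metric on $S^1$: symmetry is clear, the triangle inequality comes from concatenating crosscuts and slightly perturbing, and positivity holds because $f_t$ is injective. I will show that $\lim_{t\to1}d^\pm_t(p,q)$ exists, or at least that whether it is zero is well defined. The tool is a uniform-continuity estimate. Given $\epsilon$, choose $s<1$ with $\|f_t-f_1\|<\epsilon$ for $t\ge s$. Then a crosscut for $f_t$ of small diameter stays near $f(p),f(q)$, and a comparison between $f_t$ and $f_{t'}$, both within $\epsilon$, shows $|d^\pm_t-d^\pm_{t'}|\le C\epsilon$ modulo a Carathéodory-type prime-end argument. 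I expect to use Kerbs' result directly for this analytic step rather than reprove it.

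\textbf{Properties of $\LL^\pm$.} Define $\LL^\pm$ as in Construction~\ref{construction:Kerbs_invariant}. I will verify four things.
\begin{enumerate}
\item \emph{Equivalence relation.} Transitivity follows from the triangle inequality for the limiting pseudometric.
\item \emph{Closedness.} The relation is closed because the limit pseudometric is continuous: it is a uniform limit of uniformly equicontinuous metrics, the equicontinuity coming from $f_t\to f$ uniformly.
\item \emph{Unlinking.} If $\{p,q\}$ and $\{r,s\}$ are linked, then small crosscuts on the same side of $f_t(S^1)$ must intersect. Hence $f(p),f(q),f(r),f(s)$ all coincide, so the classes are equal.
\item \emph{Image relation.} Each pair in $\LL^\pm$ is identified by $f$, since $d(f(p),f(q))\le d^\pm_t(p,q)+2\|f_t-f\|$.
\end{enumerate}

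To obtain an especial pair, the remaining work is to check that a class of $\LL^+$ and a class of $\LL^-$ share at most one point, and that $f$ identifies exactly the pairs generated by $\LL^+\cup\LL^-$. The containment follows from item 4. For the reverse, I will use surjectivity and the fact that $f$ is nowhere locally constant. Given $f(p)=f(q)$ with $p\ne q$, consider the arcs $f_t([p,q])$. Their endpoints converge together, so one of the two complementary regions gives a small connected set joining $f_t(p),f_t(q)$, which can be chopped into a chain through intermediate equivalence points on alternating sides. I expect this decomposition of the relation, and the "at most one common point" condition, to be the main obstacle. For the latter I would argue that two common points would produce a bigon region of $S^2$ collapsing to a point, contradicting surjectivity together with nowhere local constancy.

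\textbf{Invariance.} Suppose $f_{t_i}=f'_{t'_i}$ along sequences $t_i,t'_i\to1$. Then $d^\pm_{t_i}=d'^\pm_{t'_i}$ identically, because the metrics depend only on the embedding $f_{t_i}$ and the orientation labelling of its sides. Since the limits exist by the first step, they agree along these subsequences, and therefore $\LL^\pm(f_t)=\LL^\pm(f'_t)$.
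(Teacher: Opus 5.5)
The paper gives no proof of this statement; it is quoted from Kerbs. So your proposal has to stand on its own. Your invariance argument is correct, and it is the natural one once $\lim_{t\to1}d^\pm_t$ is known to exist. However, there are two genuine gaps.

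\textbf{The existence of the limit.} Everything rests on $\lim_{t\to1}d^\pm_t$ existing, or at least on the fact that whether $d^\pm_t(p,q)\to0$ is detected by an arbitrary sequence $t_i\to1$. You defer this to Kerbs, and the mechanism you sketch for it cannot work.

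Suppose you had an estimate $|d^\pm_t-d^\pm_{t'}|\le C\epsilon$ obtained only from $\|f_t-f\|,\|f_{t'}-f\|<\epsilon$. It would apply verbatim to two \emph{different} pseudo-isotopies of $f$, so $\LL^\pm(f_t)$ would depend only on $f$. By Proposition~\ref{proposition:Kerbs_complete}, every $\PEEE(f)$ would then be path-connected, contradicting the existence of self-bumping points (Example~\ref{example:zigzag}).

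Uniform closeness of two Jordan curves does not control their crosscut metrics. A correct argument has to use the whole path of embeddings $f_u$, $u\in[t,t']$, near $f$, not just its endpoints.

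\textbf{The especial-pair half.} Your checklist omits the condition that carries the content. As this paper uses the term (see the remark after Theorem~\ref{theorem:wheels_from_laminations}), an especial pair is a pair of laminar relations with \emph{no isolated sides} in which a class of $\LL^+$ and a class of $\LL^-$ share at most one point.

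You never address no isolated sides, and that is precisely where surjectivity of $f$ must enter. Consider a non-surjective $f$ that pinches a single pair $\{p,q\}$ and is injective elsewhere: it has a pseudo-isotopy whose Kerbs invariant is one isolated leaf.

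One possible route is as follows. Surjectivity forces $\sup_{y\in S^2}\mathrm{dist}(y,f_t(S^1))\to0$, so both complementary disks become uniformly thin. You would then need to turn that thinness into short crosscuts accumulating on each boundary leaf from the side away from its class.

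Conversely, ``$f$ identifies exactly the pairs generated by $\LL^+\cup\LL^-$'' is not part of the definition. Your chain-chopping sketch does not prove it, so drop it.

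\textbf{Smaller points.}
\begin{itemize}
\item In the unlinking step, $f(p)=f(r)$ does not give $\{p,r\}\in\LL^+$. Instead, follow one short positive crosscut from $f_t(p)$ to its intersection with the other, and then on to $f_t(r)$. This gives $d^+_t(p,r)\to0$ directly.
\item Your bigon argument for ``at most one common point'' is sound, and it needs only that $f$ is nowhere locally constant. The union of short positive and negative crosscuts from $f_t(a)$ to $f_t(b)$ is a small Jordan curve separating $f_t((a,b))$ from $f_t((b,a))$. Hence one of $f([a,b])$, $f([b,a])$ is a point.
\item Closedness via equicontinuity is fine, using $d^\pm_t(p,p')\le\mathrm{diam}\,f_t([p,p'])$, which you get from a crosscut hugging the arc.
\end{itemize}
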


The following corollary is a straightforward application of 
Theorem~\ref{theorem:Kerbs_invariant} and Construction~\ref{construction:Kerbs_invariant}:

\begin{corollary}[Homotopy class \cite{Kerbs}]\label{corollary:homotopy_class}
If $f_t,f_t'$ determine the same class in $\pi_0(\PEEE(f))$ then their Kerbs
invariants are equal. Furthermore, if $\LL^\pm$ induce a shrinkable decomposition
of $S^2$ realized by a pseudo-isotopy $f_t \in \PEEE(f)$ 
(via Theorem~\ref{theorem:shrinkable_pseudoisotopy}) then $\LL^\pm(f_t)=\LL^\pm$.
\end{corollary}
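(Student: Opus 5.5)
The statement has two parts, and I would prove them in order.

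\emph{First part: homotopic pseudo-isotopies have equal Kerbs invariants.} Suppose $f_t$ and $f'_t$ lie in the same path component of $\PEEE(f)$, and let $f^s_t$ for $s\in[0,1]$ be a path in $\PEEE(f)$ joining them. I would reduce to Theorem~\ref{theorem:Kerbs_invariant} by interpolating one pseudo-isotopy between $f_t$ and $f'_t$:

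\begin{enumerate}
\item Choose sequences $t_i\to 1$ and $s_i\in[0,1]$, with $s_i$ alternating between $0$ and $1$.
\item Build a single pseudo-isotopy $g_t$ which, on $[t_{2i},t_{2i+1}]$, travels from $f_{t_{2i}}$ to $f'_{t_{2i+1}}$ through the embeddings $f^s_{t}$. Concretely, parameterize a path in the $(s,t)$ square that zigzags between the two vertical edges $s=0$ and $s=1$ while $t$ increases monotonically to $1$.
\item Check the three required properties of $g_t$.
\begin{enumerate}
\item Each $g_t$ with $t<1$ is an embedding, since it equals some $f^s_{t}$ with $t<1$.
\item $g_t\to f$ as $t\to 1$, by uniform continuity of $(s,t)\mapsto f^s_t$ on the compact square together with $f^s_1=f$ for all $s$.
\item $g_t$ agrees with $f_t$ along one infinite sequence of times tending to $1$, and with $f'_t$ along another.
\end{enumerate}
\item Apply Theorem~\ref{theorem:Kerbs_invariant} twice, once to the pair $(g_t,f_t)$ and once to $(g_t,f'_t)$. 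This gives $\LL^\pm(f_t)=\LL^\pm(g_t)=\LL^\pm(f'_t)$.
\end{enumerate}

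The only thing to watch is the continuity of $g$ at $t=1$, which is item (b) above.

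\emph{Second part: a pseudo-isotopy realizing $\LL^\pm$ has Kerbs invariant $\LL^\pm$.} Here $f_t$ comes from Theorem~\ref{theorem:shrinkable_pseudoisotopy} applied to the decomposition $\CC$ of $S^2$ built from $\LL^\pm$. So $f_t=F_t|S^1$, where $F_t$ are homeomorphisms for $t<1$ and $F_1=F$ is the quotient map.

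\begin{enumerate}
\item \emph{Inclusion $\LL^+\subset\LL^+(f_t)$.} Let $\{p,q\}$ be a leaf of $\LL^+$. The geodesic joining $p$ to $q$ in $P^+$ lies in a single decomposition element. Its image $F_t(\text{geodesic})$ is a connected set on the positive side of $f_t(S^1)$ meeting $f_t(S^1)$ only at $f_t(p)$ and $f_t(q)$. The diameter of this image tends to $0$, because $F_1$ collapses the geodesic to a point and $F_t\to F_1$ uniformly. Hence $d^+_t(p,q)\to 0$.
\item \emph{Inclusion $\LL^+(f_t)\subset\LL^+$.} Suppose $d^+_t(p,q)\to0$ and $\{p,q\}\notin\LL^+$. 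Then $f(p)=f(q)$, so $\{p,q\}$ must be a leaf of $\LL^-$. By Lemma~\ref{lemma:linking_endpoints} there is a leaf of $\LL^+$ linking it.
\item Push the sets $K_t(p,q)$ back by $F_t^{-1}$. They are connected sets in the closed positive half $P^+\cup S^1$ joining $p$ to $q$. Any such set must cross the geodesic of the linking positive leaf $\{r,s\}$. Its $F_t$-image therefore meets $F_t(H^+(\mu))$, where $\mu$ is the class of $\{r,s\}$. In the limit $t\to 1$ this gives $f(p)=f(r)$, contradicting the disjointness of distinct classes.
\end{enumerate}

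The same argument with signs reversed gives $\LL^-(f_t)=\LL^-$.

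The main obstacle I expect is in step 3 of the second part. One must show the $K_t$ cannot shrink to a point while still having to cross the image of a fixed decomposition element that joins $f_t(r)$ to $f_t(s)$. My plan is to use upper semi-continuity to obtain a neighbourhood separating the two classes, which bounds $d^+_t(p,q)$ below uniformly in $t$.
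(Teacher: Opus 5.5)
Your proposal is correct and is essentially the argument the paper intends. The paper only says the corollary is a straightforward application of Theorem~\ref{theorem:Kerbs_invariant} and of Construction~\ref{construction:Kerbs_invariant}; your zigzag pseudo-isotopy $g_t$ supplies exactly the sequences of agreement that the theorem requires, and your second part compares $d^\pm_t$ directly with the decomposition.

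Two minor remarks.
\begin{enumerate}
\item The step you flag as the main obstacle is already settled by uniform convergence. Every admissible $K_t$ meets $F_t(\gamma_{rs})$, which lies within $\epsilon_t:=\sup_x d(F_t(x),F(x))\to 0$ of $f(r)\neq f(p)$, so $d^+_t(p,q)\ge d(f(p),f(r))-2\epsilon_t$. Upper semi-continuity is not needed.
\item Your second part uses Lemma~\ref{lemma:linking_endpoints} and the splitting $\LL(f)=\LL^+\sqcup\LL^-$. So it proves the claim when $\LL^\pm$ has no perfect fits, which is the CaTherine wheel case used in Theorem~\ref{theorem:CaTherine_unique_pseudoisotopy}. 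It does not cover an arbitrary especial pair inducing a shrinkable decomposition.
\end{enumerate}
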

In other words, the Kerbs invariant is an invariant of $\pi_0(\PEEE(f))$. 
It is easy to prove a converse to this, i.e.\/ that the Kerbs invariant is a
{\em complete} invariant of $\pi_0(\PEEE(f))$:

\begin{proposition}[Calegari--Kerbs; Kerbs is complete]\label{proposition:Kerbs_complete}
Suppose $f:S^1 \to S^2$ is surjective and nowhere locally constant, and
admit pseudo-isotopies $f_t,g_t \in \PEEE(f)$ with the same Kerbs invariant. Then
$f_t$ and $g_t$ determine the same class in $\pi_0(\PEEE(f))$.
\end{proposition}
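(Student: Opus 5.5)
The plan is to build the homotopy between $f_t$ and $g_t$ by comparing the two embeddings at each time with a homeomorphism of $S^2$. For each $t<1$ let $\phi_t\in\Homeo^+(S^2)$ satisfy $\phi_t\circ f_t = g_t$; such a homeomorphism exists by the Schoenflies theorem. It is unique up to homeomorphisms fixing $g_t(S^1)$ pointwise, and these form a contractible family. The aim is to choose $\phi_t$ continuously in $t$ so that $\phi_t\to\id$ uniformly as $t\to 1$. Once this is done, the straight-line homotopy is available: fix an isotopy $\Phi_{t,s}$ from $\phi_t$ to $\id$ for $s\in[0,1]$, chosen so that $\Phi_{t,s}\to\id$ uniformly as $t\to 1$ (for instance via Alexander-trick-type contractions in $\Homeo^+(S^2)$, which are continuous near the identity). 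Then $h^s_t:=\Phi_{t,s}\circ f_t$ is an embedding for every $t<1$ with $h^s_1=f$, and it interpolates from $g_t$ to $f_t$ within $\PEEE(f)$.

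The key step is to control $\phi_t$ using the common Kerbs invariant $\LL^\pm$, and I would carry it out in the following order. First, fix $\epsilon>0$. Since the limiting metrics $d^\pm$ agree for the two pseudo-isotopies, there is a finite collection of arcs $\alpha_i$ with $f_t$-endpoints on $S^1$, lying on the $\pm$ sides, such that:
\begin{enumerate}
\item the arcs cut $S^2 - f_t(S^1)$ into regions of diameter $<\epsilon$;
\item the chords $\{p_i,q_i\}$ they connect are nearly leaves of $\LL^\pm$, with $d^\pm_t$ small;
\item the same combinatorial pattern of chords is realized by short arcs $\beta_i$ for $g_t$, on the same sides and unlinked in the same way.
\end{enumerate}
The fact that the Kerbs invariants are equal is exactly what gives the third condition. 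Second, choose $\phi_t$ so that it carries each $\alpha_i\cup f_t$ to $\beta_i\cup g_t$; this is possible by Schoenflies applied cell by cell. Then $\phi_t$ maps each small complementary cell to a small cell with nearby boundary. Since $f_t$ and $g_t$ both converge uniformly to $f$, the displacement $d(\phi_t(x),x)$ is bounded by a constant multiple of $\epsilon$ plus $\sup d(f_t,g_t)$.

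Continuity in $t$ comes from carrying out this construction on a sequence $t_n\to1$ with $\epsilon_n\to 0$, and interpolating on each interval $[t_n,t_{n+1}]$. On such an interval the embeddings vary continuously, so the choice space for $\phi_t$ is contractible there, and the interpolation can be kept uniformly close to $\id$. Finally, combining this with Corollary~\ref{corollary:homotopy_class} shows that the bijection on $\pi_0$ is well defined.

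I expect the main obstacle to be the first step: extracting from equality of the limiting relations $\LL^\pm$ a uniform, finite, combinatorially matched system of short arcs for both pseudo-isotopies at the same time $t$. Equality of limits only says that $d^\pm_t(p,q)\to0$ holds on the same pairs, and this does not quantify rates. To handle this I would first use compactness of $S^1\times S^1$ and closedness of $\LL^\pm$ (Theorem~\ref{theorem:Kerbs_invariant}) to pick finitely many chords for a fixed $\epsilon$, and then reparametrize time in $g_t$, which is permissible since reparametrization preserves the $\pi_0$-class, so that both pseudo-isotopies have small $d^\pm$ on those chords simultaneously.
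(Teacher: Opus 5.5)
Your proposal is essentially the paper's argument: near $s=1$ you realize a finite sublamination of the common Kerbs invariant by short disjoint arcs on the correct sides of both $f_s(S^1)$ and $g_s(S^1)$, so the homeomorphism $\phi_s$ matching the two cell structures (the paper's $g_sf_s^{-1}$) moves points by $O(\delta)$; you then contract it to the identity by a small isotopy using local contractibility of $\Homeo^+(S^2)$, and finally take a sequence $s_i\to 1$ and fill in between. One simplification: the ``rates'' obstacle you anticipate does not arise if the chords are genuine leaves of $\LL^\pm$, since on finitely many leaves $d^\pm_s\to 0$ for both pseudo-isotopies at once, so you need neither equality of the limiting metrics nor a time reparametrization; also, keeping the interpolation on $[t_n,t_{n+1}]$ near $\id$ rests on local contractibility (as in the paper's final filling step), not on contractibility of the space of choices of $\phi_t$.
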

\begin{proof}
The idea of the proof is to interpolate between $f_s$ and $g_s$ for each $s\in [0,1]$
for which $f_s$ and $g_s$ are $\delta$-close by a path in $\EEE$ of diameter
$O(\delta)$. We may then fill in small diameter loops in $\EEE$ by small diameter
disks, using the local contractibility of $\EEE$. We remark that this is a local
statement, in other words one must take $\delta$ small enough, since $\EEE$ is not (globally)
simply-connected. 

Fix a small $0 < \delta \ll 1$. Then we may find $s \in [0,1]$ so that each pair
of maps in $f_s, g_s, f$ have distance at most $\delta$. 

Next, we approximate the Kerbs invariant $\LL^\pm$ by finite laminations
$L^\pm$ contained in $\LL^\pm$ with the same set of endpoints $X$ in $S^1$ which 
are a $\delta$-net in either metric $d^\pm_s$ associated either to $f_s$ or $g_s$. 
Embed $S^1$ in $S^2$ as the equator,
and realize leaves of $L^\pm$ by chords in the northern and southern hemisphere, so that
$\Gamma:=S^1 \cup L^\pm$ is a finite connected graph in $S^2$ with disk complementary
components. Add more (but finitely
many) leaves to $L^\pm$ if necessary, still contained in the laminar relations, 
so that complementary polygons to $\Gamma$ have bounded number of sides (6 can be achieved).
 
By the definition of the Kerbs invariant, for each leaf $\lambda$ of
$L^+$ resp. $L^-$ with endpoints $p,q\in X$
we can find a properly embedded arc in $S^2 - f_s(S^1)$ on the 
positive resp. negative side of
$f_s(S^1)$ joining $f_s(p)$ to $f_s(q)$ and with diameter at most $\delta$; by
abuse of notation, call this arc $f_s(\lambda)$, and likewise define $g_s(\lambda)$.
If two such arcs $f_s(\lambda)$ and $f_s(\lambda')$ intersect, we may eliminate innermost
bigons inductively to obtain disjoint arcs with the same diameter bounds.
We may therefore extend $f_s$ and $g_s$ to maps $f_s,g_s: \Gamma \to S^2$ for which the
diameter of each complementary polygonal region is bounded by $3\delta$. Now
extend these $f_s$ and $g_s$ further over complementary polygons of $\Gamma$ to obtain
homeomorphisms $f_s,g_s:S^2 \to S^2$ so that $g_s f_s^{-1}:S^2 \to S^2$ moves points
at most $7\delta$.

But by the Whitney trick, any homeomorphism of $S^2$ that moves points at most $O(\delta)$
can be isotoped to the identity by an isotopy that moves points at most $O(\delta)$
(the exact constants are not important). It follows that we may interpolate between
$f_s$ and $g_s$ by a 1-parameter family of embeddings $F_{s,t}$ with diameter $O(\delta)$
in $\EEE$.

Choose a sequence $s_i \to 1$ and 1-parameter families $F_{s_i,t}$ interpolating
between $f_{s_i}$ and $g_{s_i}$ with diameters going uniformly to $0$. Now
use the fact that the group of homeomorphisms of $S^2$ is locally contractible
to fill in the rest of the homotopy through pseudo-isotopies $F_{s,t}$ from $f_s$ to $g_s$.
\end{proof}

In fact, similar arguments, and the local contractibility of $\Homeo^+(S^2)$ can 
probably be used to prove the following conjecture (though we have not pursued this):

\begin{conjecture}[Local weak contractibility]\label{conjecture:local_contractibility}
Let $f\in \partial \dashEEE$ be any map. Then every path component of $\PEEE(f)$ is
weakly contractible. That is, any $S^k \to \PEEE(f)$ for $k>0$
extends to $D^{k+1} \to \PEEE(f)$.
\end{conjecture}

\subsection{Uniqueness of pseudo-isotopies}

For some $f \in \partial \dashEEE$ the space of pseudo-isotopies $\PEEE(f)$ is not path-connected.
In other words, $\EEE$ is not locally path-connected near $f$. One says that such $f$
are {\em self-bumping} points (since $\EEE$ `bumps into' itself nontrivially at $f$).

\begin{example}[Zigzag]\label{example:zigzag}
We give a simple example of two embeddings of $S^1$ in $S^2$ that are very close
as maps, but can't be joined by a path of embeddings of small diameter. The
example is local, and may be inserted near any $f$ in $\partial \dashEEE$. 
In particular, the set of self-bumping points is dense in $\partial \dashEEE$. 
See Figure~\ref{zigzag}.

\begin{figure}[htpb]
\centering
\includegraphics[scale=0.5]{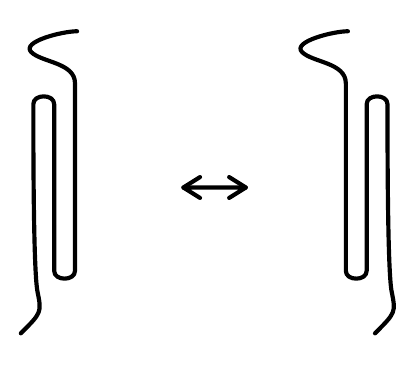}
\caption{Two embedded intervals that are close as maps but not close through embedded
maps.}
\label{zigzag}
\end{figure}
\end{example}

\begin{theorem}[Uniqueness of pseudo-isotopies]\label{theorem:CaTherine_unique_pseudoisotopy}
Let $f:S^1 \to S^2$ be a CaTherine wheel. Then $f$ admits a unique path equivalence
class of pseudo-isotopy. In particular, $f\in \partial \dashEEE$ is not a self-bumping point
of $\EEE$: i.e.\/ there is a unique local path component of $\PEEE(f)$.
\end{theorem}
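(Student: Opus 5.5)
We prove existence and uniqueness separately; uniqueness is reduced, via Proposition~\ref{proposition:Kerbs_complete}, to showing that every pseudo-isotopy of $f$ has the same Kerbs invariant.

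\textbf{Existence.} By Theorem~\ref{theorem:laminar_decomposition}, $f$ is the restriction to the equator $S^1\subset S^2_f$ of the quotient map $F:S^2_f\to S^2$ for the decomposition $\CC$. The elements of $\CC$ are points or closed convex hulls $\nu\cup H^\pm(\nu)$ lying in a single closed hemisphere, so they are cellular. Moore's Theorem~\ref{theorem:Moore} then shows that $\CC$ is shrinkable. Daverman's Theorem~\ref{theorem:shrinkable_pseudoisotopy} supplies a pseudo-isotopy $F_t$ of $F$, and $f_t:=F_t|S^1$ lies in $\PEEE(f)$. By Corollary~\ref{corollary:homotopy_class}, its Kerbs invariant is the pair $\LL^\pm(f)$ of positive and negative image relations.

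\textbf{Uniqueness.} Let $g_t\in\PEEE(f)$ be an arbitrary pseudo-isotopy. By Proposition~\ref{proposition:Kerbs_complete}, it suffices to show $\LL^\pm(g_t)=\LL^\pm(f)$, possibly after swapping the roles of $\pm$. This is harmless if the sides of $f_t(S^1)$ are defined using the orientation of $S^2$, which forces the convention. By Theorem~\ref{theorem:Kerbs_invariant}, $\LL^\pm(g_t)$ is an especial pair. Its relations are laminar, and their union is contained in the image relation $\LL(f)$: if $d^\pm_t(p,q)\to 0$ then $f(p)=f(q)$. Conversely, every leaf of $\LL(f)$ should lie in $\LL^+(g_t)\cup\LL^-(g_t)$. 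The reason is that for $t$ near $1$ the points $g_t(p)$ and $g_t(q)$ are close, so some small arc joins them on one side of the nearly surjective Jordan curve $g_t(S^1)$.

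The key step, and the main obstacle, is to show that the sign assigned by $g_t$ agrees with the sign assigned by $f$. Concretely, if $[p,q]$ is positively degenerate for $f$, we must show $\{p,q\}\in\LL^+(g_t)$. We plan to use the disks $g_t(I)$-side regions: for an interval $I=[p,q]$, let $D_t(I)$ be the closed region bounded by $g_t(I)$ together with a short arc $K_t$ joining $g_t(p)$ to $g_t(q)$. The first sub-step is to show that $D_t(I)$ converges in the Hausdorff sense to $f(I)$ or to $f(I^c)$, using axiom (2a) of Definition~\ref{definition:CaTherine_wheel} and surjectivity. The second sub-step is to deduce the side of $K_t$ from the canonical orientation on $\partial f(I)$ given by Proposition~\ref{proposition:monotonicity}. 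If $K_t$ lay on the wrong side, then $Z^+(I)$ would be traced by $g_t(I)$ with the wrong orientation relative to $S^2$, contradicting monotonicity.

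A cleaner alternative we would try first goes through Lemma~\ref{lemma:linking_endpoints}. Any pair $\{p,q\}\notin\LL^+(f)$ is linked by a leaf $\{r,s\}\in\LL^+(f)$. Suppose $\{p,q\}$ belonged to $\LL^+(g_t)$. The small positive-side arcs for $\{p,q\}$ and for $\{r,s\}$, which we must first show exists, would then be disjoint small arcs on the same side with linked endpoints, which is impossible by planarity. To make this work we need, for each leaf of $\LL^+(f)$, at least one of the two signs realised by $g_t$. By no perfect fits (Proposition~\ref{proposition:relations_transitive}) and the existence of rainbows (Lemma~\ref{lemma:boundary_leaves_accumulate}), a sign mismatch at one leaf should then propagate through linking to a contradiction. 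This shows $\LL^\pm(g_t)\subseteq\LL^\pm(f)$, and since the union of $\LL^\pm(g_t)$ is all of $\LL(f)$ by the covering step above, equality follows and uniqueness is proved.
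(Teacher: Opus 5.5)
Your existence argument, and the reduction via Proposition~\ref{proposition:Kerbs_complete} to showing that every $g_t\in\PEEE(f)$ has Kerbs invariant $\LL^\pm(f)$, are fine and agree with the paper. The gap is in the uniqueness step; write $\KK^\pm:=\LL^\pm(g_t)$.

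\textbf{The covering claim.} Your claim that every leaf of $\LL(f)$ lies in $\KK^+\cup\KK^-$ is not justified by the reason you give. Closeness of $g_t(p)$ and $g_t(q)$ in $S^2$ does not produce a short arc in a complementary domain of $g_t(S^1)$ meeting $g_t(S^1)$ only at its endpoints. The gap between the round metric and Kerbs' metrics $d^\pm_t$ is exactly the phenomenon of Example~\ref{example:zigzag}. Moreover, a priori a fibre of $f$ could be a chain of $\KK^+$- and $\KK^-$-classes sharing points, rather than a single class. The paper does not argue this way: it takes from Kerbs only that $\KK^\pm$ is an especial pair inducing the same decomposition as $f$, and then works class by class. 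Your route (A) presupposes the short arc $K_t$, so it inherits this gap.

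\textbf{The signs.} More seriously, neither route settles the signs. Route (B) is circular: to exclude $\{p,q\}\in\KK^+$ you need the linking leaf $\{r,s\}\in\LL^+(f)$ to be already known to lie in $\KK^+$. Nor can a mismatch ``propagate through linking to a contradiction''. Linking only forces linked leaves (which have opposite $\LL$-signs) to receive opposite $\KK$-signs. The global swap $\KK^\pm=\LL^\mp$ satisfies this, so propagation yields a global mismatch, not a contradiction.

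What is missing is the paper's two-step argument.
\begin{enumerate}
\item If $\{p,q\}\in\KK^+\cap\LL^+$, the leaves of the negative rainbows at $p$ and $q$ link $\{p,q\}$, so they are not in $\KK^+$ and hence lie in $\KK^-$. Since they nest down on $p$ and $q$, $\KK^-$ has no nontrivial class through $p$ or $q$. So the entire $\LL^+$-class of $p$ is a single $\KK^+$-class, and similarly with signs exchanged; incidentally $\KK^\pm$ have no perfect fits.
\item Identify both relations with hull decompositions of $P^+$. The set of $\KK^+$-hulls that are also $\LL^+$-hulls is closed (both relations are closed) and open (no isolated sides), hence everything or nothing.
\end{enumerate}
This gives $\KK^\pm=\LL^\pm$ up to a single global swap, which is the paper's conclusion (``up to sign'').

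Proposition~\ref{proposition:Kerbs_complete} needs actual equality, so ruling out the swap is where an orientation argument in the spirit of your (A) belongs. As sketched, however, (A) is not yet an argument. For positively degenerate $I$ the set $Z^+(I)$ is empty. Also, $g_t(I)$ does not trace $\partial f(I)$: it snakes through the interior of $f(I)$, with only the Cantor set $C$ landing on the boundary. So relating the boundary orientation of $D_t(I)$ to the canonical orientation of Proposition~\ref{proposition:monotonicity} needs a genuine local analysis near $f(p)$, which you have not supplied.
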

\begin{proof}
Let $f:S^1 \to S^2$ be a CaTherine wheel with associated laminar relations
$\LL^\pm$. Any CaTherine wheel admits a pseudo-isotopy with associated Kerbs 
invariant $\LL^\pm$. Suppose there is another pseudo-isotopy with Kerbs invariant
$\KK^\pm$. By Proposition~\ref{proposition:Kerbs_complete} 
it suffices to show that $\KK^\pm$ agrees with
$\LL^\pm$. In fact we shall show that any especial pair $\KK^\pm$ on $S^1$ inducing the same
decomposition as $f$ must agree with $\LL^\pm$ (up to sign).

First, each equivalence class of $\KK^\pm$ is an entire equivalence class of one of 
$\LL^\pm$. For, if $\lbrace p,q\rbrace$ is in $\KK^+$ and also (say) in $\LL^+$, then
there is a negative rainbow in $\LL^-$ for both $p$ and $q$; the elements in this rainbow
cannot be in $\KK^+$ so they must be in $\KK^-$. But then $\KK^-$ cannot have a nontrivial 
equivalence class containing $p$ or $q$. This establishes the claim (and incidentally
shows that $\KK^\pm$ have no perfect fits).

Second, by abuse of notation we will identify $\KK^\pm$ and $\LL^\pm$ with the
corresponding hull decompositions of $P^\pm$. We show
that the set of decomposition elements of $\KK^+$ in $P^+$ 
that are also decomposition elements of $\LL^+$ is open and closed. Closed follows
because both relations are closed. Open follows from no isolated sides (both for $\KK^+$ and
for $\LL^+$). This proves the claim.

The conclusion follows.
\end{proof}

\section{P-CaTherine wheels}\label{section:P_wheels}

The definition of CaTherine wheel is simple enough to be useful, and flexible enough
to cover a great many important examples that arise in practice, as we shall see 
(pseudo-Anosov flows without perfect fits; some expanding Thurston maps; 
whole plane $\SLE_\kappa$ for $\kappa \ge 8$). But there are important exceptions:
Cannon--Thurston maps associated to hyperbolic punctured surface bundles; general
pseudo-Anosov flows or expanding Thurson maps; Brownian webs and space-filling
$\SLE_\kappa$ for $\kappa \in (4,8)$ and so on; see e.g.\/ \cite{Gwynne_Miller_Sheffield}
Figure~5. 

It is challenging to give a useful definition of a surjective map $f:S^1 \to S^2$ that covers
all and only the examples we want. Let us give a name to this class of maps, in the
hope that this will bring us closer to a definition: P-CaTherine wheels, where the P
stands of Peano, parabolic, puncture, pseudo-Anosov, pseudo-isotopy, provisional, 
or whatever the reader likes.

One might try to define P-CaTherine wheels in terms of a pair of laminar relations
$\LL^\pm$. We have already seen that a pair of laminar relations $\LL^\pm$ of $S^1$ arises from
a CaTherine wheel if and only if it is an especial pair (in the sense of 
Frankel--Landry) without perfect fits. An arbitrary especial pair gives rise to two
decompositions of $S^1$, but the nontrivial elements in these two decompositions
can overlap nontrivially, and the maximal common quotient is typically not homeomorphic to
a sphere, for instance,
if the especial pair contains a finite cycle of leaves alternating between $\LL^+$ and
$\LL^-$ for which each leaf shares exactly one endpoint with the next one in the cycle.
More generally, an especial pair might not contain any finite alternating cycles, but 
it might contain finite cycles of infinite alternating chains whose closures share an
endpoint in common with the closure of their successor, and so on. 

At a minimum, a P-CaTherine wheel should have the property that for every closed
interval $J\subset S^1$ the image $f(\partial J)$ is contained in the frontier
$\partial f(J)$. This property alone implies the analog of bullet (3) from 
Lemma~\ref{lemma:basic_properties}, and with essentially the same proof:

\begin{lemma}[Interiors disjoint]\label{lemma:interiors_disjoint}
Let $f:S^1 \to S^2$ be any map so that for all closed intervals $J\subset S^1$,
the image $f(\partial J)$ is contained in the frontier $\partial f(J)$. The for
disjoint closed intervals $I,J \subset S^1$ the interiors of $f(I)$ and $f(J)$ are disjoint.
\end{lemma}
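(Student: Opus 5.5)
The plan is to copy the argument for bullet (3) of Lemma~\ref{lemma:basic_properties}; it used only the frontier property of endpoints, never the disk property. One wrinkle: the statement says \emph{disjoint} intervals, but the same argument works for intervals with disjoint interiors, and I will handle that case (adjacent intervals included). By symmetry in $I$ and $J$, it suffices to show that no point of the interior of $f(J)$ lies in the interior of $f(I)$. Suppose $y \in \inte f(I)\cap \inte f(J)$; I will derive a contradiction.

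First pick $p \in J$ with $f(p)=y$. Since $y$ is an interior point of $f(J)$, it is not a frontier point of $f(J)$. The hypothesis applied to $J$ gives $f(\partial J)\subset \partial f(J)$, so $p \notin \partial J$. Hence $p$ lies in the interior of $J$, and since $I$ and $J$ have disjoint interiors, $p \notin I$.

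Next let $K$ be the closed interval with $K^-=p$ and $K^+=I^+$. Going positively from $p$ to $I^+$ we stay outside $I$ until reaching $I^+$, since $p$ lies in the complementary arc of $I$. So $K$ does not contain $I$. Instead, use the interval with $K^-=p$ running positively and passing through all of $I$, i.e.\ $K=[p,I^+]$ taken so that $I\subset K$. Concretely, start at $p$, go positively until first hitting $I^-$, then continue through $I$ to $I^+$. This is a proper closed interval, not all of $S^1$, because $p\notin I$. Then $f(I)\subset f(K)$, so $\inte f(I)\subset \inte f(K)$. The hypothesis applied to $K$ gives $y=f(p)\in f(\partial K)\subset \partial f(K)$, so $y\notin \inte f(K)$ and hence $y\notin\inte f(I)$, a contradiction.

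The only point needing care, and the expected obstacle, is the bookkeeping of orientations: $K$ must genuinely contain $I$ and have $p$ as an endpoint. This is immediate once $p\notin I$ is established, because $S^1-\{p\}$ is an open arc containing $I$. So one may take $K$ to be the closure of the sub-arc of $S^1-\{p\}$ from $p$ to the far endpoint of $I$. No other input is needed; notably the disk property of $f(I)$ is not used.
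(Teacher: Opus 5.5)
Your argument is correct and is essentially the paper's proof: pick $p\in J$ over a point of $\inte f(J)$, use the frontier hypothesis on $J$ to get $p\in\inte J$ and hence $p\notin I$, then apply the hypothesis to $K=[p,I^+]\supset I$. Your extension to intervals with merely disjoint interiors is also valid. The one slip is your intermediate claim that going positively from $p$ to $I^+$ avoids $I$: since $p\in(I^+,I^-)$, the positive arc from $p$ reaches $I^-$ first and then traverses $I$, so $[p,I^+]$ in the paper's convention already contains $I$, and that sentence should simply be deleted.
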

\begin{proof}
If $p\in J$ is arbitrary with $f(p)$ in the interior of $f(J)$ then $p$ is in the interior
of $J$ by hypothesis and therefore in the complement of $I$. Let $K$ be the closed
interval $[p,I^+]$. Then $I\subset K$ and therefore the interior of $f(I)$ is contained in
the interior of $f(K)$. But $f(p)$ is contained in the frontier of $f(K)$ by hypothesis and
therefore the interior of $f(I)$ does not contain $f(p)$. This completes the proof.
\end{proof}

Before giving a precise definition,
Let's discuss a couple of examples to get more clarity on the properties that a
P-CaTherine wheel should have.

\begin{example}[Punctured surface bundle]\label{example:punctured_surface}
Let $f:S^1 \to S^2$ be the Cannon--Thurston map associated to a hyperbolic
punctured surface bundle $M$. A parabolic element $t\in \pi_1(M)$ generating the
monodromy has countably infinitely many fixed points $p_i$ on $S^1$ with $i \in \Z$
alternating between stable/unstable fixed points, and accumulating as $i \to \pm \infty$
on an `indifferent' fixed point $p$. The stable fixed points and $p$ are all in a
single class $\nu^+ \in \LL^+$, and the unstable fixed points and $p$ are all in a
single class $\nu^- \in \LL^-$. In particular, $\nu^+ \cap \nu^-$ is nonempty, and
there is a perfect fit.

If $J \subset S^1$ is the interval between two adjacent stable (or unstable) fixed points
then $f(J^+)=f(J^-)$ is a local (but not global) cut point of $f(J)$, which is not 
simply-connected. See Figure~\ref{parabolic_circle}.

\begin{figure}[htpb]
\centering
\includegraphics[scale=0.6]{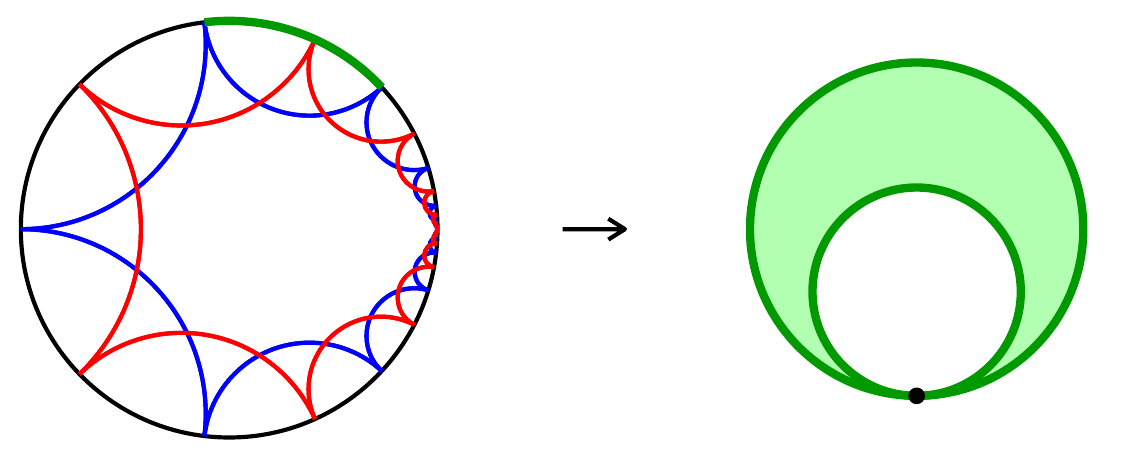}
\caption{The parabolic fixed point in $S^2_\infty$ is a local cut point of $f(J)$}
\label{parabolic_circle}
\end{figure}
\end{example}

\begin{example}[Quasigeodesic flow]\label{example:pA_perfect_fits}
Let $f:S^1_\un \to S^2_\infty$ be the map defined by Frankel \cite{Frankel_Peano} 
associated to a quasigeodesic flow $X$ on a closed
hyperbolic 3-manifold; the domain $S^1_\un$ of $f$ here is the {\em universal circle} 
associated to the orbit space $P$ of $\tilde{X}$ (which is topologically a plane)
defined in \cite{Calegari_quasigeodesic}.
Associated to $\tilde{X}$ are two decompositions of $P$, associated to
the positive and negative endpoint maps $e^\pm:P \to S^2_\infty$, and these in turn
give rise to laminar relations $\LL^\pm$ on $S^1_\un$. It can easily happen that 
these have perfect fits; for example, $X$ could already be a quasigeodesic pseudo-Anosov 
flow with perfect fits.

A typical configuration is a {\em lozenge}, a pair of leaves $\mu^+,\nu^+$ in $\LL^+$
and another pair $\mu^-,\nu^-$ in $\LL^-$ sharing endpoints as in 
Figure~\ref{lozenge}. Let $J \subset S^1$ be the interval indicated. Then 
$f(J)$ has two local cut points. 

\begin{figure}[htpb]
\centering
\includegraphics[scale=0.6]{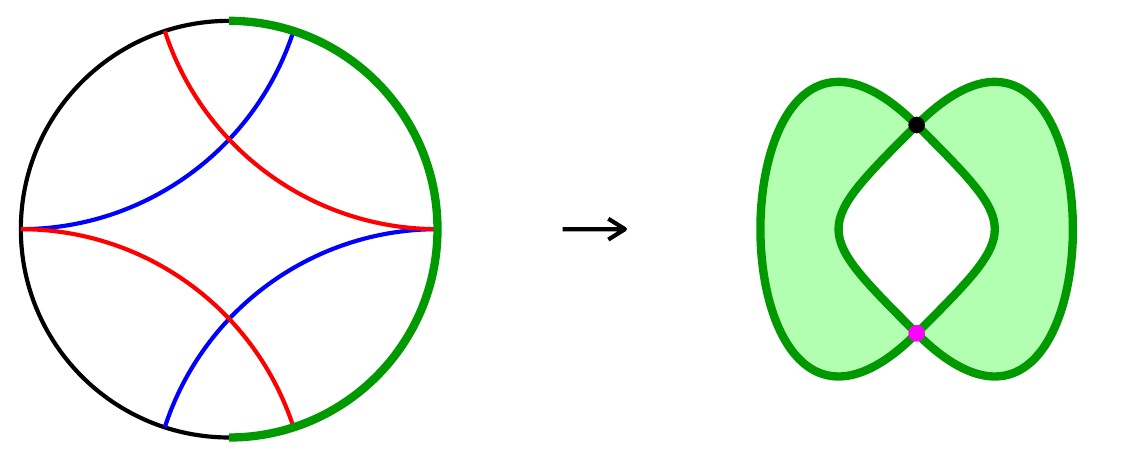}
\caption{The image $f(J)$ has two local cut points. One (in black) is genuine; the
other (in pink) is fake.}
\label{lozenge}
\end{figure}
\end{example}

The local cut points in Example~\ref{example:pA_perfect_fits} 
are different in kind from each other. Roughly speaking, one of the local cut points
locally separates the track of $f$ (for now, call such a cut point {\em genuine}) whereas the
other one does not (call such a cut point {\em fake}). For example,
the local cut point in Example~\ref{example:punctured_surface} is fake. 

Taking complements reverses this: the interior of $f(J^c)$ is the complement of 
$f(J)$ and vice versa, and the fake local cut point in $\partial f(J)$ is a 
genuine local cut point in $\partial f(J^c)$ and vice versa. This is essential if we
want $f$ to admit a pseudo-isotopy: if $I$ and $J$ are closed disjoint intervals
with a common point $x \in f(I)\cap f(J)$ which is a genuine local cut point in both, and
if the local tracks of $f|I$ and $f|J$ cross essentially at $x$, then $f$ cannot
be perturbed to an embedding. With this in mind we give the following provisional
definition:

\begin{provisional_definition}[P-CaTherine wheel]\label{definition:P_CaTherine_wheel}
A {\em P-CaTherine wheel} is a map $f:S^1 \to S^2$ which is surjective, and such 
that for every closed interval $J$ in $S^1$,
\begin{enumerate}
\item{the interior of $f(J)$ is dense;}
\item{every component of the interior of $f(J)$ is an open disk;}
\item{$f(\partial J)$ is contained in the frontier $\partial f(J)$; and}
\item{{\bf (no essential crossing):} if $I$ and $J$ are disjoint and their
images $f(I)$ and $f(J)$ contain a common point $x$ which is a local cut point in both,
then $f|I$ and $f|J$ do not cross essentially at $x$.}
\end{enumerate}
\end{provisional_definition}

Given $f(J)$ we would like to {\em resolve} the fake local cut points. In our (toy)
examples, this means to remove each fake local cut point and add a new point on either
side so that the resolution is still compact and path-connected. If we perform this
resolution, then we may embed the resolutions of $f(J)$ and $f(J^c)$ disjointly from
each other as path-connected and contractible subsets of $S^2$ separated by an
abstract `circle' $S^1(J)=S^1(J^c)$. 

In general we must deal with  the fact that a local cut point may have a high 
(even infinite) valence.

\begin{definition}[Local resolution]\label{definition:local_resolution}
The {\em resolution} $f_R(J)$ of $f(J)$ is a compact, connected, locally
connected subset of $S^2$ together with a monotone decomposition $S^2 \to S^2$
taking $f_R(J)$ to $f(J)$ so that $f_R(J) \to f(J)$ is $1$--$1$
on the complement of the local cut points of $f(J)$, so that $f|J$ factorizes as
$J \to f_R(J) \to f(J)$, and so that $f_R(J)$ is maximal with respect to these properties.
\end{definition}

Here is how to construct $f_R(J)$. We explain the construction first in the case of
a single local cut point. If $x \in f(J)$ is a local cut point we give $f(J)-x$
the path topology, and define $\EE_x$ to be the set of ends of $f(J)-x$ in this 
topology. Give $\EE_x$ the obvious (circular) order topology, and let $\overline{\EE}_x$
be the order completion, and embed $\overline{\EE}_x$ in $S^1$ in an order-reversing
way. Now choose a homeomorphism $S^2 - x$ into $S^2 - D_x$ for $D_x$ a closed unit disk
in such a way that the homeomorphism extends to the given embedding of
$\overline{\EE}_x$ into $S^1_x = \partial D_x$. The embedding $\overline{\EE}_x \to S^1_x$
should be order reversing, because $\overline{\EE}_x$ lives on the `outside' of $D_x$
whereas $S^1_x$ inherits its orientation from the inside, i.e.\/ from $D_x$ itself.

For some pairs of ends $e,e'$ of $\EE_x$ there is a germ of an arc $I\subset J$ so that
$f|I$ nontrivially crosses from $e$ to $e'$; this collection of pairs determines a
relation on $\EE_x$, and we let $\LL_x$ denote the closed equivalence 
relation on $S^1_x$ it generates. 

\begin{lemma}[Cut point relation laminar]\label{lemma:cut_relation_laminar}
With notation as above, the relation $\LL_x$ is laminar.
\end{lemma}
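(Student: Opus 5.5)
We have to check the three clauses of Definition~\ref{definition:laminar_relation}: $\LL_x$ is closed in $\M$ by construction, since it is defined as the closed equivalence relation generated by the crossing pairs. So the work is to show that its equivalence classes are closed subsets of $S^1_x$ and that distinct classes are unlinked. The plan is to first prove a statement about the generating relation $R$ and then pass to the closure.

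\textbf{Step 1: generating pairs are pairwise unlinked.} Let $R \subset \M$ be the set of pairs $\lbrace e,e'\rbrace$ of ends of $f(J)-x$ for which some germ of an arc $I\subset J$ has $f|I$ crossing nontrivially from $e$ to $e'$ through $x$. The claim is that two pairs of $R$ that share no point are unlinked in $S^1_x$.
\begin{itemize}
\item Suppose $\lbrace e_1,e_1'\rbrace$ comes from a germ $I_1$ and $\lbrace e_2,e_2'\rbrace$ from a germ $I_2$, with the two pairs linked.
\item Pass to the blown-up picture $S^2-D_x$. Each $f(I_k)$ approaches $\partial D_x$ at the points of $S^1_x$ determined by its two ends, so linking means the tracks $f|I_1$ and $f|I_2$ cross essentially at $x$.
\item If the germs $I_1,I_2$ can be taken disjoint, this is exactly what clause (4) (no essential crossing) of Provisional Definition~\ref{definition:P_CaTherine_wheel} forbids.
\item If they cannot (for instance they are nested or overlapping subintervals of $J$ that both pass through a preimage of $x$), I would shrink them to sub-germs around distinct preimage points $p_1\neq p_2$ of $x$ in $J$. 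This is possible because the pairs share no point, so the preimages are distinct.
\item Then apply Lemma~\ref{lemma:interiors_disjoint}: the interiors of the images of disjoint intervals are disjoint, and linked crossings at $x$ would force these interiors to overlap near $x$. This is essentially the argument of Proposition~\ref{proposition:monotonicity}, where two properly embedded arcs with linked endpoints on a circle must intersect.
\end{itemize}

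\textbf{Step 2: the closure preserves unlinking.} Next I would show that the equivalence relation generated by $R$ (before closure) has classes that are pairwise unlinked.
\begin{itemize}
\item A chain $e_0 R e_1 R \cdots R e_n$ gives a finite tree of chords. By Step 1, chords sharing no endpoint do not cross, so chords of two different chains that are linked would have to share an endpoint, which merges the classes.
\item Taking the closure in $\M$ preserves the no-crossing property, since linking is an open condition.
\item For transitivity of the closed relation: a limit of chords forms a closed family of pairwise non-crossing chords. I would show its connected components (chords joined through shared endpoints) are the classes, in the same way that $\rel(\Lambda)$ is laminar in Definition~\ref{definition:two_relations}, where the paper already asserts this is straightforward. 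Concretely, the closure of the union of the hulls of the classes is a lamination-like closed set, and the classes are the closures of the vertex sets of hull components.
\end{itemize}

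\textbf{Step 3: classes are closed.} A class of the closed relation is the trace on $S^1_x$ of a closed connected union of hulls of non-crossing chords, so it is closed.

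\textbf{Expected main obstacle.} The hard part is Step 1, specifically making ``cross essentially'' precise when $x$ has infinite valence and the ends lie in the completion $\overline{\EE}_x$ rather than $\EE_x$. I would handle this by approximating ideal points by honest ends and using that linking is an open condition.
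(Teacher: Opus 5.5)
Your Step 1 is the paper's argument: if laminarity fails, there are two disjoint germs in $J$ whose tracks cross essentially over $x$ between linked ends, contradicting the no-essential-crossing axiom. Your Steps 2--3 make explicit the passage from the generating pairs to the closed relation, which the paper leaves implicit. There, the property to carry through closure and transitive closure is that linked pairs have equivalent endpoints, not that pairs never cross, since pairs inside a single class of four or more points do cross.

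One correction to Step 1: once you have shrunk to disjoint germs around $p_1\neq p_2$, apply axiom (4) directly. Your appeal to Lemma~\ref{lemma:interiors_disjoint} is wrong. Two images with disjoint interiors can still meet in an essential X-shaped crossing at a common local cut point, and that is exactly the configuration axiom (4) was introduced to exclude.
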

\begin{proof}
For, if not, we could find disjoint embedded intervals $I,I'$ in $J$ for which the local
tracks $f|I$ and $f|I'$ cross essentially over $x$ between pairs of local components
that are linked in $\EE_x$, violating the no essential crossing axiom.
\end{proof}

Define a decomposition of $D_x$ whose elements are the convex hulls of classes in
$\LL_x$, and extend it trivially to $S^2 - D_x$. The quotient of $S^2$ 
by this decomposition is $S^2$, and $f_R(J)$ sits inside it. Further quotienting
all of $D_x$ to a point gives the desired map $f_R(J) \to f(J)$, and by construction
$f$ factors to $J \to f_R(J) \to f(J)$.

When there are more local cut points we perform this operation simultaneously on all
of them. This is a little subtle, since {\em a priori} there might be uncountably many local cut
points. However, there are only countably many local cut points with valence at least
$3$ (this is for essentially the same reason that one can only draw countably many
disjoint copies of the letter T in the plane); and of the valence $2$ local cut
points, only countably many fake ones. Since this discussion is largely informal, 
details are left to the reader. 

\begin{proposition}[Canonical cactus resolution]\label{proposition:cactus_resolution}
For every closed interval $J$ in $S^1$, the canonical resolution $f_R(J)$ is a {\em cactus}:
it is a nonempty, compact, locally connected, simply connected planar set with dense interior.
\end{proposition}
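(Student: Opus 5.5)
We will verify the defining properties of a cactus one at a time, using the construction of $f_R(J)$ as the image of $f(J)$ under the inverse of the monotone decomposition $S^2 \to S^2$ built from the disks $D_x$ and the laminar relations $\LL_x$ of Lemma~\ref{lemma:cut_relation_laminar}.

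\emph{Nonempty, compact, connected and locally connected.} These come essentially for free. Since $f|J$ factors as $J \to f_R(J) \to f(J)$ and the first map is continuous, $f_R(J)$ is the continuous image of the interval $J$ once we know the first map is onto. Surjectivity follows from the maximality clause in Definition~\ref{definition:local_resolution}: any point of $f_R(J)$ missed by the image of $J$ could be discarded, giving a smaller set with the same properties. By the Hahn--Mazurkiewicz theorem, the continuous image of an interval is a nonempty Peano continuum, so $f_R(J)$ is compact, connected and locally connected.

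\emph{Dense interior.} This is where planarity of the resolution is used. By axiom (1) of Definition~\ref{definition:P_CaTherine_wheel}, the interior of $f(J)$ is dense in $f(J)$. The resolution map $f_R(J) \to f(J)$ is $1$--$1$ off the preimages of local cut points. Only countably many local cut points are resolved nontrivially: those of valence $\ge 3$ and the fake ones of valence $2$. Away from the resolved ones the map is a homeomorphism onto an open subset of $S^2$ minus countably many points. Hence interior points of $f(J)$ lift to interior points of $f_R(J)$. The preimage of each resolved cut point $x$ is a subset of the decomposition quotient of $D_x$, and it lies in the closure of the lifted interior, because the ends $\EE_x$ are dense in $S^1_x$ and every end is approached by interior points.

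\emph{Simple connectivity (main obstacle).} For a planar Peano continuum, simple connectivity is equivalent to every complementary component of $f_R(J)$ in $S^2$ being simply connected. Equivalently, the complement $S^2 - f_R(J)$ is connected, so that the set is non-separating. The plan is to argue by contradiction. Suppose $f_R(J)$ contains a Jordan curve $\sigma$ that bounds a disk $E$ not contained in $f_R(J)$. Push $\sigma$ down to $f(J)$, where it becomes a closed curve $\bar\sigma$ whose self-touchings occur only at local cut points. Because the interior of $f(J^c)$ is exactly the complement of $f(J)$, some point of the interior of $f(J^c)$ lies inside $\bar\sigma$.

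The key point is that, after resolution, every local cut point at which $\bar\sigma$ pinches is either genuine for $J$, in which case it is fake for $J^c$, or was already separated in $D_x$ by the laminar relation $\LL_x$. Then the arc of $J^c$ reaching that interior point would have to cross the track of $f|J$ essentially at some local cut point. This contradicts the no essential crossing axiom, with the crossing taken between a subinterval of $J$ and a subinterval of $J^c$, in exactly the way it was used in Lemma~\ref{lemma:cut_relation_laminar}. I would first carry this out when there is a single resolved cut point, where the picture of $D_x$ with its laminar hull decomposition makes it transparent. I would then pass to the countable case by resolving cut points one at a time and taking an inverse limit, checking that non-separation is preserved in the limit by upper semi-continuity of the decomposition. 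Making this limiting step rigorous, given the informal treatment of simultaneous resolution, is the step I expect to be hardest.
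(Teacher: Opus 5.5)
The gap is in the simple connectivity step. Your other steps are essentially fine. Your use of axiom (1) for dense interior is more careful than just citing that $f_R(J)$ is an image of $J$. One correction there: surjectivity of $J \to f_R(J)$ should be taken from the construction, not from maximality. Maximality concerns how finely cut points are resolved, so discarding points does not contradict it.

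First, your contradiction hypothesis (that $f_R(J)$ contains a Jordan curve $\sigma$ bounding a disk $E \not\subset f_R(J)$) is not the negation of simple connectivity on $S^2$. A Jordan curve in $S^2$ bounds two disks, and a closed disk in $S^2$ contains Jordan curves whose outer complementary disk is not contained in it. Similarly, ``every complementary component is simply connected'' holds for every continuum in $S^2$. Only your second formulation, that $S^2 - f_R(J)$ is connected, is the right criterion.

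Second, and as a consequence, you produce only one point of $\inte f(J^c)$ inside $\bar\sigma$. You then invoke ``the arc of $J^c$ reaching that interior point'' without saying where that arc starts. If it starts at a point of $f(J)$, for instance at $f(\partial J^c)=f(\partial J)$ lying on $\bar\sigma$, nothing forces it to cross the track of $f|J$. So the no essential crossing axiom is never triggered.

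The missing idea is to use two points on opposite sides, which is the paper's argument:
\begin{enumerate}
\item Negate simple connectivity as ``$S^2 - f_R(J)$ is disconnected.'' Choose complementary regions $U,V$ of $f(J)$ lying in distinct components of $S^2 - f_R(J)$, points $x\in U$, $y \in V$, and preimages $p,q\in S^1$.
\item Since $x,y\notin f(J)$, both $p$ and $q$ lie in $\inte(J^c)$, so one of $[p,q]$, $[q,p]$ is disjoint from $J$.
\item By Lemma~\ref{lemma:interiors_disjoint}, the image of that interval has interior disjoint from the interior of $f(J)$. So the path $f|[p,q]$ can get from $U$ to $V$ only through frontier points of $f(J)$.
\item Because $U$ and $V$ are not reconnected by the resolution, the path must cross the track of $f|J$ essentially at a genuine local cut point. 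This contradicts the no essential crossing axiom.
\end{enumerate}
This argument only has to locate a single crossing along a single arc. Your proposed one-cut-point-at-a-time resolution and inverse-limit step, which you flagged as the hardest part, therefore becomes unnecessary.
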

\begin{proof}
The resolution $f_R(J)$ is compact, locally connected and has dense interior
because it is the surjective image of $J$
under the factorization $J \to f_R(J) \to f(J)$. We show it is simply-connected.
Suppose not, so that there are at least two disjoint complementary regions $U$ and $V$
to $f(J)$ contained in disjoint complementary regions of $f_R(J)$.
Pick points $x\in U$ and $y\in V$ with preimages $p,q\in S^1$. Up to exchanging the
order, the interval $[p,q]$ is disjoint from $J$ and therefore the interior of $f([p,q])$ 
is disjoint from the interior of $f(J)$. It follows that there must be some genuine local
cut point of $f(J)$ that $f|[p,q]$ crosses essentially, violating the no essential crossing
axiom.
\end{proof}

For any interval $J$, the boundary of the (planar) cactus $f_R(J)$ 
has a canonical parameterization by an abstract circle $S^1(J)$ that may be identified
with the space of prime ends of the complement.

\begin{proposition}[Monotonicity]\label{proposition:monotone_cactus}
With notation as above, if $C(J) \subset J$ is the preimage of $\partial f(J)$, there is a canonical 
factorization $C(J) \to S^1(J) \to \partial f(J)$, and the 
map from $C(J)$ to $S^1(J)$ is monotone in the complement of the image of $\partial J$,
in the sense of Proposition~\ref{proposition:monotonicity}.
\end{proposition}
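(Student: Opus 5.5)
The plan is to imitate the proof of Proposition~\ref{proposition:monotonicity} almost verbatim, with the disk $f(I)$ replaced by the cactus $f_R(J)$ and the Jordan-curve argument replaced by a prime-end argument.

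\emph{The factorization.} The map $J \to f_R(J)$ is the first arrow of the factorization $J \to f_R(J) \to f(J)$ built in the construction of the resolution. We need that it sends $C(J)$ into $\partial f_R(J)$. This holds because $f_R(J)\to f(J)$ is the restriction of a monotone decomposition of $S^2$ that is $1$--$1$ off the local cut points, and hence carries interior points to interior points. For each $p\in C(J)$, its image in $\partial f_R(J)$ is accessible from the complement, since $f(J^c)$ approaches it. So it determines a prime end, that is, a point of $S^1(J)$.

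If a boundary point of the cactus is accessible in several ways, I resolve the ambiguity with the germ of $f$ near $p$. The track of $f|J$ near $p$ lies in a single local component at each local cut point, because the resolution has separated the local components. This selects a unique prime end. Composing with the prime-end impression map $S^1(J)\to \partial f_R(J) \to \partial f(J)$ recovers $f|C(J)$, which gives the factorization.

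\emph{Monotonicity.} Fix a component $Y$ of $S^1(J)$ minus the image of $\partial J$. Suppose there are $x<y$ in $C(J)$ with images in $Y$ in the wrong order, where the order is measured relative to an endpoint $J^-$ (or relative to a third point $z$ in the degenerate case, exactly as in the proof of Proposition~\ref{proposition:monotonicity}).
\begin{enumerate}
\item Consider the subintervals $[J^-,x]$ and $[y,J^+]$. Their images have disjoint interiors by Lemma~\ref{lemma:interiors_disjoint}. After resolution, their resolved images are subcacti of $f_R(J)$ that meet $\partial f_R(J)$ at the prime ends of $x$ and $y$ respectively.
\item Inside the resolved interior of $[J^-,x]$, choose a crosscut-type arc from the prime end of $J^-$ to that of $x$. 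Do the same inside $[y,J^+]$, joining the prime ends of $y$ and $J^+$. These arcs live in the simply connected cactus (Proposition~\ref{proposition:cactus_resolution}).
\item The prime end pairs are linked in $S^1(J)$. So the two arcs must either meet in the interior, which contradicts disjoint interiors, or pass through a common pinch point of the cactus. In the latter case $f|[J^-,x]$ and $f|[y,J^+]$ cross essentially at a genuine local cut point. This violates the no essential crossing axiom (4) of Provisional Definition~\ref{definition:P_CaTherine_wheel}.
\end{enumerate}

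\emph{Main obstacle.} The hard step is step (2): showing that the interior of a resolved subcactus connects the two prime ends of its endpoints. In the disk case this was automatic. Here I would argue component by component through the chain of open disks (axiom (2)) forming the subcactus, linked at genuine cut points. At each linking point the passage is a crossing of $f|[J^-,x]$, so its prime-end linking data is governed by the relation $\LL_x$ of Lemma~\ref{lemma:cut_relation_laminar}. Laminarity of $\LL_x$ is exactly what turns a linking of prime ends into an essential crossing.
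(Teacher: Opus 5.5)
Your monotonicity argument is the same as the paper's. Take $x<y$ in the wrong order. Because the extreme points link in $S^1(J)$, the images of $[J^-,x]$ and $[y,J^+]$ in $f_R(J)$ must meet. By Lemma~\ref{lemma:interiors_disjoint} they cannot meet in either interior, so they meet at a local cut point of both, contradicting no essential crossing. The paper simply asserts the intersection that your steps (2)--(3) try to justify.

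The real problem is your construction of the factorization on all of $C(J)$. You claim the germ of $f|J$ at $p$ lies in a single local component ``because the resolution has separated the local components.'' This fails exactly at genuine local cut points. \emph{Genuine} means the track passes through $f(p)$ from one local component into another. The resolution does not separate those components: it glues them back together through the hull of a class of $\LL_{f(p)}$, which is why $f_R(J)$ is a cactus rather than a disk. Such a point is the impression of several prime ends, and the germ does not choose among them. In the simplest picture, $f([J^-,p])$ and $f([p,J^+])$ are disks meeting only at the genuine cut point $f(p)$. Then $S^1(J)$ passes through $f(p)$ twice, once in each component of $S^1(J)$ minus the images of $J^\pm$, and $p$ must go to both. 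So this does not produce a single-valued map $C(J)\to S^1(J)$. It also leaves step (2) ambiguous about which prime end a crosscut ending at such a point represents.

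The paper avoids this as follows:
\begin{itemize}
\item By continuity it reduces to the case $f(J^-)\neq f(J^+)$ with neither a local cut point.
\item It works only with $C^o(J)$, the preimage of the points of $\partial f(J)$ that are not local cut points. Each such point is the impression of a single prime end, so $C^o(J)\to S^1(J)\to\partial f(J)$ is immediate.
\item $C^o(J)$ is dense in $C(J)$, and the offending $x,y$ are chosen in $C^o(J)$, so the endpoints of your crosscuts are unambiguous.
\end{itemize}
You should do the same, or define the map separately over each component $Y$ (the sense of Proposition~\ref{proposition:monotonicity}) and extend from the dense set.

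Two smaller points. First, accessibility of boundary points of the cactus comes from its local connectivity via Carath\'eodory, not from $f(J^c)$ approaching them. Second, your closing appeal to laminarity of the cut-point relations does not do what you want. Both crosscuts pass through the same pinch point, hence through the same class of the relation there, and laminarity only constrains distinct classes. The contradiction has to come directly from the no essential crossing axiom applied to the disjoint intervals $[J^-,x]$ and $[y,J^+]$, which is all the paper uses.
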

\begin{proof}
By continuity of $f$ it suffices to prove this in the generic case that $f(J^-)$ and $f(J^+)$ are
distinct, and are not local cut points of $f(J)$. Let $C^o(J) \subset J$ denote the preimage
of the set of points on $\partial f(J)$ that are not local cut points. Note that
$C^o(J)$ is dense in $C(J)$, and by definition, there is a factorization
$C^o(J) \to S^1(J) \to \partial f(J)$. As in the proof of
Proposition~\ref{proposition:monotonicity} we suppose that the proposition is false, and that there
are $J^- < x < y < J^+$ with $x,y\in C^o(J)$ and such that $f(J^-) < f(y) < f(x) < f(J^+)$ in
the orientation on $S^1(J)$. Consider the images $f([J^-,x])$ and $f([y,J^+])$ in the resolution
$f_R(J)$. Because the extreme points link in $S^1(J)$, the images must intersect. The intersection
cannot be in the interior of either, and therefore it must be a local cut point in both.
But this violates the no essential crossing axiom.
\end{proof}

Monotonicity is the key to the structure theory of (ordinary) CaTherine wheels
developed in \S~\ref{section:CaTherine_wheels}. Our
confidence in Provisional Definition~\ref{definition:P_CaTherine_wheel} 
rests on Proposition~\ref{proposition:monotone_cactus}. 

\vfill
\pagebreak

\part{Dynamics and Conformal Geometry}

\section{Automorphisms}

Some CaTherine wheels enjoy a large family of symmetries. These are in many ways
among the most interesting examples, and it is in this section that we start to
connect up our technology to low-dimensional geometry, especially to the theory of
hyperbolic 3-manifolds.

\subsection{Classification of automorphisms}

\begin{definition}[Automorphism]\label{definition:automorphism}
An {\em automorphism} of a CaTherine wheel $f:S^1 \to S^2$ is a pair of 
{\em orientation preserving} homeomorphisms $h:S^1 \to S^1$ and $H:S^2 \to S^2$ such
that $fh = Hf$.
\end{definition}

\begin{remark}
There is no a priori reason why we can't consider orientation-reversing homeomorphisms
but in this section we restrict to the orientation-preserving kind for simplicity. Note that in 
case one map is orientation-reversing on one of $S^1$ and $S^2$ and the other map is
orientation-preserving on the other, it will interchange $\LL^\pm$. Otherwise
it preserves each of $\LL^\pm$. 
\end{remark}

\begin{theorem}[Automorphisms]\label{theorem:automorphisms}
Let $h:S^1 \to S^1$, $H:S^2 \to S^2$ be an automorphism of a CaTherine wheel $f:S^1 \to S^2$.
Then \begin{enumerate}
\item{some finite power $h^n$ of $h$ is either the identity, or has a finite even 
positive number of fixed points on $S^1$ with alternating source-sink dynamics;}
\item{up to replacing $h^n$ by $h^{-n}$ if necessary, the attracting/repelling fixed 
points are joined by leaves of $\LL^+$ and $\LL^-$ respectively; }
\item{if there are at least 4 fixed points for $h^n$, the image in $S^2$ of 
the attracting/repelling fixed points of $h^n$ in $S^1$ are the unique fixed points 
of $H$ and are attracting/repelling for $H$; and}
\item{if there are exactly 2 fixed points for $h^n$, the image in $S^2$ of 
these fixed points are the unique fixed points of $H$ and either 
\begin{enumerate}
\item{they are distinct and are attracting/repelling for $H$; or}
\item{(exotic): they are equal in $S^2$, 
and the restriction of $H$ to the complement of this
unique fixed point is conjugate to a translation of the plane.}
\end{enumerate}}
\end{enumerate} 
\end{theorem}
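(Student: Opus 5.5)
The plan is to work entirely on $S^1$ with the laminar relations $\LL^\pm$. Since $h$ and $H$ both preserve orientation, $h$ preserves each of $\LL^\pm$ separately (by the remark after Definition~\ref{definition:automorphism}), and hence acts on the boundary laminations $\Lambda^\pm=\Lam(\LL^\pm)$. The first step is to reduce to the case that $h$ has a fixed point. If $h$ has irrational rotation number, then either $h$ is conjugate to a rotation or it has a minimal Cantor set. In the first case, the closure of the $h$-orbit of any leaf of $\Lambda^+$ is invariant under the whole circle of rotations. These leaves must be pairwise unlinked, and that is impossible for a nonempty family. Nonemptiness holds because of rainbows (Lemma~\ref{lemma:boundary_leaves_accumulate}). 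In the second case, I would use the semiconjugacy to a rotation and apply the same argument to the images of leaves. This needs care, and I would handle it by showing that the leaves must lie in the gaps of the Cantor set, which contradicts the existence of rainbows at points of the minimal set. So the rotation number is rational, say $p/n$, and $h^n$ has fixed points. If $h^n$ is not the identity, its fixed set $\fix(h^n)$ is a nonempty closed set.

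Next I analyze $g=h^n$ on a complementary interval $(a,b)$ of $\fix(g)$, where $g$ translates, say, from $a$ towards $b$. By Lemma~\ref{lemma:linking_endpoints}, the pair $\{a,b\}$ is either a leaf of $\LL^+$ or is linked by a leaf of $\LL^+$, and the same holds for $\LL^-$. Suppose $\ell$ links $\{a,b\}$ and is not $g$-invariant. Then its iterates $g^k\ell$ all link $\{a,b\}$ and must be pairwise unlinked, so they are nested. Their limits are invariant leaves, which by closedness (Lemma~\ref{lemma:L_closed_subsets}) have endpoints in $\fix(g)$. From this I want to conclude that each fixed point $a$ lies in a leaf of exactly one of $\LL^\pm$ joining it to other fixed points. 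I would then argue that the sign is determined by the dynamics: if the germ of $g$ near $a$ is attracting, then the rainbow at $a$ in the other lamination is pushed toward $a$, and this forces the non-invariant linking leaves to be of the opposite sign. No perfect fits (Theorem~\ref{theorem:wheels_from_laminations}) prevents a fixed point from being an endpoint of leaves of both signs. This gives the source/sink alternation, and shows that attracting points are joined by $\LL^+$ leaves and repelling points by $\LL^-$ leaves, up to replacing $g$ by $g^{-1}$.

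Finiteness of $\fix(g)$ is the step I expect to be the main obstacle. Because of the alternation just established, an accumulation point of fixed points would be a point whose one-sided neighbourhoods contain fixed points of both types. By closedness of $\LL^\pm$, it would then lie in limits of leaves of both $\LL^+$ and $\LL^-$, which is a perfect fit. It could also happen that $g$ is the identity on an interval. To rule this out I would use Lemma~\ref{lemma:basic_properties}: $f$ of that interval has nonempty interior and is fixed pointwise by $H$, and I would derive a contradiction with the nontrivial translation on adjacent intervals. If instead $g$ fixes everything, it is the identity. This proves (1) and (2).

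For (3) and (4), the attracting fixed points form a single $\LL^+$ class, since they are joined by leaves, and so they map to a single point $x^+$; likewise the repelling points map to $x^-$. For $y\in S^2$, I would take a preimage $q\in S^1$ and note that $g^k(q)$ converges to the attractors, so $H^{kn}(y)\to x^+$ by continuity. The only exceptions are preimages lying in classes containing fixed points. This shows that $x^\pm$ are the unique fixed points of $H^n$, and hence of $H$, and that they are attracting and repelling. With four or more fixed points, the attractors form a positive class and the repellers a negative class, and these classes are distinct, so $x^+\ne x^-$. With exactly two fixed points $a,b$, the pair $\{a,b\}$ may be a single degenerate pair, that is, a leaf. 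Then $x^+=x^-$, and every orbit of $H$ on $S^2-\{x\}$ converges to $x$ in both directions with no other fixed points. The action is free and properly discontinuous, because of the uniform convergence on compacta away from $x$ inherited from $g$. Brouwer's plane translation theorem then gives the conjugacy to a translation, which is the exotic case.
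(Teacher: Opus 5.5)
\textbf{There is a genuine gap, at the heart of (1)--(2).} What has to be shown is that $\fix(g)$, for $g=h^n$, is finite and that every fixed point is a two-sided source or sink. Your intermediate goal, that each fixed point lies in a leaf of exactly one of $\LL^\pm$ joining it to other fixed points, is false. In case (4a) of the statement itself there are only two fixed points and their images in $S^2$ are distinct, so neither is joined by a leaf to another fixed point. The error comes from discarding the possibility that the nested iterates $g^k\ell$ of a linking leaf shrink to a single point instead of converging to a leaf. That degenerate case is exactly where two-sided sinks come from.

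More seriously, nothing in the proposal rules out the bad fixed-point configurations:
\begin{itemize}
\item a semi-stable fixed point (attracting on one side, repelling or accumulated by fixed points on the other);
\item a Cantor set of fixed points;
\item an interval of fixed points.
\end{itemize}
Your sentence about the rainbow being ``pushed toward $a$'' already assumes the germ at $a$ is two-sided attracting. For intervals, a homeomorphism of $S^2$ fixing an open disk pointwise is not contradictory in itself. The finiteness step is also mis-stated. Being a limit of leaves of both signs is not a perfect fit, since every point lying in no leaf has rainbows of both signs. What you need is that the accumulating sinks lie in a single $\LL^+$ class and the sources in a single $\LL^-$ class. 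These classes are closed, so the limit point then lies in nontrivial classes of both signs, which is the perfect fit.

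\textbf{The missing idea.} The paper's key lemma is that a fixed point attracting on one side is attracting on the other. Suppose $p\in\fix(h)$ is attracting on one side, and there are points $p_i\to p$ on the other side that are fixed or not attracted. By no perfect fits, $p$ has a rainbow of some sign. Take nested rainbow leaves $\ell=\{x,y\}$ and $\ell'=\{x',y'\}$ with some $p_i$ between $x$ and $x'$, and with $y,y'$ in the attracting basin. A high iterate moves $h^k(y)$ between $p$ and $y'$ while $h^k(x)$ stays beyond $x'$, so $h^k\ell$ crosses $\ell'$. This kills semi-stable points, intervals and Cantor sets of fixed points at once. One then iterates a rainbow leaf at an isolated source: it either collapses to a single sink or converges to a leaf joining the two adjacent sinks. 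Propagating signs along the resulting chain gives the alternation and the single-class structure.

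\textbf{A repair inside your own setup.} Take a complementary interval $(a,b)$ of $\fix(g)$ on which $g$ moves toward $b$. Either $\{a,b\}$ is a leaf of one sign and there is a linking leaf of the other sign, or there are linking leaves of both signs. The forward limit of a linking leaf is either a leaf $\{b,d_\infty\}$ or degenerates to $b$. Since $b$ cannot lie in nontrivial classes of both signs, some limit degenerates. A degenerate limit forces $b$ to be isolated and attracting from the other side as well. This is precisely the step your proposal is missing.

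\textbf{The other parts.} Your semiconjugacy argument for irrational rotation number is a valid and more elementary alternative to the paper's Lefschetz argument. In (4b), the conclusion comes from proper discontinuity plus covering space theory: the quotient is an open annulus. Brouwer's translation theorem alone does not give a conjugacy to a translation.
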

\begin{proof}
Let $h,H$ be an automorphism of $f:S^1 \to S^2$. The first step is to show that no
point of $\fix(h)$ is isolated in $\fix(h)$ on exactly one side, and every isolated
point of $\fix(h)$ is either a source or a sink. Equivalently, after possibly replacing $h$ with
its inverse, we must show that if $p \in \fix(h)$ is attracting on one side 
(without loss of generality, the positive side) then it must also be attracting on the 
other side. Suppose not, so that there is $p\in \fix(h)$ attracting on the positive side
and there exist $p_i \to p$ on the negative side which are either fixed or repelled by $h$.
Without loss of generality $p$ has a positive rainbow. In particular, there are positive
boundary leaves $\ell:=\lbrace x,y\rbrace$ and $\ell':=\lbrace x',y'\rbrace$ with
$$p_{i-1} < x < p_i < x' < p < y' < y$$
and so that $y',y$ are in the attracting basin of $p$ on the right. But then there
is a power $h^n$ for which $h^n(x) < x' < p < h^n(y) < y'$ so that $h^n(\ell)$ crosses
$\ell'$, which is a contradiction.

It follows that every nontrivial $h$ with a fixed point has an isolated fixed point $p$,
without loss of generality a source. Again without loss of generality there is a positive
rainbow for $p$, and we choose a positive boundary leaf $\ell$ in this rainbow with
endpoints in the repelling basin of $p$. The
iterates $h^n(\ell)$ must limit somewhere. Either they limit to a single sink point $q\in\fix(h)$
or $h^n(\ell)$ limits to a leaf of $\LL^+$ whose endpoints $p_1,p_{-1}$ are both in $\fix(h)$. By
construction, these endpoints are both isolated in $\fix(h)$ on at least one side, and
therefore they are isolated sinks in $\fix(h)$. The points $p_{\pm 1}$ have
negative rainbows, so we may repeat this construction and obtain a countable
chain of consecutive isolated fixed points $p_i\in \fix(h)$ indexed by $i\in \Z/n\Z$
for some $n\ge 0$, that alternate between sources and sinks; and providing $n>2$,
all the sources resp. sinks are in a single equivalence class of $\LL^-$ resp. $\LL^+$.
But this implies $n>0$, for if $n=0$ the $p_i$ have a limit point in $\fix(h)$, and
this limit point is in a nontrivial equivalence class of both $\LL^-$ and $\LL^+$,
contradicting no perfect fits.

If no power of $h$ has a fixed point, it has irrational rotation number. 
By the Lefschetz fixed point theorem, the map $H: S^2 \to S^2$ must have some fixed point, 
and the preimage of this (call it $C$ in $S^1$) must be a nontrivial equivalence
class (without loss of generality in $\LL^-$) permuted by $h$. Note that $C$ 
can't be countable, or else by Sierpinski induction it would contain an invariant 
finite subset, which would show that the rotation number is rational after all. 
Thus after passing to an invariant subset of $C$ if necessary, we may assume $C$ is 
a Cantor set, and $f$ is transitive on the complementary intervals $J_i$
(i.e.\/ they are indexed by $\Z$ so that $h(J_i) = J_{i+1}$). 
Let $J_0$ be any such complementary interval. Without loss of generality the 
endpoint $J_0^+$ has a positive
rainbow, and therefore there is a boundary leaf $\mu$ of $\LL^+$ with endpoints in $J_0$ and
in some other $J_i$. But complementary intervals are dense and share no endpoints,
so there must be some complementary interval $J_j$ between $J_0$ and $J_i$, and 
then $h^j(\mu)$ must cross $\mu$ transversely, giving a contradiction. 

We are thus reduced to proving the third and fourth bullet. Without loss of generality, 
$h$ has a finite even number of fixed points that alternate between attracting and 
repelling; if there are at least $4$ points, the attracting points are all contained
in a single equivalence class of $\LL^+$ (say) and the repelling points are all 
contained in a single equivalence class of $\LL^-$; 
in particular, these have different images in $S^2$ which are the unique 
attracting/repelling points of $H$ and we are done. 

On the other hand, if there are exactly 2 points it is a priori possible that these two 
points are joined by a leaf $\mu$ of $\LL^+$ (say) and that $H$ has a unique fixed point $x$ 
in $S^2$. Notice that $\mu$ cannot be a boundary leaf of $\LL^+$ since otherwise it would be
accumulated on at least one side by leaves of $\LL^+$ in distinct equivalence classes, which would
intersect themselves under powers of $h$. In any case, for every compact subset
$K \subset S^2 - x$ the preimage of $K$ in $S^1$ is compact and disjoint from the
endpoints of $\mu$, and therefore under positive/negative powers of $H$ the image of
$K$ must converge to $x$. This shows that the action of $\langle H\rangle$ on
$S^2-x$ is a covering space action with Hausdorff quotient. Since the quotient space
is orienable and has fundamental group $\Z$ it is an annulus, 
so $H$ is conjugate to a translation.
\end{proof} 

We call this last kind of automorphism {\em exotic}. Evidently it cannot
arise as an element of the group $G$ of automorphisms of $f:S^1 \to S^2$ when $G$
is a cocompact Kleinian group. 

\begin{example}[Exotic automorphism]\label{example:exotic_automorphism}
We describe a construction in $\HH^2$. First, start with any CaTherine wheel.
Pick a leaf $\lambda$ in $\LL^+$ that is a limit leaf on both sides, and a pair
$\mu,\nu$ in $\LL^-$ that are limit leaves on both sides, and whose geodesic
representatives in $\HH^2$ intersect the geodesic representative of $\lambda$ in two
distinct points $p,q$. Now form the infinite dihedral cover of $\HH^2$ with order 2
branching over both these points. This is topologically a plane that may be thought
of as an infinite rectangular strip running between two parallel horizontal lines, 
and it may be compactified to a closed disk by adding these lines together with 
two points corresponding to the ends of the infinite dihedral group. 

The preimage of (the geodesics associated to) 
each of $\mu$ and $\nu$ alternate in an infinite horizontal sequence of 
letter $X$s, and the preimage of $\lambda$ is an infinite horizontal line crossed 
by infinitely many vertical intervals which bisect each of these $X$s. Together with
the components of the preimages of the other leaves of $\LL^\pm$, these give a
decomposition of the disk that limits to a new pair of laminar relations of $S^1$
invariant under the infinite dihedral group. By construction these relations have no
perfect fits and no isolated sides.
See Figure~\ref{exotic_laminations}.

\begin{figure}[htpb]
\centering
\includegraphics[scale=0.6]{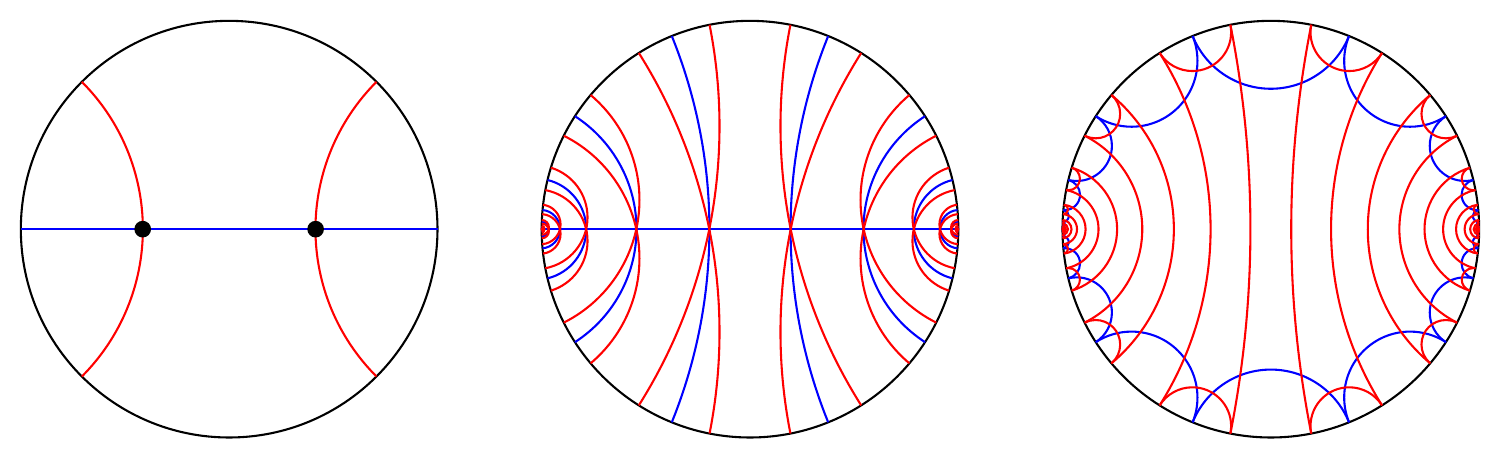}
\caption{The geodesics associated to $\lambda$ (blue) and $\mu$ and $\nu$ (red),
their preimages under the infinite dihedral cover, and boundary leaves in the
associated lamination in the cover}
\label{exotic_laminations}
\end{figure}
\end{example}

In the sequel we will consider CaTherine wheels with large automorphism groups.
Let's formalize the notion.

\begin{definition}[$G$-CaTherine wheel]\label{definition:G_wheel}
If $G$ is a group of orientation-preserving homeomorphisms of $S^2$, a
{\em $G$-CaTherine wheel} is a CaTherine wheel $f:S^1 \to S^2$ together with an
action of $G$ on $S^1$ by automorphisms such that $f$ intertwines the actions of $G$
on $S^1$ and $S^2$. 

If $f:S^1 \to S^2$ is a $G$-CaTherine wheel for some $\rho:G \to \Homeo^+(S^1)$ and
$\alpha\in \Homeo^+(S^1)$ is arbitrary, then $f\circ \alpha:S^1 \to S^2$ is a 
$G$-CaTherine wheel for $\rho^\alpha:G \to \Homeo^+(S^1)$ defined by 
$\rho^\alpha(g) := \alpha^{-1} \rho(g) \alpha$.
We say that the $G$-CaTherine wheels $f$ and $f\circ \alpha$ (with the given actions)
are {\em conjugate}. 
\end{definition}

\subsection{Hyperbolic 3-manifolds that fiber over the circle}

Let $M$ be a closed 3-manifold. If there is a fibration over the
circle $F \to M \to S^1$ then there is an associated short exact sequence of fundamental
groups $\pi_1(F) \to \pi_1(M) \to \Z$ where $\phi:\pi_1(M) \to \Z$ is the surjective homomorphism 
induced by $M \to S^1$.

The kernel of $\phi$ is $\pi_1(F)$ which is finitely generated (in fact, finitely
presented). Stallings \cite{Stallings} showed the converse: if
$\phi:\pi_1(M) \to \Z$ is a surjective homomorphism, and $\ker(\phi)$ is finitely
generated then $M$ fibers over the circle, and the fibering induces $\phi$
(in fact Stallings needed to assume further that $M$ is irreducible, but that hypothesis
is superfluous now that the Poincar\'e Conjecture has been proved).

Thurston \cite{Thurston_fibered} showed that $M$ as above is hyperbolic if and only
if the fiber $F$ has $\chi(F)<0$ and the monodromy is pseudo-Anosov (see
\cite{Thurston_dynamics}), and in this case one obtains a pseudo-Anosov suspension flow $X$. 
This means in particular that the monodromy preserves a
pair of projectively invariant transversely measured singular foliations $\F^\pm_F$ on
$F$ that lift to singular foliations $\tilde{\F}^\pm$ of the universal cover $\tilde{F}$ 
invariant under the natural action of $\pi_1(M)$. The data of $\tilde{\F}^\pm$ is
encoded by a pair of laminar relations $\LL^\pm$ without perfect fits
on the ideal boundary $S^1_\infty$ of $\tilde{F}$ .

The leaf spaces of $\tilde{\F}^\pm$ are topological $\R$-trees, and the endpoint
maps of the lifted suspension pseudo-Anosov flow $\tilde{X}$ on $\tilde{M}=\HH^3$ have
disjoint images $Z^\pm$ that are homeomorphic to these leaf spaces.

Finally, Cannon--Thurston \cite{Cannon_Thurston} used this structure to build a
map $f:S^1_\infty \to \CP^1$ equivariant under the natural action of $\pi_1(M)$ on
$S^1_\infty$ coming from the fibration and the geometric action of $\pi_1(M)$
on $\CP^1$ coming from the hyperbolic structure.

Setting $G=\pi_1(M)$, the four structures $X,f,Z^\pm,\phi$ are respectively 
examples of a pseudo-Anosov flow without perfect fits, a $G$-CaTherine wheel, 
a $G$-zipper, and a uniform quasimorphism on $G$. Looking ahead 
to \S~\ref{subsection:coarse_correspondence} we
shall see that there is an equivalence of these four different
structures in general.

\subsection{Kleinian groups and quasicircles}

\begin{definition}[Quasicircle]\label{definition:quasicircle}
A Jordan curve $\Gamma \subset \CP^1$ is a {\em $K$-quasicircle} if it is the image of a
round circle under a $K$-quasiconformal homeomorphism of $\CP^1$. It is a 
{\em quasicircle} if it is a $K$-quasicircle for some $K$.
\end{definition}

For an introduction to the theory of quasiconformal maps and quasicircles, see
Ahlfors \cite{Ahlfors_book} or Hubbard \cite{Hubbard_Teichmuller_1}. 

\begin{proposition}[Quasicircle]\label{proposition:quasicircle}
A Jordan curve $\Gamma \subset \CP^1$ is a quasicircle if and only if
every Hausdorff limit of translates of $\Gamma$ by conformal maps is 
either a Jordan curve or a single point.
\end{proposition}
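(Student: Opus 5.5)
The plan is to prove the two directions separately, using compactness of normalized quasiconformal maps in one direction and a geometric characterization of quasicircles in the other. For the latter I will use Ahlfors' three-point (bounded turning) condition: $\Gamma$ is a quasicircle if and only if there is $C$ such that for any two points $a,b\in\Gamma$, the smaller of the two arcs of $\Gamma-\lbrace a,b\rbrace$ has spherical diameter at most $C$ times $d(a,b)$. Here the diameter is computed after normalizing by a conformal map so that the larger arc has definite size.

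\emph{Quasicircle $\Rightarrow$ limits are Jordan curves or points.} Write $\Gamma=\phi(S^1)$ with $\phi$ $K$-quasiconformal, and let $g_n\in \mathrm{PSL}_2(\C)$ with $g_n(\Gamma)\to\Lambda$ in the Hausdorff topology. The maps $g_n\phi$ are $K$-quasiconformal. Precompose each by a M\"obius map $h_n$ preserving $S^1$, chosen so that three points of the round circle $h_n^{-1}(S^1)=S^1$ have images under $g_n\phi h_n$ that stay a definite distance apart, if possible.

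If this is possible along a subsequence, then the family of normalized $K$-quasiconformal maps is compact. A limit is a $K$-quasiconformal homeomorphism $\psi$ with $\Lambda=\psi(S^1)$, which is a Jordan curve (indeed a quasicircle).

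If this is impossible, then all of $g_n(\Gamma)$ has diameter shrinking except possibly along arcs of small diameter. I would argue via equicontinuity of normalized quasisymmetric maps that in this case $\Lambda$ is a single point. One subtlety needs care here: a degenerate limit could a priori be an arc. The bounded-turning (three-point) estimate for $K$-quasicircles, which is invariant under M\"obius maps, rules this out. If two points of $g_n(\Gamma)$ stay apart, then some three points stay pairwise apart, so we are back in the compact case.

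\emph{Converse.} Suppose $\Gamma$ is not a quasicircle. Then the three-point condition fails, so there are $a_n,b_n\in\Gamma$ with one arc $\alpha_n$ of $\Gamma-\lbrace a_n,b_n\rbrace$ having $\mathrm{diam}(\alpha_n)/d(a_n,b_n)\to\infty$, while the complementary arc $\beta_n$ is at least as large.

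Choose $g_n$ conformal, sending $a_n,b_n$ to two points at distance tending to $0$ near $0$, with $\alpha_n$ rescaled to have diameter $1$ and a far point of $\beta_n$ sent near $\infty$. Pass to a Hausdorff convergent subsequence with limit $\Lambda$. Then $\Lambda$ contains the limit of $g_n(\alpha_n)$, a continuum of diameter $1$ through $0$, and also contains $\infty$. The limit of $g_n(\beta_n)$ also passes through $0$ and $\infty$. Since both arcs pass through the pinch point $0$, which is the common limit of $g_n(a_n)$ and $g_n(b_n)$, $\Lambda$ is neither a point nor a Jordan curve: removing $0$ leaves a set for which the limits of the two arcs meet at a cut point. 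More precisely, a Jordan curve cannot contain two subcontinua meeting only near a point while both have diameter bounded below on either side.

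\textbf{Main obstacle.} I expect the hardest step to be making this last claim rigorous, since Hausdorff limits can collapse structure. The first thing I would try is to choose the normalization so that the limit of $g_n(\alpha_n)$ and the limit of $g_n(\beta_n)$ meet at $0$ with each limit being nondegenerate, and then check the cases:
\begin{itemize}
\item if they share more than $0$, then $\Lambda$ contains a theta-like or non-manifold point;
\item if they meet only at $0$, then $0$ separates $\Lambda$ locally.
\end{itemize}
In both cases $\Lambda$ is not a Jordan curve.
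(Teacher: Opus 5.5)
Your forward direction is fine, and it is the standard argument; the paper gives no proof of its own, only a pointer to Ahlfors' turning criterion. The detour through bounded turning to exclude an arc limit is unnecessary. Connectedness of $g_n(\Gamma)$ already turns two points that stay apart into three points that stay pairwise apart, and then compactness of normalized $K$-quasiconformal maps gives $\Lambda=\psi(S^1)$.

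The converse has a genuine gap, exactly at the step you flag: a Hausdorff limit taken at a single turning-violating pair can be a Jordan curve. Here is an example.

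\begin{itemize}
\item[(i)] Let $\Gamma$ be a Jordan curve which, on disks of radius $\gg D_k$ around points $x_k$ (with $D_k\to0$), is straight except for a thin finger of length $D_k$ and width $\epsilon_kD_k$ ($\epsilon_k\to0$) folded along the line.
\item[(ii)] Rescale by $1/D_k$ at the foot $a_k$ of the finger; this is exactly your normalization. Then on any fixed ball, for $k$ large, $g_k(\Gamma)$ runs along $(-\infty,0]$, out along $[0,1]$, back along $[0,1]+i\epsilon_k$ to $g_k(b_k)=i\epsilon_k$, up to $2i\epsilon_k$, along $[0,2]+2i\epsilon_k$, down to $2$, and then along $[2,\infty)$.
\item[(iii)] All your requirements hold: $\alpha_k$ is the finger, $\mathrm{diam}(\alpha_k)/d(a_k,b_k)\approx 1/\epsilon_k\to\infty$, and $\beta_k$ is larger.
\item[(iv)] Yet $g_k(\Gamma)\to\R\cup\{\infty\}$, a round circle. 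The limit of $g_k(\alpha_k)$ is $[0,1]$, and it is nested inside the limit of $g_k(\beta_k)$, which is the whole circle.
\end{itemize}

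So both cases of your proposed analysis are wrong. Two subcontinua of a Jordan curve can share an arc, or even be nested, without producing any theta-like point. Also, $0$ need not separate $\Lambda$ locally.

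The underlying issue is that Hausdorff limits forget the cyclic order along the curve. Thin complementary regions therefore collapse, and the pinch becomes invisible in the limit. Normalizing at the scale $d(a_n,b_n)$ instead of $\mathrm{diam}(\alpha_n)$ does not cure this. For instance, take a hairpin of length $\gg1$ and width $\ll1$ whose returning strand overshoots its foot $a$ by distance $1$ before dropping to $b$; it collapses to a line in the same way.

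What is missing is a way to choose a location and scale at which the pinch survives in the limit. In the example above, zooming at the middle of the finger at the scale of its width yields parallel lines, i.e.\ circles tangent at $\infty$, which is not a Jordan curve. You also need an argument that such a choice always exists. A fresh zoom could a priori collapse again, so this requires some selection or extremality principle, or an argument upgrading Hausdorff convergence to convergence of parametrizations using the hypothesis on \emph{all} limits. A case analysis of a single limit cannot supply this. As written, the converse is not proved.
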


This is essentially equivalent to Ahlfors' turning criterion for quasicircles: 
pairs of points in $\Gamma$ which are
close in $\CP^1$ are joined by an arc of $\Gamma$ of small diameter. According to
Curt McMullen, the characterization of quasicircles in Proposition~\ref{proposition:quasicircle}
is ``well known in Finland''.

\begin{definition}[$K$-CaTherine wheel]\label{definition:K_wheel}
For $K\ge 1$ a {\em $K$-CaTherine wheel} is a CaTherine wheel $f:S^1 \to \CP^1$ 
for which $\partial f(I)$ is a $K$-quasicircle for every closed interval $I\subset S^1$.
\end{definition}

As an easy application of Proposition~\ref{proposition:quasicircle}
one has the following elegant observation by McMullen \cite{McMullen}:

\begin{proposition}[McMullen, uniform $K$]\label{proposition:uniform_K}
Let $f:S^1 \to \CP^1$ be any CaTherine wheel invariant under a Kleinian group $G$
which is either 
\begin{enumerate}
\item{cocompact; or}
\item{a doubly degenerate closed surface group 
with the injectivity radius of $\HH^3/G$ bounded below.}
\end{enumerate}
Then $f$ is a $K$-CaTherine wheel for some $K$. Furthermore in the second case, $K$
depends only on the genus of the surface and the injectivity radius of $\HH^3/G$.
\end{proposition}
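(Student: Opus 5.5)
We argue by contradiction using the compactness criterion of Proposition~\ref{proposition:quasicircle}. Assume that for each $n$ there is a closed interval $I_n\subset S^1$ such that $\Gamma_n:=\partial f(I_n)$ is not an $n$-quasicircle. Since all the $\Gamma_n$ are Jordan curves (Definition~\ref{definition:CaTherine_wheel}), a normal-families argument applied to Proposition~\ref{proposition:quasicircle} produces conformal maps $A_n\in \mathrm{PSL}_2(\C)$ for which $A_n\Gamma_n$ converges in the Hausdorff topology to a compact set $\Gamma_\infty$ that is neither a Jordan curve nor a point. Concretely, the turning condition fails with worse and worse constants: we find pairs $x_n,y_n\in\Gamma_n$ whose distance is tiny compared with the diameters of both arcs of $\Gamma_n$ between them. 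We normalize with $A_n$ so that $x_n,y_n$ go to two fixed points and both complementary arcs have diameter bounded below. The goal is then to show that this failure is impossible once the symmetry group of $f$ is used to bring the limit back into a compact family of CaTherine wheels.

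\emph{Case (1), $G$ cocompact.} Because $G$ acts cocompactly on $\HH^3$, every Möbius map $A_n$ can be written as $A_n=B_n g_n$ with $g_n\in G$ and $B_n$ lying in a fixed compact subset of $\mathrm{PSL}_2(\C)$. The idea is to let $g_n$ move the point of $\HH^3$ that the normalization centers to a point in a compact fundamental domain. After passing to a subsequence we may assume $B_n\to B$, and so $g_n\Gamma_n$ converges to $B^{-1}\Gamma_\infty$. Equivariance gives $g_n\Gamma_n=\partial f(g_n I_n)$, where $g_n$ also acts on $S^1$. Now we pass to a subsequence along which the intervals $J_n:=g_nI_n$ converge in $\tilde\M\cup\{\text{degenerate limits}\}$. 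There are two possibilities:
\begin{enumerate}
\item If $J_n\to J\in\tilde\M$, then Lemma~\ref{lemma:boundary_continuous} gives Hausdorff convergence $\partial f(J_n)\to\partial f(J)$, which is a Jordan curve. This contradicts the choice of $\Gamma_\infty$, up to the issue discussed below.
\item If $J_n$ shrinks to a point, or grows to all of $S^1$ (so that $J_n^c$ shrinks, and Lemma~\ref{lemma:adjacent_intervals} lets us pass to complements), then uniform continuity of $f$ makes $f(J_n)$ or $f(J_n^c)$ small. Hence the limit is a point, which our normalization rules out.
\end{enumerate}

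\emph{Case (2), doubly degenerate surface group with injectivity radius bounded below.} Here we run the same argument in a larger space. We consider the space of all pairs $(\rho,f)$ consisting of a doubly degenerate representation of the fixed surface group with $\mathrm{inj}\ge\epsilon$, together with its Cannon--Thurston map, and we act on this space by $\mathrm{PSL}_2(\C)$. By Thurston's compactness results together with ending laminations, this space is compact modulo the $\mathrm{PSL}_2(\C)$ action and the mapping class group action on the circle. A limit is again a CaTherine wheel, since Cannon--Thurston maps vary continuously under geometric convergence when the injectivity radius is bounded below. Repeating the subsequence argument inside this compact family gives the contradiction, and also shows that $K$ depends only on the genus and the injectivity radius.

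\emph{Main obstacle.} The hard step is the claim, in both cases, that Hausdorff convergence of the Jordan curves $\partial f(J_n)\to\partial f(J)$ rules out the pinching that witnesses the failure of the turning condition. Hausdorff limits of Jordan curves can be Jordan curves even though the approximating curves have arbitrarily bad turning. So we need uniform convergence of parametrizations, not just Hausdorff convergence. Our first attempt will use the monotone parametrizations from Proposition~\ref{proposition:monotonicity}: the arcs $Z^\pm(J_n)\cup f(\partial J_n)$ are parametrized monotonically by subsets of $J_n$ and vary continuously (Lemma~\ref{lemma:orientation_continuous}). From this we will extract equicontinuity of the boundary curves, and conclude that arcs of $\partial f(J_n)$ between nearby points have small diameter uniformly in $n$.

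In case (2) there is an additional difficulty: establishing continuity of Cannon--Thurston maps in the family. We would cite this from the literature on the ending lamination theorem rather than prove it.
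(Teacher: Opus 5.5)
Your argument is essentially the paper's. Proposition~\ref{proposition:quasicircle} is combined with cocompactness of $G$ (respectively, compactness of the bounded-geometry doubly degenerate groups under geometric limits, plus the continuity of Cannon--Thurston maps that the paper leaves implicit) to show that every Hausdorff limit of M\"obius rescalings of the curves $\partial f(I)$ is either a point or a M\"obius image of some $\partial f'(J)$, hence a Jordan curve. The `main obstacle' you flag does not arise on this route: once Proposition~\ref{proposition:quasicircle} gives rescalings converging to a set that is neither a Jordan curve nor a point, uniqueness of Hausdorff limits and Lemma~\ref{lemma:boundary_continuous} already give the contradiction, and only your `concrete' normalization at a single bad pair (which genuinely can converge to a Jordan curve) would need convergence of parametrizations, so drop it and use the criterion as a black box, as the paper does.
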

\begin{proof}
The first case follows immediately from Proposition~\ref{proposition:quasicircle}.
The set of (pointed) manifolds $\HH^3/G$ arising as in the second case is closed
under geometric limits and hence compact, and invariant under the obvious conjugation
action of the M\"obius group. Again, apply Proposition~\ref{proposition:quasicircle}.
\end{proof} 

\begin{example}[Surface bundles over a graph]\label{example:surface_graph}
This example was kindly explained to us by Chris Leininger.

Let $S$ be a closed oriented surface of genus at least $2$, and let $S \to Y \to \Gamma$
be a surface bundle over a finite graph $\Gamma$ (for concreteness, we can let $\Gamma$ be
a wedge of $k>1$ circles) for which $\pi_1(Y)$ is word-hyperbolic. 
Many such examples are known from the seminal work of Farb--Mosher 
\cite{Farb_Mosher}; they are associated to so-called `convex co-compact' free subgroups
of the mapping class group of $S$. Let $\tilde{\Gamma}$ denote the universal cover of $\Gamma$;
this is a regular $2k$-valent tree. For every proper embedded line $\ell$ in $\tilde{\Gamma}$ the
total space of the pulled back $S$ bundle over $\ell$ is uniformly quasi-isometric
to $\HH^3/G$ where $G$ is a doubly degenerate surface group isomorphic to $\pi_1(S)$, 
and furthermore the injectivity radius in this quotient is uniformly bounded 
below over all such $\ell$.
 
Let $\CC$ denote the space of (Freudenthal) ends of $\tilde{\Gamma}$; this is a Cantor set.
It follows that we obtain a parameterized family of CaTherine wheels 
$$S^1 \times (\CC \times \CC - \Delta) \to S^2 \times (\CC \times \CC - \Delta) \to \partial_\infty \pi_1(Y)$$
whose image $S^2$s each surject in a complicated way to $\partial_\infty \pi_1(Y)$; see
Mj \cite{Mitra} for details.

Each one of these $S^2$s has a natural conformal structure for which the associated
CaTherine wheel is a $K$-CaTherine wheel, with $K$ uniform over the entire family, by
Proposition~\ref{proposition:uniform_K}.
Can one use the images of these $K$-quasicircles in $\partial_\infty \pi_1(Y)$ to 
estimate (e.g.\/ by the method of Gromov as developed 
by Bourdon \cite{Bourdon,Gromov_asymptotic}) the conformal dimension of $\partial_\infty \pi_1(Y)$?
\end{example}

Because of Proposition~\ref{proposition:uniform_K} we are emboldened to make a conjecture:

\begin{conjecture}[Injectivity radius]\label{conjecture:injectivity_radius}
For any $R > r > 0$ there is a finite $K(r,R)$ such that
any CaTherine wheel invariant under a Kleinian group $G$
for which $\HH^3/G$ has injectivity radius in the interval $[r,R]$ everywhere
is a $K(r,R)$-CaTherine wheel.
\end{conjecture}

We shall see some considerable evidence for Conjecture~\ref{conjecture:injectivity_radius} 
in \S~\ref{subsubsection:finiteness_and_compactness} after we relate $G$-CaTherine 
wheels to pseudo-Anosov flows without perfect fits.

If $f:S^1 \to \CP^1$ is a $K$-CaTherine wheel, then so is $f \circ g:S^1 \to \CP^1$
for any $g\in\Homeo^+(S^1)$. Up to this flexibility (i.e.\/ choice of `gauge' $g$),
and without any assumptions on dynamics, we have the following:
\begin{proposition}[$K$ is compact]\label{proposition:K_is_compact}
Let $f_n:S^1 \to \CP^1$ be any sequence of $K$-CaTherine wheels. Then after passing
to a subsequence, and precomposing with a sequence $g_n \in \Homeo^+(S^1)$ if
necessary, $f_n \to f_\infty$ in the compact-open topology, and 
$f_\infty:S^1 \to \CP^1$ is a $K$-CaTherine wheel. 
\end{proposition}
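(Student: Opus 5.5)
The plan is to normalize the gauge so that the $f_n$ become equicontinuous, extract a limit by Arzel\`a--Ascoli, and then verify the CaTherine wheel axioms for the limit using the uniform quasicircle control.

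\textbf{Normalization.} For each $n$, precompose $f_n$ with a homeomorphism $g_n$ so that the parameterization is tied to the image geometry. Concretely, pick a point $x_n\in S^1$ and reparameterize so that spherical area is uniform: the area of $f_n([x_n,x])$ becomes proportional to the length of $[x_n,x]$. This is possible because $x\mapsto \area f_n([x_n,x])$ is continuous and strictly increasing. It is strictly increasing because images of nontrivial intervals have nonempty interior (Lemma~\ref{lemma:basic_properties}). Its total is $\area(S^2)$ because $\partial f(I)$ has measure zero and images of complementary intervals meet only along their boundary (Lemma~\ref{lemma:adjacent_intervals}). After this normalization, every interval of length $\ell$ has image a closed disk $D=f_n(I)$ with $\area(D)=c\ell$, whose boundary $\partial D$ is a $K$-quasicircle.

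\textbf{Equicontinuity.} The key estimate is that a $K$-quasicircle $\Gamma$ bounding a disk $D$ of small area has small diameter, with a modulus depending only on $K$. The reason is that a $K$-quasicircle satisfies Ahlfors' three-point (turning) condition. This gives quasidisks a uniform two-sided "fatness": for either complementary disk $D$ and every $p\in D$, $D$ contains a round ball around some point of radius comparable to $\min(\operatorname{diam} D,\pi)$. Area at least $c'(\operatorname{diam})^2$ then follows, unless $D$ is nearly all of $S^2$, which short intervals exclude.

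This step is the main obstacle. I would argue it by contradiction using compactness of normalized $K$-quasicircles, which is essentially Proposition~\ref{proposition:quasicircle}. Rescale by M\"obius maps so that the diameter is $1$. A Hausdorff limit of $K$-quasicircles of diameter $1$ is again a Jordan $K$-quasicircle, and it bounds two disks of positive area. So the area of the smaller side is bounded below, uniformly in $K$. Pulling this back gives the modulus: $\operatorname{diam} f_n(I)\le \omega(|I|)$ with $\omega\to 0$ independent of $n$. Hence the $f_n\circ g_n$ are equicontinuous, and Arzel\`a--Ascoli gives a subsequence converging uniformly to some $f_\infty$.

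\textbf{The limit is a $K$-CaTherine wheel.} Surjectivity passes to uniform limits of surjective maps from a compact space. Fix an interval $I$. By Lemma~\ref{lemma:boundary_continuous}-style reasoning, the quasicircles $\partial f_n(I)$ subconverge in the Hausdorff topology. Their limit is not a point, since the area of $f_n(I)$ stays $c|I|>0$ and fatness forces the diameter to stay bounded below. So the limit is a $K$-quasicircle $\Gamma$, bounding disks $D\supseteq$ limits of $f_n(I)$ and $D'$ limits of $f_n(I^c)$. The images $f_n(I)$ converge in the Hausdorff sense to $f_\infty(I)$ by uniform convergence. Since $f_n(I)$ is the closed disk bounded by $\partial f_n(I)$ on one side, $f_\infty(I)$ is the closed disk bounded by $\Gamma$ on the corresponding side; this is where the fatness estimate prevents any collapse.

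The points $f_n(\partial I)$ lie on $\partial f_n(I)$, so in the limit $f_\infty(\partial I)\subset \Gamma=\partial f_\infty(I)$. Finally, each boundary $\partial f_\infty(I)$ is a limit of $K$-quasicircles and is therefore a $K$-quasicircle, so $f_\infty$ is a $K$-CaTherine wheel.
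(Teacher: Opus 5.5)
Your proof follows the same overall route as the paper's. The paper also passes to the Lebesgue gauge (area of $f_n(I)$ proportional to the length of $I$, using that $K$-quasicircles are null). It gets equicontinuity from a diameter--area inequality $\operatorname{diam}(D)\le C(K)\sqrt{\area(D)}$ for $K$-quasidisks, applies Arzel\`a--Ascoli, and checks the axioms for the limit using that limits of $K$-quasidisks are $K$-quasidisks.

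The difference is in how that inequality is proved. The paper argues directly. If $p,q\in\partial D$ realize the diameter and it is much larger than $\sqrt{\area(D)}$, the coarea formula gives a line perpendicular to $pq$, far from both, meeting $D$ only in very short segments. This produces two nearby points of $\partial D$ both of whose connecting arcs in $\partial D$ are large, contradicting Ahlfors' turning condition. The result is an explicit constant and H\"older exponent $1/2$.

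Your rescaling-and-compactness argument yields the same scale-invariant inequality with an inexplicit constant, which is all the proposition needs. For this you should rescale by similarities in a chart, so that area scales like diameter squared, rather than by general M\"obius maps.

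One inference in it must be added. Hausdorff convergence of the rescaled boundary curves to a nondegenerate $K$-quasicircle does not by itself bound the areas of the enclosed disks from below. A thin C-shaped strip has boundary Hausdorff-close to a circle and arbitrarily small area. You need compactness of the quasiconformal parametrizations, not just of the curves. Write $D_j=\phi_j(\text{closed hemisphere})$ with $\phi_j$ $K$-quasiconformal, normalized so that three fixed equatorial points go to points of $\partial D_j$ at definite mutual distance. These maps subconverge uniformly to a homeomorphism, so $D_j$ eventually contains a fixed round ball.

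In your final step, by contrast, fatness is unnecessary. Because $\area f_n(I)=c|I|$ and $\area f_n(I^c)=c|I^c|$ are fixed and positive while the limit curve is null, neither side can collapse onto it. So your treatment of that step is, if anything, more explicit than the paper's appeal to Proposition~\ref{proposition:quasicircle}.
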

\begin{proof}
Fix a round metric on $\CP^1$ in the correct conformal class. After making this
(arbitrary) choice, for each $K$-CaTherine wheel $f_i$ and
each interval $I \subset S^1$, the image $f_i(I)$ has positive Lesbesgue measure.
Since the boundary of $f_i(I)$ is a $K$-quasicircle, it has 
Hausdorff dimension at most $2K/(K+1) < 2$ and therefore Lesbesgue measure $0$
(see e.g.\/ \cite{Astala}, Corollary~1.3). 
Thus in particular, and because the images of disjoint intervals have disjoint
interiors, this assignment of Lesbesgue measure to intervals determines a
measure of full support on $S^1$ with total mass $4\pi$ and no atoms. We can therefore find 
a gauge $g_i$ (that we call the {\em Lebesgue gauge}) 
so that $\area(f_i\circ g_i(I)) = 2\cdot\length(I)$ for all $I$, and for the usual
metric on $S^1$.

Now it is a fact that for any $K$-quasidisk $D$ in the Euclidean plane (or the
round sphere) there is a constant $C(K)$ so that 
$$\frac {2} {\sqrt{\pi}}\cdot\sqrt{\area(D)} \le \text{diameter}(D) \le C(K)\cdot \sqrt{\area(D)}$$
The first inequality is just the usual isodiametric inequality for the plane, and has nothing
to do with quasidisks per se. The second inequality may be proved as follows.
Pick a pair of points $p,q\in \partial D$ with $d(p,q)=\text{diameter}(D)$. If
$\text{diameter}(D) \gg \sqrt{\area(D)}$ then by the coarea formula, we can find a
line $\ell$ perpendicular to the segment $pq$ and far from both $p$ and $q$ so that
every component of $\ell \cap D$ is very short. But this means we can find a pair
of points on $\ell \cap \partial D$ that are very close, but can't be joined by
a path of small diameter in $\partial D$, contrary to Ahlfors' turning criterion
for quasicircles. We remark that the dependence of $C(K)$ on $K$ is of order $e^{cK}$,
although we do not use this fact.

From this inequality it follows that
in the Lebesgue gauge, a $K$-CaTherine wheel is H\"older continuous with exponent $1/2$,
and has a uniform modulus of continuity depending only on $K$.

Thus after passing to a subsequence, we may assume $f_n$ (in the Lebesgue
gauge) converges in the compact-open topology to some (necessarily surjective) map
$f_\infty:S^1 \to \CP^1$. Evidently $f_\infty$ is not locally constant, since for any
fixed interval $I$ the images $f_i(I)$ have diameter bounded below; conversely it is not
all of $\CP^1$ since the same argument applies to $f_i(I^c)$. But $f_i(I)$
is a sequence of $K$-quasidisks converging to $f_\infty(I)$ and therefore by
Proposition~\ref{proposition:quasicircle} the limit is a $K$-quasidisk. 
Evidently the limit of $\partial f_n(I)$ is
$\partial f_\infty(I)$ and therefore by continuity, $f_\infty(\partial I) \subset
\partial f_\infty(I)$ and we are done.
\end{proof}
In other words, up to reparameterization (`gauge'), the space of $K$-CaTherine wheels
is compact. By the way, the existence of Lebesgue gauge only depends on the
fact that the area of $\partial f(I)$ is zero for all closed intervals $I \subset S^1$.

In the Lebesgue gauge, the action of $g \in G$ near its periodic points in $S^1$
is especially nice.
\begin{lemma}[Bilipschitz]\label{lemma:bilipschitz}
Suppose $f:S^1 \to \CP^1$ is a $G$-CaTherine wheel for some Kleinian group $G$
that admits a Lebesgue gauge (i.e.\/ the Lebesgue area of $\partial f(I)$ is zero for all
closed intervals $I\subset S^1$). Let $g \in G$ be nontrivial, and suppose that
$|g_*d\area/d\area| \in [C^{-1},C]$ on $\CP^1$. Then 
$g$ acts on $S^1$ by a $C$-bilipschitz homeomorphism of $S^1$ in the
Lebesgue gauge.

Furthermore, if $p \in S^1$ is an attracting resp. repelling periodic orbit for $g$,
and $\tau(g)$ is the translation length of $g$ on $\HH^3$, 
then for every $\epsilon$ there is an open neighborhood $I_p$ of $p$ so that 
the derivative of $g$ restricted to $I_p$ is within $\epsilon$ of $e^{-2\tau(g)}$
resp. $e^{2\tau(g)}$.
\end{lemma}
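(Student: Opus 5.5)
The plan is to compute $g$ on $S^1$ in the Lebesgue gauge directly from the definition, using the identity $\area(f(I)) = 2\cdot\length(I)$ for closed intervals $I$. Equivariance $f\circ g = g\circ f$ gives $f(g(I)) = g(f(I))$. Hence
$$\length(g(I)) = \tfrac12\area(g(f(I))) = \tfrac12\int_{f(I)} |g_*d\area/d\area|\,d\area .$$
Here I am reading the hypothesis as a bound on the Jacobian of $g$ with respect to spherical area. The integrand lies in $[C^{-1},C]$, so $\length(g(I))/\length(I)\in[C^{-1},C]$ for every interval $I$. Applying this to arbitrary short intervals $[p,q]$ gives the bilipschitz bound for $g$ in the Lebesgue gauge, and the same argument applied to $g^{-1}$ bounds its inverse. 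One should also note that the Lebesgue gauge measure has no atoms and full support, as established in the proof of Proposition~\ref{proposition:K_is_compact}, so lengths are well defined and comparable.

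For the derivative statement I would localize the same computation. Take $p$ to be an attracting periodic point of $g$; after replacing $g$ by a power and rescaling $\tau$, assume $p$ is fixed. Then $f(p)$ is a fixed point of $g$ on $\CP^1$. By Theorem~\ref{theorem:automorphisms}, $f(p)$ is the attracting fixed point of the loxodromic $g$, unless we are in the exotic case, which cannot occur for Kleinian groups. In a conformal coordinate centered at $f(p)$, $g$ is locally $z\mapsto \lambda z$ with $|\lambda| = e^{-\tau(g)}$. The spherical Jacobian is continuous, with value $|\lambda|^2 = e^{-2\tau(g)}$ at $f(p)$. By continuity of $f$, for every $\epsilon$ there is a neighborhood $I_p$ of $p$ with $f(I_p)$ small enough that the Jacobian on $f(I_p)$, and on its image under $g$, is within $\epsilon$ of $e^{-2\tau(g)}$. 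Then for every subinterval $J\subset I_p$ the ratio $\length(g(J))/\length(J)$ is an average of the Jacobian over $f(J)$, so it lies within $\epsilon$ of $e^{-2\tau(g)}$. Letting $J$ shrink to a point shows that the difference quotients of $g$, and hence its derivative wherever it exists (almost everywhere, since $g$ is Lipschitz), lie within $\epsilon$ of $e^{-2\tau(g)}$. The repelling case follows by applying this to $g^{-1}$, which gives $e^{2\tau(g)}$.

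The step I expect to need the most care is justifying that the area of $f(J)$ equals twice the length of $J$ uniformly on arbitrary subintervals. This is the point where the hypothesis that $\partial f(I)$ has zero area is used: it makes the interval-to-area assignment additive, with images of adjacent intervals overlapping only in a null set, by Lemma~\ref{lemma:adjacent_intervals}.

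A second point needing care is identifying $f(p)$ as the attracting (rather than repelling) fixed point in $\CP^1$, which is what makes the exponent $-2\tau$ rather than $+2\tau$. I would obtain this from the dynamics. Small intervals $J$ around $p$ satisfy $g^n(J)\to p$, so the disks $g^n(f(J))$ shrink to $f(p)$. This forces $f(p)$ to be the attracting fixed point of $g$ on $\CP^1$, consistent with part (3) or (4a) of Theorem~\ref{theorem:automorphisms}.
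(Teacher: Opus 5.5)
Your proposal is correct and is essentially the paper's argument written out in detail. The bilipschitz bound comes from how $g$ transforms spherical area together with $\area(f(I)) = 2\cdot\length(I)$ in the Lebesgue gauge. The derivative estimate comes from Theorem~\ref{theorem:automorphisms} placing $f(I_p)$ near the attracting resp.\ repelling fixed point of $g$ in $\CP^1$, where the spherical Jacobian is close to $e^{-2\tau(g)}$ resp.\ $e^{2\tau(g)}$.

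Two small repairs are needed. First, do not replace $g$ by a power: that only controls the derivative of $g^n$ near $p$, not of $g$. Instead observe directly that $f(p)$ is fixed by $g$, since it is fixed by $g^n$ and a nontrivial power of a loxodromic or parabolic element has the same fixed points. Second, the exotic case does occur for Kleinian groups with parabolics. It is harmless, though, since then $\tau(g)=0$ and the Jacobian at the fixed point is $1$.
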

\begin{proof}
The first statement is immediate from the definition of the Lebesgue gauge, and the
way in which the action of $g$ transforms area on $\CP^1$.

Any nontrivial $g\in G$ has only finitely many periodic 
points in $S^1$, that alternate between attracting and repelling, by 
Theorem~\ref{theorem:automorphisms}. If $I_p$ is a sufficiently small neighborhood
of one such point, $f(I_p)$ is contained in a small neighborhood of a
fixed point of $g$ on $\CP^1$ where the corresponding statement is obvious.
\end{proof}

The hypothesis of Lemma~\ref{lemma:bilipschitz} is satisfied whenever
$f$ is a $K$-CaTherine wheel for any $K$. It follows that if $f_i:S^1 \to \CP^1$ 
are a sequence of $G$-CaTherine wheels which are also $K$-CaTherine wheels 
(an unfortunate clash of terminology) the action of each $g\in G$ on $S^1$ 
in the Lebesgue gauge is uniformly bilipschitz independent of $i$, 
and the limit $f_\infty:S^1 \to \CP^1$ 
is also a $G$-CaTherine wheel, and with the same bilipschitz 
constants for the $G$ action on $S^1$.

In fact for $G$ a fixed cocompact Kleinian group, we have the following proposition:
\begin{proposition}[Actions are precompact]\label{proposition:precompact}
Let $G$ be a cocompact Kleinian group, and let $f_i:S^1 \to \CP^1$ be a sequence
of $G$-CaTherine wheels. Then in the Lesbesgue
gauge, the sequence of associated $G$ actions $\rho_i:G \to \Homeo^+(S^1)$ 
has a convergent subsequence.
\end{proposition}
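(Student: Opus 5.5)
The plan is to apply Arzel\`a--Ascoli to each generator separately and pass to a diagonal subsequence. Since $G$ is cocompact, it is finitely generated; fix a finite symmetric generating set $g_1,\dots,g_k$. Precompactness of the actions $\rho_i$ in the topology of pointwise convergence on $G$ (uniform convergence on $S^1$ for each group element) reduces to precompactness of each sequence $\rho_i(g_j)$ in $\Homeo^+(S^1)$, with limits that are again homeomorphisms. Once each generator converges, products of generators converge, because composition is continuous for uniform convergence of equicontinuous families. The limiting assignment is then a homomorphism, since the relations hold for each $i$ and therefore in the limit.

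First I need a Lebesgue gauge for every $f_i$. By McMullen's Proposition~\ref{proposition:uniform_K} (case (1), cocompact), each $f_i$ is a $K_i$-CaTherine wheel. The constant in that statement depends only on the compactness argument of Proposition~\ref{proposition:quasicircle} applied to $G$-translates, so I expect a uniform $K$ depending only on $G$. Uniformity is not even needed here, though: by the discussion after Proposition~\ref{proposition:K_is_compact}, a Lebesgue gauge exists as soon as $\partial f_i(I)$ has measure zero, and each quasicircle has measure zero.

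The key estimate comes from Lemma~\ref{lemma:bilipschitz}. For each $g\in G$, the Möbius transformation $g$ has Jacobian $|g_*d\area/d\area|$ bounded in $[C(g)^{-1},C(g)]$ on the round sphere, with $C(g)$ depending only on $g$ and not on $i$. In the Lebesgue gauge of $f_i$, $\rho_i(g)$ therefore acts by a $C(g)$-bilipschitz homeomorphism of $S^1$, uniformly in $i$. This is the whole point of the gauge: area on $\CP^1$ is pulled back to length on $S^1$, and $g$ distorts area by a factor depending only on $g$. The family $\{\rho_i(g_j)\}_i$ is therefore equicontinuous, as is the family of inverses $\rho_i(g_j^{-1})=\rho_i(g_j)^{-1}$.

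By Arzel\`a--Ascoli we can pass to a subsequence along which $\rho_i(g_j)\to h_j$ and $\rho_i(g_j)^{-1}\to h_j'$ uniformly for all $j$. Then $h_jh_j'=\id$, so each $h_j$ is a $C$-bilipschitz orientation-preserving homeomorphism, and the limit defines $\rho_\infty:G\to\Homeo^+(S^1)$.

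The main obstacle is checking that the bilipschitz bound really is independent of $i$. Lemma~\ref{lemma:bilipschitz} is stated using only the Jacobian bound of $g$ on $\CP^1$, together with the fact that both the gauge and the measure come from Lebesgue area with zero-area boundaries. I would spell out this one point carefully: the relation $\area(f_i(\rho_i(g)I))=\area(g f_i(I))$, which holds because the boundaries have measure zero, gives
\[
\length(\rho_i(g)I)\in[C^{-1},C]\cdot\length(I).
\]

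If additionally desired, Proposition~\ref{proposition:K_is_compact} (with a uniform $K$) lets the $f_i$ converge simultaneously to a $G$-CaTherine wheel intertwining $\rho_\infty$, though this is not needed for the statement.
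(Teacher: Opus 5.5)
Your proof is correct and follows the paper's own argument: existence of a Lebesgue gauge, then Lemma~\ref{lemma:bilipschitz} giving $C(g)$-bilipschitz bounds independent of $i$, then compactness on a finite generating set. Your handling of the inverses and of the limiting homomorphism is more careful than the paper's.

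One caution: your aside that McMullen's argument in Proposition~\ref{proposition:uniform_K} should yield a $K$ depending only on $G$ is unjustified. That argument gives a $K$ depending on $f$, and the paper notes that a uniform $K(M)$ is equivalent to the finiteness Theorem~\ref{theorem:BTZ_finiteness}. As you say, though, the proof never uses this.
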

\begin{proof}
For $G$ cocompact, any $G$-CaTherine wheel is a $K$-CaTherine wheel for some $K$.
Let $\rho_i:G \to \Homeo^+(S^1)$ be the action of $G$ on $S^1$ corresponding to $f_i$
in the Lesbesgue gauge (the representation $\rho_i$ is only well-defined up to conjugacy
by a rigid rotation of $S^1$). By Lemma~\ref{lemma:bilipschitz} for each
$g\in G$ the element $\rho_i(g)$ is $C$-bilipschitz for some $C(g)$ independent of $i$.
Thus each $\rho_i(g)$ has a convergent subsequence, and since $G$ is finitely generated, 
there is a limit $\rho_\infty:G \to \Homeo^+(S^1)$ for which each $\rho_\infty(g)$
is still $C(g)$-bilipschitz with the same constant.
\end{proof}
If one could show that the limiting action $\rho_\infty:G \to \Homeo^+(S^1)$
extended to a $G$-CaTherine wheel $f:S^1 \to \CP^1$, then a posteriori there would be
a uniform $K$ for all $G$-CaTherine wheels, and (looking ahead) this would give a new
proof of Theorem~\ref{theorem:BTZ_finiteness}.

We remark that if $\LL^\pm_i$ are the $\rho_i(G)$-invariant laminar relations associated
to $f_i$, then some subsequence converges to $\rho_\infty(G)$-invariant laminar relations
$\LL^\pm_\infty$. These limit laminar relations have no isolated sides, but a priori
there is no reason why they should have no perfect fits.

We end this subsection with an inequality relating the analytic quality of 
a $G$-CaTherine wheel to the dynamics of $G$ on $S^1$:
\begin{proposition}[$K$ bounds valence]\label{proposition:K_bounds_valence}
Let $f:S^1 \to \CP^1$ be a $K$-CaTherine wheel. Then there is a bound $n(K)$ on
the number of $f$ preimages of any point in $\CP^1$. 

In particular, if $f$ is also a $G$-CaTherine wheel for some subgroup $G$
of M\"obius transformations of $\CP^1$, then with the same bound $n(K)$ as above, 
every nontrivial $g\in G$ has at most $2n(K)$ periodic points in $S^1$.
\end{proposition}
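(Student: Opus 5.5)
Fix $x\in\CP^1$ and let $\nu=f^{-1}(x)$ with $|\nu|=n$ points $p_1<p_2<\cdots<p_n$ in circular order; suppose $n$ is large. The plan is to find an interval whose image boundary violates Ahlfors' turning criterion with a constant that gets worse as $n$ grows. By Proposition~\ref{proposition:relations_transitive}, $\nu$ is a single class of $\LL^+$ or of $\LL^-$. Its complementary intervals $J_i=[p_i,p_{i+1}]$ have disjoint interiors, and each $f(J_i)$ is a closed $K$-quasidisk containing $x$ in its boundary (Lemma~\ref{lemma:adjacent_intervals} and the definition). So near $x$ we see $n$ quasidisks with disjoint interiors all touching at $x$. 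Their union $f(S^1)$ is all of $\CP^1$, so they cover a neighborhood of $x$.

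Normalize by a M\"obius map so that $x=0$. Rescale so that every $f(J_i)$ meets the unit circle; these disks have positive diameter, and we may take the minimum diameter to be comparable to $1$ after scaling. Ahlfors' criterion, equivalently the three-point condition with constant $C(K)$, gives each quasidisk $D_i=f(J_i)$ a definite "opening" at $0$. The boundary $\partial D_i$ passes through $0$, and on the circle $|z|=r$ for small $r$, the set $D_i\cap\{|z|=r\}$ contains an arc of angle at least $\theta(K)>0$. To see this, take the two boundary arcs of $\partial D_i$ leaving $0$. If they came within distance $\ll r$ of each other at modulus $\approx r$ while enclosing $D_i$'s points of modulus $r$ in a thin sector, then, combined with the interior having definite area (a quasidisk contains a ball of radius comparable to its diameter at every scale), we would contradict the turning or reverse-quasidisk condition at scale $r$.

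Disjointness of interiors means these $n$ angular sectors are disjoint on the circle $|z|=r$, so $n\theta(K)\le 2\pi$. This gives $n(K)=\lfloor 2\pi/\theta(K)\rfloor$. The main obstacle is making the angle bound rigorous. A cleaner route is compactness. Suppose $n_j\to\infty$ for a sequence of $K$-CaTherine wheels. Blow up at $x$ (M\"obius rescaling preserves $K$), pass to Hausdorff limits of the quasidisks $D_i$ by Proposition~\ref{proposition:quasicircle}, and obtain arbitrarily many $K$-quasidisks with disjoint interiors through $0$ with diameters bounded below. Area bounds, via the inequality $\mathrm{diam}\le C(K)\sqrt{\area}$ from the proof of Proposition~\ref{proposition:K_is_compact} applied at a fixed scale in a disk around $0$, then bound their number. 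The uniform-scale normalization must be done carefully: at scale $r$, each $D_i$ meeting $\{|z|=r\}$ has area in $B(0,2r)$ at least $c(K)r^2$, because a $K$-quasidisk with $0$ on its boundary and diameter $\ge r$ contains a disk of radius comparable to $r$ within $B(0,2r)$. Summing gives $n\,c(K)r^2\le 4\pi r^2$.

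For the second statement, let $g\in G$ be nontrivial. By Theorem~\ref{theorem:automorphisms}, some power $g^m$ has finitely many fixed points in $S^1$ alternating between attracting and repelling. By parts (3) and (4) of that theorem, the attracting ones form a single fiber $f^{-1}(a)$ and the repelling ones a single fiber $f^{-1}(b)$; in the exotic case $a=b$ and there are only two such points. The periodic points of $g$ are exactly the fixed points of $g^m$, so their number is at most $|f^{-1}(a)|+|f^{-1}(b)|\le 2n(K)$.
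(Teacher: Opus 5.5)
Your proof is correct, and for the first assertion it takes a different route from the paper's.

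\textbf{The paper's route.} It works in the Lebesgue gauge. This gauge exists because $\partial f(I)$ has zero area, which follows from the dimension bound for quasicircles. The paper then takes $n$ disjoint intervals of the same small length $A$ \emph{centered} at the preimages. Their images are $K$-quasidisks of equal area, all containing $x$. The global estimate $\mathrm{diam}(D)\le C(K)\sqrt{\area(D)}$ from the proof of Proposition~\ref{proposition:K_is_compact} puts all of them inside one ball of radius comparable to $C(K)\sqrt{A}$. Comparing areas gives $n\lesssim C(K)^2$.

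\textbf{Your route.} You use the intervals $[p_i,p_{i+1}]$ \emph{between} consecutive chosen preimages. This also handles an infinite fiber, since you may take any $n$ of its points. Their images are $K$-quasidisks with disjoint interiors that share the boundary point $x$. You then pack interior corkscrew balls of radius comparable to $r$ into $B(x,2r)$, at one scale $r$ below all their diameters.

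\textbf{Comparison.} Your route needs neither the gauge nor any measure-zero statement. The cost is that it rests on the interior corkscrew (John/NTA) property of $K$-quasidisks, with a constant depending only on $K$. This is standard. It follows from the $\eta_K$-quasisymmetry of $K$-quasiconformal maps of $\C$ after a M\"obius normalization, applied locally so it does not matter if one image contains $\infty$. It is a localized form of the paper's diameter--area inequality. Since your whole proof rests on it, state it explicitly as the cited input.

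The angular-sector heuristic and the compactness detour contribute nothing and should be deleted. The final packing count $n\,c(K)r^2\le 4\pi r^2$ is the entire argument, and both routes give $n(K)$ of order the square of the relevant quasidisk constant.

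\textbf{Second assertion.} Your argument is the paper's. Your added remark that the periodic points of $g$ are exactly the fixed points of $g^m$ is correct: an orientation-preserving circle homeomorphism with a fixed point has no other periodic points.

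Like the paper, however, you tacitly need $g$ to have infinite order. Theorem~\ref{theorem:automorphisms}(1) allows $g^m=\id$. This happens for torsion elements, such as the order-$n$ element $[a,b]$ in Example~\ref{example:numerical_estimate}, and then every point of $S^1$ is periodic. When $g$ has infinite order, $g^m$ cannot act trivially on $S^1$, because $f$ is surjective and so $g^m$ would then be trivial on $\CP^1$.
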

\begin{proof}
Since $f$ is a $K$-CaTherine wheel, it has a Lebesgue gauge.
Suppose there is a point $p\in \CP^1$ with $n$ preimages $q_1,\cdots,q_n \in S^1$.
In the Lebesgue gauge, we can put disjoint intervals $J_1 \cdots J_n \subset S^1$ 
centered at the $q_i$ each of length some small positive number $A$.
The images $f(J_1) \cdots f(J_n)$ have disjoint interiors, 
but have a point in common (the common fixed point of $g$ in $\CP^1$). 
But then the union $B: = \cup f(J_i)$ has area $nA$ but diameter at most 
$2 C(K)\cdot \sqrt{A}$ by the diameter--area estimate for $K$-quasidisks. On the other hand,
this violates the elementary diameter--area inequality in round $S^2$ for 
fixed $C(K)$ if $n$ is big enough.

Now suppose $f$ is a $G$-CaTherine wheel for some M\"obius group $G$.
By Theorem~\ref{theorem:automorphisms} any nontrivial $g\in G$ 
has only finitely many periodic orbits on $S^1$ 
that alternate between attracting and repelling, and the images of these in $\CP^1$ are
(respectively) the unique attracting and repelling fixed points for $g$.
\end{proof}

Bounds on the cardinality of point preimages of certain Cannon--Thurston maps are obtained via
$\delta$-hyperbolic (as distinct from analytic) methods in \cite{BHLM_bound}.

\begin{corollary}[No parabolics]\label{corollary:no_parabolics}
Let $G$ be a Kleinian group containing a parabolic element. Then no $G$-CaTherine wheel
is a $K$-CaTherine wheel.
\end{corollary}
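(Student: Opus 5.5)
The strategy is to argue by contradiction against Proposition~\ref{proposition:K_bounds_valence}. Suppose $f:S^1 \to \CP^1$ is a $G$-CaTherine wheel that is also a $K$-CaTherine wheel, and let $g\in G$ be parabolic with unique fixed point $x\in\CP^1$. By Proposition~\ref{proposition:K_bounds_valence}, $g$ (and each of its powers) has at most $2n(K)$ periodic points on $S^1$. By Theorem~\ref{theorem:automorphisms}, some power $h=g^m$ acting on $S^1$ is either the identity or has a finite even number of fixed points with alternating source--sink dynamics. I will rule out each of the four cases of that theorem.

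\textbf{Identity case.} If $h=\id$ on $S^1$, then $fh=g^mf$ and surjectivity of $f$ force $g^m=\id$ on $\CP^1$. This is impossible, since a parabolic has infinite order.

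\textbf{At least four fixed points, or two with distinct images (cases (3) and (4a)).} Here $H=g^m$ has two distinct fixed points in $\CP^1$, one attracting and one repelling. But a nontrivial power of a parabolic is parabolic, with a single fixed point that is neither attracting nor repelling in the dynamical sense. This is a contradiction.

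\textbf{Exotic case (4b).} The two fixed points $p,q$ of $h$ in $S^1$ map to the single point $x$, and by the proof of Theorem~\ref{theorem:automorphisms} they are joined by a leaf $\mu$ of (say) $\LL^+$. This case is not immediately contradictory, since $g^m$ restricted to $\CP^1-x$ is indeed conjugate to a translation. This is the step I expect to need real input, and I would use the $K$-quasicircle property directly.

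Choose a closed interval $I$ with endpoints in the source and sink basins, say a small interval $I$ on one side of $p$. Its iterates $h^k(I)$ shrink to $p$ or $q$, so the quasidisks $g^{mk}(f(I))$ shrink to $x$. The images $f(h^k(I))=g^{mk}f(I)$ are Möbius images of the fixed quasidisk $f(I)$. A nontrivial parabolic sends a disk not containing $x$ to disks tending to $x$ tangentially. After rescaling by Möbius maps fixing $x$, this produces Hausdorff limits of translates of $\partial f(I)$. By the quasicircle criterion (Proposition~\ref{proposition:quasicircle}), these limits must be Jordan curves or points.

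The cleaner route is to look in the Lebesgue gauge. Lemma~\ref{lemma:bilipschitz} says that near an attracting fixed point $p\in S^1$ the derivative of $h$ is close to $e^{-2\tau(g^m)}$. But $\tau=0$ for a parabolic, so the derivative near $p$ is close to $1$, while $p$ is a topologically attracting isolated fixed point. That is not contradictory by itself.

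Instead I would use area. Let $J$ be a fundamental domain for $h$ acting on $(p,q)$. The intervals $h^k(J)$ are disjoint with lengths summing to at most $2\pi$. On the other hand, $\area(g^{mk}f(J))$ decays only like $k^{-4}$ for a parabolic, while the diameter is comparable to $k^{-2}$. That matches the quasidisk estimate, so again there is no contradiction from this alone.

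The decisive point should therefore be valence. The set $f^{-1}(x)$ contains the whole $\LL^+$ class of $\mu$. Both sides of $\mu$, near $x$, must fill the tangential horoball-like regions swept by the parabolic on both sides. Concretely, I would show that the two components of $\partial f([p,q])$ at $x$ form an inward cusp at $x$, since the images of the fundamental domains are parabolic translates accumulating tangentially. An inward cusp violates Ahlfors' turning criterion, because a $K$-quasicircle cannot have a zero-angle cusp.

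So the order of work is: dispose of the identity and hyperbolic-type cases using Theorem~\ref{theorem:automorphisms}, then show the exotic case produces a cusp in some $\partial f(I)$, which contradicts the $K$-quasicircle property. The cusp argument is the main obstacle.
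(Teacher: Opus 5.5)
Your handling of the identity case and of cases (3) and (4a) of Theorem~\ref{theorem:automorphisms} is fine. However, the exotic case (4b) is the whole content of the corollary, and there your argument has a genuine gap: the cusp you propose to find need not exist. Since $h$ fixes $p$ and $q$, the disk $f([p,q])$ is $g^m$-invariant, so $\partial f([p,q])$ is a $g^m$-invariant Jordan curve through $x$. Normalize so that $x=\infty$ and $g^m$ is a translation. Then $\partial f([p,q])-\{\infty\}$ is a line on which the translation acts freely and cocompactly. It therefore stays in a strip parallel to the translation direction, and its two ends leave in opposite directions. Moving $x$ back to a finite point, the two ends approach $x$ from opposite sides at angle $\pi$, exactly as a round circle through $x$ does; an invariant straight line is an example. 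So invariance under a parabolic forces no cusp and no failure of Ahlfors' turning criterion. This is consistent with what your area and Lebesgue-gauge computations were already telling you.

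The missing idea is the valence argument you named and then abandoned: you need the class $\mu\supset\{p,q\}$ to be infinite. It suffices that $\mu\neq\{p,q\}$. Any $r\in\mu-\{p,q\}$ is not fixed by $h$, and since $H(x)=x$ its $h$-orbit is an infinite subset of $\mu\subset f^{-1}(x)$. This contradicts the bound $n(K)$ of Proposition~\ref{proposition:K_bounds_valence}.

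Now suppose $\mu=\{p,q\}$. Then $\{p,q\}$ would be a boundary leaf, so by Lemma~\ref{lemma:boundary_leaves_accumulate} it is accumulated by leaves $\{a,b\}$ of $\LL^+$ in other classes, with $a$ near $p$ and $b$ near $q$. Since $h$ moves both endpoints in the same direction from source to sink, $\{a,b\}$ and $h\{a,b\}$ link. That is impossible unless $h$ preserves the class of $\{a,b\}$. But in that case the class maps to the unique fixed point $x$ and so equals $\mu$, a contradiction.

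This is exactly the paper's proof: it uses the observation, made in the proof of Theorem~\ref{theorem:automorphisms}, that $\{p,q\}$ is not a boundary leaf, and then applies Proposition~\ref{proposition:K_bounds_valence}. If you want to keep a quasicircle argument, the genuine cusp lives in $\partial f([r,hr])$ for such an $r$ (say $r\in(p,q)$). Its two ends at $x$ are the arc $f([h^{-1}r,r])\cap f([r,hr])$ and the $g^m$-translate of that arc, and these stay a bounded distance apart in the normalized coordinates. But producing $r$ is the whole point, and once you have it, valence finishes immediately.
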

\begin{proof}
Let $f:S^1 \to S^2$ be a $G$-CaTherine wheel.
Let $g\in G$ be parabolic and let $x\in S^2$ be its unique fixed point. 
Then by Theorem~\ref{theorem:automorphisms} the element
$g$ fixes exactly two points $p,q$ in $S^1$ in the preimage of $x$
with source-sink dynamics. Furthermore,
the leaf $\lbrace p,q\rbrace$ is contained in a nontrivial equivalence class $\mu$
in $\LL^+$ (say), but it is not a boundary leaf. In particular, $\mu$ contains points
on either side of the leaf $\lbrace p,q\rbrace$, and since $\mu$ is $g$-invariant it
contains infinitely many points, all in the preimage of $x$. Thus by
Proposition~\ref{proposition:K_bounds_valence}, $f$ is not a $K$-CaTherine wheel for
any $K$.
\end{proof}

\begin{example}[Injectivity radius]\label{example:injectivity}
The idea of this example was kindly suggested to us by Mahan Mj.

Let $G$ be a doubly degenerate closed surface group (i.e.\/ one without parabolics), 
but for which the infimum of the injectivity radius on $\HH^3/G$ is zero. It is still true that
the Cannon--Thurston map $f:S^1 \to S^2$ is a $G$-CaTherine wheel, but it is not true
that it is a $K$-CaTherine wheel for any $K$. 
For, by Proposition~\ref{proposition:K_is_compact} for any fixed $K$ 
the space of $K$-CaTherine wheels up to conjugacy is compact. On the other hand,
the group $G$ has the property that there is a geometric limit of $\HH^3/G$ with cusps.
Call this geometric limit $\HH^3/G'$ where $G'$ contains parabolic elements. If
$f$ were a $K$-CaTherine wheel we would obtain a sequence of $K$-CaTherine wheels
as conjugates of $f$ and in the limit we would obtain a $K$-CaTherine wheel invariant
under $G'$, contrary to Corollary~\ref{corollary:no_parabolics}. 

Compare with Proposition~\ref{proposition:uniform_K}.
\end{example}

\begin{example}[Numerical estimate of Hausdorff dimension]\label{example:numerical_estimate}
For a cocompact Kleinian group $G$ and a $G$-CaTherine wheel $f:S^1 \to \CP^1$,
every path in the zipper $Z^\pm$ has Hausdorff dimension at most $2K/(K+1)$ where $K$
is as in Proposition~\ref{proposition:uniform_K}. However, it can easily be that
$K \to \infty$ while the Hausdorff dimension stays bounded away from $2$.

Let $M_n$ be obtained by $(0,n)$ orbifold filling on the figure 8 knot complement,
and let $G_n$ be the orbifold fundamental group of $M_n$, realized as a Kleinian group.
The orbifold $M_n$ fibers over the circle, with fiber a torus with one cone point of
order $n$. The associated Cannon--Thurston map is a CaTherine wheel 
$f_n:S^1 \to \CP^1$ for $G_n$, and Table~\ref{table:dimension} contains numerical
estimates of the Hausdorff dimension of the zippers $Z^\pm$ as a function of $n$.

\begin{table}[ht]\label{table:dimension}
\begin{tabular}{r|l}
$n$ & $D$ \\
\hline
2 & 1.08108 \\
3 & 1.14815 \\
4 & 1.19000 \\
5 & 1.21446 \\
6 & 1.23218 \\
10 & 1.26401 \\
50 & 1.29661
\end{tabular}
\end{table}

These numerical estimates were obtained as follows.
Fix generators $a,b,t$ for $G_n$ where $a,b$ are generators for the fiber, and $t$ is
the meridian; thus $t$ conjugates $a$ to $ab$ and $b$ to $bab$. Let $p\in \CP^1$ be
a stable fixed point for $t$; thus $p\in Z^+(f_n)$. 

For each fixed $n$, choose a big integer $m$ and express the conjugate 
$t^{-m}at^m$ as a geodesic word $w_m$ in the subgroup $\langle a,b\rangle$.
The images of $p$ under the sequence of elements $t^mw_m(i)$ where $w_m(i)$ are the initial 
subwords of $w_m$ of length $i$ interpolate between $p$ and $ap$; taking
$m$ large produces a piecewise linear curve $\gamma_m\subset \CP^1$ 
which is a numerical approximation
to the true curve $\gamma \subset Z^+(f_n)$. For each $\epsilon$ we can perform this 
subdivision until successive points on $\gamma_m$ are approximately distance $\epsilon$
apart, and estimate the Hausdorff dimension $D$ by linear regression of $\log|\gamma_m|$
versus $-\log(\epsilon)$ using the heuristic $|\gamma_m| \sim C\cdot \epsilon^{1-D}$. 
Note that although the Hausdorff dimensions
are converging to a number much smaller than 2, the limit of the curves $\gamma$
as $n \to \infty$ is not a quasiarc, since the limit group $G_\infty$ 
has parabolic elements giving rise to visible cusps; see Figure~\ref{z_arcs}. 

\begin{figure}[htpb]
\centering
\includegraphics[scale=0.3]{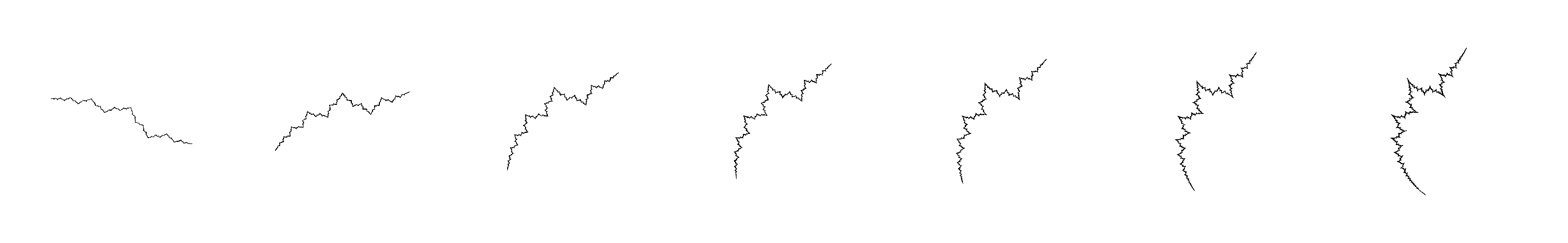}
\caption{Arcs in $Z^+(f_n)$ for $G_n$-CaTherine wheels for $n=2,3,4,5,6,10,50$.}
\label{z_arcs}
\end{figure}

In principle we may obtain explicit lower bounds on $K_n$, the least number such that 
$f_n$ is a a $K_n$-CaTherine wheel, as follows. In the faithful action of $G_n$ on
$S^1$, the fixed points of $t$ are nontrivially permuted by the (order $n$) commuting
element $[a,b]$. In particular, some power of $t$ has at least $n$ fixed points on $S^1$,
giving a lower bound on $K_n$ via Proposition~\ref{proposition:K_bounds_valence}. This
lower bound is of order $O(\log{n})$; we have no idea if this is the correct order
of magnitude.

The arc $\gamma$ associated to $G_\infty$ is an example of what is called a
{\em lightning curve} by Dicks et. al. see \cite{Dicks_Cannon_I,Dicks_Cannon_II,Dicks_Wright}. 
Automata are constructed in \cite{Dicks_Wright} to draw these curves, 
which (more or less) specialize to the
algorithm we use here. By considering the action of $G_n$ on $S^1$, one may use 
these automata to determine the Cantor set $C$ in $S^1$ mapped to $\gamma$ by $f_n$, 
with notation as in Proposition~\ref{proposition:monotonicity}.
\end{example}

\subsection{Pseudo-Anosov flows}\label{subsection:pseudo_Anosov}

Suppose $M$ is a closed hyperbolic 3-manifold $M$ with fundamental group $G$ acting on
$\CP^1$. In this subsection we show that a $G$-CaTherine wheel gives rise to a
pseudo-Anosov flow on $M$ without perfect fits. This is the converse of a deep theorem of
Fenley \cite{Fenley_no_perfect_fits}, and together with Fenley's theorem
this gives an equivalence between such flows up to orbit equivalence, 
and $G$-CaTherine wheels up to conjugacy.

There is a vast literature on the theory of pseudo-Anosov flows, and there are
many equivalent ways of representing such structures (bifoliated planes, branched surfaces, 
veering triangulations etc). See e.g.\/
\cite{Mosher, Agol, Landry_Tsiang, Baik_Jung_Kim, Baik_Wu_Zhao, Barbot_Fenley, 
Fenley_good, Fenley_quasigeodesic, Barthelme_Bonatti_Mann, 
Barthelme_Frankel_Mann, Frankel_Schleimer_Segerman}.
There is a considerable overlap of ideas and methods among these papers, and some of
the same ideas go into the proof of our theorem, and the foundations that it rests
on that were developed in Part~\ref{part:point_set_topology}.

\begin{theorem}[Pseudo-Anosov flows and CaTherine wheels]\label{theorem:wheels_and_flows}
Let $M$ be a closed hyperbolic 3-manifold with fundamental group $G$. 
Then there is a bijection between orbit equivalence classes of 
pseudo-Anosov flows  without perfect fits $X$ on $M$ and
conjugacy classes of $G$-invariant CaTherine wheels $f:S^1 \to S^2_\infty$.
\end{theorem}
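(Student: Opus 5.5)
We establish the two directions separately and then check that they are mutually inverse up to the stated equivalences.

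\emph{From flows to wheels.} Let $X$ be a pseudo-Anosov flow without perfect fits on $M$. The orbit space $P$ of $\tilde{X}$ is a plane carrying singular stable and unstable foliations $\tilde{\F}^\pm$, and $G$ acts on it. We form its universal circle $S^1_\un$, for instance by taking the circular order on ends of leaves of $\tilde{\F}^\pm$ and then completing. The leaves of $\tilde{\F}^+$ and $\tilde{\F}^-$ give $G$-invariant laminar relations $\LL^\pm$ on $S^1_\un$: two ideal points are $\LL^+$-related exactly when they are ends of the same (possibly singular) stable leaf, and similarly for $\LL^-$. The hypothesis of no perfect fits says precisely that no stable and unstable leaf share an ideal point, which is the no perfect fits condition of Theorem~\ref{theorem:wheels_from_laminations}. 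For no isolated sides, we invoke the Proposition at the end of \S 2: every pair of ideal points either lies in a leaf or is linked by a leaf of each foliation. This follows from density of leaves, since minimality of the foliations of a pseudo-Anosov flow on a closed hyperbolic manifold gives leaves crossing any region of $P$. Theorem~\ref{theorem:wheels_from_laminations} then produces a CaTherine wheel $f:S^1_\un\to S^2$. It is $G$-equivariant for some $G$-action on the abstract sphere $S^2$, because the construction is canonical.

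\emph{Identifying the target with $S^2_\infty$.} This is the main obstacle: we must show the abstract $S^2$ is $G$-equivariantly homeomorphic to $S^2_\infty=\CP^1$. This is the content of Fenley's theorem, and we want an independent route. The plan is to use the zipper $Z^\pm=F(P^\pm)$ and the results of \cite{Calegari_Zung} and \cite{Calegari_Loukidou_Zippers}. A flow without perfect fits on hyperbolic $M$ has flowlines that are uniformly quasigeodesic in a suitable sense, so there is an adapted uniform quasimorphism, and \cite{Calegari_Zung} supplies endpoint maps $e^\pm:P\to S^2_\infty$. We then show that $e^\pm$ factor through the decomposition $\CC$ of Theorem~\ref{theorem:laminar_decomposition}. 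Concretely, the $e^+$-fibres are exactly the stable leaves, which are the $\LL^+$ hulls. Both maps from $S^2_f$, namely the quotient map $F$ and the continuous extension of $e^\pm$, then have the same fibres. A continuous bijection of compact Hausdorff spaces is a homeomorphism, so the target is identified equivariantly with $S^2_\infty$. The hard step is continuity of the combined map across $S^1_\un$, which needs a uniform shrinking estimate: the image of a small half-disk cut off by a rainbow leaf should have small diameter in $S^2_\infty$. We would try to get this from the quasigeodesic property together with the convergence group dynamics of $G$ on $S^2_\infty$.

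\emph{From wheels to flows.} Given a $G$-CaTherine wheel, we take $\LL^\pm$ and the boundary laminations $\Lambda^\pm$. These bound a $G$-invariant pair of laminations of $\HH^2$ with no perfect fits. Collapsing complementary regions, or dually taking the leaf spaces $Z^\pm$ of the zipper, yields a bifoliated plane $P$ with a $G$-action. Theorem~\ref{theorem:automorphisms} gives finitely many alternating fixed points for each element, so every element acts on $P$ with at most one fixed point, of pseudo-Anosov (saddle) type. Cocompactness of $G$ makes the action of $G$ on $P$ cocompact on the space of marked triples. From this we build the flow: either by the Barthelm\'e--Frankel--Mann style reconstruction of a pseudo-Anosov flow from an Anosov-like action on a bifoliated plane, or by realizing $P\times\R$ inside $UT\HH^3$ as the set of geodesics from $F(P^-)$-points to $F(P^+)$-points. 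The latter gives the manifold $\tilde N$ mentioned in the introduction, and quotienting by $G$ gives a 3-manifold homotopy equivalent to $M$, hence homeomorphic to $M$.

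Finally, both constructions are natural. Going from flow to wheel to flow recovers the bifoliated plane up to conjugacy, hence the flow up to orbit equivalence. Going from wheel to flow to wheel recovers $\LL^\pm$ up to conjugacy by the Equivalence Theorem, so the correspondence is a bijection.
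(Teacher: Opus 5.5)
\textbf{Genuine gap.} Your flows-to-wheels direction is not established at its decisive step. Theorem~\ref{theorem:wheels_from_laminations} only produces a CaTherine wheel onto an \emph{abstract} sphere carrying some $G$-action. The entire content of this direction is that this action is $G$-equivariantly conjugate to the action on $S^2_\infty$, and you leave exactly that step unproved (``we would try to get this\dots'').

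The route you sketch also does not remove the dependence on Fenley. Each of the following is a consequence of Fenley's work:
\begin{enumerate}
\item flowlines of $\tilde X$ are quasigeodesic when $X$ has no perfect fits;
\item the fibres of $e^+$ are exactly the stable leaves;
\item $e^+(P)$ and $e^-(P)$ are disjoint, which you also need so that $F$ and your map have the same fibres.
\end{enumerate}
The adapted quasimorphisms of \cite{Calegari_Zung} are built on this as well. The uniform shrinking estimate you need across $S^1_\un$ is the Cannon--Thurston-type heart of Fenley's theorem. Likewise, closedness and no isolated sides of $\LL^\pm$ on the universal circle are part of Fenley's ideal-boundary theory, not a quick consequence of density of leaves.

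The paper closes this direction by citing Fenley's Main Theorem: $G$ acts on the compactified sphere as a (uniform) convergence group. Bowditch's theorem then conjugates that action to the action on $S^2_\infty$. You should either cite this, or actually prove the convergence property or shrinking estimate.

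\textbf{Wheels to flows.} Your second option is essentially the paper's construction, but it omits the steps that make it work; these are fillable:
\begin{enumerate}
\item The decomposition of a single copy of $\HH^2$ into pairwise intersections $H^+(\mu)\cap H^-(\nu)$ has compact elements precisely because there are no perfect fits. This is why it is upper semi-continuous with plane quotient $P$.
\item The induced map $P\to Z^+\times Z^-\subset S^2\times S^2-\Delta$ is continuous and injective. This is what makes the $G$-action on $\tilde N=P\times\R$ properly discontinuous, so that $N=\tilde N/G$ is Hausdorff.
\item $N$ must be shown to be closed (for instance by cohomological dimension) before ``homotopy equivalent, hence homeomorphic'' applies.
\item You must verify that the tautological flow is pseudo-Anosov without perfect fits, or appeal to Frankel--Landry.
\end{enumerate}
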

\begin{proof}
One direction is the Main Theorem of Fenley from \cite{Fenley_no_perfect_fits} that
we summarize as follows: the lifted flow $\tilde{X}$ has an orbit space which is 
homeomorphic to a plane that admits a natural compactification by a circle,
and the stable/unstable foliations of the flow determine a pair of laminar relations 
of this circle with no perfect fits and no isolated sides. Thus there is a CaTherine wheel
$f:S^1 \to S^2$ (Fenley constructs this map directly). Fenley shows that the 
action of $G$ on $S^2$ is a convergence action, and therefore by Bowditch the action is
conjugate to the action of $G$ on $S^2_\infty$. This proves one direction, which
is Fenley's theorem restated in our language.

Conversely, let $f: S^1 \to S^2_\infty$ be a CaTherine wheel invariant under $G$.
Then there is a $G$-zipper $Z^\pm \subset S^2_\infty$ for which $Z^+$ and $Z^-$ are
homeomorphic to quotient spaces induced by decompositions of planes $P^\pm$
coming from a pair of laminar relations $\LL^\pm$ of $S^1$ without perfect fits. 
Identify $S^1$ with the ideal boundary $S^1_\un$ of
$\HH^2$, and thereby identify $P^\pm$ with $\HH^2$ in such a way that the boundary leaves
are honest geodesics.

Now let's intersect decomposition elements associated to $\LL^+$ and $\LL^-$. Identify $P^+$
and $P^-$ with a single copy of $\HH^2$, and form a new
decomposition of $\HH^2$ whose elements are pairwise intersections of hulls $H^+(\mu)$ and
$H^-(\nu)$ for positive resp. negative laminar equivalence classes $\mu$ and $\nu$. 
The topology (though not the geometry) of this decomposition does not depend on any choices,
and is preserved by $G$.
The decomposition elements are closed and convex, because they are the intersection of 
closed convex subsets of $\HH^2$. They are compact, because otherwise $H^+(\mu)$ and
$H^-(\nu)$ would have an ideal point in common, contradicting the no perfect fits property.
Frontier points are contained in boundary leaves, thus these decomposition elements are all
compact finite sided polyhedra (including the degenerate case of a closed geodesic interval
or a single point). Since boundary leaves are not isolated, it follows easily that this
decomposition is upper semi-continuous; since the elements are non-separating, the quotient
space is homeomorphic to a plane $P$.

The plane may be identified with a subset $P \subset Z^+ \times Z^-$ where the image in
each factor is the projection to the further quotient space of $\HH^2$ by the decomposition
associated to the positive and negative laminar relations respectively.

Now, the unit tangent bundle of $\HH^3$ admits a foliation by lifts (from $\HH^3$) of 
bi-infinite geodesics, giving it the structure of a product 
$UT\HH^3 = (S^2 \times S^2 - \Delta) \times \R$ and inside this space we take
$\tilde{N}: = P \times \R$, using the given embedding of $P$ in 
$Z^+\times Z^- \subset S^2 \times S^2 - \Delta$. Ignoring its subspace topology for now, 
$\tilde{N}$ is certainly homeomorphic to $\R^3$. Since the map from the plane to
$S^2 \times S^2 - \Delta$ is continuous and injective, its restriction to every
compact subset is a homeomorphism onto its image, and therefore compact subsets
of $\tilde{N}$ with its $\R^3$ topology are also compact in $UT\HH^3$. But
$G$ acts on $UT\HH^3$ freely and properly discontinuously, in the strong sense that
for any compact subset $K$ of $UT\HH^3$ the intersections $K\cap gK$ are empty for
all but finitely many $g$. This is true in particular for $K$ which are compact
subsets of $\tilde{N}$, and therefore the action of $G$ on $\tilde{N}$ is also free 
and properly discontinuous in the strong sense. This implies that the quotient $N:=\tilde{N}/G$
is Hausdorff (see e.g.\/ \cite{Kapovich}, Lemma~9 and Theorem~11), 
and the quotient map $\tilde{N} \to N$ is a covering space, so that $N$ is a 
(Hausdorff!) 3-manifold with fundamental group $G$.
Furthermore, $N$ comes equipped with a tautological flow $X$ coming from the $\R$ factors in 
$UT\HH^3$, which are M\"obius invariant. Hence in particular $N$ is a closed 3-manifold
which by Mostow rigidity is homeomorphic to $M$, and a posteriori $\tilde{N}$ 
is properly embedded in $UT\HH^3$ so that its subspace topology agrees with 
its $\R^3$ topology. 

It is easy to see from the structure of the decomposition space
that the tautological flow $X$ on $N$ is already a (topological)
pseudo-Anosov flow without perfect fits, since it has singular stable/unstable 
foliations coming from the separate decompositions associated to $\LL^+$ and $\LL^-$ 
respectively.

Alternately, by considering the intersection of $\tilde{N}$ with translates
of a compact fundamental domain for $G$ on $UT\HH^3$ that march along a flowline $\ell$
of $\tilde{X}$ we can see that $\ell$ is quasigeodesic in $UT\HH^3$ and therefore
also in $\tilde{N}$, and then the main theorem of Frankel--Landry \cite{Frankel_Landry_flows},
Theorem~1.1 implies that this quasigeodesic flow may
be straightened to a pseudo--Anosov flow (unique up to orbit equivalence), 
with the same action on the circle at infinity
(which is the $S^1$ from the CaTherine wheel). 

In any case we obtain a canonical route from $G$-CaTherine wheels to pseudo--Anosov
flow without perfect fits, precisely inverse to Fenley's construction.
\end{proof}

\begin{remark}
In fact, the major part of Frankel--Landry's argument, starting with a geodesic flow, 
is precisely to construct the data of a (universal) circle with a pair of especial
laminar relations. When the relations have no perfect fits, they exactly satisfy the
sufficient conditions to arise from a CaTherine wheel; Frankel--Landry's argument at
this point is parallel to ours, though more roundabout since they are not able to work
directly with the zipper $Z^\pm$ in $S^2$. Thus logically the appeal to Frankel--Landry 
is superfluous. 
\end{remark}
 
In fact, we may construct a pseudo-Anosov flow from a CaTherine wheel even without 
assuming the CaTherine wheel comes from a 3-manifold. Recall that Cannon conjectured
(this conjecture is implicit in \cite{Cannon}, \S~8) 
that if $G$ is a word hyperbolic group with Gromov boundary
$\partial_\infty G$ homeomorphic to $S^2$, then $G$ is virtually the fundamental group
of a closed hyperbolic 3-manifold. We shall now show that this is true whenever
there is a $G$-invariant CaTherine wheel $f:S^1 \to \partial_\infty G$.

\begin{theorem}[CaTherine wheels and Cannon's Conjecture]\label{theorem:wheel_Cannon_conjecture}
Suppose $G$ is a word hyperbolic group with Gromov boundary $\partial_\infty G$
homeomorphic to $S^2$, and suppose that there is a $G$-invariant CaTherine wheel
$f:S^1 \to \partial_\infty G$. Then $G$ is virtually the fundamental group of a closed
hyperbolic 3-manifold; i.e.\/ the Cannon Conjecture is true for $G$.
\end{theorem}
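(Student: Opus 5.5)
The plan is to imitate the converse direction of Theorem~\ref{theorem:wheels_and_flows}, using only the convergence dynamics of $G$ on $\partial_\infty G$ in place of the Kleinian structure. From the $G$-CaTherine wheel we get the $G$-invariant laminar relations $\LL^\pm$ with no perfect fits and no isolated sides (Theorem~\ref{theorem:laminar_decomposition}, Theorem~\ref{theorem:wheels_from_laminations}). We intersect hulls $H^+(\mu)\cap H^-(\nu)$ in a single copy of $\HH^2$ exactly as before. The elements are compact convex polyhedra, by no perfect fits, and the decomposition is upper semi-continuous with non-separating elements. Hence the quotient is a plane $P$, with a $G$-equivariant injective continuous map $P\to Z^+\times Z^-\subset \partial_\infty G\times\partial_\infty G-\Delta$.

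The substitute for $UT\HH^3$ is Gromov's geodesic flow space $\mathcal G(G)$. This is a proper metric space quasi-isometric to $G$, homeomorphic to $(\partial^2 G)\times\R$ equivariantly, and carries a proper cocompact $G$-action and an $\R$-action commuting with it. Mineyev's flow space would do equally well, with a genuinely free action after passing to a torsion-free finite index subgroup. Hyperbolic groups are virtually torsion-free only conjecturally in general. However, a torsion-free subgroup is not needed if we work with orbifolds, or we may note that finite-order elements act on $S^1$ with fixed points and check compatibility. I would first try replacing $G$ by a finite index subgroup if possible and otherwise carry finite stabilizers along.

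Set $\tilde N:=P\times\R\subset\mathcal G(G)$. As in the Kleinian proof, compact subsets of $\tilde N\cong\R^3$ are compact in $\mathcal G(G)$, so $G$ acts properly discontinuously on $\tilde N$. The quotient $N=\tilde N/G$ is therefore a Hausdorff 3-manifold (or orbifold) with a tautological flow. This flow has singular stable and unstable foliations coming from $\LL^\pm$, so it is a pseudo-Anosov flow without perfect fits.

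The main obstacle is cocompactness of $N$, which in the Kleinian case was free from Mostow rigidity. Here I would argue directly, and I expect this to be the hardest step. $G$ acts cocompactly on $\partial^2G$ triples, hence on $\partial^3G$. Each point of $P$ determines a "tripod-like" configuration; more concretely, a flowline together with a transverse boundary leaf. Using the no perfect fits property and the uniform convergence dynamics, we show that $\tilde N$ meets a fixed compact fundamental domain of $\mathcal G(G)$ in a set whose $G$-translates cover $\tilde N$. The needed estimate is that points of $P$ far from a basepoint map to pairs in $\partial^2G$ that are translated back by $G$ into a compact set, staying in $\tilde N$ by $G$-invariance of $P$.

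Once $N$ is a closed aspherical 3-manifold (or orbifold) with $\pi_1=G$, up to finite index, Perelman's geometrization applies. Since $G$ is hyperbolic and infinite, $N$ is hyperbolic, and virtually $G$ is a closed hyperbolic 3-manifold group.
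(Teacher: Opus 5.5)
Your outline is the paper's: the plane $P$ of intersected hulls $H^+(\mu)\cap H^-(\nu)$, its equivariant injection into $Z^+\times Z^-\subset\partial^2_\infty G$, the thickening $\tilde N=P\times\R$ inside a flow space (the paper uses Mineyev's), and the quotient by $G$.

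The genuine gap is where you declare $N=\tilde N/G$ to be ``a Hausdorff 3-manifold (or orbifold)''. You cannot first pass to a torsion-free finite-index subgroup, since virtual torsion-freeness of hyperbolic groups is open; in the paper it is an \emph{output} of the argument. Nor does a properly discontinuous action with finite stabilizers on $\R^3$ by itself make the quotient an orbifold. Finite groups can act on $\R^3$ with wild fixed point sets (as for Bing's involution), and then no geometrization theorem applies. Your fallback via fixed points on $S^1$ does not get started either. A finite-order element acting nontrivially on $S^1$ preserves orientation, is conjugate to a rotation, and so acts freely.

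The missing idea, which is exactly where the paper's proof does its work, is to use the product structure. $G$ permutes the flowlines, so its action on $\tilde N$ descends to an action on the plane $P$. By Ker\'ekj\'art\'o, every finite group action on $\R^2$ is conjugate to an isometric one. Concretely, the finite stabilizer $A$ of a point of $\tilde N$ fixes its flowline pointwise. Averaging the translation cocycle over $A$ then conjugates the $A$-action on $P\times\R$ to its action on $P$ times the identity. So $N$ is a genuine good 3-orbifold. Together with cocompactness below, orbifold geometrization applies, and virtual torsion-freeness of $G$ comes out a posteriori.

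On cocompactness, you are right that something must be said, since the paper leaves it implicit. But it is not the hard step, and the estimate you aim for is automatic: any point of $\tilde N$ can be translated into a compact fundamental domain $K$ of the flow space and stays in $\tilde N$. What must be shown is that $\tilde N\cap K$ is compact in the $\R^3$-topology of $\tilde N$, i.e.\ that $P\to\partial^2_\infty G$ is proper.

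This is immediate. Let $F^\pm$ be the restrictions of $F:S^2_f\to\partial_\infty G$ to the two closed hemispheres, both identified with $\overline{\HH^2}$. The map $x\mapsto(F^+(x),F^-(x))$ is continuous on $\overline{\HH^2}$. It sends each $e\in S^1$ to $(f(e),f(e))\in\Delta$, and sends $\HH^2$ into $\partial^2_\infty G$ because $Z^+\cap Z^-=\emptyset$. Hence preimages of compact subsets of $\partial^2_\infty G$ are compact in $\HH^2$, so $P$ is a closed embedded plane. Then $\tilde N$ is closed in the flow space, with subspace topology equal to its $\R^3$-topology, and $\tilde N=G\cdot(\tilde N\cap K)$ with $\tilde N\cap K$ compact. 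This gives both properness and cocompactness of the action on $\tilde N$ at once.
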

\begin{proof}
For brevity, let's denote $\partial_\infty G \times \partial_\infty G - \Delta$ by
$\partial^2_\infty G$.
As in the proof of Theorem~\ref{theorem:wheels_and_flows} we obtain a $G$-invariant
continuous injective map from a plane to $P \subset Z^+ \times Z^-$.
Mineyev's flow space (see \cite{Mineyev}, Theorem~60) is a metric space $\F(G)$ quasi-isometric
to $G$ with a cocompact isometric $G$-action, and is homeomorphic to 
$\partial^2_\infty G \times \R$ in such a way that the $G$ action preserves the foliation
by $\R$ factors and descends to the usual $G$ action on $\partial^2_\infty G$.
(Technically what Mineyev denotes by $\F(G)$ is the $\Z/2\Z$ quotient of this space
by a free involution that interchanges the two $\partial_\infty G$ factors, and reverses
the orientation of each $\R$ leaf; this is the difference between the unit tangent bundle
and the projective unit tangent bundle).

We may therefore form, just as above, the product $\tilde{N}:=P\times \R \subset \F(G)$ and observe
that it is $G$-invariant and homeomorphic to $\R^3$ with a $G$-invariant 
product-covered foliation by lines. Now, we cannot assume a priori that $G$
is virtually torsion free. But it does act properly discontinuously on $\tilde{N}$.
We claim the quotient space $N:=\tilde{N}/G$ has the structure of a smooth 3-dimensional
orbifold. Although finite
groups can act on $\R^3$ with wild fixed point sets, the action of $G$ on $\tilde{N}$
descends to an action on $P$. As is well-known, a theorem of K\'er\'ekjart\`o \cite{Kerekjarto}
easily implies that any action of a finite
group on $\R^2$ is topologically conjugate to a finite group of isometries of the Euclidean
plane, and therefore $N$ has the structure of a smooth 3-dimensional orbifold. Thus
a posteriori $G$ is virtually torsion free, and without loss of generality $N$ is a
3-manifold with fundamental group commensurable to $G$, and incidentally also admits
a pseudo-Anosov flow without perfect fits, by Theorem~\ref{theorem:wheels_and_flows}.

In particular, Cannon's Conjecture is true for $G$.
\end{proof}

\subsubsection{Finiteness and compactness}\label{subsubsection:finiteness_and_compactness}

If Conjecture~\ref{conjecture:injectivity_radius} were true, it would imply 
(together with Proposition~\ref{proposition:K_is_compact}) that
the space of $G$-CaTherine wheels for any fixed cocompact Kleinian group $G$ is compact.
By Theorem~\ref{theorem:wheels_and_flows} $G$-CaTherine wheels (up to gauge) are
in bijection with pseudo-Anosov flows on $M:=\HH^3/G$ without perfect fits. Since
pseudo-Anosov flows on a closed hyperbolic 3-manifold are structurally stable, a
compact family of $G$-CaTherine wheels is necessarily finite. Thus finiteness of
orbit-equivalence classes of pseudo-Anosov flows without perfect fits on $M$ is 
{\em equivalent} to a fixed $K(M)$ so that every $G$-CaTherine wheel is a $K(M)$-CaTherine
wheel. 

But in fact this finiteness is a very recent {\em theorem} of 
Barthelm\'e--Tsang--Zung \cite{BTZ}, building on work of Li \cite{Li_singular}.
Explicitly:

\begin{theorem}[Barthelm\'e--Tsang--Zung \cite{BTZ} finiteness]\label{theorem:BTZ_finiteness}
Let $M$ be a closed hyperbolic 3-manifold. Then there are finitely many orbit equivalent
pseudo-Anosov flows without perfect fits $X$ on $M$.
\end{theorem}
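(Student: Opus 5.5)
One route uses the compactness machinery developed above rather than veering triangulations. Suppose for contradiction that $M=\HH^3/G$ carries infinitely many pairwise non-orbit-equivalent pseudo-Anosov flows without perfect fits $X_i$. By Theorem~\ref{theorem:wheels_and_flows} these correspond to pairwise non-conjugate $G$-CaTherine wheels $f_i:S^1 \to \CP^1$ with actions $\rho_i:G \to \Homeo^+(S^1)$. By Proposition~\ref{proposition:uniform_K} each $f_i$ is a $K_i$-CaTherine wheel, so each admits a Lebesgue gauge. By Proposition~\ref{proposition:precompact}, after passing to a subsequence, $\rho_i \to \rho_\infty$, where each $\rho_\infty(g)$ is $C(g)$-bilipschitz. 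By the remark following that proposition, the invariant laminar relations $\LL^\pm_i$ subconverge to $\rho_\infty(G)$-invariant laminar relations $\LL^\pm_\infty$ with no isolated sides.

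The heart of the argument is to show that $\LL^\pm_\infty$ has no perfect fits. Theorem~\ref{theorem:wheels_from_laminations} would then produce a $G$-CaTherine wheel $f_\infty$. I expect this to be the main obstacle: it is essentially equivalent to a uniform $K$, i.e.\ to Conjecture~\ref{conjecture:injectivity_radius} for the single manifold $M$.

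For this step I would work with periodic points. Suppose a leaf $\lambda^+$ of $\LL^+_\infty$ and a leaf $\lambda^-$ of $\LL^-_\infty$ share an endpoint $x$. I would first show, using cocompactness of $G$ together with the bilipschitz control, that up to translating by $G$ one can take $x$ to be a periodic point of some $\rho_\infty(g)$. Since $G$ acts minimally, such configurations recur densely, and this should be achievable by a closing-lemma argument on the associated flow space. Lemma~\ref{lemma:bilipschitz} then says that $\rho_\infty(g)$ has derivative exactly $e^{\mp 2\tau(g)}$ at attracting and repelling periodic points. Near $x$ the approximating $\rho_i(g)$ have uniformly controlled fixed points, and these alternate between attracting points lying in an $\LL^+_i$-class and repelling points lying in an $\LL^-_i$-class, by Theorem~\ref{theorem:automorphisms}. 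A perfect fit at $x$ would force attracting and repelling fixed points of $\rho_i(g)$ to collide in the limit. That collision is incompatible with the derivative being uniformly bounded away from $1$ at each of them, since the derivative of a limit of bilipschitz maps at a limit of hyperbolic fixed points of opposite type cannot take both values. If instead the perfect fit involves infinitely many periodic points accumulating at $x$, as in the parabolic picture of Example~\ref{example:punctured_surface}, the number of fixed points of $\rho_i(g)$ would be unbounded. I would rule this out by noting that, in the bilipschitz Lebesgue gauge, the fixed points of $\rho_i(g)$ are uniformly separated, so their number is bounded in terms of $C(g)$. If both ideas fail, the fallback is to prove a uniform $K$ directly through Proposition~\ref{proposition:quasicircle}: a sequence of worse and worse quasicircles $\partial f_i(I)$ would, after Möbius rescaling and using cocompactness of $G$, converge to a non-Jordan limit, which would exhibit exactly such a perfect fit.

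Once $f_\infty$ exists, Theorem~\ref{theorem:wheels_and_flows} gives a pseudo-Anosov flow $X_\infty$ without perfect fits on $M$. The final step is local rigidity. The flows $X_i$ (more precisely their circle actions $\rho_i$) converge to $X_\infty$ (respectively $\rho_\infty$), and by structural stability of pseudo-Anosov flows any $G$-action on $S^1$ arising from such a flow and sufficiently close to $\rho_\infty$ is conjugate to it. To make this concrete I would argue that the finitely many periodic-point configurations of generators determine the laminar relations $\LL^\pm$ by density of periodic leaves. These configurations are stable under small perturbation because the fixed points are hyperbolic, with derivatives bounded away from $1$. Hence $\rho_i$ is conjugate to $\rho_\infty$ for all large $i$, so $f_i$ is conjugate to $f_\infty$, contradicting the assumption that the $f_i$ are pairwise non-conjugate. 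This yields finiteness.
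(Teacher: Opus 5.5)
The paper does not prove this statement. It quotes it from Barthelm\'e--Tsang--Zung, whose argument rests on Li's finiteness theorem for singular orbits (Theorem~\ref{theorem:Li_finiteness}). The paper also records your compactness route as \emph{open}. After Proposition~\ref{proposition:precompact} it notes that the limit relations $\LL^\pm_\infty$ have no isolated sides ``but a priori there is no reason why they should have no perfect fits''. In \S~\ref{subsubsection:finiteness_and_compactness} it observes that a uniform $K(M)$ is \emph{equivalent} to the finiteness you want. So the step you call the main obstacle is the whole theorem, and the tools you propose for it do not work.

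First, nothing lets you move a perfect fit of $\LL^\pm_\infty$ to a periodic point of some $\rho_\infty(g)$. A closing lemma needs a flow, and the only flow available is the one you hope to build from $\LL^\pm_\infty$ once perfect fits are excluded.

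Second, Lemma~\ref{lemma:bilipschitz} applies to $\rho_\infty$ only if $\rho_\infty$ already comes from a $G$-CaTherine wheel. For each $\rho_i$, the neighborhood $I_p$ on which the derivative is close to $e^{\mp 2\tau(g)}$ depends on the modulus of continuity of $f_i$, which is uniform only under a uniform $K$. These neighborhoods can therefore shrink to nothing as $i\to\infty$. Derivatives do not pass to uniform limits. An attracting and a repelling fixed point of uniformly $C$-bilipschitz homeomorphisms can merge in a $C$-bilipschitz limit into a fixed point that attracts on one side and repels on the other. So the ``collision'' gives no contradiction.

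Third, bilipschitz control neither separates fixed points nor bounds their number. For fixed $0<\epsilon<1/(2\pi)$ and every $N$, the map $x\mapsto x+\frac{\epsilon}{N}\sin(2\pi Nx)$ on $\R/\Z$ is $(1-2\pi\epsilon)^{-1}$-bilipschitz and has $2N$ alternating hyperbolic fixed points. Gluing rescaled copies of one model map between consecutive fixed points gives the same behaviour with derivatives exactly $e^{\pm 2\tau(g)}$ at the fixed points. A uniform bound on periodic points is exactly what Proposition~\ref{proposition:K_bounds_valence} extracts from a \emph{uniform} $K$, so this step is circular. Your fallback is circular in the same way: a non-Jordan rescaled limit of the quasicircles $\partial f_i(I)$ is just the failure of Conjecture~\ref{conjecture:injectivity_radius} for $M$, not a contradiction.

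Even granting no perfect fits, two further points need work.
\begin{enumerate}
\item{Theorem~\ref{theorem:wheels_from_laminations} produces a CaTherine wheel onto an abstract sphere. Its $G$-action must still be identified with the action on $S^2_\infty$ before Theorem~\ref{theorem:wheels_and_flows} applies. Without a uniform $K$ the $f_i$ need not converge, so Proposition~\ref{proposition:K_is_compact} is unavailable for this.}
\item{Your local rigidity argument again relies on hyperbolicity of fixed points. That is $C^1$ information which the $C^0$ convergence $\rho_i\to\rho_\infty$ does not preserve, so it must be replaced by a genuine $C^0$-stability statement for these boundary actions.}
\end{enumerate}
As it stands, the proposal restates the compactness program the paper leaves open. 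Completing it needs a new input that rules out perfect fits in $\LL^\pm_\infty$, of the strength of a uniform quasicircle constant or of Li's theorem.
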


In turn, this theorem depends crucially on the following finiteness theorem of Li:

\begin{theorem}[Li \cite{Li_singular} finiteness]\label{theorem:Li_finiteness}
Let $M$ be a closed hyperbolic 3-manifold. Then there are only finitely many
isotopy classes of closed curves $\gamma$ in $M$ that are singular orbits of
pseudo-Anosov flows without perfect fits $X$ on $M$.
\end{theorem}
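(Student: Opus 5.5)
Li's finiteness theorem is a deep result proved by methods well outside this paper, but the framework developed here suggests a route to it through $G$-CaTherine wheels. The plan is to use the correspondence of Theorem~\ref{theorem:wheels_and_flows} in three steps. First, identify singular orbits with group elements. Next, bound the complexity of those elements in terms of the analytic quality of the wheel. Finally, use finiteness of conjugacy classes of bounded translation length.

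\textbf{Singular orbits as group elements.} A closed singular orbit $\gamma$ of a flow $X$ without perfect fits corresponds to a conjugacy class of some $g\in G$. The lift of $\gamma$ to the orbit space is a point with at least $3$ prongs, i.e.\ a decomposition element of the plane $P$ in the proof of Theorem~\ref{theorem:wheels_and_flows} coming from classes $\mu^\pm$ of $\LL^\pm$ with $|\mu^\pm|\ge 3$. By Theorem~\ref{theorem:automorphisms}, part~(3), $g$ (or a bounded power of it) acts on $S^1$ with at least $4$ fixed points. These fixed points alternate between the classes $\mu^+$ and $\mu^-$. Their images in $\CP^1$ are the attracting and repelling fixed points of $g$, so $\gamma$ is freely homotopic, and in fact isotopic, to the closed geodesic of $g$. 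Hence it suffices to show that only finitely many conjugacy classes of $g\in G$ arise as singular orbits of some flow without perfect fits.

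\textbf{Bounding the elements.} We aim to show that such $g$ have translation length $\tau(g)$ bounded by a constant depending only on $M$; finiteness then follows because a closed hyperbolic manifold has only finitely many closed geodesics of bounded length. We would try to extract this bound from Lemma~\ref{lemma:bilipschitz} in the Lebesgue gauge. Near an attracting fixed point the derivative of $g$ is close to $e^{-2\tau(g)}$. The $\ge 4$ fixed points together with the prong structure force the leaves of $\mu^\pm$ to cut $S^1$ into intervals whose $f$-images are quasidisks meeting at the fixed point, and area distortion ought to be controlled uniformly by the geometry of a fundamental domain.

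\textbf{Main obstacle.} Making this uniform over all flows requires a uniform quasiconformal constant $K(M)$ for all $G$-CaTherine wheels, which is exactly the unproved Conjecture~\ref{conjecture:injectivity_radius}. As noted above, such a constant is equivalent to Theorem~\ref{theorem:BTZ_finiteness}, which itself depends on Li's theorem, so this route would be circular. Given such a $K(M)$, Proposition~\ref{proposition:K_bounds_valence} bounds the number of prongs, Proposition~\ref{proposition:K_is_compact} gives compactness, and the remaining step would be a compactness argument ruling out long singular orbits. Without it, one would have to fall back on Li's original argument, which is not reproduced here; the statement is cited from \cite{Li_singular} and not proved within this paper.
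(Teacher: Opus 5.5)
The paper gives no proof of this statement. It is quoted from \cite{Li_singular} as an external input, and Theorem~\ref{theorem:BTZ_finiteness} rests on it, so your closing fallback is exactly what the paper does. Read as a proof, however, your proposal has a genuine gap. The missing step is the only one with real content: a bound on $\tau(g)$, uniform over all flows without perfect fits, for $g$ represented by singular orbits.

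Lemma~\ref{lemma:bilipschitz} cannot supply this bound. The bilipschitz constant it gives for $g$ is roughly $e^{2\tau(g)}$. Its derivative estimate holds only on a neighborhood $I_p$ whose size depends on $g$, so nothing global follows.

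More fundamentally, no argument that uses only ``some power of $g$ has at least $4$ alternating fixed points on $S^1$'' can bound $\tau$. Every \emph{regular} periodic orbit also has this property, from the $2$-point classes of $\LL^\pm$ through it. So does every power of the corresponding element, and those translation lengths tend to infinity. A bound would have to exploit having at least $3$ prongs, and your sketch never says how. The phrase ``area distortion ought to be controlled uniformly'' is precisely the unproved point.

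As you observe, the natural source of uniformity is a constant $K(M)$ as in Conjecture~\ref{conjecture:injectivity_radius}. That constant is equivalent to Theorem~\ref{theorem:BTZ_finiteness}, which is proved using Li's theorem, so this route is circular. Also, if you had $K(M)$, no separate compactness argument about long singular orbits would be needed. Finitely many flows, each with finitely many singular orbits, already give the conclusion.

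Your reduction step also needs repair. Theorem~\ref{theorem:automorphisms}(3) does not produce the fixed points; it presupposes at least $4$ of them. The count comes from the prong structure: if the orbit is $k$-pronged and $g^n$ fixes each prong, then $g^n$ has $2k$ fixed points alternating between $\mu^+$ and $\mu^-$.

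More seriously, ``freely homotopic, and in fact isotopic'' is asserted without justification. A free homotopy class of loops in a $3$-manifold contains infinitely many isotopy classes of knots; for example, you can tie in local knots. So finiteness of the conjugacy classes of such $g$ would not by itself give finitely many isotopy classes of singular orbits. You would need an additional argument that singular orbits in the same free homotopy class, possibly of different flows, are isotopic.
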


Of course, this trivially implies that for any $0<r$ and $v < \infty$ there is a
$K(r,v)$ so that any $G$-CaTherine wheel invariant under a cocompact Kleinian group
$G$ for which $\HH^3/G$ has injectivity radius $\ge r$ and volume $\le v$ is a
$K(r,v)$-CaTherine wheel, since there are only finitely many such $G$ up to conjugacy,
and each one is compatible with only finitely many CaTherine wheels.

\subsection{Zippers}\label{subsection:G_zippers}

Suppose $G$ is the fundamental group of a closed hyperbolic 3-manifold $M$,
thought of as a Kleinian group acting on $S^2_\infty$.
How can we construct $G$-CaTherine wheels without assuming that $M$ admits
a pseudo-Anosov flow without perfect fits and invoking Fenley~\cite{Fenley_no_perfect_fits}?

One way to do it is via the theory of $G$-zippers. Every $G$-CaTherine wheel 
$f:S^1 \to S^2$ gives rise to a unique (necessarily minimal) $G$-zipper. 
In this subsection we prove the 
converse: every minimal $G$-zipper gives rise to a $G$-CaTherine wheel, unique up 
to conjugacy; and this correspondence is a bijection. We prove this in two steps;
first we show (Theorem~\ref{theorem:G_zipper_to_G_wheel}) that a hairy $G$-zipper
always comes from a (unique up to conjugacy) $G$-CaTherine wheel, and then we show
(Theorem~\ref{theorem:G_zippers_are_hairy}) that every minimal $G$-zipper is hairy.

\begin{theorem}[Hairy $G$-zipper to $G$-CaTherine wheel]\label{theorem:G_zipper_to_G_wheel}
Let $G$ be the fundamental group of a closed hyperbolic 3-manifold $M$. 
Suppose that $Z^\pm \subset S^2_\infty$ is a minimal $G$-zipper
and suppose further that $Z^\pm$ is hairy. Then the laminar relations $\LL^\pm$
associated to $Z^\pm$ have no perfect fits, and there is a $G$-CaTherine wheel $f:S^1 \to S^2$
for {\em the given} $G$ action (unique up to conjugacy) and with associated zippers $Z^\pm$.
\end{theorem}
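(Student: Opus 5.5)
The plan is to reduce the statement to Theorem~\ref{theorem:wheels_from_zippers} and Proposition~\ref{proposition:canonical_identification}. The only missing hypothesis there is the strong landing property, so the real work is to replace it with $G$-equivariance plus hairiness. By Lemma~\ref{lemma:order_circles} the hairy zipper gives circles $S^1_\pm$, the order completions of the end spaces $\bar{\EE}^\pm$. By Lemma~\ref{lemma:zipper_relations} it gives laminar relations $\LL^\pm$ on $S^1_\pm$ with no isolated sides. All of this is canonical, so $G$ acts on $S^1_\pm$ by orientation-preserving homeomorphisms preserving $\LL^\pm$.

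The next step is to identify $S^1_+$ with $S^1_-$ equivariantly. As remarked in the proof of Proposition~\ref{proposition:canonical_identification}, the correspondence of \cite{Calegari_Loukidou_Zippers}, \S~2.7, needs only a dense set of landing rays in each of $Z^\pm$. For a $G$-zipper I would get these from dynamics. For a loxodromic $g\in G$, choose a point $z\in Z^+$ and follow the path in $Z^+$ from $z$ to $gz$, then to $g^2z$, and so on. This gives a $g$-quasi-invariant ray, and it converges to the attracting fixed point of $g$ on $S^2_\infty$ because $g^n$ contracts compact sets away from the repelling point. Fixed points of loxodromics are dense in $S^2_\infty$, and the ends determined by such axes are dense in $\EE^+$ by minimality. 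Together these give dense families of landing rays in both $Z^\pm$. The construction of \cite{Calegari_Loukidou_Zippers} then yields a $G$-equivariant, order-reversing homeomorphism $S^1_+\to S^1_-$, and we call the common circle $S^1$.

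The main obstacle is proving no perfect fits for $\LL^\pm$ without the full strong landing property. I would argue by contradiction. Suppose a leaf of $\LL^+$, supported at $p\in Z^+$, shares an endpoint $x$ with a leaf of $\LL^-$ supported at $q\in Z^-$. Then $x$ is an ideal gap for both zippers, and the end-nested lines converging to $x$ in $Z^+$ and in $Z^-$ force the disks $D_i$ (bounded by the Jordan curves as in the proof of Proposition~\ref{proposition:canonical_identification}) to shrink to a set containing both $p$ and $q$. Hence $p=q$, which is impossible because $Z^+\cap Z^-=\emptyset$. The delicate point is showing that the $D_i$ shrink, that is, that their intersection $K$ is a point. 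Here I would use hairiness and minimality together with the convergence-group dynamics of $G$. If $K$ were a nondegenerate continuum, the $G$-translates of the configuration near $K$ would accumulate, by cocompactness, onto a configuration with an arc in $Z^\pm$ having no hairs on one side, contradicting hairiness. The result of KyeongRo Kim \cite{Kim} on actions with such laminations may be needed here to rule out non-Hausdorff behaviour. I expect this step to be the heart of the proof.

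Once there are no perfect fits, Theorem~\ref{theorem:wheels_from_laminations} produces a CaTherine wheel $f:S^1\to S^2$. Its construction is natural, so $G$ acts on the target sphere and $f$ is equivariant. The zipper of $f$ is recovered from $\LL^\pm$ as in Theorem~\ref{theorem:zippers_from_wheels}. It is identified with $Z^\pm$ through the map sending ends and ideal gaps to their landing points in $S^2_\infty$, and this map is a $G$-equivariant homeomorphism, so $f$ is a CaTherine wheel for the given $G$ action on $S^2_\infty$.

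For uniqueness up to conjugacy, any $G$-CaTherine wheel with zipper $Z^\pm$ has its circle canonically equal to $\bar{\EE}^\pm$, by Theorem~\ref{theorem:zippers_from_wheels} and Lemma~\ref{lemma:order_circles}. Hence any two such wheels differ by an equivariant $\alpha\in\Homeo^+(S^1)$, which is exactly a conjugacy in the sense of Definition~\ref{definition:G_wheel}.
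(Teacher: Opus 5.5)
There is a genuine gap, and it is exactly the step you flag as the heart of the proof: you never show that the nested disks $D_i$ shrink to a point. Your reduction of no perfect fits to that statement matches the paper's, but the mechanism you suggest would not supply it.

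Apart from at most two endpoints (the supports of $e^\pm$ when these are ideal gaps), the continuum $K$ is disjoint from $Z^+\cup Z^-$. This is where hairiness is actually used: any other point of $K\cap Z^+$ would force a nondegenerate segment of $Z^+$ into $K$, and hairs on that segment separate it from $K$. So you give no reason why limits of translates of $K$ should produce an unhairy arc of $Z^\pm$. The contradiction the paper reaches is with $Z^+\cap Z^-=\emptyset$, not with hairiness, and it needs three ingredients absent from your proposal.

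First, a nesting property: if $gK$ meets $\inte(D_i)$ then $gK\subset D_i$, because $\partial D_i\subset Z^+\cup Z^-$ and $gK$ is cut out by the Jordan curves $g\partial D_j$. Together with cocompactness this makes $K$ porous: no sequence of translates Hausdorff-converges to all of $S^2_\infty$.

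Second, Lemma~\ref{lemma:porous_multiple}: a nontrivial porous continuum has blow-ups $g_iK\to K_\infty$ with $S^2_\infty-K_\infty$ having two components $U,V$. These can be centred away from the endpoints, so that disks $E_i$ of diameter tending to $0$ contain the endpoints of $g_iK$.

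Third, separation: arcs $\alpha^\pm\subset Z^\pm$ from a point of $U$ to a point of $V$ meet $g_iK$ at most in its endpoints, so each must pass through $E_i$. If $E_i\to c$, then $c\in\alpha^+\cap\alpha^-$, a contradiction.

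Kim's result is not what rules out bad behaviour here. \cite{Kim}, Theorem~B supplies the dense type~2 landing rays from which the Jordan curves $\partial D_i$ are built in the first place.

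The same unproved step is hidden in your last two moves. Identifying the abstract sphere from Theorem~\ref{theorem:wheels_from_laminations} with $S^2_\infty$ ``by sending ends and ideal gaps to their landing points'' presupposes that every end and every ideal gap lands. That means $K_e$ is a point for every $e\in S^1$, not just at a putative perfect fit, and this is the strong landing property, which is only available a posteriori. The paper avoids this by defining $f(e):=K_e$ directly in $S^2_\infty$. This map is equivariant since $gK_e=K_{ge}$ and continuous by the shrinking disks. No perfect fits and the correct target action then both follow from the single fact that all $K_e$ are points.

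Finally, your landing rays from loxodromic dynamics need $g$ to act on $Z^+$ without a fixed point. If $g$ fixes a point $p\in Z^+$, the arcs $g^n[z,gz]$ may all pass through $p$ and need not assemble into a proper ray. You also neither show that fixed-point-free elements exist nor explain how their landing points pair with rays in $Z^-$. The paper sidesteps all of this by invoking Kim's Theorem~B.
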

\begin{proof}
By hairiness, $S^1_\pm$ may be identified with the union of the ends and ideal gaps
of $Z^\pm$. For a minimal $G$-zipper, the action of $G$ on $S^1_\un$ is minimal, and 
therefore $S^1_\un$ may be identified canonically with each of $S^1_\pm$. 
By Lemma~\ref{lemma:zipper_relations} we obtain a pair of laminar relations $\LL^\pm$
each with no isolated sides. 

If we can show $\LL^\pm$ have no perfect fits, we will obtain a CaTherine wheel
$f:S^1_\un \to S^2$ for {\em some} $G$-action on $S^2$. A more careful analysis will show
both that there are no perfect fits, and that the range of $f$ may be taken to be
$S^2_\infty$ with the given $G$-action. In fact, the same argument will prove both
claims.

We would like to define the map $f:S^1_\un \to S^2_\infty$ as follows: a point
$e\in S^1_\un$ corresponds to a pair of points $e^\pm \in S^1_\pm$. Each of the
points $e^\pm$ either corresponds to an equivalence class of proper ray, or to an
ideal gap. 

The argument may be shortened by appealing to 
Kim \cite{Kim}, Theorem~B which says that for any minimal $G$-zipper,
landing rays of type 2 exist, and by minimality they are dense. 
A type 2 landing ray determines a pair of points $x^\pm \in S^1_\pm$
corresponding to a single point $x\in S^1_\un$
for which one of the $x^\pm$ is an end and the other is an ideal gap (where a proper ray
representing the end lands). 
A pair of type 2 landing rays determines two pairs of points $x^\pm,y^\pm$
and we may form a Jordan curve $\gamma$ in $Z^+\cup Z^-$ which is the union of the
convex hull of $x^+,y^+$ in $Z^+$ and the convex hull of $x^-,y^-$ in $Z^-$. 
Associated to $e\in S^1_\un$ we may choose any sequence of points $e_i \in S^1_\un$
that converge to $e$ from alternating sides, and for each consecutive 
pair $e_{2i}^\pm,e_{2i+1}^\pm \subset S^1_\pm$ we form a Jordan curve 
$\gamma_i \subset Z^+\cup Z^-$ as above. 

The $\gamma_i$ bound a nested decreasing sequence of disks $D_i$, and the intersection
of this family of disks is a compact, connected, cellular set $K_e$ associated
canonically to $e$. Notice by the way that the construction
of $K_e$ is functorial, so that $gK_e = K_{ge}$ for every $g\in G$.
A similar subset $K$ was constructed in 
Proposition~\ref{proposition:canonical_identification}; the key in that proposition was
to show that $K$ consisted of a single point, and that is the key here too.

We explain how the theorem follows if we can show that every $K_e$ consists of a
single point. First of all, $\LL^\pm$ can have no
perfect fits, since otherwise for some $e$ the points $e^\pm$ would both correspond
to ideal gaps with support $p^\pm \in Z^\pm$. By construction, $p^\pm \subset D_i$ for
every $D_i$ as above, and therefore $p^\pm \in K_e$. But if $K_e$ is a single point
then $p^+ = p^-$ and $Z^\pm$ intersect, contrary to the definition of zippers.
Second of all, if every $K_e$ is a single point, then we can define $f:S^1_\un \to S^2_\infty$
by $f(e) = K_e$. The shrinking families of disks $D_i$ associated to each $K_e$ certify
that this map is continuous; by construction the point preimages are precisely
the decomposition elements associated to $\LL^\pm$ (the use of shrinking point neighborhoods
associated to leaves of the stable/unstable lamination to prove continuity is
the heart of the classical Cannon--Thurston argument for hyperbolic surface
bundles; see \cite{Cannon_Thurston}). This completes the proof, modulo the
fact that every $K_e$ consists of a single point.

The argument to show that each $K_e$ consists of a single point is surprisingly delicate;
it would be much easier if we could assume a rather plausible general conjecture
about the action of $G$ on $S^2_\infty$ that we call `no wandering continua'
(Conjecture~\ref{conjecture:no_wandering_continua}) but for now this general statement
seems out of reach. Let us therefore assume that some $K_e$ consists of more than one
point, and derive a contradiction.

Let us first establish some basic properties of $K_e$. 
The first property is that $K_e$ can intersect $Z^\pm$ only in the support of
$e^\pm$ (if either or both of these are ideal gaps). For if $x\in K_e \cap Z^+$ (for instance)
we can construct the convex hull $\delta$ in $Z^+$ of $x$ with the support/end of $e^+$ and
observe that this entire (nontrivial!) segment $\delta$ is contained in all of $K_e$. But by
hairiness, there are many hairs on $\delta$ on either side separating $x$ from $K_e$
after all. This establishes the first property.
Let's refer to these points of $K_e \cap Z^\pm$ (if any) as the `endpoints' of $K_e$.

It follows that for the shrinking disks $D_i$ in the definition of $K_e$ that the
intersection of the {\em interiors} of the $D_i$ contains all of $K_e$ except possibly
for the endpoints. For, by construction, each $\partial D_i$ is
entirely contained in $Z^+\cup Z^-$, and now apply the previous observation.

The second property is that for all $g \in G$ if $gK_e\cap \inte(D_i)$ is nonempty, 
then $gK_e \subset D_i$. To see this, note that if the interior of 
$gD_j$ intersects $\partial D_i$, this intersection divides $gD_j$ into two subdisks,
and (by the definition of the $D_i$ and of $K_e$) the set 
$gK_e$ must be contained in one of them. 
A related fact, true for essentially the same reason, is that the intersection
$K_e \cap gK_e$ is either all of $K_e$, which can happen if and only if $ge=e$;
or it is contained in the common endpoints of both sets, which can happen if and only
if these endpoints are distinct ideal points in either $Z^+$ or $Z^-$ with the same support.

Notice by the way, since points in $S^1_\un$ have at most cyclic stabilizer, that
if $gK_e = K_e$ then $g$ acts on $K_e$ properly discontinuously away from the fixed
points of $g$ in $S^2_\infty$. It follows that if $K_e$ consists of more than one point,
we may take the intersection of $K_e$
with a fundamental domain for $g$ and obtain a possibly smaller compact connected
subset $K'\subset S^2_\infty$, nontrivial if $K_e$ is nontrivial, and so that $K'$ is
{\em disjoint} from all its nontrivial $G$ translates. We strongly suspect that this
already gives a contradiction (see Conjecture~\ref{conjecture:no_wandering_continua})
but this seems very subtle if true, and we do not pursue it further here in this generality.

The third property is that the set $K_e$ is {\em porous}. Because $G$ is cocompact, this
may be stated as the property that there is no sequence of translates $g_iK_e$ whose
Hausdorff limit is all of $S^2_\infty$. For, if this were true, some translate
$g_iK_e$ would intersect some $\inte(D_j)$ without being contained in $D_j$, contrary to
the second property. 

We now need an elementary geometric lemma:
\begin{lemma}[Porous multiple components]\label{lemma:porous_multiple}
Suppose $L\subset \CP^1$ is a nontrivial connected porous closed set. Then
there are a sequence of M\"obius transformations $g_i$ so that the Hausdorff
limit $L_\infty: = \lim g_iL$ exists, and for which the
complement $\CP^1 - L_\infty$ has at least two distinct (open) components. 
\end{lemma}
\begin{proof}
To prove this lemma, we may assume first of all that $U:=\CP^1 - L$ is connected, 
or else there is nothing to prove, and since $L$ is connected, 
$U$ is homeomorphic to an open disk. After a conformal change of coordinates, 
let's suppose $L \subset \C \subset \CP^1$ and that the intersection of $L$
with the real axis is contained in the interval $[-1,1]$ and contains the points
$\pm 1$. Let $\delta$ be the arc $\RP^1 - [-1,1]$ which is properly embedded in $U$, 
and divides it into two open subdisks, which we denote $U^\pm$
respectively. 

The property that $L$ is porous is equivalent to the fact that there is a 
positive constant $C_L>0$ so that every round disk in $\C$ of radius $r$ contains a 
round disk of radius $C_L\cdot r$ in $\C - L$. For every round disk $D$ of radius $r$ let
$C^\pm(D) \in [0,1]$ be the biggest numbers such that $D$ contains an open round disk
of radius $C^+(D)\cdot r$ in $D\cap U^+$ and an open round disk of radius 
$C^-(D)\cdot r$ in $D\cap U^-$. The pair of numbers $C^\pm(D)$ are continuous functions of
the disk $D$, and $\max(C^+(D),C^-(D))\ge C_L$ for all $D$. For $D$ entirely contained
in $U^+$ we have $C^+(D)=1$ and $C^-(D)=0$, and likewise for $D$ entirely contained in $U^-$.
It follows by the intermediate value theorem that we may find disks $D_r$ of any small radius $r$
whose center is far from $\delta$, and such that $C^+(D_r)=C^-(D_r) \ge C_L$. 
Now take a sequence of such disks with $r_i \to 0$ and rescale $L$ centered at these disks
so that the rescaled disks have center $0$ and radius $1$. A Hausdorff limit of any subsequence 
of these blowups necessarily has at least two complementary components, proving the lemma.
\end{proof}

Finally, let's apply Lemma~\ref{lemma:porous_multiple} to $K_e$ and let
$\delta$ be as in the proof of the lemma. 
Since $K_e$ is porous, we can find a sequence of elements $g_i\in G$ so that 
the Hausdorff limit $K_\infty:=g_iK_e$ 
has at least two distinct complementary components $U,V$. We may further choose $D_r$
as in the lemma whose center is far from the endpoints of $K_e$ (if any). It follows
that we may find a sequence of closed disks $E_i$ of diameter at most $1/i$ that
contain both $g_i\delta$ and the endpoints of $g_iK_e$ (if any). 

Choose points $a^\pm \in Z^\pm \cap U$ and $b^\pm \in Z^\pm \cap V$
and let $\alpha^\pm\subset Z^\pm$ be the unique embedded arcs joining $a^\pm$ to $b^\pm$.
Each $\alpha^\pm$ is disjoint from $g_iK_e$ except possibly at its endpoints. On the
other hand, $E_i$ separates $a^+$ from $b^+$ and $a^-$ from $b^-$ in $S^2_\infty -
g_i K_e$, when $i$ is large. It follows that $\alpha^\pm$ both intersect $E_i$.

After passing to a subsequence, we may assume $E_i$ converges to a single point 
$c \in K_\infty$. Since $\alpha^\pm$ are both closed arcs and hence compact, 
$\alpha^\pm$ must both contain $c$. But this means that $Z^+$ and $Z^-$ intersect, 
which is a contradiction after all.

This completes the proof of the theorem.
\end{proof}

\begin{theorem}[Minimal $G$-zippers are hairy]\label{theorem:G_zippers_are_hairy}
Let $G$ be the fundamental group of a closed hyperbolic 3-manifold $M$. 
Suppose that $Z^\pm \subset S^2_\infty$ is a minimal $G$-zipper. Then $Z^\pm$ is hairy.
\end{theorem}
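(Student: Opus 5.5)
I would argue by contradiction, using minimality and the dynamics of $G$ on $S^2_\infty$. Suppose $Z^+$, say, is not hairy. Then there is an embedded oriented closed arc $\delta \subset Z^+$ with no hairs on one side, say the right. Since $Z^+$ is a countable increasing union of finite trees, a ray starting on $\delta$ and leaving it on the right would be a hair. So the hypothesis says that no component of $Z^+ - \delta$ attaches to the interior of $\delta$ from the right.

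The first step is to show this defect is \emph{open}: it persists on a nontrivial subarc under small perturbation. The second step is to use minimality to propagate it. The set of points of $Z^+$ at which some arc has a hairless side is $G$-invariant. Using the landing rays of type 2, which by Kim \cite{Kim}, Theorem~B exist and are dense, I would show that this set would yield a proper $G$-invariant subzipper. The subzipper I have in mind is obtained from $Z^+$ by deleting those branches that could otherwise be trimmed. Concretely, I would try to see that the hairless side of $\delta$ is accessed directly by $Z^-$. Density of $Z^-$ and disjointness force points of $Z^-$ arbitrarily close to $\delta$ on the right. The path in $Z^-$ joining two such points cannot cross $\delta$, so it runs ``parallel'' to $\delta$ on its right side.

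The key construction is then a region bounded by $\delta$ and an arc $\beta$ of $Z^-$ parallel to it on the right. This region contains no branches of $Z^+$ attached to $\delta$, yet $Z^+$ is dense there. Hence whatever parts of $Z^+$ lie in the region must enter through the ends of $\delta$ or through $\beta$, and the latter is impossible by disjointness. I would use this to produce a nontrivial interval in the end circle $S^1_+$ (order completion of $\mathcal E^+$ from Lemma~\ref{lemma:order_circles}, which needs only the circular order) that is a gap. The translates of this gap under $G$ are pairwise disjoint. I would then show that collapsing these gaps yields a smaller $G$-invariant pair $Z'^\pm$ of disjoint, path connected sets, for instance by removing from $Z^+$ the branches whose ends lie in the gap orbit. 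This contradicts minimality as in \cite{Calegari_Loukidou_Zippers}, Proposition~2.10.

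The main obstacle is the middle step: turning a local hairless side into a global proper subzipper (or into a contradiction with the convergence dynamics). Local geometry alone does not rule out hairless arcs. My first attempt would use a loxodromic $g \in G$ whose axis endpoints lie near $\delta$ on its two sides. Iterating $g$ would stretch the one-sided region across all of $S^2_\infty$, and in the Hausdorff limit give a Jordan curve in $\overline{Z^+}$ with an entire complementary side free of $Z^+$-branches. That contradicts the density of $Z^+$ together with uniqueness of paths (Lemma~\ref{lemma:unique_path}). The delicate point is controlling Hausdorff limits of arcs of $Z^\pm$, as in the porosity argument of Lemma~\ref{lemma:porous_multiple}.
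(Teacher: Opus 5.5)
\textbf{There is a genuine gap: the main mechanism cannot work, and the fallback is missing the key ideas.}

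\emph{Why the minimality route fails.} Any nonempty $G$-invariant path-connected subset of $Z^+$ contains the union of the $Z^+$-paths joining points of one orbit $Gp$. By minimality that union is already all of $Z^+$. So ``deleting branches'' can never produce a smaller subzipper, and nothing in your outline uses the hairless side to get around this. (In the paper, minimality enters only through dynamical facts used to build a certain Jordan curve.)

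\emph{Why the gap does not exist as you need it.} Lemma~\ref{lemma:order_circles} uses hairiness, and a hairless side of $\delta$ need not create a gap in the order completion. Typically it produces an ideal point whose support is a nondegenerate arc containing $\delta$; this is exactly what Lemma~\ref{lemma:hairy_ideal_gap_point} excludes under hairiness. In any case a gap has no ends in its interior, so removing ``the branches whose ends lie in the gap orbit'' removes nothing.

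\emph{Why the parallel arc is unjustified.} $\delta$ does not separate, so the $Z^-$-path between two points near its right side can go around its endpoints. Closing up a Jordan curve from a $Z^+$-arc and a $Z^-$-arc requires two junction points outside $Z^+$. The paper gets these from dynamics (axes and invariant rays landing at fixed points, \cite{Calegari_Loukidou_Zippers} Lemmas~2.23--2.24 and \cite{Kim} Theorem~A), not from density.

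\emph{What is missing.} Your last paragraph only gestures at the needed argument. The contradiction you propose there, a side of a limit curve in $\overline{Z^+}$ free of branches, is not a contradiction: $Z^+$ can reach that side through points of the curve that are not in $Z^+$. The paper's argument has three steps.
\begin{enumerate}
\item A closed subarc $K$ of the hairless arc is \emph{porous}. Fix a Jordan curve $X=\alpha\cup\beta\cup\{p,q\}$ as above, with $\alpha \subset Z^+$ and $\beta \subset Z^-$. Any translate $gK$ meeting both sides of $X$ must cross $\alpha$ in a segment, and the leftover pieces of $\alpha$ are then hairs on both sides of $gK$.
\item Cocompactness and Lemma~\ref{lemma:porous_multiple} give $g_i\in G$ zooming in at an interior point of $K$. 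Then $g_iK$ converges to a continuum with two complementary regions, while the closing arc $g_iK'$ of a Jordan curve $K\cup K'$ shrinks to a point $x$.
\item Take points of $Z^+$ and of $Z^-$ in each region. The $Z^-$-path between the $Z^-$ points misses $g_iK$. The $Z^+$-path cannot cross $g_iK$ through its interior, since that would again give hairs on both sides. So both paths meet every $g_iK'$, hence both contain $x$, contradicting $Z^+\cap Z^-=\emptyset$.
\end{enumerate}
Your plan lacks this use of \emph{both} halves of the zipper, forced through the same shrinking passage, and the porosity step that makes the blow-up possible. Iterating a single loxodromic leaves the Hausdorff limits of $g^n\delta$ uncontrolled and forces nothing.
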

\begin{proof}
We suppose not, so that there is some open interval $J$ in $Z^+$ which is not hairy
on at least one side. Choose a proper closed interval $K \subset J$.

\begin{lemma}[Porous]\label{lemma:K_is_porous}
The arc $K$ is porous.
\end{lemma}
\begin{proof}
We first construct a Jordan curve $X$ in $S^2$
which is the union of an open arc $\alpha \subset Z^+$, an open arc $\beta \subset Z^-$,
and two points $p,q$ neither of which are in $Z^+$ (they might be in $Z^-$ or not).
The construction of $X$ depends on some known facts about the dynamics of $G$ on a zipper,
proved in \cite{Calegari_Loukidou_Zippers} and \cite{Kim}. 

If there is some element $g\in G$ that acts on $Z^+$ without fixed points, it has an
axis in $Z^+$ that lands at its endpoints $p^\pm \subset S^2 - Z^+$ by
\cite{Calegari_Loukidou_Zippers} Lemma~2.23. Either one of these
endpoints is already in $Z^-$, or there is an axis in $Z^-$ that also lands at $p^\pm$.
In either case we can find a closed embedded interval $\gamma:[0,1] \to  S^2$
so that $\gamma\, [0,1/2) \subset Z^+$ and $\gamma\, (1/2,1] \subset Z^-$ while
$\gamma(1/2) \in S^2 - Z^+$. If $h \in G$ is not commensurable with $g$ then we
can find $X$ as a subset of the convex hulls of $\gamma(I) \cup h\gamma(I)$.

Kim \cite{Kim}, Theorem~A says that no nontrivial element $g\in G$ can have both
fixed points in $Z^+$ or both in $Z^-$. So let's suppose every nontrivial element
has exactly one fixed point in $Z^+$. Then  by \cite{Calegari_Loukidou_Zippers} Lemma~2.24
there is some nontrivial $g\in G$ fixing $p^+\in Z^+$ and leaving invariant some
proper ray $r\subset Z^+$ that lands on the other fixed point $p^-\in S^2-Z^+$.
Either $p^-$ is in $Z^-$, or $g$ has an axis in $Z^-$ which has an end that lands at $p^-$.
In either case we get a closed embedded interval $\gamma$ as above, and obtain $X$.

Now we show that $K$ is porous. Suppose not. The complement $S^2 - X$ consists of two
open disks, $U$ and $V$. If $K$ is not porous, there is some translate $gK$ which
intersects both $U$ and $V$, and therefore $gK$ must intersect the {\em open} arc
$\alpha$, necessarily in a closed connected segment. But then the complementary
arcs of $\alpha$ are hairs on either side of $gK$, contrary to hypothesis.
\end{proof}

The end of the argument closely follows the end of the proof of
Theorem~\ref{theorem:G_zipper_to_G_wheel}. First of all, by Lemma~\ref{lemma:porous_multiple}
we may choose a Jordan arc $X:=K \cup K'$ for some compact arc $K'$ meeting $K$ only
at its endpoints, and whose complement is the disjoint union of open disks $U$ and $V$,
and a sequence $g_i \in G$ which zooms in near some point in the interior of $K$, and
so that $g_i K \to K_\infty$ where $K_\infty$ has at least two complementary open regions
$U_\infty$ and $V_\infty$, and where points in $U_\infty$ are limits of points
in $g_i U$ and points in $V_\infty$ are limits of points in $g_i V$.
By the choice of $g_i$ the spherical diameters of $g_i K'$ must converge to $0$, and
after passing to a subsequence we can assume there is a specific point $x\in S^2$ with
$g_i K' \to x$.

Choose points $p^\pm \in U_\infty \cap Z^\pm$ and $q^\pm \in V_\infty \cap Z^\pm$. 
Note that $g_i X$ separates $p^\pm$ from $q^\pm$ when $i$ is big.

There are paths $\alpha \subset Z^+$ from $p^+$ to $q^+$ and $\beta \subset Z-$ 
from $p^-$ to $q^-$. Evidently 
$\beta$ is disjoint from every $g_i K$ and therefore $\beta$ intersects every $g_iK'$ so that 
in fact $\beta$ contains $x$. On the other hand, we claim $\alpha$ must also intersect every 
$g_i K'$. For otherwise, if $\alpha$ intersects $g_i K$ but not $g_i K'$ its intersection
must lie entirely in the interior of $g_i K$ so that $\alpha$ would give a hair 
on both sides of $g_iK$, contrary to hypothesis. But if $\alpha$ intersects every $g_i K'$ 
then $\alpha$ must contain $x$. So $\alpha$ and $\beta$ both contain $x$ and 
therefore $Z^+$ and $Z^-$ intersect, which is a contradiction. This final contradiction 
proves the theorem.
\end{proof}

\begin{corollary}[Properties of $G$-zippers]\label{corollary:G_zipper_properties}
Let $Z^\pm$ be a minimal $G$-zipper. Then
\begin{enumerate}
\item{$Z^\pm$ is hairy;}
\item{$Z^\pm$ satisfies the strong landing property;}
\item{$Z^\pm$ have {\em short hair}: for every $\epsilon > 0$ there is a finite simplicial
tree $T \subset Z^+$ so that every (path) component of $Z^+-T$ has spherical diameter at
most $\epsilon$;}
\item{there is an $n$ so that every point of $Z^\pm$ has valence at most $n$;}
\item{there is a $K$ so that every embedded path in $Z^\pm$ is a $K$-quasiarc; and}
\item{the Hausdorff dimension of each of $Z^\pm$ is strictly less than 2.}
\end{enumerate}
\end{corollary}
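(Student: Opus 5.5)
The plan is to route everything through the $G$-CaTherine wheel that the zipper determines, and then read off each property from results already proved. Item (1) is Theorem~\ref{theorem:G_zippers_are_hairy}. With hairiness in hand, Theorem~\ref{theorem:G_zipper_to_G_wheel} produces a $G$-CaTherine wheel $f:S^1\to S^2_\infty$ for the given action, whose associated zipper is exactly $Z^\pm$. Since the zipper of any CaTherine wheel has the strong landing property by Theorem~\ref{theorem:zippers_from_wheels}, item (2) follows. Item (3) is then the result of \cite{Calegari_Gwynne}, Proposition~1.4, which says that the zippers of every CaTherine wheel have short hair.

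For items (4)--(6) I will use the analytic theory. Because $G$ is cocompact, Proposition~\ref{proposition:uniform_K} makes $f$ a $K$-CaTherine wheel for some $K$.

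For (4), a point $p\in Z^+$ of valence $k$ is the image of a positive class $\nu$ whose convex hull has $k$ sides, so $|\nu|\ge k$ (for $k\ge 2$). Proposition~\ref{proposition:K_bounds_valence} bounds $|f^{-1}(p)|$ by $n(K)$, so $n=n(K)$ works.

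For (5), take an embedded compact path in $Z^+$. By the proof of Theorem~\ref{theorem:zippers_from_wheels} it is a subpath of some $Z^+(I)\subset\partial f(I)$, which is a $K$-quasicircle. A subarc of a $K$-quasicircle is a $K'$-quasiarc, with $K'$ depending only on $K$ (via Ahlfors' three-point condition).

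For (6), first note that each path has Hausdorff dimension at most $2K/(K+1)<2$, as recalled in the proof of Proposition~\ref{proposition:K_is_compact}. Next, $Z^+$ is a countable union of finite trees. Each finite tree is a finite union of arcs, so countable stability of Hausdorff dimension gives $\dim Z^\pm\le 2K/(K+1)<2$.

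The step I expect to need most care is (4): I have to make sure that valence in the path topology of $Z^+$ equals the number of complementary components of $H^+(\nu)$ in $P^+$. This should follow from the explicit description $Z^+=F(P^+)$ and the unique-path property, because components of $Z^+-p$ correspond to components of $P^+-H^+(\nu)$ under $F$. Item (3) also depends on an external citation; if I wanted a self-contained argument, I would deduce short hair from (5) together with the uniform Hölder continuity of $f$ in the Lebesgue gauge.
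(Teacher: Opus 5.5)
Your proposal is correct and follows essentially the same route as the paper. You get hairiness from Theorem~\ref{theorem:G_zippers_are_hairy}, then use the $G$-CaTherine wheel of Theorem~\ref{theorem:G_zipper_to_G_wheel} for strong landing and (via Calegari--Gwynne) short hair. For items (4)--(6) you combine Proposition~\ref{proposition:uniform_K} with Proposition~\ref{proposition:K_bounds_valence}, the containment of embedded paths in the quasicircles $\partial f(I)$, and Astala's bound with countable stability. Your extra justifications (valence equals the number of complementary components of $H^+(\nu)$, and subarcs of $K$-quasicircles are quasiarcs) only make explicit steps the paper leaves implicit.
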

\begin{proof}
The first bullet is Theorem~\ref{theorem:G_zippers_are_hairy}.
Theorem~\ref{theorem:G_zipper_to_G_wheel} says that every 
hairy minimal $G$-zipper $Z^\pm$ arises from a $G$-CaTherine wheel $f:S^1 \to \CP^1$, and therefore
satisfies the strong landing property, proving the second bullet. 
Any zipper arising from a CaTherine wheel has short hair (\cite{Calegari_Gwynne} 
Proposition~1.4); this proves the third bullet. 

A $G$-CaTherine wheel is also a
$K$-CaTherine wheel for some $K$ by Proposition~\ref{proposition:uniform_K}.
The bound $n$ is then the same bound as in Proposition~\ref{proposition:K_bounds_valence},
proving the fourth bullet. Every embedded path in $Z^\pm$ is contained in $\partial f(I)$
for some closed interval $I\subset S^1$, proving the fifth bullet.
Finally, each of $Z^\pm$ is a countable union of embedded paths; each of these
has Hausdorff dimension at most $2K/(K+1)$ by the fifth bullet and Astala's estimate, and
therefore the same is true for each of $Z^\pm$; proving the last bullet.
\end{proof}

We end this subsection with a conjecture and a question: 

\begin{conjecture}[No wandering continua]\label{conjecture:no_wandering_continua}
Let $G$ be a cocompact Kleinian group. Suppose $K\subset S^2_\infty$ is a compact connected
set disjoint from all its $G$-translates. Then $K$ consists of a single point.
\end{conjecture}

We remark that any $K$ which is a counterexample to this conjecture cannot contain an
arc of a round circle. This depends on a theorem of Zeghib \cite{Zeghib} that
compact hyperbolic manifolds cannot contain nontrivial totally geodesic laminations
of dimension $\ge 2$ with some noncompact leaves; a refinement of this argument,
explained to us by Curt McMullen \cite{McMullen}, implies that $K$ cannot contain
a quasi-arc.

\begin{question}[No wandering continua]\label{question:no_wandering_continua}
Let $G$ be a word-hyperbolic group with Gromov boundary $\partial_\infty G$
homeomorphic to $S^2$. Is it possible for $\partial_\infty G$ to contain
a nontrivial compact connected subset $K$ disjoint from all its $G$-translates?
\end{question}
Of course, if we believed the Cannon Conjecture, we would expect the answer to
this question to be `no'. On the other hand, maybe the Cannon Conjecture is false, and
this question has a positive answer after all. It's best to keep an open mind.

\subsection{The coarse viewpoint}\label{subsection:coarse_correspondence}

At this point we have estabished an equivalence between pseudo-Anosov flows without
perfect fits on $M$ up to orbit equivalence, $G$-CaTherine wheels up to conjugacy,
and minimal $G$-zippers. In fact, we can add a fourth element to this correspondence.

\begin{definition}[Uniform quasimorphism]\label{definition:uniform_quasimorphism}
Let $G$ be a group. A {\em quasimorphism} is a function $\phi:G \to \R$ for which
there is some least non-negative real number $D(\phi)$ (called the {\em defect})
so that for all $g,h \in G$ there is an inequality
$$|\phi(gh) - \phi(g) - \phi(h)| \le D(\phi)$$
If $G$ is finitely generated, and we fix any word metric $d_G$, a quasimorphism on $G$
is {\em uniform} if it is unbounded, and if the coarse level sets are coarsely connected.
This means there are constants $C_1,C_2>0$ so that if $g,h \in G$ are any two elements
with $\phi(g),\phi(h) \in [-C_1,C_1]$ then there is a sequence of elements
$g=g_0,g_1,\cdots,g_n=h$ so that $\phi(g_i) \in [-C_1,C_1]$ for all $i$,
and $d_G(g_i,g_{i+1})\le C_2$ for all $i$.
\end{definition}

Any bounded function on a group is a quasimorphism; quasimorphisms on $G$ modulo bounded
functions form a vector space $Q(G)$ which admits a natural norm with respect to which
it is a (typically non-separated) Banach space.
For an introduction to the theory of quasimorphisms, see \cite{Calegari_scl}.

Uniform quasimorphisms were introduced by Calegari--Loukidou \cite{Calegari_Loukidou_Zippers}
(although the underlying concept has been familiar to geometric group theorists for some time)
and it is shown there (\cite{Calegari_Loukidou_Zippers} Theorem~4.10)
that if a hyperbolic group $G$ admits a uniform quasimorphism then it admits a $G$-zipper
(in this paper we are only concerned with the case that the ideal
boundary of $G$ is homeomorphic to $S^2$, but a more general definition makes sense
for any hyperbolic group).

Conversely, (Calegari--Zung \cite{Calegari_Zung} Theorem~2.10 part 1)
show that if $X$ is any pseudo-Anosov
flow on $M$ without perfect fits, one can construct uniform quasimorphisms $\phi:G \to \R$
that are {\em adapted to $X$}. This means that the values of $\phi$ grow linearly
along any quasigeodesic in $G$ that tracks (i.e.\/ stays a bounded distance from) a 
flowline of $\tilde{X}$, the lift of $X$ to the universal cover.

Associated to $X$, let $Q_X(G)$ denote the space of uniform quasimorphisms on $G$ adapted
to $X$. Here we think of $Q_X(G)$ as a subset of $Q(G)$. It turns out 
(Calegari--Zung \cite{Calegari_Zung} Theorem~2.10 part 2) that $Q_X(G)$ is
an open convex cone in $Q(G)$. On the other hand, by combining 
\cite{Calegari_Loukidou_Zippers} Theorem~4.10 with Theorem~\ref{theorem:G_zippers_are_hairy}, 
Theorem~\ref{theorem:G_zipper_to_G_wheel} and Theorem~\ref{theorem:wheels_and_flows}
every uniform quasimorphism $\phi$ on $G$ gives rise to a pseudo-Anosov flow $X$ without
perfect fits, and it is immediate from the construction that $\phi$ is adapted to $X$.

We therefore obtain the second main result of this paper:

\begin{theorem}[$G$-Equivalence Theorem]\label{theorem:G_equivalence}
For $M$ a fixed closed hyperbolic 3-manifold with fundamental group $G$ acting on $\CP^1$
there are canonical bijections between the following structures:
\begin{enumerate}
\item{pseudo-Anosov flows without perfect fits on $M$ up to orbit equivalence;}
\item{$G$-CaTherine wheels up to conjugacy;}
\item{(minimal) $G$-zippers; and }
\item{connected components in $Q(G)$ of the space of uniform quasimorphisms $\phi:G\to \R$.}
\end{enumerate}
\end{theorem}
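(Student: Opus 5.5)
The plan is to assemble the four bijections from results already established, arranging them as a cycle of maps
$$(1)\to(2)\to(3)\to(4)\to(3)\to(2)\to(1)$$
and checking that the composites are the identity on isomorphism classes.

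For (1)$\leftrightarrow$(2) I invoke Theorem~\ref{theorem:wheels_and_flows} directly. In one direction, Fenley's construction takes the orbit space compactification to a $G$-CaTherine wheel. In the other, the flow space $\tilde{N}=P\times\R\subset UT\HH^3$ with its tautological flow gives back a flow. To see these are mutually inverse up to orbit equivalence and conjugacy, I note that both constructions pass through the same pair of laminar relations $\LL^\pm$ on the universal circle. The orbit space $P$ is recovered as the intersection decomposition of $\HH^2$ by hulls of $\LL^+$ and $\LL^-$ classes.

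For (2)$\leftrightarrow$(3) I apply Theorem~\ref{theorem:zippers_from_wheels} to get a map from wheels to zippers $Z^\pm=F(P^\pm)$. These sets are $G$-invariant, and minimality follows from the standard minimality argument of \cite{Calegari_Loukidou_Zippers}. Conversely, Theorem~\ref{theorem:G_zippers_are_hairy} shows every minimal $G$-zipper is hairy. Theorem~\ref{theorem:G_zipper_to_G_wheel} then produces a $G$-wheel for the given action, unique up to conjugacy, whose zipper is $Z^\pm$. Injectivity of wheel$\mapsto$zipper follows from the Equivalence Theorem~\ref{theorem:equivalence}: the wheel is recovered from $Z^\pm$, with $S^1$ identified canonically with the order completion of the ends. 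This gives the bijection up to conjugacy.

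For (3)$\leftrightarrow$(4) I use two results. First, \cite{Calegari_Loukidou_Zippers} Theorem~4.10 gives a $G$-zipper, and hence a minimal subzipper, from each uniform quasimorphism $\phi$. Second, \cite{Calegari_Zung} Theorem~2.10 gives, for each flow $X$, the nonempty open convex cone $Q_X(G)$ of uniform quasimorphisms adapted to $X$.

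I then verify three things.
\begin{enumerate}
\item The flow obtained from $\phi$ (via zipper, wheel, flow) is one to which $\phi$ is adapted. This uses the construction in \cite{Calegari_Loukidou_Zippers}, where $Z^\pm$ are the coarse ends of superlevel and sublevel sets of $\phi$, so flowlines run from $Z^-$ to $Z^+$ along which $\phi$ grows linearly.
\item Each uniform $\phi$ is adapted to exactly one flow up to orbit equivalence. If $\phi$ were adapted to both $X$ and $X'$, the zippers obtained from $\phi$ would coincide, so the flows agree by the previous bijections.
\item Consequently the cones $Q_X(G)$ partition the space of uniform quasimorphisms. Each cone is convex and hence connected, and each is open, so the cones are exactly the connected components.
\end{enumerate}

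The main obstacle I expect is step (2) of this verification, namely canonicity of the map $\phi\mapsto$ zipper. The minimal subzipper of the zipper attached to $\phi$ must depend only on the component containing $\phi$, and adaptedness to two flows must force equality. My first attempt is to show that if $\phi$ is adapted to $X$, the minimal $G$-zipper obtained from $\phi$ contains (hence by minimality equals) the zipper of $X$. The argument would be that endpoint maps of flowlines of $\tilde{X}$ give rays along which $\phi\to\pm\infty$, and these land in the coarse super/sublevel sets used in Theorem~4.10.
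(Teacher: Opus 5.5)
Your plan is the paper's proof. The paper simply assembles Theorems~\ref{theorem:wheels_and_flows}, \ref{theorem:G_zipper_to_G_wheel} and \ref{theorem:G_zippers_are_hairy} with \cite{Calegari_Loukidou_Zippers} Theorem~4.10 and \cite{Calegari_Zung} Theorem~2.10, and declares that $\phi$ is adapted to the resulting flow ``immediately from the construction.'' It never spells out why $Q_X(G)\cap Q_{X'}(G)=\emptyset$ when $X,X'$ are not orbit equivalent, and you are right that this is what makes the cones the connected components.

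However, your sketch for that step does not close it. The flowline argument gives $Z_X^\pm\subset Z_\phi^\pm$, where $Z_\phi^\pm$ is the a priori non-minimal zipper produced by \cite{Calegari_Loukidou_Zippers} Theorem~4.10. But ``hence by minimality equals'' needs $Z_X^\pm$ to lie inside the particular minimal subzipper you attached to $\phi$. Nothing you cite prevents $Z_\phi^\pm$ from containing two different minimal subzippers $Z_X^\pm$ and $Z_{X'}^\pm$, and that is exactly the situation you must rule out.

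The missing ingredient is that a $G$-zipper $Z^\pm$ has a \emph{unique} minimal subzipper. This follows quickly from results you already use.
\begin{enumerate}
\item Let $Y^\pm,W^\pm\subset Z^\pm$ be minimal subzippers. By Theorems~\ref{theorem:G_zippers_are_hairy} and \ref{theorem:G_zipper_to_G_wheel}, $Y^\pm$ is the zipper of a $G$-CaTherine wheel $f$.
\item Take $I$ negatively degenerate for $f$; such $I$ exist by Lemma~\ref{lemma:linking_endpoints}. Then $J:=\partial f(I)$ is a Jordan curve with $J-f(\partial I)=Z^+(I)\subset Y^+$ and $f(\partial I)\in Y^-\subset Z^-$. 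Hence $J\cap W^+\subset Y^+\cap W^+$.
\item If $Y^+\cap W^+$ were empty, the path-connected set $W^+$ would lie in a single component of $S^2-J$, contradicting its density. So $Y^+\cap W^+\neq\emptyset$. Symmetrically, using a positively degenerate interval, $Y^-\cap W^-\neq\emptyset$.
\item Embedded arcs in $Z^\pm$ are unique, by the argument of Lemma~\ref{lemma:unique_path}. Hence the sets $Y^\pm\cap W^\pm$ are path-connected. They are also $G$-invariant and disjoint, so they form a $G$-subzipper of both $Y^\pm$ and $W^\pm$. Minimality then gives $Y^\pm=W^\pm$.
\end{enumerate}

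With this lemma your step (2) goes through. If $\phi\in Q_X(G)\cap Q_{X'}(G)$, then $Z_X^\pm$ and $Z_{X'}^\pm$ are both the unique minimal subzipper of $Z_\phi^\pm$. So $X$ and $X'$ are orbit equivalent by the bijection (1)$\leftrightarrow$(3), and the map from components of uniform quasimorphisms to minimal $G$-zippers is well defined.
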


In particular, the set of uniform quasimorphisms on $G$ is the union of finitely many
open convex cones; this is a coarse analog of part of Thurston's theory of fibrations
\cite{Thurston_norm}, that may be paraphrased as saying that the subset of $H^1(M;\R)$ 
represented by closed nowhere zero 1-forms is the union of finitely many open convex cones 
(which correspond precisely to the open fibered faces of the Thurston norm ball 
on $H_2(M;\R) = H^1(M;\R)$).

Let us end this subsection with a conjecture.
Uniform quasimorphisms make sense on any hyperbolic group, and in fact
\cite{Calegari_Loukidou_Zippers} Theorem~4.10 constructs a $G$-zipper 
$Z^\pm \subset \partial_\infty G$ for any
hyperbolic group $G$ from a uniform quasimorphism $\phi$ on $G$.

\begin{conjecture}[Zippers to flows]\label{conjecture:uniform_quasimorphism}
Let $G$ be a hyperbolic group with $\partial_\infty G$ homeomorphic to $S^2$, and
suppose there is a minimal $G$-zipper $Z^\pm \subset \partial_\infty G$.
Let $P \subset Z^+\times Z^-$ be the closure of the set of pairs of points
$p^+\times p^-$ where $p^\pm$ are the fixed points of any infinite order $g\in G$ that
fixes exactly one point in each of $Z^+$ and $Z^-$. Then $P$ is homeomorphic to $\R^2$.
\end{conjecture}

If this conjecture is true for some $G$ and some $G$-zipper $Z^\pm$, 
we may construct $\tilde{N}:=P \times \R$
in the flow space $\F(G)$ and deduce that $G$ is virtually a 3-manifold group by
the argument of Theorem~\ref{theorem:wheel_Cannon_conjecture}.

\section{Endomorphisms}

This section is mostly expository, and is concerned almost entirely with 
examples. An {\em endomorphism} of a CaTherine wheel $f:S^1 \to S^2$ is a pair of
endomorphisms $h:S^1 \to S^1$ and $H:S^2 \to S^2$ for which $Hf = fh$. There is
probably no hope for a full classification, and even if it were possible,
such a classification would leave
out some important and closely related examples (semigroups or pseudogroups of partially
defined endomorphisms of $S^1$ and $S^2$ respectively).

An important class of examples arise from {\em expanding Thurston maps}, whose 
theory is developed in great detail by Bonk--Meyer \cite{Bonk_Meyer}. For these examples,
$h$ is (conjugate to) the covering map $z \to z^d$ on $S^1$ for some $d>1$
and $H$ is a degree $d$ branched covering of $S^2$ (for the same $d$),
which in many cases can be taken to be a degree $d$ rational map on $\CP^1$. We
emphasize that it is only {\em some} of these examples that give rise to
CaTherine wheels; many (the majority) give rise to Peano curves that should be
P-CaTherine wheels in the sense of Definition~\ref{definition:P_CaTherine_wheel}.
The difference has to do with the absence of perfect fits; in the language of matings,
the ray equivalence classes should have diameter zero.

A closely related, though hitherto unexplored (as far as we know) class of examples
are those that arise from {\em expanding origamis}. For these examples the map $h$
is piecewise linear on $S^1$ with slopes of constant absolute
value $\lambda > 1$, whereas the map $H$ is piecewise conformal (alternately holomorphic
and anti-holomorphic) on a subdivision of $\CP^1$ into finitely many
tiles. This is a subject that deserves a more thorough treatment, that we plan
to develop in future projects (e.g.\/ \cite{Calegari_Herr}). 

Further afield, there are many examples of CaTherine wheels and related objects
that admit nontrivial families of partially defined symmetries --- Bowen--Series Markov maps
\cite{Bowen_Series}, Dicks' lightning paths \cite{Dicks_Cannon_I,Dicks_Cannon_II,Dicks_Wright},
Arnoux' IET suspensions \cite{Arnoux} --- even Hilbert's square-filling curve is of
this kind (\cite{Calegari_foliations} Example~10.14). This subject is too big to
survey here.

\subsection{Expanding Thurston maps}\label{subsection:expanding_Thurston}

In this subsection we discuss CaTherine wheels $f:S^1 \to S^2$ admitting endomorphisms
$h,H$ for which $h:S^1 \to S^1$ is conjugate to $z \to z^d$ and $H:S^2 \to S^2$ is a
degree $d$ branched cover.

Bonk--Meyer's approach is to start with the branched cover $H:S^2 \to S^2$; our 
account of their theory starts with the map $z \to z^d$ on $S^1$ and directly construct
invariant laminations and laminar relations. These laminations are well-studied, and
their analysis is rather subtle; see e.g.\/ \cite{Thurston_poly, Kiwi, Blokh_Levin,
Blokh_Oversteegen, Thurston_degree_d}. 

When is a degree $d$ covering map $S^1 \to S^1$ conjugate to $z \to z^d$? When it is
locally eventually onto:

\begin{lemma}[LEO]\label{lemma:LEO}
A degree $d>1$ covering map $h:S^1 \to S^1$ is conjugate to $z \to z^d$ for some $d>1$ 
if and only if it is LEO (locally eventually onto): for any open interval $U \subset S^1$ 
there is an $n$ such that $f^n(U) = S^1$.
\end{lemma}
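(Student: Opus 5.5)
The plan is to prove the two directions separately. The forward direction is immediate. The reverse direction goes through a semiconjugacy to the standard map followed by an injectivity argument.

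Suppose first that $h = \phi^{-1}\circ z^d\circ\phi$ for a homeomorphism $\phi$. Let $U$ be an open interval, so $\phi(U)$ contains an arc of some length $\ell>0$. The map $z\mapsto z^d$ multiplies arc length by $d$ until the arc covers the circle. Hence $(z^d)^n(\phi(U))=S^1$ as soon as $d^n\ell\ge 2\pi$, and so $h^n(U)=S^1$.

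Now suppose $h$ is a degree $d$ covering map that is LEO. Since $h$ is a covering of degree $d>1$ it is a local homeomorphism, and we may take it orientation preserving; if it reverses orientation, compose with a reflection and use $z\mapsto \bar z^{d}$. Lift $h$ to $\tilde h:\R\to\R$ with $\tilde h(x+1)=\tilde h(x)+d$. Then $\tilde h$ is strictly increasing.

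The first step is to build a semiconjugacy by the standard contraction argument. On the space of continuous increasing maps $\tilde\psi:\R\to\R$ with $\tilde\psi(x+1)=\tilde\psi(x)+1$, consider the operator
$$T\tilde\psi := \tfrac1d\,\tilde\psi\circ\tilde h.$$
Note that $\tilde\psi-\id$ is periodic, and $T$ contracts the sup norm of $\tilde\psi_1-\tilde\psi_2$ by the factor $1/d$. So $T$ has a unique fixed point $\tilde\psi$. It is monotone non-decreasing, continuous and degree one, and it satisfies $\tilde\psi\circ\tilde h=d\,\tilde\psi$. Descending to the circle gives a continuous monotone degree one map $\psi:S^1\to S^1$ with $\psi\circ h=z^d\circ\psi$. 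Such a $\psi$ is surjective.

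The second step, which is the main point, is to show that $\psi$ is injective, and this is where LEO enters. A monotone degree one map fails to be injective only if it is constant on some nondegenerate closed interval $J$. From $\psi\circ h^n=(z^d)^n\circ\psi$ it follows that $\psi$ is constant on every $h^n(J)$. One must check that $h^n(J)$ is an interval or all of $S^1$; this holds because $h$ restricted to a short interval is a homeomorphism onto its image, and the image of an interval under a covering map is connected. By LEO applied to the interior of $J$, some $h^n(J)=S^1$. Then $\psi$ would be constant on $S^1$, contradicting degree one. So $\psi$ is a continuous bijection of a compact Hausdorff space, hence a homeomorphism, and it conjugates $h$ to $z\mapsto z^d$.

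The only delicate points are the lifting conventions and the orientation-reversing case; neither is a real obstacle.
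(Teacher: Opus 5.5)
Your proof is correct, but it takes a different route from the paper's. The paper builds the conjugacy combinatorially on a dense set:
\begin{itemize}
\item it takes a fixed point $p$ of $h$;
\item LEO makes $\bigcup_n h^{-n}(p)$ dense, since any open $U$ has $h^n(U)=S^1\ni p$;
\item complementary arcs of $h^{-n}(p)$ map homeomorphically onto complementary arcs of $h^{-(n-1)}(p)$, so $h^{-n}(p)$ can be matched equivariantly and in circular order with the $d^n$-th roots of unity;
\item this order isomorphism between dense sets extends to the conjugacy.
\end{itemize}
You instead get a monotone semiconjugacy $\psi$ from the contraction principle. It exists for every orientation-preserving degree $d$ covering with no dynamical hypothesis, and LEO enters only to forbid plateaus of $\psi$.

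Your version buys a sharper statement: every such $h$ is monotonically semiconjugate to $z\mapsto z^d$, and LEO is exactly the condition that nothing is collapsed. It also avoids checking that a coding extends continuously. The paper's version is explicit, giving the conjugacy as base-$d$ coordinates, which is the viewpoint used later for invariant laminations.

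The two constructions in fact give the same map. Since $(\tilde h-\mathrm{id})(x+1)=(\tilde h-\mathrm{id})(x)+d-1$, the lift $\tilde h$ has a fixed point. Your $\psi$ sends its image $p$ to $1$. An orientation-preserving conjugacy to $z\mapsto z^d$ with $p\mapsto 1$ is unique, because it is pinned down on all iterated preimages of $1$.

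Three small corrections:
\begin{itemize}
\item For an open arc of length $\ell$ you need $d^n\ell>2\pi$, not $\ge 2\pi$.
\item Run the contraction on the closed, hence complete, space of non-decreasing degree-one lifts; that is where your fixed point actually lives.
\item Drop the orientation-reversing aside rather than handling it. Degree $d>1$ already forces $h$ to preserve orientation. An orientation-reversing covering has degree $-d$, and degree is a conjugacy invariant, so it can never be conjugate to $z\mapsto z^d$. A conjugacy to $z\mapsto \bar z^{d}$ is not what the lemma asserts.
\end{itemize}
The check that $h^n(J)$ is an interval is unnecessary, since you only use $h^n(J)=S^1$.
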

\begin{proof}
This condition is evidently necessary. Conversely, it we start with a fixed point $p$ and
form the increasing union of preimages of $p$, this sequence of subsets converges to
all of $S^1$ and defines the coordinates of the obvious conjugacy.
\end{proof}

\subsubsection{Invariant laminations}

Let's fix an integer $d$ and discuss laminations. It is convenient to think of the
circle $S^1$ as $\R/\Z$ and the map $h$ as $x \to d\cdot x \pmod \Z$.

\begin{definition}[Major]\label{definition:major}
Fix $d$. A (degree $d$) {\em critical leaf of multiplicity $n$} is an unordered
$(n+1)$-tuple of distinct points $\lbrace x_0,\cdots, x_n\rbrace$ in $S^1$ 
for which $d(x_i-x_j)=0\pmod \Z$ for all $i,j$. 

A (degree $d$) {\em major} is a collection $C$ of disjoint (degree $d$)
critical leaves which are pairwise unlinked and have total multiplicity $d-1$.

A critical leaf is simple if it has multiplicity $1$, and a major is simple if
all its critical leaves are simple.
\end{definition}
A non-simple critical leaf of multiplicity $n$ may be thought of as a union of 
$n$ (ordinary) leaves, and a major may be thought of in this way as a finite lamination.
Equivalently, a major may be thought of as the nontrivial elements in an equivalence
relation on $S^1$.

Now let's discuss dynamics. If $\lambda:=\lbrace x,y\rbrace$ is a non-critical leaf, we 
define $h\lambda$ to be the leaf $\lbrace d\cdot x,d\cdot y\rbrace$, and otherwise 
we define $h\lambda$ to be empty. The {\em grand orbit} of a leaf $\lambda$
is the set of leaves $\mu$ such that either $h^n(\lambda)=\mu$ for some $n$, or
$h^n(\mu) = \lambda$ for some $n$.

\begin{definition}[Invariant lamination]\label{definition:invariant_lamination}
A lamination $\Lambda$ of $S^1$ is said to be {\em invariant} if
\begin{enumerate}
\item{for every non-critical leaf $\lambda$, the leaf $h\lambda$ is a leaf of $\Lambda$; and}
\item{for every leaf $\lambda$ there are exactly $d$ leaves $\mu$ of $\Lambda$ with 
$h\mu=\lambda$, and these leaves are all disjoint.}
\end{enumerate}
\end{definition}
The condition that preimages are disjoint implies that if $\Lambda$ is an invariant
lamination and $\lambda:=\lbrace x,y\rbrace$ is a leaf of $\Lambda$ then for every
$x' \in S^1$ with $hx' = x$ there is a unique $y' \in S^1$ with $hy' = y$ for which
$\lbrace x',y'\rbrace$ is a leaf of $\Lambda$.

\begin{proposition}[Invariant lamination]\label{proposition:invariant_lamination}
For any $d>1$ and a degree $d$ major $C$ there is an invariant lamination
$\Lambda$ containing $C$.

Furthermore, suppose there are no points $x,y \in S^1$ (not necessarily distinct)
contained in leaves of $C$ and an $n>0$ so that $h^n(x)=y$. Then $\Lambda$ is
unique.
\end{proposition}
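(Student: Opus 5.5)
The plan is the standard pullback construction of Thurston \cite{Thurston_degree_d}. Fix the degree $d$ major $C$ and put $\Lambda_0 := C$, read as a finite lamination by splitting a critical leaf of multiplicity $n$ into $n$ ordinary leaves with disjoint preimages. The $d$ critical leaves together with the chords of $C$ cut the closed disk into complementary regions. Since the total multiplicity is $d-1$, there are exactly $d$ such regions, and the arcs of $S^1$ in each region have total length $1/d$. Each region maps onto all of $S^1$ by $x \to d\cdot x$, and this map is injective up to the identifications along the critical leaves.

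The first step is to check this region count. It is an Euler characteristic argument on the disk cut along $C$: each critical leaf of multiplicity $n$ adds $n$ regions, and the condition $d(x_i-x_j)=0$ makes the arcs of each region have lengths summing to $1/d$. Given any leaf $\lambda$ that is unlinked from $C$, or is a leaf of $C$, we then get $d$ disjoint preimages, one inside each region. Each preimage is obtained by pulling back $\lambda$ through the branch of $h^{-1}$ on that region.

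The main construction is inductive. Let $\Lambda_{k+1}$ be $C$ together with all these pullbacks of the leaves of $\Lambda_k$. An induction shows $\Lambda_k \subset \Lambda_{k+1}$ and that every $\Lambda_k$ is unlinked. Disjointness is preserved because the branches of $h^{-1}$ are order preserving on each region, and pullbacks lying in different regions are separated by $C$. We then set $\Lambda := \overline{\bigcup_k \Lambda_k}$. Unlinkedness is a closed condition, so $\Lambda$ is a lamination.

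For invariance, note first that $h$ maps every non-critical leaf of $\Lambda_{k+1}$ into $\Lambda_k$, and that every leaf of $\Lambda_k$ has its $d$ disjoint preimages in $\Lambda_{k+1}$. Both properties pass to limits. For a limit leaf $\lambda = \lim \lambda_i$, we choose the preimages of the $\lambda_i$ region by region and take limits. Care is needed when $\lambda_i$ converge onto an endpoint of a critical leaf. In that case the limit preimages may fail to be disjoint or may come out critical, and I would handle this case by allowing degenerate limits in the closure. This is where the existence part is delicate, and I expect to lean on Thurston's treatment rather than redo it.

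For uniqueness, suppose no point of $C$ is mapped by some $h^n$, $n>0$, to a point of $C$. Let $\Lambda'$ be any invariant lamination containing $C$. Its preimage axiom forces every leaf of $\Lambda'$ other than the leaves of $C$ to lie in a single region and to be the pullback of its image. Iterating, each leaf of $\Lambda'$ is determined by the itinerary of the $h^n$-images of its endpoints through the $d$ regions, which are separated by $C$. The hypothesis guarantees that no forward image lands on $C$, so itineraries are well defined and no ambiguity in choosing preimages arises. Since $h$ is expanding, any two leaves with the same itinerary coincide; leaves with all forward images avoiding $C$ are dense in $\Lambda'$, and $\Lambda'$ is closed, so $\Lambda' = \Lambda$. \textbf{The main obstacle} is exactly this step: ruling out extra leaves whose images accumulate on critical leaves. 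The no-orbit-relation hypothesis is what prevents those leaves from having two valid pullbacks. I would first try a shrinking-region argument: every leaf of $\Lambda'$ is trapped in the nested cells cut out by $\Lambda_k$, and these cells shrink because $h$ expands by the factor $d$.
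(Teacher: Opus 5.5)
Your existence argument is the paper's construction: your $d$ regions are the $d$ circles of the cactus $S^1/\!\sim$, you pull back region by region, and you take the closure of the increasing union. Deferring the limit-leaf issues to Thurston is also what the paper does. However, your inductive step is not well defined as written. Suppose an endpoint of a leaf of $\Lambda_k$ equals $h(c)$ for an endpoint $c$ of a critical leaf $\{c,c',\dots\}$. Then on a region adjacent to that critical leaf, the ``branch of $h^{-1}$'' is two-valued at that point. Order-preservation then gives no disjointness until a rule is fixed. If two leaves of $\Lambda_k$ share the endpoint $h(c)$, pulling one back to $c$ and the other to $c'$ can produce linked chords. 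The paper resolves this by making a uniform leftmost choice at every stage, and you need some such rule.

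The genuine gap is uniqueness. You try to show that every invariant lamination $\Lambda'\supseteq C$ equals $\Lambda$.
\begin{itemize}
\item The inclusion $\Lambda\subseteq\Lambda'$ is fine: backward invariance plus unlinkedness with $C$ forces all the pullbacks.
\item The reverse inclusion does not follow from what you write. That a leaf is determined by its itinerary says nothing about whether that itinerary is realized by a leaf of $\Lambda$. Density of leaves avoiding $C$ does not place them in $\Lambda$.
\item The cells do not shrink. Since $C$ has multiplicity $d-1$, every gap maps with degree one onto a gap, so a periodic gap never gets smaller.
\end{itemize}
In fact the strong statement is false. Take $d=5$ and generic simple critical leaves $\{a_k,a_k+1/5\}$ with $a_k\in(k/4,k/4+1/20)$, $k=0,\dots,3$.
\begin{itemize}
\item Every leaf of the pullback lamination $\Lambda$ has both endpoints in a single closed quarter $[k/4,(k+1)/4]$.
\item Iterated pullbacks of each critical leaf through the central region converge to the sides of the fixed quadrilateral with vertices $0,1/4,1/2,3/4$, so this quadrilateral is a gap of $\Lambda$.
\item Adjoining the fixed leaf $\{0,1/2\}$ together with all its iterated pullbacks gives a strictly larger invariant lamination containing $C$.
\end{itemize}
You also misread the hypothesis. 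It only forbids endpoints of $C$ from mapping onto endpoints of $C$; leaves of $\Lambda$ certainly map onto $C$.

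What the paper means by uniqueness is that the pullback construction involves no choices. The only ambiguity occurs at an endpoint equal to $h(c)$ as above. Every leaf of $\Lambda_k$ is a $j$-fold pullback of a leaf of $C$, so $h^j$ sends its endpoints to endpoints of $C$. Hence an ambiguous endpoint $h(c)$ would give $h^{j+1}(c)=c'$ for endpoints $c,c'$ of $C$, which the hypothesis excludes. So $\Lambda=\overline{\bigcup_k\Lambda_k}$ is canonically determined. By your forcing argument, it is the smallest invariant lamination containing $C$, not the only one.
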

\begin{proof}
For a proof see e.g.\/ \cite{Thurston_degree_d} Theorem~4.3 or 
\cite{Calegari_sausages}, Proposition~5.3 (in the very slightly different context of
invariant elaminations). For the sake of the reader we give a sketch of the construction.

The idea is very simple: the result of quotienting
$S^1$ by the equivalence relation $\sim$ associated to the major $C$ is a cactus, i.e.\/ a
tree of $d$ smaller circles $C_i$, for $i=0,\cdots,d-1$, each of perimeter $1/d$. 
The map $h$ factors through $\sim$ to give a map $h: C/\sim \; \to C$ which takes each 
$C_i$ to $C$ homothetically, scaling lengths by $d$. Define $\Lambda_0:=C$ and
inductively define $\Lambda_i$ by pulling back $\Lambda_{i-1}$ by $h$ to each $C_i$,
and then taking the union of the preimages in $C$. Then take $\Lambda$ to be the closure of
the increasing union $\cup_i \Lambda_i$.

The step that takes a collection of laminations on the $C_i$ and forms the
preimage in $C$ is ambiguous when one of the laminations has endpoints in $C_i$ at the
image of leaves of $C$ (which become points in $C/\sim$); one may resolve this
ambiguity by always making a leftmost choice of preimage at every stage. 
By construction, this ambiguity arises when, and only when, there are points 
$x,y\in S^1$ contained in leaves of $C$ (not necessarily distinct) and an $n>0$ 
so that $h^n(x) = y$.
\end{proof}

It should be emphasized that Proposition~\ref{proposition:invariant_lamination} is
entirely constructive, and invariant laminations (or: finite approximations to them)
can be computed efficiently. When endpoints of the majors are rational, the
leaves of the invariant lamination (expressed as unordered pairs of right-infinite
strings in their base $d$ expansion) are parameterized by infinite walks in an
explicit finite digraph, which may be thought of as the analog of the
(projectively weighted) invariant stable traintrack for a pseudo-Anosov automorphism of
a closed surface.

If $C$ is a degree $d$ major with invariant lamination $\Lambda$, then  
we can form the big invariant relation $\Rel(\Lambda)$. It is possible for an invariant
lamination $\Lambda$ to contain only countably many leaves, so that $\Rel(\Lambda)$ is
the complete equivalence class and $\Lam(\Rel(\Lambda))$ is empty!

\begin{example}[Countable lamination]\label{example:countable}
Set $d=2$ and let $C$ consist of the single leaf $\lbrace 0,1/2\rbrace$. Then there is
an invariant lamination with $2^n$ leaves of length $2^{-1-n}$ and no limit leaves at all.
See Figure~\ref{countable_invariant_lamination}.

\begin{figure}[htpb]
\centering
\includegraphics[scale=0.5]{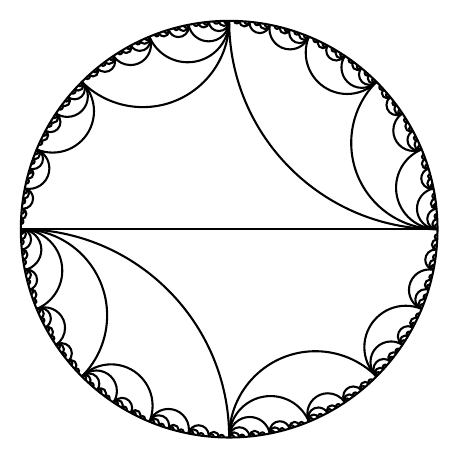}
\caption{A countable degree 2 invariant lamination with critical leaf $\lbrace 0,1/2\rbrace$.}
\label{countable_invariant_lamination}
\end{figure}
\end{example}

However, there are many situations where $\Lambda$ has only finitely many grand orbits of
isolated leaves, and $\Lam(\Rel(\Lambda))$ has no isolated leaves, and only finitely
many grand orbits of (finite sided) complementary polygonal regions.

\begin{example}[Misiurewicz point]\label{example:Misiurewicz_point}
A {\em Misiurewicz point} $c \in \C$ is a complex number for which the critical point
$0$ is preperiodic but not periodic under the associated polynomial map $H_c:z \to z^2 + c$.
Misiurewicz points are contained in the Mandelbrot set, and their Julia set $J_c$ is a dendrite.
See e.g.\/ \cite{Douady_Hubbard}.

There is a surjective map $f:S^1 \to J_c$ whose equivalence classes are $\Rel(\Lambda)$
where $\Lambda$ is the invariant degree 2 lamination of $S^1$ associated to the critical
major $C$ consisting of a single leaf $\ell:=\lbrace \theta, \theta + \frac{1}{2}\rbrace$ for some
$\theta \in \Q/\Z$ disjoint from its forward iterates. The leaf $\ell$ maps to $0$.

For example, take $c=i$, so that $\theta = 1/12$. The forward orbit
$$0 \to i \to -1+i \leftrightarrows -i$$
is preperiodic but not periodic. It corresponds to the dynamics on $S^1$:
$$\lbrace 1/12,7/12\rbrace \to 1/6 \to 1/3 \leftrightarrows 2/3$$
See Figure~\ref{i_example}.

\begin{figure}[htpb]
\centering
\includegraphics[scale=0.5]{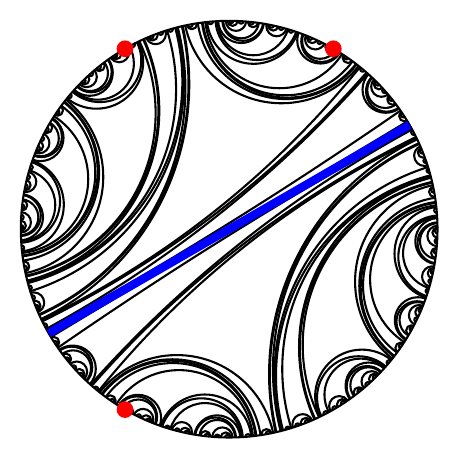}\quad\quad\quad\quad
\includegraphics[scale=0.423]{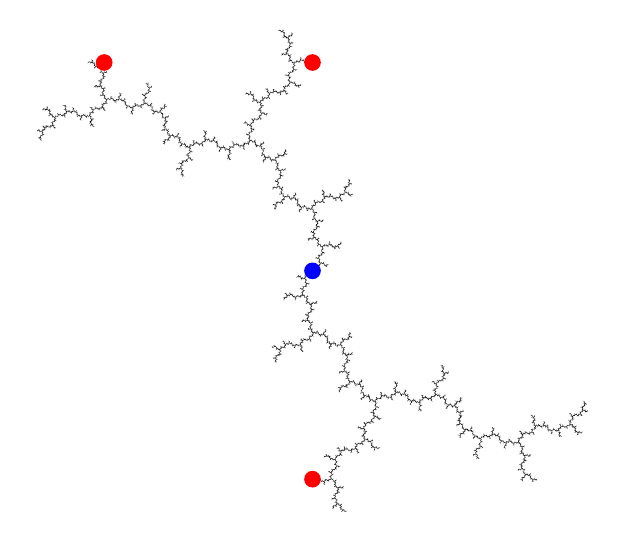}
\caption{The Julia set for $H_i:z \to z^2 + i$ is the quotient of $S^1$ by the
laminar relation associated to the degree 2 major $\lbrace 1/12,7/12\rbrace$.}
\label{i_example}
\end{figure}
\end{example}

\subsubsection{Mating}\label{subsubsection:mating}

Mating of (connected polynomial) Julia sets was first described by Douady and worked
out in detail by Tan Lei, Rees, Shishikura, Ch\'eritat
and others; see \cite{Tan_Lei_mate,Rees_mate,Shishikura_mate,Cheritat}.
Milnor \cite{Milnor_worked} worked out an explicit example of the mating of
two (degree 2) dendrite Julia sets both associated to $z \to z^2 + c$ for
$c \approx -0.228 + 1.115i$, the unique root of $4c^3 + 4c^2 + 1$ with a positive
imaginary part, and one can hardly do better than read his paper.

Let $H^\pm: \CP^1 \to \CP^1$ be degree $d$ polynomials with connected filled Julia
sets $K^\pm$. Assuming that $\partial K^\pm$ are locally connected,
there are surjective maps $f^\pm:S^1 \to \partial K^\pm$ 
conjugating $h:z \to z^d$ on $S^1$ to $H^\pm$ on $\partial K^\pm$. 
A {\em topological mating} is obtained as a quotient
$K:=K^+ \sqcup K^- /\sim$ where $p \in \partial K^+$ is identified with $q \in \partial K^-$
if there is $x\in S^1$ with $f^+(x)=p$ and $f^-(-x)=q$.
Evidently the maps $f^\pm$ paste together (after conjugating $f^-$ by the antipodal
map, which commutes with $z \to z^d$) to form $f:S^1 \to K$, and there is a 
degree $d$ endomorphism $H:K \to K$ agreeing with $H^\pm$ on $K^\pm$
respectively so that $fh = Hf$ as maps from $S^1$ to $K$.

When $K^\pm$ are dendrites, and $f^\pm$ are surjective and arise by quotienting
the circle by laminar relations $\LL^\pm$, then providing $\LL^\pm$ have no perfect
fits, the quotient $K$ is a sphere and $f:S^1 \to K$ is a CaTherine wheel.
More generally, whenever $K^\pm$ are dendrites and $K$ is a sphere, then $\LL^\pm$ 
form an especial pair and we conjecture that $f:S^1 \to K$ is a P-CaTherine wheel 
in the sense of \S~\ref{section:P_wheels}. In the terminology of mating, leaves of $\LL^\pm$
are called {\em rays}, and chains of leaves alternating between $\LL^\pm$ each with
one endpoint in common with its adjacent leaves define an equivalence relation on rays
called {\em ray equivalence}. The {\em diameter} of a ray equivalence class is one
less than the number of leaves in such a maximal chain. Much is known and
much is unknown about ray equivalence in general; see \cite{Jung} or \cite{Mating_Questions}, \S~3.
For instance, Epstein showed that whenever postcritically finite quadratic maps can be
geometrically mated, the diameter is always finite; see 
\cite{Shishikura_mate} or \cite{Petersen_Meyer} Proposition~4.12. 

If $K$ is a sphere one says $H:K \to K$ is obtained
by {\em topological mating}. When $H$ is conjugate to a (degree $d$) 
rational map on $\CP^1$ one says it is a {\em geometric mating}. One should beware
that the holomorphic conjugacy class of this rational map is not necessarily unique!

When $H^\pm$ are strictly postcritically quadratic polynomials (associated to
Misiurewicz points), Shishikura \cite{Shishikura_mate} showed there 
is a geometric mating if and only if 
$H^+$ and $H^-$ are in nonconjugate limbs of the Mandebrot set $\M$. This means the
following. For every $\lambda \in \C$ there is a unique $c(\lambda)$ for which
$H_{c(\lambda)}:z \to z^2 + c(\lambda)$ has a fixed point $z$ with 
$H'_{c(\lambda)}(z)=\lambda$, namely 
$$c(\lambda) = \lambda/2 - (\lambda/2)^2$$
For each nonzero rational number $p/q$ and each root of unity $\lambda = e^{2\pi i p/q}$
there is a unique component $\M(p/q)$ of $\M - c(\lambda)$ that is disjoint from
the main cardioid. For any $c \in \overline{\M}(p/q)$ the fixed point $z$ for $H_c$ has
multiplier of absolute value $\ge 1$, and is a cut point for the Julia set $J_c$
whose preimage in $S^1$ consists of $q$ distinct points $x_0,\cdots,x_{q-1}$ in cyclic
order on which the restriction of $h$ maps $x_i$ to $x_{i+p}$, indices taken mod $q$;
note that this determines the $x_i$ uniquely up to cyclic rotation. For example,
at $c=c(\lambda)$ the fixed point is rationally indifferent and is the
intersection of the closure of $q$ Leau domains.

In particular, whenever $H^\pm$ are on conjugate limbs of $\M$, the laminar relations
$\LL^\pm$ have an entire nontrivial equivalence class in common (the set 
$\lbrace x_i\rbrace$) and they do not form an especial pair. 

\begin{example}[Conjugate mating]\label{example:conjugate_mating}
Let $\LL^\pm$ be the degree 2 invariant laminar relations associated to the
majors $\lbrace 1/12,7/12\rbrace$ and $\lbrace 5/12,11/12\rbrace$ respectively.
The laminar relations $\LL^\pm$ have no perfect fits. The reader with good eyesight
can verify this informally by zooming in on Figure~\ref{i_pair} and observing that every 
blue ray ends on a red rainbow, and conversely. This can be certified rigorously by
restricting attention to a fundamental domain for the $h$ action (extended to the
germ of a collar neighborhood of the boundary), and making suitable numerical 
estimates.

\begin{figure}[htpb]
\centering
\includegraphics[scale=0.5]{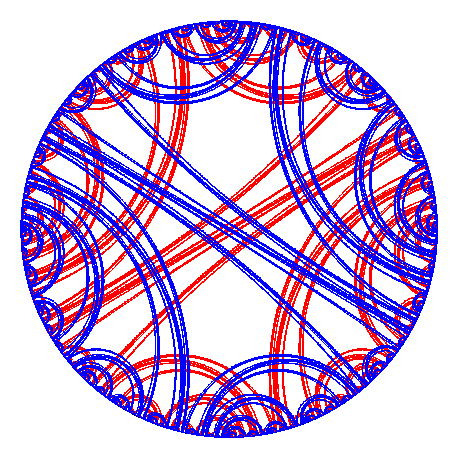}
\caption{The laminar relations $\LL^\pm$ have no perfect fits.}
\label{i_pair}
\end{figure}

The geometric mating is a degree 2 rational map $H:\CP^1 \to \CP^1$ whose Julia set
is all of $\CP^1$, and the map $f:S^1 \to \CP^1$ is a CaTherine wheel admitting a
degree 2 endomorphism. This is a Latt\`es example, derived from complex multiplication
on the Euclidean torus $E:=\C/\langle 1, \eta \rangle$
where one has
$$ \eta = \frac {1+i\sqrt{7}}{2} \text{ and } H(z) = \frac {\eta^2 z + 1} {z(z+\eta^2)}$$
(see \cite{Milnor_worked} \S~B.4). 

A fundamental domain for the CaTherine wheel $f:S^1 \to \CP^1$ lifts to
$\tilde{f}:I \to \C$, where its image covers half of a fundamental domain for the
lattice $\langle 1, \eta \rangle$. This image is a Jordan disk, and may be obtained
as the limit of an iterated piecewise-linear subdivision rule.
See Figure~\ref{disk_on_torus} and compare
with \cite{Milnor_worked}, Figure~7.

\begin{figure}[htpb]
\centering
\includegraphics[scale=1.5]{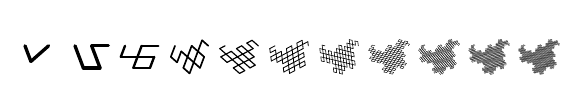}
\caption{A sequence of approximations to $\tilde{f}(I) \subset \C$}
\label{disk_on_torus}
\end{figure}
\end{example}

\subsubsection{Zippers}

If $\LL$ is an invariant laminar relation for a postcritically finite major $C$
there is a minimal nonempty finite forward-invariant subset $X \subset S^1$ containing at
least one point in the forward image of every leaf of $C$.
Let's suppose $\LL$ is the laminar relation associated to an 
equivariant parameterization $f:S^1 \to J \subset \CP^1$ of a 
dendritic Julia set associated to a rational map $H$. We may extend $\LL$ to a relation
in a hyperbolic plane $P$ bounded by $S^1$ and extend $f$ to $F:P \cup S^1 \to J$.
The image of the convex hull $C(X)$ of $X$ is a forward-invariant tree $T \subset J$
called the {\em Hubbard tree} (see \cite{Douady_Hubbard}, \S~IV), 
and the full Julia set $J$ may be obtained as the
closure of the union of iterated preimages of $T$.

If $\LL^\pm$ are a pair of invariant laminar relations arising from a CaTherine wheel
$f:S^1 \to S^2$ associated to the topological mating of two dendritic Julia sets, 
the interiors of the Hubbard trees $T^\pm \subset \CP^1$ are in the image of
$P^\pm$ and are therefore contained in $Z^\pm$.

\begin{definition}[Trimmed Hubbard trees]\label{definition:trimmed_trees}
The {\em trimmed} Hubbard trees $T^\pm_o \subset \CP^1$ are obtained from the Hubbard trees
$T^\pm$ by removing the 1-valent vertices.
\end{definition}

\begin{proposition}[Zipper is full preimage]\label{proposition:zipper_preimage}
Suppose a topological mating of dendritic Julia sets admits a CaTherine wheel
$f:S^1 \to S^2$. Then the unions of the iterated preimages of the trimmed Hubbard trees
$Z^\pm:= \cup_n H^{-n} T^\pm_o$ are the zipper for $f$.
\end{proposition}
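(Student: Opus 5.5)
We prove the two inclusions $\cup_n H^{-n}T^+_o \subset Z^+$ and $Z^+ \subset \cup_n H^{-n}T^+_o$; the case of $Z^-$ is symmetric. Throughout we work in the model of Theorem~\ref{theorem:laminar_decomposition}. There $Z^+ = F(P^+)$, where $F:S^2_f \to S^2$ is the quotient by the hull decomposition of $\LL^+ \sqcup \LL^-$ on $P^\pm$. The key observation is that the endomorphism $h:z\mapsto z^d$ on $S^1$ extends to a branched self-map $\hat h$ of $P^+$. Concretely, $\hat h$ maps each hull $H^+(\nu)$ onto $H^+(h\nu)$ and each complementary gap onto its image gap. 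Since $H^+$ is the quotient of $P^+ \cup S^1$ by $\LL^+$, this is the usual dynamics of the dendrite Julia set $K^+$. We therefore have $F\hat h = H F$ on $P^+$, and $H^{-1}(F(P^+)) = F(\hat h^{-1} P^+) = F(P^+)$. In words, $Z^+$ is completely invariant, and the Hubbard tree $T^+ = F(C(X))$ is realized inside $F(P^+\cup S^1)$.

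\textbf{Step 1: $\cup_n H^{-n}T^+_o \subset Z^+$.} The paper already notes that $\inte(T^+)$ lies in $F(P^+)$. What has to be checked is which points of $T^+$ fail to lie in $Z^+$. A point of $T^+$ is the image either of a point of $P^+$ in the convex hull $C(X)$, which gives a point of $Z^+$, or of a point of $X \subset S^1$ that is not in a nontrivial $\LL^+$-class. The latter are exactly the endpoints of $T^+$ that are not branch or interior points. Because $T^+$ is the image of a convex hull, $F$ carries $C(X)\cap P^+$ onto $T^+$ minus the images of the points of $X$ that are extreme, i.e.\ $1$-valent. Hence $T^+_o = F(C(X)\cap P^+)\subset Z^+$. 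Complete invariance of $Z^+$ then gives $H^{-n}T^+_o\subset Z^+$ for all $n$.

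\textbf{Step 2: $Z^+ \subset \cup_n H^{-n}T^+_o$.} Take $z\in Z^+$, say $z=F(x)$ with $x\in H^+(\nu)$ for some nontrivial class $\nu$ of $\LL^+$. It suffices to show that $\hat h^n(x)$ lies in $C(X)\cap P^+$ for some $n$, because then $H^n z\in T^+_o$. The class $\nu$ is a nontrivial class of the postcritically finite relation, so some forward image $h^n\nu$ contains a point of the forward orbit of a critical leaf. Those points lie in $X$, so $h^n\nu$ meets $X$ in at least two points, or is a class separating $X$. Hence $H^+(h^n\nu)\cap C(X)\neq\emptyset$. However, we need $\hat h^n(x)$ itself, not merely the hull, to land in $C(X)$. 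To arrange this, use that $F$ collapses the whole hull $H^+(h^n\nu)$ to a single point. Then $F(\hat h^n x) = F(y)$ for any $y\in H^+(h^n\nu)\cap C(X)$. This point is not a $1$-valent vertex, since it is the image of a nontrivial class, which is a cut point. So $H^n z\in T^+_o$, and $z\in H^{-n}T^+_o$.

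\textbf{Main obstacle.} The delicate step is the claim in Step 2 that every nontrivial $\LL^+$-class eventually maps to a class meeting the Hubbard tree. Equivalently, every cut point of the dendrite $J^+$ eventually lands in $T^+$. For postcritically finite (Misiurewicz) dendrites this is the standard fact that $J^+$ is the closure of $\cup_n H^{-n}T^+$, together with expansion. Concretely, a cut point $p$ separates $J^+$ into several branches. Pushing forward, the branches' diameters grow under expansion until their images must contain points of the postcritical set, which lies in $T^+$. I would first try to prove this via the following argument. If $H^n p\notin T^+$ for all $n$, then $H^n p$ lies in a component of $J^+-T^+$ on which $H$ is injective and which maps into another such component. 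Expansion then forces those components to have diameter tending to infinity, which is a contradiction. The remaining bookkeeping — that $1$-valent vertices of $T^+$ are exactly the images of extreme points of $X$ lying in trivial $\LL^+$-classes, so that they land in $S^2-(Z^+\cup Z^-)$ or in $Z^-$ — follows from the no perfect fits description in Theorem~\ref{theorem:zippers_from_wheels}.
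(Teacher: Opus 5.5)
Your Step~1 is fine; the paper treats that inclusion as evident. Step~2, however, has a genuine gap.

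You assert that for every nontrivial class $\nu$ of $\LL^+$ some $h^n\nu$ contains a point of the forward orbit of a critical leaf. This is false for all but countably many classes. $Z^+$ contains nondegenerate arcs (already $T^+_o$ does), so it is uncountable. By contrast, the set of points whose forward orbit meets the finite postcritical set is countable, since each point has finitely many $H^n$-preimages. So Step~2 as written only places countably many points of $Z^+$ in $\bigcup_n H^{-n}T^+_o$.

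Moreover, the statement you actually need, ``$H^nz\in T^+_o$ for some $n$,'' is by definition of preimage exactly the inclusion $Z^+\subset\bigcup_nH^{-n}T^+_o$. Nothing has been reduced.

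The fallback in your ``main obstacle'' paragraph does not close the gap either. A component $U$ of $J^+-T^+$ need not map into another such component. The reason is that $H^{-1}(T^+)$ is a strictly larger tree whose extra arcs lie in components of $J^+-T^+$, so $H(U)$ can meet $T^+$. Without that step you get no monotone growth of diameters and no contradiction.

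Separately, being the image of a nontrivial class does not make a point of $T^+$ non-$1$-valent in $T^+$. The critical value is always an endpoint of the quadratic Hubbard tree, yet it can be a branch point of $J^+$, for instance at the principal Misiurewicz point of a limb. So your inference from ``cut point'' to ``lies in $T^+_o$'' is invalid.

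The missing idea is the one the paper uses, which avoids following individual forward orbits altogether. Let $W:=\bigcup_n H^{-n}T^+_o$. Then:
\begin{enumerate}
\item $W$ is path-connected, since each successive preimage meets the earlier ones.
\item $W$ meets every component of $Z^+-z$ for every $z\in Z^+$. Indeed, iterated preimages of the major (whose image lies in $T^+_o$) are dense in $\LL^+$, so they occur in every complementary region of the hull $H^+(\nu)$ with $F(H^+(\nu))=z$.
\item Every point of $Z^+$ is a cut point and arcs in $Z^+$ are unique (Lemma~\ref{lemma:unique_path}). Hence the arc in $W$ joining points of two different components of $Z^+-z$ must pass through $z$, so $z\in W$.
\end{enumerate}
You do invoke the density statement $J^+=\overline{\bigcup_nH^{-n}T^+}$. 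What upgrades density to equality is combining it with path-connectedness and the cut-point property, not expansion.
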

\begin{proof}
Evidently $\cup_n H^{-n} T^\pm_o$ is contained in the zipper for $f$, so it suffices to
prove the converse. These unions are path-connected, since each successive preimage intersects
the preimages at the previous stage, and they are dense in $Z^\pm$ because the preimages
of the major (whose image is in $T^\pm_o$) are dense in $\LL^\pm$ by definition. 
Equality follows.
\end{proof}

For the Latt\`es Example~\ref{example:conjugate_mating} a lift to the torus of the
(positive) Hubbard tree is an arc that may 
itself be obtained as the limit of an iterated piecewise-linear subdivision rule.
In particular, the Hausdorff dimension is exactly $\log(\lambda)/\log(2)$ 
(approximately $1.2102$) where $\lambda$ is the biggest real root of $x^3 - 2x^2 + x - 4$.
Furthemore, one may directly verify (from the recursive construction) that 
this arc satisfies the bounded turning condition, and is therefore a quasiarc. 
See Figure~\ref{complex_multiplication_hubbard_arc}.

\begin{figure}[htpb]
\centering
\includegraphics[scale=1]{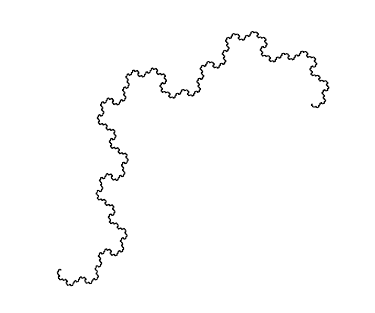}
\caption{The preimage of the Hubbard tree in $\C/\langle 1,\eta\rangle$ is a quasiarc.}
\label{complex_multiplication_hubbard_arc}
\end{figure}

The map from $\C/\langle 1,\eta\rangle$ to $\CP^1$ is branched over four points,
and there is one branch point in each of the half-zippers, namely the unique critical
point contained in either Hubbard tree. The preimage of each of the half-zippers
$Z^\pm$ in $\C$ is a disjoint union of topological $\R$-trees, each of which maps to $Z^\pm$ by a
double cover branched over the critical point. 

Figure~\ref{complex_multiplication_zipper} depicts a pair of these
lifts. Each is dense in a Jordan domain which is a fundamental domain for the torus
$\C/\langle 1,\eta\rangle$. These Jordan domains overlap even though
the lifted half-zippers are disjoint, and the overlap is itself a Jordan domain whose
boundary is the union of (lifts of) the two Hubbard trees.

\begin{figure}[htpb]
\centering
\includegraphics[scale=1]{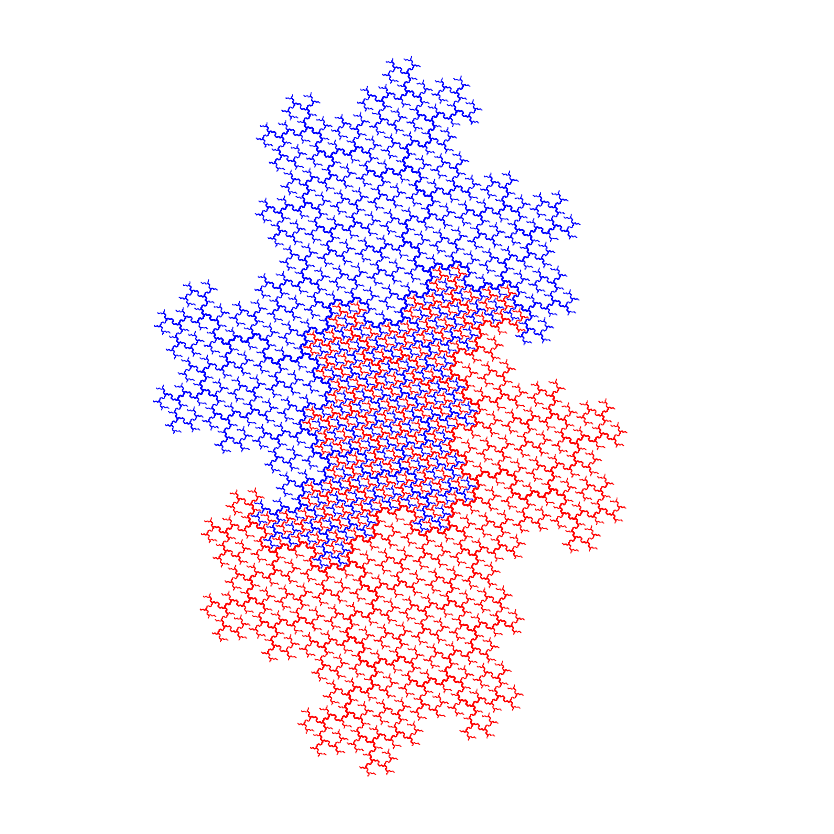}
\caption{The zipper is the union of the preimages of the trimmed Hubbard trees: $Z^\pm := \cup_n H^{-n} T^\pm_o$. 
Each half of the zipper is double covered by a topological $\R$-tree in $\C$, and these 
trees are in blue and red.}
\label{complex_multiplication_zipper}
\end{figure}

Based on this example, and in analogy with Proposition~\ref{proposition:uniform_K}, 
we make the following conjecture:

\begin{conjecture}[Quasicircles]\label{conjecture:quasicircles}
Let $f:S^1 \to \CP^1$ be a CaTherine wheel arising from an expanding Thurston map. 
Then $f$ is a $K$-CaTherine wheel for some $K$.
\end{conjecture}

Some evidence in favor of this conjecture comes from Lin--Rohde \cite{Lin_Rohde},
especially Theorem~1.5 and Proposition~2.2 which imply that for certain
quadratic dendritic Julia sets $J$ there is a $K$ so that every
embedded arc in $J$ is a $K$-quasiarc. These conditions can be expressed purely 
in terms of the associated lamination; we wonder whether a CaTherine wheel 
$f:S^1 \to \CP^1$ arising from a geometric mating of dendritic Julia sets, and
for which the associated laminar relations $\LL^\pm$ separately satisfy
the Lin--Rohde criterion, is a $K$-CaTherine wheel.

For expanding Thurston maps that do not arise from CaTherine wheels the construction
(and even the definition) of (P-)zipper is much more subtle; see \cite{AHLZ}.

\subsubsection{Expanding Thurston maps}

We are now in a position to make contact with Bonk--Meyer.

\begin{definition}
An orientation-preserving branched cover $H:S^2\to S^2$ is an {\em expanding Thurston map}
\begin{enumerate}
\item{if it is post-critically finite; i.e.\/ if the forward orbit of the set of critical
points is finite; and}
\item{if there is a Jordan curve $\Gamma$ containing the postcritical set so that the
mesh size of the graphs $\Gamma_n:=H^{-n}{\Gamma}$ goes to zero.}
\end{enumerate}
\end{definition}

With this definition, one has the following theorem of Meyer \cite{Meyer_unmate}:

\begin{theorem}[Meyer; Power is mating]\label{theorem:power_mating}
Let $H:S^2 \to S^2$ be an expanding Thurston map without periodic critical points. 
Then every sufficiently high iterate of $H$ is topologically conjugate to the 
topological mating of two polynomials.
\end{theorem}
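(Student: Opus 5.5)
The plan is to go through Bonk--Meyer's invariant Jordan curve, since the mating structure is most naturally read off from an $H^n$-invariant Jordan curve through the postcritical set.

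\emph{Step 1: an invariant curve.} For $H$ expanding with postcritical set $P$ (with $|P|\ge 3$), I would invoke the Bonk--Meyer theorem that for every sufficiently large $n$ there is a Jordan curve $\Gamma\supset P$ with $H^n(\Gamma)\subset\Gamma$. Moreover, $\Gamma$ can be taken so that the restriction $H^n|\Gamma$ is semiconjugate to $z\mapsto z^{d^n}$ on $S^1$. This step I take as known input from \cite{Bonk_Meyer}. Set $F:=H^n$ and $D:=d^n$.

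\emph{Step 2: the two tiles and their polynomial models.} $\Gamma$ splits $S^2$ into two closed Jordan domains $X^\pm$, the $0$-tiles. Invariance of $\Gamma$ means the $1$-tiles $F^{-1}(X^\pm)$ subdivide each $X^\pm$, and $F$ maps each $1$-tile homeomorphically onto $X^+$ or $X^-$.

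In $X^+$, I would look at the connected components of the union of the $1$-tiles mapping to $X^-$. Iterating this gives nested ``white'' regions whose complement shrinks. Expansion (mesh going to zero) ensures that the intersection of the nested unions of tiles of one colour, taken relative to $\Gamma$, collapses to a tree-like continuum $K^+$. Concretely, I would define an equivalence relation $\sim^+$ on $\Gamma\cong S^1$: two points are equivalent if they are joined through $X^+$ by chains of tiles of the other colour at every level.

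One then checks two things. First, $\sim^+$ is closed and unlinked, so it is laminar. Second, $\sim^+$ is invariant under the degree-$D$ circle map. Hence $S^1/\sim^+$ is a dendrite carrying an induced degree-$D$ map. Thurston's characterization (here in the form of the existence of invariant laminations, Proposition~\ref{proposition:invariant_lamination}) then realizes this quotient as the Julia set of a postcritically finite polynomial $P^+$. The hypothesis of no periodic critical points is exactly what ensures the filled Julia set is a dendrite, with no Fatou components. The same construction gives $P^-$ from $X^-$.

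\emph{Step 3: gluing.} The semiconjugacy $S^1\to\Gamma$, together with the two quotients, identifies $S^2$ with $K^+\sqcup K^-/\sim$, where the identification is via $x\leftrightarrow -x$ (or its orientation-reversing analogue, matching the conventions of \S~\ref{subsubsection:mating}). Under this identification $F$ corresponds to the map induced by $P^\pm$. The point to verify is that the map $S^1\to S^2$ obtained this way has point preimages equal to the classes generated by $\sim^+\cup\sim^-$; this follows from the fact that each $H^{-k}(\Gamma)$ is a cell decomposition with mesh tending to zero. One then applies Moore's Theorem~\ref{theorem:Moore} to conclude that the resulting decomposition of the sphere is the topological mating and gives back $S^2$.

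\emph{Main obstacle.} The hard part is Step 1, namely the existence of an invariant curve for some iterate together with the semiconjugacy to $z^{D}$; in the proof I would simply cite Bonk--Meyer for it. The second delicate point is the dendrite claim in Step 2, which depends on the absence of periodic critical points.
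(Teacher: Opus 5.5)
There is a genuine gap, and it is in Step~1, on which everything else depends. The first half of Step~1 is fine: Bonk--Meyer do give an $F$-invariant Jordan curve $\Gamma\supset P$ for large $n$. The second sentence, however, is false. No such $\Gamma$ can have $F|_\Gamma$ semiconjugate to $z\mapsto z^D$ by a map $S^1\to\Gamma$ of nonzero degree:

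\begin{itemize}
\item Such a semiconjugacy would force $F|_\Gamma$ to have degree $D\ge 2$ as a circle map, so every $y\in\Gamma$ would have at least $D$ distinct preimages on $\Gamma$.
\item Since $y$ has exactly $D$ preimages in $S^2$ counted with multiplicity, all of them would be simple, and no critical value could lie on $\Gamma$.
\item But the critical values lie in $P\subset\Gamma$.
\end{itemize}

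In examples, $F|_\Gamma$ has smaller degree and need not be a covering. The circle carrying the $z\mapsto z^D$ dynamics is not $\Gamma$ at all; it is the domain of a surjection onto the whole of $S^2$. As a result, your relations $\sim^\pm$ live on the wrong circle and are not invariant under any degree-$D$ map. Likewise, Step~3 cannot produce $S^2$ from a map $S^1\to\Gamma$, which misses almost all of the sphere.

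The picture behind Step~2 is also off. In a mating of two dendrites, each of $K^\pm$ maps \emph{onto} $S^2$, so the two $0$-tiles $X^\pm$ do not correspond to the two polynomials. Moreover, one-colour regions at successive levels do not nest: for large $k$, every $n$-tile contains $(n+k)$-tiles of both colours. So your $K^+$ is not well defined.

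The missing idea, which the paper's sketch (following Meyer) supplies, is to \emph{build} that circle by pulling back and resolving:

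\begin{itemize}
\item The graphs $\Gamma_n:=F^{-n}(\Gamma)$ are $2$-coloured, with vertices of valence $\ge 4$ at critical points.
\item Resolve these vertices of $\Gamma_1$ to get a Jordan curve $\Gamma_1'$ bounding a single white and a single black disk, so that $\Gamma_1'\to\Gamma_1\to\Gamma$ has degree $D$.
\item Resolve $\Gamma_{n+1}$ by pulling back the resolution of $\Gamma_n$, repeating the level-one choice at the critical points.
\item The $\Gamma_n'$ then form an inverse system of degree-$D$ circle maps. The limit circle carries a LEO map, hence one conjugate to $z\mapsto z^D$ by Lemma~\ref{lemma:LEO}.
\item The white resp.\ black resolved vertices give nested finite laminations $\Lambda_n^\pm$, whose limits are the two invariant laminations.
\end{itemize}

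The Bonk--Meyer invariant curve isotopic to $\Gamma$ rel $P$ (possibly available only for an iterate) is what makes the identifications between successive $\Gamma_n'$ coherent. It is an input to this construction, not itself the circle.

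Two smaller points. Proposition~\ref{proposition:invariant_lamination} only builds an invariant lamination from a major; it does not realize a given invariant laminar relation as a polynomial Julia set, which needs a separate realization result (e.g.\ via Thurston's characterization). Also, Moore's Theorem~\ref{theorem:Moore} is unnecessary once the limit circle is known to surject onto $S^2$. Applying it directly would instead require controlling ray-equivalence classes, which in general involve perfect fits.
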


We explain the idea of the proof in a simple case. Let $d$ be the degree of $H$,
denote the critical points by $C$ and the strictly forward orbit of $C$ by $P$.
Adjust $\Gamma$ by an isotopy if necessary so that the germ of $\Gamma$ through 
$P$ is invariant. We need one further combinatorial condition: there should
be an $H$-invariant Jordan curve $\tilde{\Gamma}$ (also containing the
postcritical set) and such that $\tilde{\Gamma}$ is isotopic to $\Gamma$ rel.
$P$. To achieve this one might need to pass to a power of $H$; that it can be
done at all is \cite{Bonk_Meyer} Theorem~15.1.

Let's suppose for simplicity the critical points are all simple (in particular
there are $2d-2$ of them) so that $\Gamma_1$ is a 4-valent graph with 
vertices at $C$. In the sequel we refer to a {\em 2-coloring of a graph} as a choice
of two colors (black and white) for the complementary regions, so that different
colors meet along every edge; and by abuse of notation we refer to a {\em 2-colored graph}
as a graph with such a choice of 2-coloring. 

Any planar graph with even valence vertices admits a 2-coloring. Since $\Gamma$ is
a Jordan curve it (trivially) admits a 2-coloring; choose one. This 2-coloring pulls
back to a 2-coloring on each of the graphs $\Gamma_n$. There are $d$ complementary white and $d$
complementary black disks to $\Gamma_1$, so if we resolve the 4-valent
vertices in such a way as to create $d$ new white isthmuses and $d$ new
black isthmuses, the result is a Jordan curve $\Gamma_1'$ bounding a single
black and a single white disk.

Now we discuss how to resolve $\Gamma_2$. At a vertex $p$ of $\Gamma_2$ which is
the preimage of a vertex $q$ of $\Gamma_1$, we pull back the resolution of $\Gamma_1$
at $q$ to a resolution of $\Gamma_2$ at $p$. This resolves all vertices of $\Gamma_2$
except for the $2d-2$ vertices at $C$.
Since the germ of $\Gamma$ through $P$ is invariant,
the germ of $\Gamma_1$ and $\Gamma_2$ near the critical set $C$ agrees, so it makes
sense to do `the same' resolution of $\Gamma_2$ at $C$ that we did at $\Gamma_1$.
This produces $\Gamma_2'$. The net result is to create as many black isthmuses and 
white isthmuses, so that $\Gamma_2'$ is a Jordan curve bounding a single
black and a single white disk. Iterate the process.

If we think of each resolution $\Gamma_n' \to \Gamma_n$ as a parameterization by a
circle, we get an inverse sequence of degree $d$ maps 
$$\cdots \to \Gamma'_{n+1} \to \Gamma'_n \to \Gamma'_{n-1} \to \cdots$$
parameterizing the inverse sequence 
$$\cdots \to \Gamma_{n+1} \to \Gamma_n \to \Gamma_{n-1} \to \cdots$$
The set of white vertices in $\Gamma_n$ (those that resolve to a white isthmus) pulls back 
to a set of unordered pairs of points $\Lambda^+_n$ in $\Gamma_n$, whereas the black
points pull back to $\Lambda^-_n$, and each of these is a (finite) lamination.
The circular orders on the $\Gamma_n$ let us identify each $\Lambda^\pm_n$ as a 
sublamination of $\Lambda^\pm_{n+1}$
and in the limit we find a pair of laminations $\Lambda^\pm$ of $S^1$ invariant
under a degree $d$ map $h:S^1 \to S^1$ which is conjugate to $z \to z^d$ by 
Lemma~\ref{lemma:LEO} (implicitly this depends on a choice of isotopy rel. $P$ of
each $\Gamma_n$ to $\Gamma_{n-1}$; this is where $\tilde{\Gamma}$ plays a role,
though we do not explain it here and refer the reader to \cite{Bonk_Meyer}).
Each of the laminations $\Lambda^\pm$ arises from
a dendrite Julia set of a postcritically finite polynomial, and by construction
$H:S^2 \to S^2$ is obtained by (topological) mating.

Notice that this construction is {\em not unique}: it depends on a choice of 
resolution of the vertices of $\Gamma_1$. Different resolutions give representations of
$H$ as a topological mating of different pairs of polynomials.

For most expanding Thurston maps, the laminations $\Lambda^\pm$ will have perfect fits.
Some examples where they do not (and therefore give rise to CaTherine wheels) include 
a mating that gives rise to the map $H: z \to 1 - (3z+1)^3/(9z-1)^2$. This 
map is a Latt\'es example, derived from a Euclidean 
crystallographic group of type $(236)$ (see \cite{Bonk_Meyer}, Example~3.25). It arises
as a mating in more than one way; some of these matings have perfect fits and do
{\em not} give rise to CaTherine wheels.

\subsection{Expanding Origamis}\label{subsection:expanding_origamis}

The construction of $\LL^\pm$ and $Z^\pm$ from a CaTherine wheel $f:S^1 \to S^2$ depends
crucially on a fixed choice of orientation on both $S^1$ and $S^2$. Endomorphisms that
do not (locally) preserve this orientation destroy this structure {\em unless} the
endomorphisms $h:S^1 \to S^1$ and $H:S^2 \to S^2$ simultaneously reverse the orientation
on both $S^1$ and $S^2$ on subsets related by $f$. Rather surprisingly, one can
find many such examples which respect the linear resp. conformal structure on $S^1$ resp. $S^2$.
In particular, many interesting examples exist for which the maps $h$ and $H$ are 
{\em expanding origamis}, to be defined in the sequel. 

\subsubsection{Invariant laminations}

An expanding origami $h:S^1 \to S^1$
is a piecwise linear map with slope $\pm \lambda$ for some fixed $\lambda>1$.
We shall always assume that $h$ achieves both positive and negative slopes (or else
$h$ would be a covering map).
LEO covering maps of $S^1$ are classified up to conjugacy by their degree. By contrast,
there are many different (topological) conjugacy classes of expanding origamis
$h:S^1 \to S^1$ of fixed degree and slope.

We say that a lamination $\Lambda$ is invariant for $h$ if
\begin{enumerate}
\item{for every leaf $\lambda$ either the points of
$\lambda$ are identified by $h$, or $h\lambda$ is a leaf of $\Lambda$; and}
\item{for every leaf $\lambda$ there is some leaf $\mu$ with
$h\mu = \lambda$.}
\end{enumerate}

For typical $h$ different points in $S^1$ will have different numbers of preimages.
If it happens that every point in $S^1$ has $d$ preimages then we say $h$ has 
{\em geometric degree} $d$.

Suppose $h$ has geometric degree $d$ and $\lbrace x,y\rbrace$ is a leaf of $\Lambda$. 
There are $d$ preimages $x_0,\cdots,x_{d-1}$ and $y_0,\cdots,y_{d-1}$ of each
of $x$ and $y$. We say that $\Lambda$ is {\em degree $d$ invariant}
if for every leaf $\lbrace x,y\rbrace$ the $2d$ preimages of $x$ and $y$ are 
partitioned into $d$ pairs which are all leaves of $\Lambda$.

By abuse of notation we refer to these as {\em preimage leaves}. A preimage
leaf of the form $\lbrace x_i,y_j\rbrace$ is {\em ordinary}; otherwise it is
{\em folded}. Note that folded leaves, strictly speaking, have empty forward
image under $h$. We must therefore distinguish between those leaves that have
no forward image and are preimages, and those that have no forward image and are
not preimages. In place of the term `major' we introduce a new term:

\begin{definition}[Seed]\label{definition:seed}
Let $h$ have geometric degree $d$, and let $\Lambda$ be degree $d$ invariant.
A finite subset $C \subset \Lambda$ is a {\em seed} if every leaf of $\Lambda$ is either an
iterated preimage leaf of $C$, or a limit of such leaves.
\end{definition}

\begin{figure}[htpb]
\centering
\includegraphics[scale=0.5]{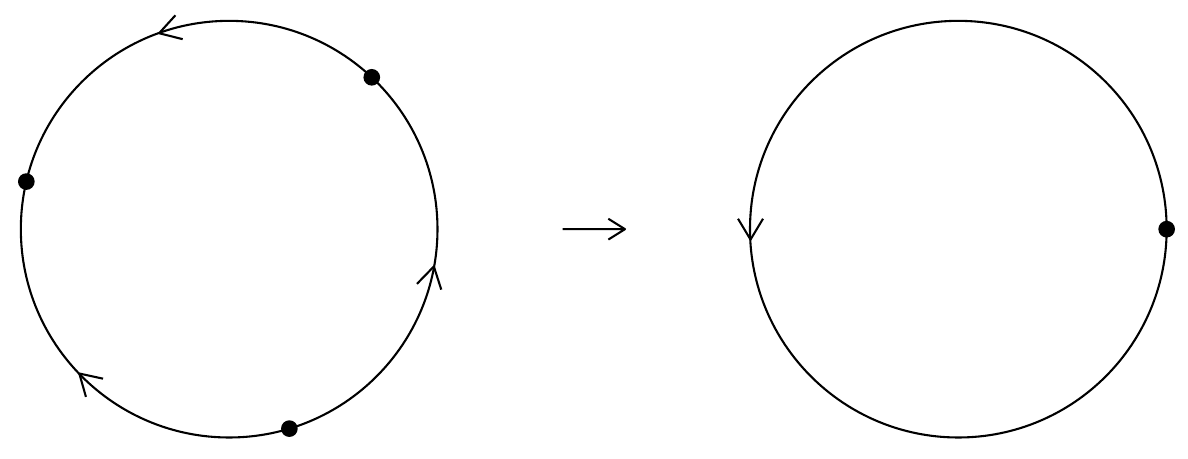}
\caption{The map $h_\theta$ for $\theta = 0.13$}
\label{deg_3_ordinary_map}
\end{figure}

Let us describe one family for which all preimages are ordinary.
\begin{example}[Ordinary family]\label{example:ordinary_family}
For each $\theta \in [0,1/3)$ define $\xi:S^1 \to S^1$ of geometric degree $3$ by
$$\xi(x) = \begin{cases}
3x \text{ if } x \in [0,1/3] \text{ or } x \in [2/3,1] \\
-3x \text{ else }
\end{cases}$$
and then define $h_\theta:S^1 \to S^1$ by $h_\theta(x) = \xi(x-\theta)$; see
Figure~\ref{deg_3_ordinary_map}.

Define $C_\theta$ to be the seed consisting of a single leaf 
$$\lbrace \theta,\theta + 1/3,\theta + 2/3\rbrace$$
Then there is a unique degree 3 invariant lamination $\Lambda_\theta$ for $h_\theta$ with
seed $C_\theta$ and for which every preimage leaf is ordinary.
See Figure~\ref{deg_3_ordinary_origami_laminations}.

\begin{figure}[htpb]
\centering
\includegraphics[scale=0.3]{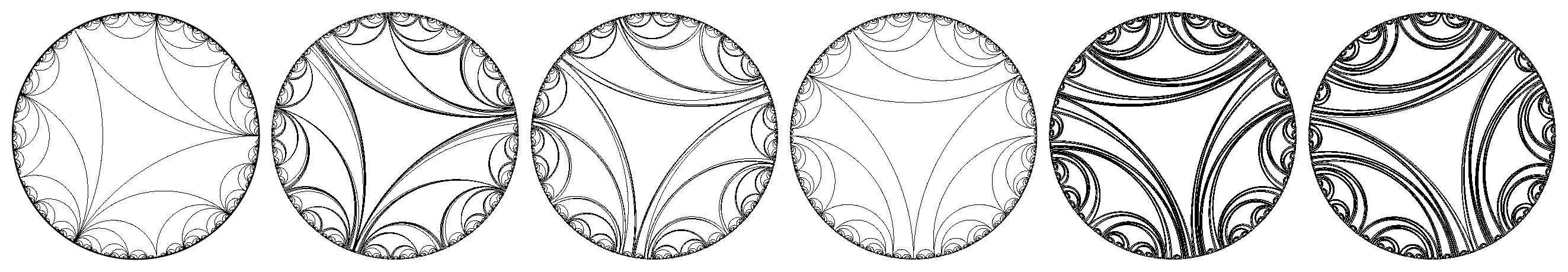}
\caption{Invariant degree 3 origami laminations $\Lambda_\theta$ for 
$\theta = i/36$ where $i = 0,\cdots,5$}
\label{deg_3_ordinary_origami_laminations}
\end{figure}\end{example}

Ordinary families are all alike; every folded family is folded in its own way.
In an ordinary family the map admits a unique (maximal) seed that generates a
degree $d$ invariant lamination.
In order to construct CaTherine wheels we need a pair of seeds generating laminations
without perfect fits; in particular the seeds themselves must be different! Hence
we must necessarily consider invariant laminations with folded preimages.
\begin{example}[Folded family]\label{example:folded_family}
For each $\theta \in [0,1/3)$ define $\xi:S^1 \to S^1$ of geometric degree 3 by
$$h_\theta(x) = \begin{cases}
3x \text{ if } x \in [-1/6,1/6] \text{ or } x \in [1/3,2/3] \\
- 3x \text{ else }
\end{cases}$$
and then define $h_\theta:S^1 \to S^1$ by $h_\theta(x) = \xi(x-\theta)$; see
Figure~\ref{deg_3_folded_map}.
\begin{figure}[htpb]
\centering
\includegraphics[scale=0.5]{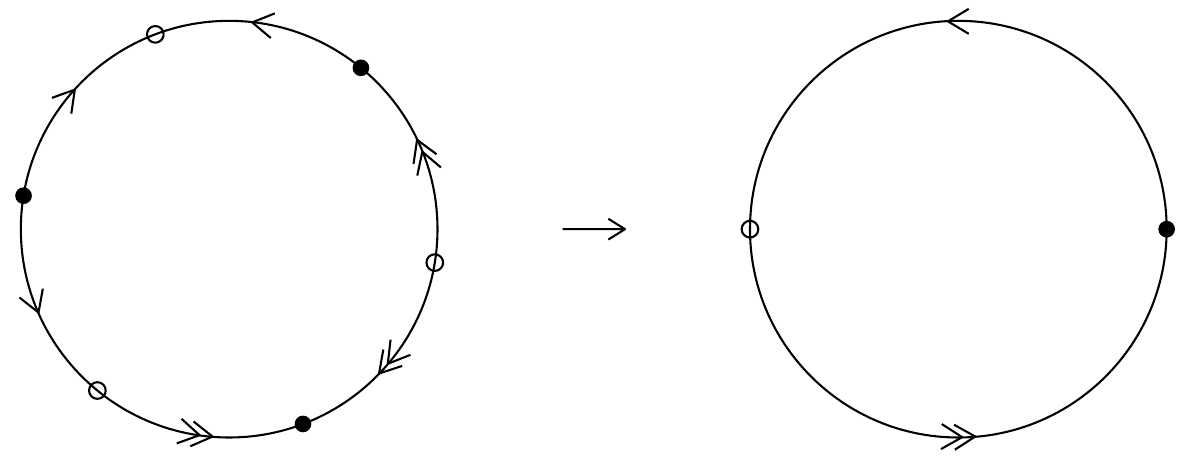}
\caption{The map $h_\theta$ for $\theta = 0.141$}
\label{deg_3_folded_map}
\end{figure}

Define $C_\theta^\pm$ to be the seeds
$$C_\theta^+ = \lbrace \theta, \theta + 1/3, \theta + 2/3\rbrace \quad 
C_\theta^- = \lbrace \theta + 1/6,\theta + 1/2, \theta + 5/6\rbrace$$
Each seed decomposes $S^1$ into three intervals. In either subdivision, one of the
three intervals is {\em ordinary}: it
maps with slope $3$ to the entire interval $[0,1]$. Again in either subdivision, 
the other two intervals are {\em folded}:
one maps with slopes $3$ and $-3$ to $[0,1/2]$ and the other maps with slopes $3$ and
$-3$ to $[1/2,1]$.

We define $\Lambda^\pm$ with seed $C_\theta^\pm$ 
recursively as follows. In order to define the preimage of
a leaf $\lbrace x,y\rbrace$ we first ask whether $x$ and $y$ are both in
$[0,1/2]$ or both in $[1/2,1]$; if so, they have three ordinary preimages. Otherwise they
have one ordinary preimage and two folded preimages. 
See Figure~\ref{deg_3_folded_origami_laminations}.

\begin{figure}[htpb]
\centering
\includegraphics[scale=0.5]{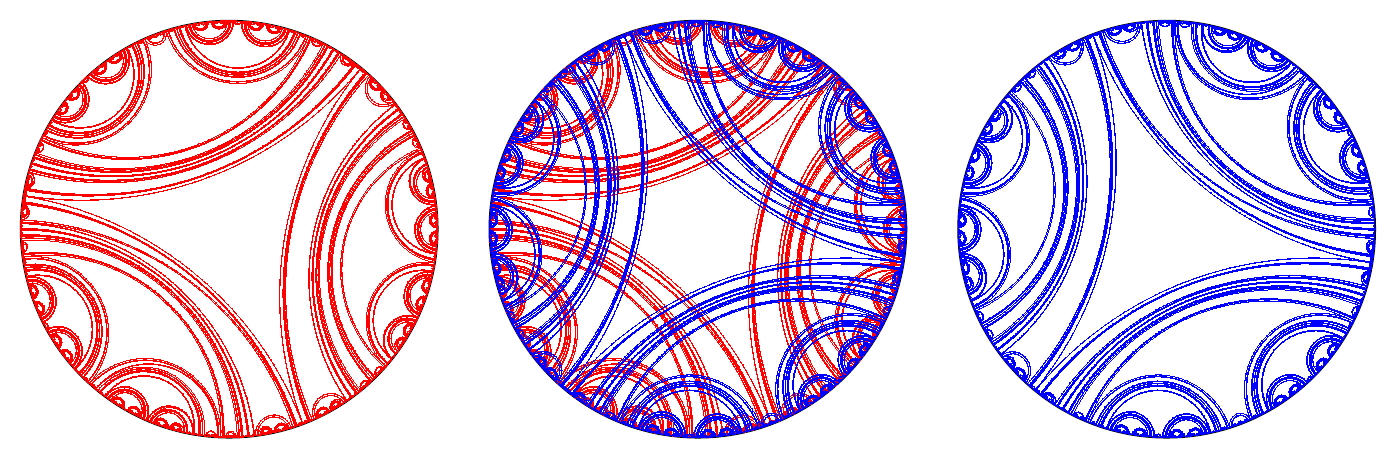}
\caption{Invariant degree 3 origami laminations $\Lambda^\pm_\theta$ for 
$\theta = 0.141$ }
\label{deg_3_folded_origami_laminations}
\end{figure}
\end{example}
One may produce many interesting endomorphisms of CaTherine wheels from folded
families.

\subsubsection{Expanding origamis}

Expanding origamis $H:S^2 \to S^2$ are defined as follows:
\begin{definition}[Expanding origami]\label{definition:expanding_origami}
A map $H: \CP^1 \to \CP^1$ is an {\em expanding origami} if there is a 2-colored
graph $\Gamma$ in $\CP^1$ so that
\begin{enumerate}
\item{the sequence of graphs on $\CP^1$ obtained as iterated preimages $H^{-n}(\Gamma)$ 
has mesh size going to 0; and}
\item{the restriction of $H$ to each white tile is holomorphic, and the
restriction of $H$ to each black tile is antiholomorphic; furthermore, any branch
points of these restrictions to each tile are either in the interior, or at a vertex.}
\end{enumerate}
An expanding origami is {\em special} if there is a subdivision $\Gamma'$ of $\Gamma$ with respect
to which $H$ is cellular.
\end{definition}

Under many conditions, expanding origamis give rise to CaTherine wheels with nontrivial
endomorphisms.

\begin{example}[Latt\`es origami]\label{example:lattes_origami}
As in the holomorphic case, the simplest examples are derived from the Latt\`es construction.
Fix the square lattice $\Z[i] \subset \C$ and consider the map
$\tilde{H}:\C \to \C$ defined by $\tilde{H}(x+iy) = G(x) + iG(y)$ where
$G:\R \to \R$ satisfies $G(1+x) = 1+G(x)$ and $G(1-x) = 1-G(x)$ and on $[0,1/2]$ is defined by
$$G(x) = \begin{cases}
3x \text{ if } x \in [0,1/3] \\
2-3x \text{ if } x \in [1/3,1/2]
\end{cases}$$
The map $\tilde{H}$ commutes with translation by elements of $\Z[i]$, and also with
the involution $z \to -z$ and therefore descends to a map $H:\CP^1 \to \CP^1$ which is
evidently a special expanding origami.

Let $\Delta$ be the group of isometries of $\C$ generated by translations in $\Z[i]$
and the involution $z \to -z$ so that $\C/\Delta = \CP^1$ as Riemann surfaces, and let
$R \subset \C$ be the fundamental domain for $\Delta$
consisting of points $x+iy$ for which $x \in [0,1/2]$ and $y \in [-1/2,1/2]$. The induced
(iterated) action of $\tilde{H}$ on $R$ is indicated in Figure~\ref{folded_map_on_rectangle}:

\begin{figure}[htpb]
\centering
\includegraphics[scale=0.5]{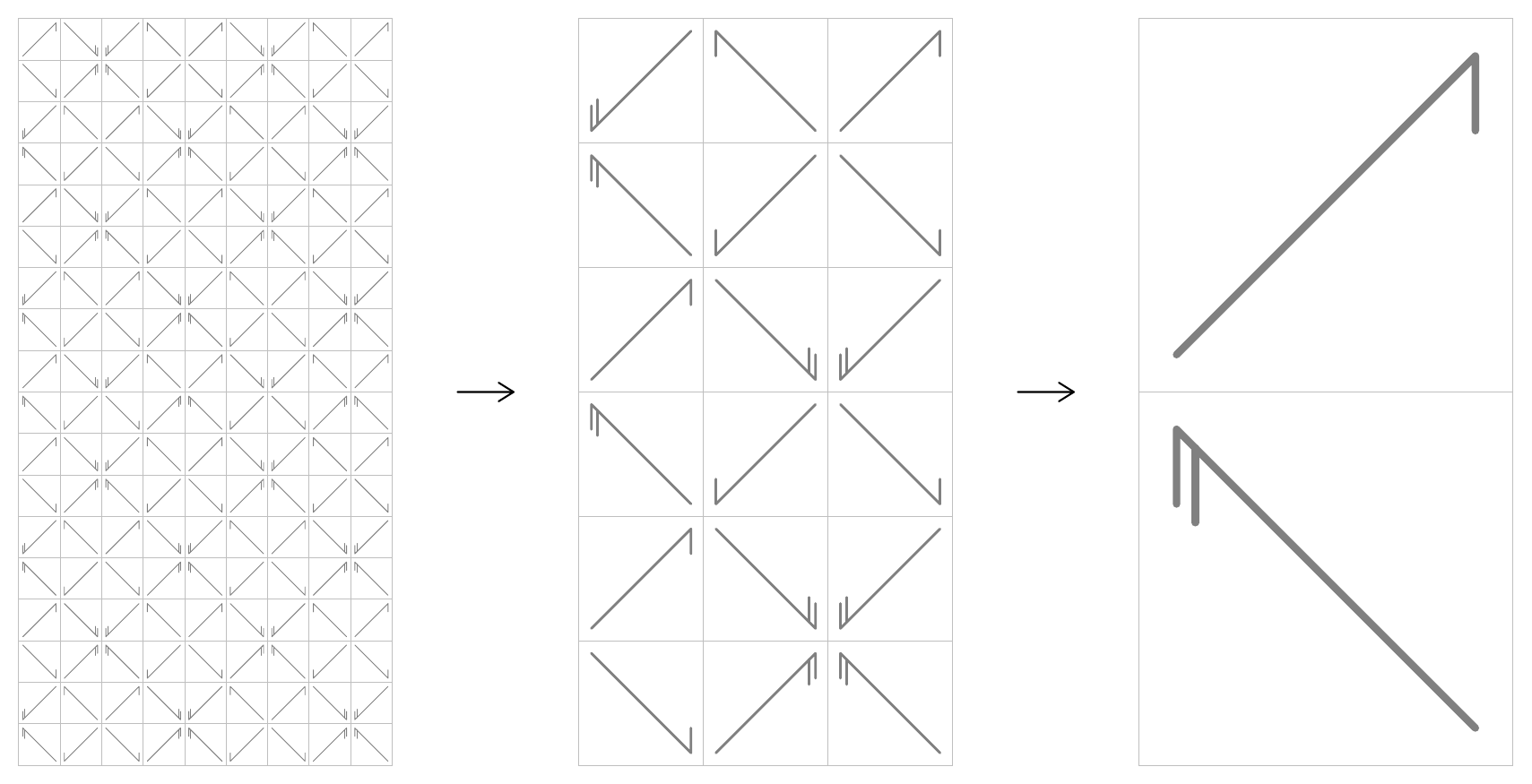}
\caption{The `restriction' of $\tilde{H}$ to $R$}
\label{folded_map_on_rectangle}
\end{figure}

At the same time one can build an origami $h:S^1 \to S^1$ as indicated in 
Figure~\ref{folded_map_on_circle}:

\begin{figure}[htpb]
\centering
\includegraphics[scale=0.5]{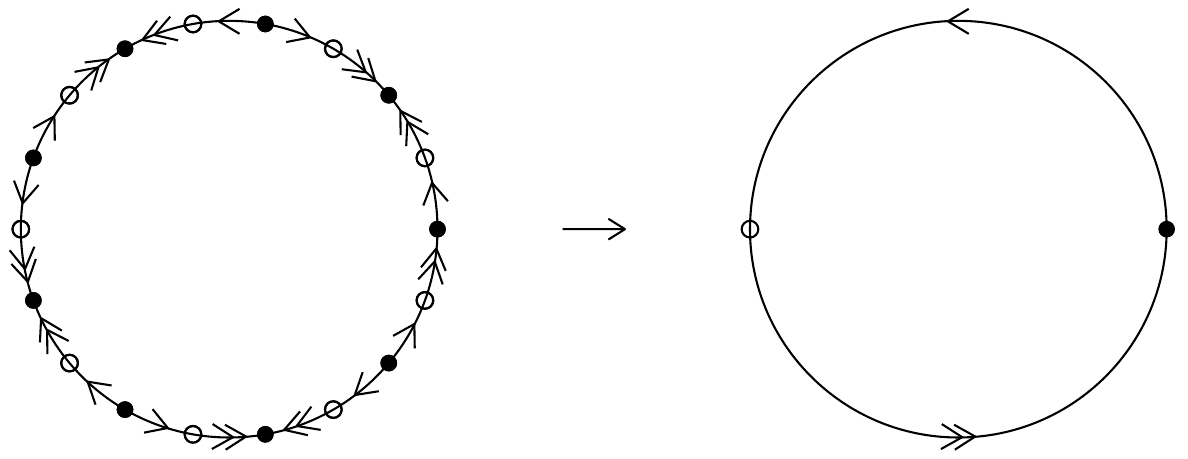}
\caption{The map $h$ on $S^1$}
\label{folded_map_on_circle}
\end{figure}

The two arrows in the rightmost copy of $R$ in Figure~\ref{folded_map_on_rectangle} close
up to form a circle $\gamma_0$ on $\CP^1$; likewise the two arrows in the rightmost
circle in Figure~\ref{folded_map_on_circle} close up to form $S^1$. We may define
$f_0:S^1 \to \CP^1$ to be the simplicial map that takes $S^1$ to $\gamma_0$, single arrow 
to single arrow and double arrow to double arrow. 
We may then inductively construct $f_{n+1}:S^1 \to \CP^1$
satisfying $H f_{n+1} = f_n h$. Actually there are four choices of the construction
at each stage corresponding to two different subdivision rules for the map between 
single and double arrow tiles. If we fix a (pair of) subdivision rules then in
the limit we obtain a CaTherine wheel $f:S^1 \to \CP^1$ satisfying $Hf = fh$. Varying
the choice of subdivision rules in a periodic manner with period $n$ gives $4^n$ CaTherine
wheels satisfying $H^nf = fh^n$, and varying the subdivision rules arbitrarily produces
uncountably many CaTherine wheels in a compact family parameterized by a 4-adic
Cantor set on which the morphism $H,h$ acts as a (one-sided) shift.

Figure~\ref{origami_curve_on_sphere_color} shows an approximation to $f(S^1)$ in $\CP^1$ 
for one invariant choice of $f$.
\begin{figure}[htpb]
\centering
\includegraphics[scale=1]{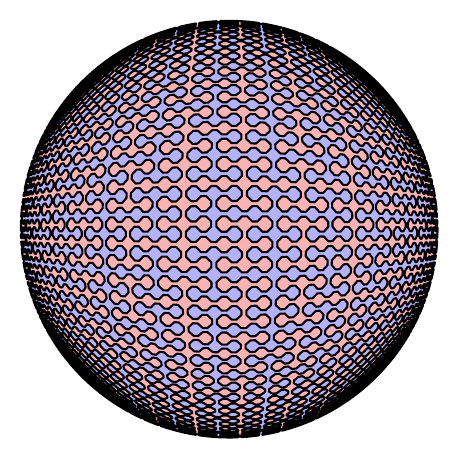}
\caption{An approximation to the origami curve $f(S^1)$ on $\CP^1$.}
\label{origami_curve_on_sphere_color}
\end{figure}
\end{example}

This example is just the tip of a rather large iceberg. The theory of expanding origamis
and their relation to holomorphic dynamics is explored in more detail in
\cite{Calegari_Herr}; also see \cite{Davis_Knuth_1,Davis_Knuth_2}.

\section{Probability}\label{section:SLE}

Conformally natural examples of CaTherine wheels, 
and much of the associated structure (laminar relations, zippers)
are extremely well-studied by probabilists, and arise naturally as
whole plane $\SLE_\kappa$ for $\kappa \ge 8$.
We are indebted to Ewain Gwynne and Greg Lawler for explaining some of this
theory to us, and guiding us to some key references. This is a vast subject, and the
purpose of this section is simply to state some of the relevant facts of the subject
in the language of the rest of this paper, and to refer the reader to the
relevant literature.

Let $\DD \subset \C$ denote the open unit disk.
Suppose $\eta:[0,\infty] \to \overline{\DD}$ is a continuous simple curve in the
unit disk with $\eta(0) \in \partial \overline{\DD}$, $\eta(\infty) = 0$ and
$\eta(0,\infty] \subset \DD$. For each $t \ge 0$ set $K_t: = \eta [0,t]$ and
$U_t:= \DD - K_t$ and
let $g_t:U_t \to \DD$ be the unique conformal homeomorphism with $g_t(0)=0$ and
$g_t'(0)$ a positive real number. After a suitable reparameterization of $\eta$
one may furthermore assume $g_t'(0)=e^t$ (one says $\eta$ is parameterized 
{\em by capacity} from $0$).

For each $t \in [0,\infty)$ the limit
$$W(t):=\lim_{z \to \eta(t)} g_t(z)$$
exists, and $W:[0,\infty) \to \partial \overline{\DD}$ is continuous.
Furthermore, if we assume $\eta$ is paramterized by capacity, $g_t$ satisfies
Loewner's differential equation \cite{Loewner}
$$\partial_t g_t(z) = -g_t(z) \frac {g_t(z) + W(t)} {g_t(z) - W(t)}$$
In fact, given $W$ one may use this differential equation to recover $K_t$
(which now might not be simple, or even a curve). Schramm \cite{Schramm} defined
radial $\SLE_\kappa$ to be $K_t$ when $W(t) = e^{iB(\kappa t)}$ where
$B:[0,\infty) \to \R$ is Brownian motion.

Whole-plane $\SLE_\kappa$ from $\infty$ to $\infty$ for $\kappa\ge 8$
is a variant of $\SLE_\kappa$ first studied in \cite{Miller_Sheffield_IG4}.
It can be built from chordal $\SLE_\kappa$ and whole plane $\SLE_{16/\kappa}$
\cite{DMS_MOT}, footnote~4. The result is a continuous proper map $\eta:\R \to \C$
that extends to $f:S^1 \to \CP^1$. 

The following theorem concatenates results from \cite{Miller_Sheffield_IG4} and 
\cite{Miller_Wu} Remark~5.3:

\begin{theorem}[SLE; \cite{Miller_Sheffield_IG4,Miller_Wu}]\label{theorem:SLE}
Whole-plane $\SLE_\kappa$ from $\infty$ to $\infty$ for $\kappa\ge 8$ gives
rise almost surely to a CaTherine wheel $f:S^1 \to \CP^1$. Furthermore, for such an $f$ 
almost surely every point has at most $3$ preimages; in detail
\begin{enumerate}
\item{a full measure set of points has only 1 preimage;}
\item{a measure 0 uncountable set has exactly 2 preimages; and}
\item{a countable set has exactly 3 preimages.}
\end{enumerate}
\end{theorem}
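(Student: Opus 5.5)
The plan is to reduce the statement to the Equivalence Theorem~\ref{theorem:equivalence}, or more directly to Theorem~\ref{theorem:wheels_from_laminations}. We will assemble the needed inputs from the imaginary geometry description of whole-plane $\SLE_\kappa$ in \cite{Miller_Sheffield_IG4}. Coupled with a whole-plane GFF, the space-filling curve $\eta$ determines two trees of flow lines, with angles $\pm\pi/2$ relative to the curve. These are the left and right boundaries of $\eta((-\infty,t])$ as $t$ varies. The points of the plane visited twice by $\eta$ are exactly the points where a left or right boundary arc meets itself. This gives two relations $\LL^\pm$ on $S^1 = \R\cup\{\infty\}$: $s\sim t$ in $\LL^+$ when $\eta(s)=\eta(t)$ lies on the left boundary of $\eta([s,t])$, and similarly for $\LL^-$. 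Equivalently, we can take $Z^\pm$ to be the two flow-line trees (the $\pm\pi/2$ flow lines from all rational points) and verify the zipper hypotheses of Theorem~\ref{theorem:wheels_from_zippers}. We will work with the laminar relations because the needed inputs are then purely combinatorial.

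The key steps, in order, are as follows.
\begin{enumerate}
\item Show that $\eta$ is almost surely continuous and surjective onto $\CP^1$, and that $\eta([s,t])$ is a closed Jordan domain with $\eta(s),\eta(t)$ on its boundary. For $\kappa\ge 8$ the region $\eta([s,t])$ has no cut points and its boundary consists of two non-self-touching flow-line arcs; this is exactly where $\kappa\ge 8$ is used, as opposed to $\kappa\in(4,8)$. Because of this, we can bypass the laminar relations and check Definition~\ref{definition:CaTherine_wheel} directly.
\item Independently of step (1), verify that $\LL^\pm$ are laminar. This is planarity of non-crossing flow lines.
\item Verify no perfect fits. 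A point with two $\LL^+$-preimages and one $\LL^-$-preimage sharing an endpoint would be a point where left and right flow lines from distinct points merge in a forbidden configuration. We rule this out by the flow-line interaction rules (flow lines of different angles cross at most once and do not bounce when the angle gap is $\pi$).
\item Verify no isolated sides. This follows from density of flow-line branch points.
\end{enumerate}

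For the preimage count, a multiple point of $\eta$ is a point on the boundaries of several pieces of the curve. By the flow-line interaction rules, left and right boundary arcs of a single point can each only merge, never cross. Consequently a point has at most one nontrivial class, lying in exactly one of $\LL^\pm$, and it has valence $1$, $2$ or $3$ in the corresponding tree.

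We then identify each valence with the claimed set. Valence one (a single preimage) holds for Lebesgue-almost every point, because the trees have Lebesgue measure zero. This follows from their dimension being $<2$ (e.g.\ $1+\kappa'/8$ for the dual $\SLE_{16/\kappa}$). Valence two gives the uncountable measure-zero set of double points, namely the interior points of tree branches. Valence three points are the branch points of the two trees. There are countably many of these, since each is a merging point of flow lines started from a countable dense set. Moreover no point has valence $\ge4$, because triple merging of flow lines has probability zero. This last fact is the statement in \cite{Miller_Wu} Remark~5.3, and we invoke it rather than reprove it.

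We expect the main obstacle to be the almost-sure no-perfect-fits statement, because it must be established simultaneously at all uncountably many points. Our plan is to reduce it to countably many flow lines started from rational points. For those, the interaction rules hold almost surely by a countable union bound, and a limiting argument then passes to general points using closedness of $\LL^\pm$ (Lemma~\ref{lemma:L_closed_subsets}) together with the density of rational starting points.
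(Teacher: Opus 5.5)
\textbf{Verdict: there is a genuine gap.} The paper gives no argument of its own for this statement. It simply concatenates \cite{Miller_Sheffield_IG4} and \cite{Miller_Wu}, Remark~5.3.

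\textbf{What is consistent with the paper's sources.} Your skeleton fits those sources: the $\pm\pi/2$ flow-line trees play the role of $Z^\pm$, $\kappa\ge 8$ is the no-bounce condition at angle gap $\pi$, the number of preimages equals the valence in $Z^\pm$, and Miller--Wu supplies the bound of three. Routing through Theorem~\ref{theorem:wheels_from_laminations} is also legitimate. You only need to add that the image relation of $\eta$ is exactly $\LL^+\sqcup\LL^-$, so that $\eta$ is a homeomorphism composed with the wheel that theorem produces.

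\textbf{First problem: the crossing rule is insufficient.} ``Flow lines of different angles cross at most once'' does not exclude a perfect fit. A single crossing of a left and a right flow line is already a point of $Z^+\cap Z^-$, so you need that gap-$\pi$ flow lines never meet at all. Non-crossing comes from the nesting $\eta((-\infty,s])\subset\eta((-\infty,t])$ for $s<t$. The boundary flow lines from $\eta(s)$ lie in a closed region bounded by the flow lines from $\eta(t)$, so they can touch those lines but not cross them. Touching is exactly what the no-bounce rule excludes.

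\textbf{Second problem, and the real gap: the limiting argument fails.} You propose to pass from rational starting points to all points ``by a limiting argument using closedness of $\LL^\pm$''. This cannot work. Closedness says limits of leaves are leaves, and that is precisely how a perfect fit at an exceptional point would arise from rational-point leaves that have none. So it yields no contradiction. Moreover, Lemma~\ref{lemma:L_closed_subsets} is proved for CaTherine wheels, so invoking it on your laminar route is circular.

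What you need is an \emph{exact} reduction. For example: almost surely every point of $Z^+$ (resp.\ $Z^-$) lies on the left (resp.\ right) flow line of some rational point other than itself. Then a point of $Z^+\cap Z^-$ is a meeting point of two specific rational flow lines, and the bad event is a countable union of null events. The same reduction is what converts fixed-point statements into the all-points bound on valence.

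\textbf{The difficulty is placed in the wrong step.} If step (1) holds for all intervals simultaneously, then laminarity, no perfect fits and no isolated sides follow from Part~I (Theorem~\ref{theorem:laminar_decomposition}, Proposition~\ref{proposition:relations_transitive}, Lemma~\ref{lemma:boundary_leaves_accumulate}). Steps (2)--(4) are then redundant.

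The real uniformity issue is step (1) itself, especially for degenerate intervals. There $z=\eta(s)=\eta(t)$ is an exceptional point, and $\partial\eta([s,t])$ is a loop made of two distinct same-angle flow lines from $z$, not ``two arcs from $\eta(s)$ to $\eta(t)$''. Fixed-point statements do not apply at such points directly. This is the work the citation of \cite{Miller_Sheffield_IG4} is actually doing.
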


The laminar relations $\LL^\pm$ associated to $f$ for whole-plane $\SLE_\kappa$
are described in terms of a pair of Brownian motions on $\R$ in 
\cite{DMS_MOT} Theorem~1.9. These Brownian motions are independent for $\kappa=8$
and negatively correlated for $\kappa > 8$. This result has many applications
to SLE and Liouville quantum gravity (hereafter LQG); see e.g.\/ \cite{Gwynne_survey}.

\begin{example}[Uniform spanning tree]\label{example:UST}
Let $D$ be a Jordan domain. Intersect $D$ with the square grid with some small
mesh size $\epsilon$, and let $X_\epsilon$ be a maximal connected subgraph of the
square grid contained in $D$. Let $T_\epsilon$ be a spanning tree of $X_\epsilon$
(chosen randomly with respect to the uniform measure on the finite set of such trees).
Pick a vertex $v$ on $X_\epsilon$ in the closure of the unbounded complementary
component; then the 
boundary of the $\epsilon/4$ neighborhood of $T_\epsilon$ is a Jordan arc with endpoints
both within distance $\epsilon/4$ from $v$ and from $\partial D$.
If we parameterize this Jordan arc proportional to arclength we get a nearly filling 
embedding $f_\epsilon:I \to D$, and associated to this near filling there is a
Kerbs invariant (defined in the obvious way), 
which is a pair of laminations of the interval; see 
Figure~\ref{australia_almost_filled}.

\begin{figure}[htpb]
\centering
\includegraphics[scale=0.75]{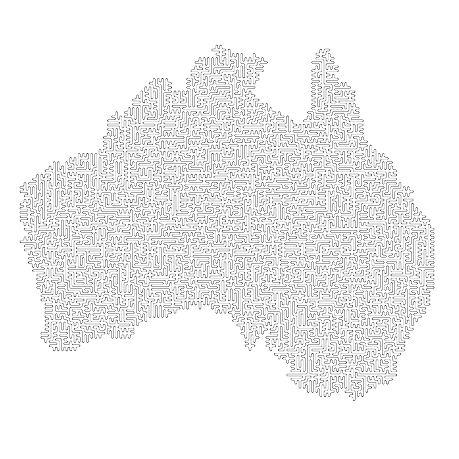} \quad
\includegraphics[scale=0.75]{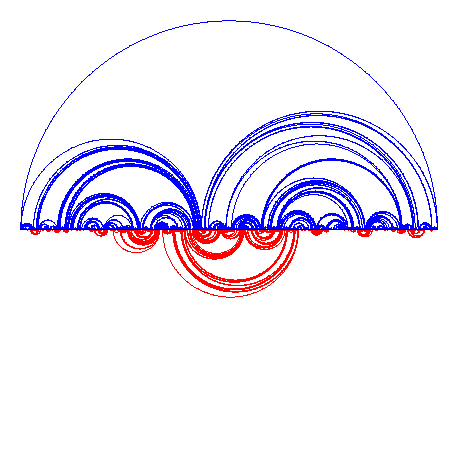}

\caption{A nearly filling map $f_\epsilon:I \to D$ and the pinching laminations}
\label{australia_almost_filled}
\end{figure}

Taking the mesh size to zero, a subsequence converges in probability to a measure
supported on filling maps $f:I \to D$. Two such maps can be welded along their
boundaries to produce a CaTherine wheel. This limiting map is a well-known variant
of $\SLE_8$; see e.g.\/ \cite{Lawler_Schramm_Werner}. 
\end{example}

\begin{example}[Half-zippers from Liouville quantum gravity]\label{example:LQG}
This example is taken from Calegari--Gwynne \cite{Calegari_Gwynne}, which was written
simultaneously with this paper.

For a parameter $\gamma \in (0,2)$, {\em Liouville quantum gravity} (hereafter
$\LQG$) is a random metric $D$ on $\CP^1$ heuristically associated to the
random `metric tensor' $g: = e^{\gamma \Phi}(dx^2 + dy^2)$, where $\Phi$ is the
Gaussian free field (GFF). The GFF is not exactly a random function, rather it
is a random distribution, and is characterized by the property that
for every smooth, compactly supported (real-valued) function $f$ on $\C$, 
the $L^2$ inner product $\langle \nabla \Phi,\nabla f\rangle$ is Gaussian with
mean 0 and variance $\|\nabla f\|^2$. In any case, although $\Phi$ itself
does not make sense as an honest function, the $\gamma$-LQG metric $D$ on 
$\C$ exists as an honest metric almost surely. 

It turns out in this metric that geodesics exist, and for almost every $p\in \C$
there is a $D$-geodesic from $p$ to $\infty$. The {\em $D$-geodesic tree rooted
at $\infty$}, denoted $Z_\infty$ is defined to be the union of all $D$-geodesics
to $\infty$, minus their starting points. In other words,
$$Z_\infty: = \lbrace \sigma(t) \text{ such that }\sigma \text{ is a $D$-geodesic from
a point of $\C$ to $\infty$ and } 0 < t < \infty\rbrace$$
See Figure~\ref{lqg_tree}.

\begin{figure}[htpb]
\centering
\includegraphics[scale=1]{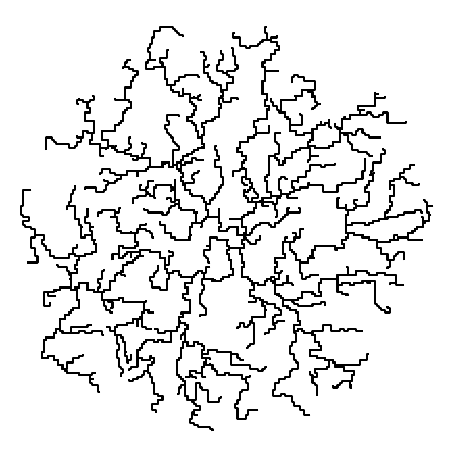} 
\caption{A numerical approximation to part of a LQG metric tree}
\label{lqg_tree}
\end{figure}

\cite{Calegari_Gwynne} Theorem~1.6 says that the set $Z_\infty$ 
is a half-zipper with short hair in the sense of
Definition~\ref{definition:half_zipper}, and \cite{Calegari_Gwynne} Theorem~1.3 
says that every half-zipper with short hair 
is $Z^+$ for some zipper $Z^\pm$ associated to a unique CaTherine wheel 
$f:S^1 \to S^2$; in particular, one obtains a `complementary' half-zipper $Z^-$ whose
meaning in the context of the $\gamma$-LQG metric for general $\gamma$ remains to be seen.
The method of proof is similar to, and depends on, the arguments in
\S~\ref{section:zippers}. 

It should be remarked that the existence of a complementary half-zipper $Z^-$ and
a CaTherine wheel were already known in the special case that $\gamma = \sqrt{8/3}$,
in which case $\gamma$ LQG is equivalent to the Brownian map; see
\cite{Miller_Sheffield_LQG,Miller_Sheffield_axiom}.
\end{example}

\vfill
\pagebreak

\part*{Back Matter}

\section*{Acknowledgements}

We would like to thank Ian Agol, Chris Bishop, Jeremy Brazas, Martin Bridgeman, 
Arnaud Ch\'eritat, Alex Eskin, Sergio Fenley, Steven Frankel, Ewain Gwynne, Yan Mary He, Fran Herr, 
Lucas Kerbs, Sarah Koch, Michael Landry, Greg Lawler, Chris Leininger, Kathryn Mann, Daniel Meyer, 
Curt McMullen, Mahan Mj, Rafael Potrie, Saul Schleimer, Rich Schwartz, Amie Wilkinson and 
Jonathan Zung for valuable comments, assistance and encouragement.

\begin{figure}[htpb]
\centering
\includegraphics[scale=0.1]{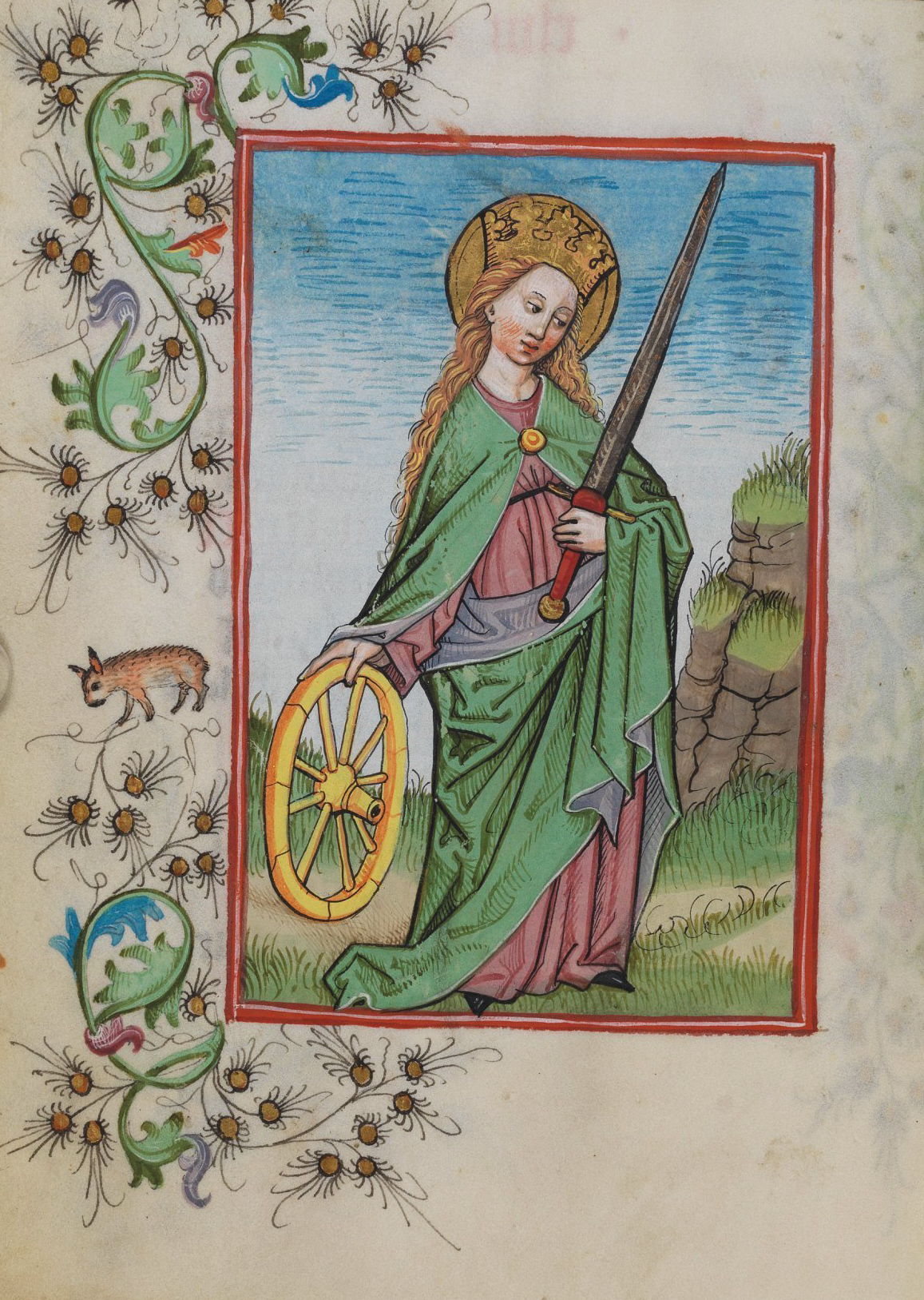}
\caption{Saint Catherine with her wheel \cite{Wikicommons}.}
\label{Saint_Catherine}
\end{figure}

\end{document}